\numberwithin{equation}{section}
\theoremstyle{plain}
\newtheorem{theorem}{Theorem}
\newtheorem{corollary}{Corollary}
\newtheorem{lemma}{Lemma}
\newtheorem{proposition}{Proposition}
\newtheorem{remark}{Remark}[section]
\newtheorem{example}{Example}
\newcommand{\R}{\mathbb{R}}
\newcommand{\dd}{\mathrm{d}}
\newcommand{\vc}{\mathrm{vec}}
\newcommand{\vch}{\mathrm{vech}}
\newcommand{\tr}{\mathrm{tr}}
\newcommand{\bbeta}{\bm\beta}
\newcommand{\beps}{\bm\epsilon}
\newcommand{\btheta}{\bm\theta}
\newcommand{\bxi}{\bm\xi}
\newcommand{\balpha}{\bm\alpha}
\newcommand{\bgamma}{\bm\gamma}
\newcommand{\bmu}{\bm\mu}
\newcommand{\bSigma}{\mathbf{\Sigma}}
\newcommand{\bGamma}{\bm\Gamma}
\newcommand{\bTheta}{\bm\Theta}
\newcommand{\bzeta}{\bm\zeta}
\newcommand{\by}{\mathbf{y}}
\newcommand{\bx}{\mathbf{x}}
\newcommand{\bs}{\mathbf{s}}
\newcommand{\bu}{\mathbf{u}}
\newcommand{\bv}{\mathbf{v}}
\newcommand{\bt}{\mathbf{t}}
\newcommand{\bz}{\mathbf{z}}
\newcommand{\bb}{\mathbf{b}}
\newcommand{\be}{\mathbf{e}}
\newcommand{\bX}{\mathbf{X}}
\newcommand{\bV}{\mathbf{V}}
\newcommand{\bG}{\mathbf{G}}
\newcommand{\bZ}{\mathbf{Z}}
\newcommand{\bI}{\mathbf{I}}
\newcommand{\bA}{\mathbf{A}}
\newcommand{\bC}{\mathbf{C}}
\newcommand{\bB}{\mathbf{B}}
\newcommand{\bD}{\mathbf{D}}
\newcommand{\bL}{\mathbf{L}}
\newcommand{\bK}{\mathbf{K}}
\newcommand{\bU}{\mathbf{U}}
\newcommand{\bR}{\mathbf{R}}
\newcommand{\bE}{\mathbf{E}}
\newcommand{\bQ}{\mathbf{Q}}
\newcommand{\bH}{\mathbf{H}}
\newcommand{\E}{\mathbb{E}}
\begin{document}

\title{Multivariate MM-estimators with auxiliary Scale for Linear Models with Structured Covariance Matrices}

\author[1]{Hendrik Paul Lopuha\"a}
\affil[1]{\emph{Delft University of Technology}}
\date{\today}

\maketitle

\begin{abstract}
We provide a unified approach to MM-estimation with auxiliary scale for balanced linear models with structured covariance matrices.
This approach leads to estimators that are highly robust against outliers and highly efficient for normal data.
These properties not only hold for estimators of the regression parameter, 
but also for estimators of scale invariant transformations of the variance parameters.
Of main interest are MM-estimators for linear mixed effects models, but our approach also includes
MM-estimators in several other standard multivariate models.
We provide sufficient conditions for the existence of MM-functionals and MM-estimators,
establish asymptotic properties such as consistency and asymptotic normality,
and derive their robustness properties in terms of breakdown point and influence function.
All the results are obtained for general identifiable covariance structures and are established under mild conditions
on the distribution of the observations, which goes far beyond models with elliptically contoured densities.
\end{abstract}

\section{Introduction}
\label{sec:introduction}
Linear models with a structured covariance are a generalization of traditional linear models where the residuals
are assumed to follow a specific covariance structure rather than being independent and identically distributed. 
This approach is useful when the residuals are correlated or exhibit some form of structure that can't be captured by simple uncorrelated noise.
These models are often used in cases like repeated measures, longitudinal data, and hierarchical structures, 
where the observations within a group or over time might be more similar to each other than to observations from other groups or time points.
An example are linear mixed effects models, which explicitly account for both fixed effects (predictors whose effects are the same across all units) and random effects (predictors whose effects vary across groups or subjects). 
In these models, the random effects together with the residuals yields a specific covariance structure depending 
on a vector of unknown covariance parameters.

Maximum likelihood estimation has been studied by Hartley and Rao~\cite{hartley&rao1967},
Rao~\cite{rao1972}, Laird and Ware~\cite{laird&ware1982},
see also Fitzmaurice \emph{et al}~\cite{fitzmaurice-laird-ware2011} and Demidenko~\cite{demidenko2013}.
To be resistant against outliers, robust methods have been investigated for linear mixed effects models
by Pinheiro \emph{et al}~\cite{pinheiro-liu-wu2001},
Copt~\cite{copt2006high},
Copt and Heritier~\cite{copt&heritier2007},
Heritier \emph{et al}~\cite{heritier-cantoni-copt-victoriafeser2009},
Agostinelli and Yohai~\cite{agostinelli2016composite},
and Chervoneva and Vishnyakov~\cite{chervoneva2014},
or for more general linear models with a structured covariance by Lopuha\"a \emph{et al}~\cite{lopuhaa-gares-ruizgazen2023}.
This often concerns S-estimators originally proposed by Rousseeuw and Yohai~\cite{rousseeuw-yohai1984}
for the multiple linear regression model.
These estimators have been extended to several multivariate statistical models
and can be viewed as smooth versions of the minimum volume ellipsoid estimator,
introduced by Rousseeuw~\cite{rousseeuw1985}, 
that are highly resistant against outliers.
However, one drawback of S-estimators is that they suffer from a low efficiency.

Some extensions have been proposed that inherit the robustness of the S-estimator,
but at the same time improve the efficiency.
Among them are the MM-estimators introduced by Yohai~\cite{yohai1987} for the multiple linear regression model.
The idea is to estimate the scale by means of a robust M-estimator, and then estimate
the regression parameter using a regression M-estimator with a different loss function
that yields better efficiency.
This idea has been extended in different ways to multivariate statistical models.
Lopuha\"a~\cite{lopuhaa1992highly} proposed a version for multivariate location,
Copt and Heritier~\cite{copt&heritier2007} used the same approach to estimate the fixed effects 
in a linear mixed effects model, and a similar method has been studied in Lopuha\"a~\cite{lopuhaa2023}
for more general linear models with a structured covariance.
All these proposals use a robust estimator of the entire scatter matrix in the first step
and only allow efficiency improvement of the location or regression estimator.
Tatsuoka and Tyler~\cite{tatsuoka&tyler2000} introduced a more extensive version of 
multivariate MM-estimators for multivariate location and scatter, 
being members of a broad class of multivariate M-estimators with auxiliary scale.
Their proposal only uses a robust M-estimator of the scale of the scatter matrix in the first step
and allows efficiency improvement of both the location estimator as well as the estimator
of the shape of the scatter matrix.
For this reason, this version of multivariate MM-estimators with auxiliary scale
is particularly useful for applications that require estimation of a covariance matrix.

The theory for these estimators is fairly limited.
Kudraszow and Maronna~\cite{kudraszow-maronna2011} study
MM-estimators with auxiliary scale for multivariate linear regression, 
but no rigorous results are derived for the covariance MM-estimator.
Tatsuoka and Tyler~\cite{tatsuoka&tyler2000} study existence of the corresponding MM-functionals,
but no attention is paid to the limiting behavior of the MM-estimators themselves.
As a basis for a robust PCA method, Salibi\'an-Barrera \emph{et al}~\cite{SalibianBarrera-VanAelst-Willems2006}
use covariance MM-estimators and discusses their limiting behavior, but a rigorous derivation is missing.

In view of this, we provide a unified approach to MM-estimation with auxiliary scale in balanced linear models with structured covariance matrices.
The balanced setup is already quite flexible and includes several specific multivariate statistical models.
Of main interest are MM-estimators for linear mixed effects models, but our approach also includes
MM-estimators in several other standard multivariate models, such as 
multivariate linear regression, and multivariate location and scatter.
We provide sufficient conditions for the existence of MM-functionals and MM-estimators,
establish their asymptotic properties, such as consistency and asymptotic normality,
and derive their robustness properties in terms of breakdown point and influence function.
All results are obtained for a large class of identifiable covariance structures, and are established under very mild conditions
on the distribution of the observations, which goes far beyond models with elliptically contoured densities.

The paper is organized as follows.
In Section~\ref{sec:structured covariance model}, we explain the model in detail and
provide some examples of standard multivariate models that are included in our setup.
In Section~\ref{sec:definitions} we define the MM-estimator and MM-functional
and in Section~\ref{sec:existence} we give conditions under which they exist.
In Section~\ref{sec:continuity} we establish continuity of the MM-functional, which is then used to
obtain consistency of the MM-estimator.
Section~\ref{sec:bdp} deals with the breakdown point.
Section~\ref{sec:equations} provides the preparation for Sections~\ref{sec:IF}
and~\ref{sec:asymp norm} in which we obtain the influence function and establish asymptotic normality.
Our results lead to single scalar indices for the asymptotic efficiency and the gross-error-sensitivity of
standardized components of the MM-estimators of the variance parameters.
In Section~\ref{sec:application} we investigate the interplay between these two scalars at the multivariate
normal and Student distributions.
All proofs are available as supplemental material~\cite{supplement2025}.

\section{Balanced linear models with structured covariances}
\label{sec:structured covariance model}
We consider independent observations $(\by_1,\bX_1),\ldots,(\by_n,\bX_n)$,
for which we assume the following model
\begin{equation}
\label{def:model}
\by_i
=
\bX_i\bbeta+\bu_i,
\quad
i=1,\ldots,n,
\end{equation}
where $\by_i\in\R^{k}$ contains repeated measurements for the $i$-th subject,
$\bbeta\in\R^q$ is an unknown parameter vector,
$\bX_i\in\R^{k\times q}$ is a known design matrix, and
the $\mathbf{u}_i\in\R^{k}$ are unobservable independent mean zero random vectors with
covariance matrix $\bV\in\text{PDS}(k)$,
the class of positive definite symmetric $k\times k$ matrices.
The model is balanced in the sense that all~$\mathbf{y}_i$ have the same dimension.
Furthermore, we consider a structured covariance matrix, that is,
the matrix $\bV=\bV(\btheta)$ is a known function of unknown covariance parameters combined in a vector 
$\btheta\in\mathbf{\bTheta}\subset\R^l$.
We first discuss some examples that are covered by this setup
in the context of MM-estimators.

\begin{example}
\label{ex:LME model}
An important case of interest is the (balanced) linear mixed effects model
$\by_i=\bX_i\bbeta+\bZ\bgamma_i+\beps_i$, for
$i=1,\ldots,n$.
This model arises from $\mathbf{u}_i=\mathbf{Z}\bgamma_i+\beps_i$,
for $i=1,\ldots,n$,
where $\mathbf{Z}\in\R^{k\times g}$ is known and $\bgamma_i\in\R^g$ and~$\beps_i\in\R^k$ are independent mean zero random variables, with
unknown covariance matrices~$\mathbf{G}$ and $\bR$, respectively.
In this case, $\bV(\btheta)=\bZ\bG\bZ^T+\bR$ and
$\btheta=(\vch(\bG)^T,\vch(\bR)^T)^T$,
where
\begin{equation}
\label{def:vech}
\vch(\bA)=(a_{11},\ldots,a_{k1},a_{22},\ldots,a_{kk})
\end{equation}
is the unique $k(k+1)/2$-vector that stacks the columns of the lower triangle elements of a symmetric matrix $\bA$.
In full generality, the model is usually overparametrized and one may run into identifiability problems.
A more feasible example is obtained by taking $\bR=\sigma_0^2\bI_k$,
$\bZ=\left[\bZ_1 \,\cdots\,\bZ_r\right]$ and
$\bgamma_i=(\bgamma_{i1}^T,\ldots,\bgamma_{ir}^T)^T$,
where the $\bZ_j$'s are known $k\times g_j$ design matrices and
the $\bgamma_{ij}\in\R^{g_j}$ are independent mean zero random variables with covariance matrix $\sigma_j^2\bI_{g_j}$,
for $j=1,\ldots,r$.
This leads to
\begin{equation}
\label{def:linear mixed effects model Copt}
\by_i=\bX_i\bbeta+\sum_{j=1}^r \bZ_j\bgamma_{ij}+\beps_i,
\quad
i=1,\ldots,n,
\end{equation}
which was considered in Copt and Heritier~\cite{copt&heritier2007}.
In this case, $\bV(\btheta)=\sigma_0^2\bI_k+\sum_{j=1}^r\sigma_j^2\bZ_j\bZ_j^T$ 
and~$\btheta=(\sigma_0^2,\sigma_1^2,\ldots,\sigma_r^2)$.
\end{example}

\begin{example}
\label{ex:multivariate linear regression}
Another example of~\eqref{def:model} is the multivariate linear regression model
\begin{equation}
\label{def:multivariate linear regression model}
\by_i=\bB^T\bx_i+\bu_i,
\qquad
i=1,\ldots,n,
\end{equation}
considered in Kudraszow and Maronna~\cite{kudraszow-maronna2011}, where $\bB\in\R^{q\times k}$ is a matrix of unknown parameters, 
$\bx_i\in\R^q$ is known,
and~$\mathbf{u}_i$, for $i=1,\ldots,n$, are independent mean zero random variables with
covariance matrix~$\bV(\btheta)=\bSigma\in\text{PDS}(k)$.
In this case, the vector of unknown covariance parameters is given  by
$\btheta=\vch(\bSigma)$, where $\vch(\cdot)$ is defined in~\eqref{def:vech}.
The model can be obtained as a special case of~\eqref{def:model}, by taking
$\bX_i=\bx_i^T\otimes \bI_k$ and $\bbeta=\vc(\bB^T)$, where~$\vc(\cdot)$ is the~$k^2$-vector that stacks the columns of a matrix.
Clearly, the multiple linear regression model considered in Yohai~\cite{yohai1987} is a special case 
of~\eqref{def:multivariate linear regression model} with $k=1$.
\end{example}

\begin{example}
\label{ex:multivariate location-scatter}
Also the multivariate location-scatter model, as considered in Lopuha\"a~\cite{lopuhaa1992highly},
Tatsuoka and Tyler~\cite{tatsuoka&tyler2000}, and
Salibi\'an-Barrera \textit{et al}~\cite{SalibianBarrera-VanAelst-Willems2006},
can be obtained as a special case of~\eqref{def:model},
by taking $\bX_i=\bI_k$, the $k\times k$ identity matrix.
In this case, $\bbeta\in\R^k$ is the unknown location parameter and covariance matrix
$\bV(\btheta)=\bSigma\in\text{PDS}(k)$, with $\btheta=\vch(\bSigma)$.
Note that this model can also be obtained as a special case of~\eqref{def:multivariate linear regression model}
by taking $\bx_i=1$ and~$\bB^T=\bbeta$.
This means that results in Kudraszow and Maronna~\cite{kudraszow-maronna2011} for model~\eqref{def:multivariate linear regression model}
also apply to the multivariate location-scatter model.
\end{example}

\begin{example}
\label{ex:time series}
Model~\eqref{def:model} also includes examples, for which $\bu_1,\ldots,\bu_n$ are generated by a time series.
An example is the case where $\bu_i$ has a covariance matrix with elements
$v_{st}=\sigma^2\rho^{|s-t|}$,
for $s,t=1,\ldots,n$.
This arises when the $\bu_i$'s are generated by an autoregressive process of order one.
The vector of unknown covariance parameters is $\btheta=(\sigma^2,\rho)\in(0,\infty)\times(-1,1)$.
A general stationary process leads to
$v_{st}=\theta_{|s-t|+1}$,
for $s,t=1,\ldots,n$,
in which case $\btheta=(\theta_1,\ldots,\theta_k)^T\in\R^k$,
where $\theta_{|s-t|+1}$ represents the autocovariance over lag~$|s-t|$.
\end{example}

Throughout the manuscript we will assume that the parameter $\btheta$ is identifiable in the sense that,
$\bV(\btheta_1)=\bV(\btheta_2)$ implies $\btheta_1=\btheta_2$.
This is true for all models in
Examples~\ref{ex:multivariate linear regression},
\ref{ex:multivariate location-scatter}, and~\ref{ex:time series}.
This may not be true in general for the linear mixed effects model
in Example~\ref{ex:LME model} with unknown $\vch(\bG)$ and $\vch(\bR)$.
For linear mixed effects models
in~\eqref{def:linear mixed effects model Copt},
identifiability of~$\btheta=(\sigma_0^2,\sigma_1^2,\ldots,\sigma_r^2)$
holds for particular choices of the design matrices
$\bZ_1,\ldots,\bZ_r$.

\section{Definitions}
\label{sec:definitions}
We start by representing our observations as points in $\R^k\times\R^{kq}$  in the following way.
For $r=1,\ldots, k$, let $\bx_r^T$ denote the $r$-th row of the $k\times q$ matrix $\bX$,
so that~$\bx_r\in\R^q$.
We represent the pair $\mathbf{s}=(\mathbf{y},\bX)$ as an element in $\R^k\times\R^{kq}$ defined 
by~$\bs^T=(\by^T,  \bx_{1}^T,\ldots,  \bx_{k}^T)$.
In this way our observations can be represented as $\bs_1,\ldots,\bs_n$, with $\bs_i=(\by_i,\bX_i)\in\R^k\times\R^{kq}$.

Similar to MM-estimators for multiple linear regression introduced by Yohai~\cite{yohai1987}, 
MM-estimators for $(\bbeta,\btheta)$ are based on two loss functions.
We require the following conditions for a loss function~$\rho$:
\begin{itemize}
\item[(R1)]
$\rho$ is symmetric around zero with $\rho(0)=0$ and $\rho$ is continuous at zero;
\item[(R2)]
There exists a finite constant $c>0$, such that $\rho$ is non-decreasing on $[0,c]$ and constant on $[c,\infty)$;
\item[(R3)]
$\rho$ is continuous and strictly increasing on $[0,c]$.
\end{itemize}
In comparison with other proposals for MM-estimators,
conditions (R1)-(R3) imply condition (A1) in Yohai~\cite{yohai1987}
and Definition~2 in Kudraszow and Maronna~\cite{kudraszow-maronna2011}.
The conditions are similar to the ones in Tatsuoka and Tyler~\cite{tatsuoka&tyler2000}
and the ones in Salibi\'an-Barrera \textit{et al}~\cite{SalibianBarrera-VanAelst-Willems2006}.
\begin{itemize}
\item[STAGE 1:]
Let $\bbeta_{0,n}$ and $\btheta_{0,n}$ be initial (high breakdown) estimators for $\bbeta$ and $\btheta$,
and consider the shape estimator $\bGamma(\btheta_{0,n})$, where for $\btheta\in\bTheta$,
\begin{equation}
\label{def:Gamma}
\bGamma(\btheta)
=
\frac{\bV(\btheta)}{|\bV(\btheta)|^{1/k}},
\end{equation}
where $|\bA|$ denotes the determinant of $\bA$.
\item[STAGE 2:]
Let $\rho_0$ satisfy (R1)-(R3) and
determine $\sigma_n$ by solving $\sigma$ from
\begin{equation}
\label{def:initial estimators}
\frac{1}{n}
\sum_{i=1}^n
\rho_0
\left(
\frac{\displaystyle\sqrt{(\by_i-\bX_i\bbeta_{0,n})^T
\bGamma(\btheta_{0,n})^{-1}
(\by_i-\bX_i\bbeta_{0,n})}}{\sigma}
\right)
=
b_0,
\end{equation}
where $0<b_0<\sup\rho_0$.
\item[STAGE 3:]
Let $\rho_1$ satisfy (R1)-(R3) and is such that
\begin{equation}
\label{eq:ineq rho functions}
\frac{\rho_1}{\sup\rho_1}\leq \frac{\rho_0}{\sup\rho_0}.
\end{equation}
For $(\bbeta,\bC)\in\R^q\times\text{PDS}(k)$, define
\begin{equation}
\label{def:Rn}
R_{n}(\bbeta,\bC)
=
\frac1n
\sum_{i=1}^{n}
\rho_1\left(
\frac{\displaystyle\sqrt{(\by_i-\bX_i\bbeta)^T\bC^{-1}(\by_i-\bX_i\bbeta)}}{\sigma_n}
\right),
\end{equation}
and let
$\mathfrak{D}
=
\left\{
(\bbeta,\bgamma)\in\R^q\times\bTheta:\bV(\bgamma)\in\text{PDS}(k)\text{ with }|\bV(\gamma)|=1
\right\}$.
Let $(\bbeta_{1,n},\bgamma_n)\in \mathfrak{D}$ be any local minimum of
$R_{n}(\bbeta,\bV(\bgamma))$
that satisfies
\begin{equation}
\label{eq:ineq Rn}
R_{n}(\bbeta,\bV(\bgamma))
\leq
R_{n}(\bbeta_{0,n},\bGamma(\btheta_{0,n})),
\end{equation}
where $\bGamma$ is defined in~\eqref{def:Gamma}.
Update the covariance estimator by $\bV_{1,n}=\sigma_n^2\bV(\bgamma_n)$
and update the estimator~$\btheta_{1,n}$ for the vector of covariance parameters as the solution of
\begin{equation}
\label{eq:update theta}
\bV(\btheta)=\sigma_n^2\bV(\bgamma_n).
\end{equation}
\end{itemize}
The idea is to choose estimators $\bbeta_{0,n}$ and $\btheta_{0,n}$ with high breakdown point
and to choose loss function~$\rho_0$ suitably, so that $\sigma_n$ will also have high breakdown point.
Estimators~$\bbeta_{1,n}$ and~$\btheta_{1,n}$ will be shown to inherit this high breakdown point, 
but at the same time the regression estimator~$\bbeta_{1,n}$ as well as the estimator of shape $\bV_{1,n}/|\bV_{1,n}|^{1/k}$
and the estimator of direction~$\btheta_{1,n}/\|\btheta_{1,n}\|$ will also have high efficiency relative to the least squares estimators
by suitable choice of loss function~$\rho_1$.
We will show that the absolute minimum of $R_n(\bbeta,\bV(\bgamma))$ with $|\bV(\bgamma)|=1$ exists.
Clearly, this absolute minimum satisfies~\eqref{eq:ineq Rn}.
However, any local minimum satisfying~\eqref{eq:ineq Rn}, will also be an MM-estimator
with high breakdown point and high efficiency.

Examples of loss functions satisfying~\eqref{eq:ineq rho functions} can be constructed from 
Tukey's bi-weight, defined as
\begin{equation}
\label{def:biweight}
\rho_{\mathrm{B}}(s;c)
=
\begin{cases}
s^2/2-s^4/(2c^2)+s^6/(6c^4), & |s|\leq c\\
c^2/6 & |s|>c.
\end{cases}
\end{equation}
The functions $\rho_0(s)=\rho_{\mathrm{B}}(s;c_0)$ and $\rho_1(s)=\rho_{\mathrm{B}}(s;c_1)$, for $0<c_0\leq c_1<\infty$,
satisfy (R1)-(R3) as well as~\eqref{eq:ineq rho functions}.
Examples of $\bbeta_{0,n}$ and $\btheta_{0,n}$ with high breakdown point are the S-estimators 
discussed in Lopuha\"a \textit{et al}~\cite{lopuhaa-gares-ruizgazen2023} defined with $\rho_0(s)=\rho_{\mathrm{B}}(s;c_0)$.
Small values of the cut-off constant $c_0$ will then correspond to a high breakdown point.

This definition of MM-estimators for the linear mixed effects model differs from the ones in Lopuha\"a~\cite{lopuhaa2023} and
Copt and Heritier~\cite{copt&heritier2007}, where 
the entire initial covariance matrix is used as initial estimator,
which then involves minimization of~$R_n$ over $\bbeta$ only.
The current definition only uses the univariate estimator $\sigma_n$ for the scale parameter 
$|\bV(\btheta)|^{1/(2k)}$ as an auxiliary statistic.
The advantage is that this version of the MM-estimator allows for improvement of the efficiency of both the regression estimator 
as well as the estimator of the shape component of $\bV(\btheta)$ and the estimator of the direction component of $\btheta$.

The corresponding MM-functionals are defined similarly.
\begin{itemize}
\item[STAGE 1:]
Let $\bbeta_0(P)$ and $\btheta_0(P)$ be initial functionals and consider the shape functional~$\bGamma(\btheta_0(P))$, where $\bGamma$ is defined in~\eqref{def:Gamma}.
\item[STAGE 2:]
Let $\rho_0$ satisfy (R1)-(R3) and
determine $\sigma(P)$ by solving $\sigma$ from
\begin{equation}
\label{def:sigma}
\int
\rho_0
\left(
\frac{\displaystyle\sqrt{(\by-\bX\bbeta_0(P))^T\bGamma(\btheta_0(P))^{-1}(\by-\bX\bbeta_0(P))}}{\sigma}
\right)
\,
\text{d}P(\bs)
=
b_0,
\end{equation}
where $0<b_0<\sup\rho_0$.
\item[STAGE 3:]
Let $\rho_1$ satisfy (R1)-(R3) and is such that~\eqref{eq:ineq rho functions} holds.
For $(\bbeta,\bC)\in\R^q\times\text{PDS}(k)$, define
\begin{equation}
\label{def:R_P}
R_P(\bbeta,\bC)
=
\int
\rho_1\left(
\frac{\displaystyle\sqrt{(\by-\bX\bbeta)^T\bC^{-1}(\by-\bX\bbeta)}}{\sigma(P)}
\right)
\text{d}P(\bs).
\end{equation}
Let $(\bbeta_1(P),\bgamma(P))\in \mathfrak{D}$ be any local minimum of
$R_P(\bbeta,\bV(\bgamma))$
that satisfies
\begin{equation}
\label{eq:ineq RP}
R_{P}(\bbeta,\bV(\bgamma))
\leq
R_{P}(\bbeta_0(P),\bGamma(\btheta_0(P))),
\end{equation}
where $\bGamma$ is defined in~\eqref{def:Gamma}.
Update the covariance functional by
$\bV_1(P)=\sigma^2(P)\bV(\bgamma(P))$
and update the functional~$\btheta_1(P)$ for the vector of covariance parameters as the solution of
\begin{equation}
\label{def:theta1}
\bV(\btheta)=\sigma^2(P)\bV(\bgamma(P)).
\end{equation}
\end{itemize}

Let $\mathbb{P}_n$ be the empirical measure corresponding to observations $(\by_1,\bX_1),\ldots,(\by_n,\bX_n)$.
We assume that the initial functionals $\bbeta_0(\cdot)$ and $\btheta_0(\cdot)$ are such that
\begin{equation}
\label{eq:cond beta0 theta0}
(\bbeta_0(\mathbb{P}_n),\btheta_0(\mathbb{P}_n))=(\bbeta_{0,n},\btheta_{0,n}),
\end{equation}
where $(\bbeta_{0,n},\btheta_{0,n})$ are the initial estimators for $\bbeta$ and $\btheta$.
Examples for which~\eqref{eq:cond beta0 theta0} holds, are the S-functionals discussed in 
Lopuha\"a \textit{et al}~\cite{lopuhaa-gares-ruizgazen2023} defined with loss function $\rho_0$.
If~\eqref{eq:cond beta0 theta0} holds, then 
$\sigma_n=\sigma(\mathbb{P}_n)$, $(\bbeta_{1,n},\bgamma_n)=(\bbeta_1(\mathbb{P}_n),\bgamma(\mathbb{P}_n))$, 
and $\btheta_{1,n}=\btheta_1(\mathbb{P}_n)$.

The definition of MM-estimators and corresponding functionals in our current setup 
includes several special cases that are already available in the literature.
For the multivariate location and scatter model of Example~\ref{ex:multivariate location-scatter},
our MM-functionals $\bbeta_1(P)$ and $\sigma^2(P)\bV(\bgamma(P))$
coincide with the multivariate location and scatter M-functionals 
with auxiliary scale $\sigma(P)$, as discussed in Tatsuoka and Tyler~\cite{tatsuoka&tyler2000}.
When, in addition, $\bbeta_{0,n}$ and $\bC_{0,n}=\bV(\btheta_{0,n})$ are the S-estimators for location and scatter defined by means of~$\rho_0$,
then our MM-estimators~$\bbeta_{1,n}$ and~$\sigma_n^2\bV(\bgamma_n)$ coincide with the MM-estimators for location and scatter considered in Salibi\'an-Barrera \textit{et al}~\cite{SalibianBarrera-VanAelst-Willems2006}.
For the multivariate linear regression model of Example~\ref{ex:multivariate linear regression},
our MM-estimators~$\bbeta_{1,n}$ and~$\sigma_n^2\bV(\bgamma_n)$ coincide 
with the ones for multivariate linear regression in Kudraszow and Maronna~\cite{kudraszow-maronna2011}.
If, in addition $k=1$, our regression MM-estimator coincides with the one for multiple linear regression, as introduced in Yohai~\cite{yohai1987}.
Our MM-functionals then coincide with the M-functional with general scale $\sigma(P)$,
as treated in Martin \textit{et al}~\cite{martin-yohai-zamar1989}. 

\section{Existence}
\label{sec:existence}
We will first establish existence of the functionals $\sigma(P)$, $\bbeta_1(P)$, $\bgamma(P)$, and $\btheta_1(P)$,
under particular conditions on the probability measure $P$.
As a consequence, this will also yield the existence of the estimators $\sigma_n$, $\bbeta_{1,n}$, $\bgamma_n$, and $\btheta_{1,n}$.
Recall that the observations are represented as points $(\by_i,\bX_i)$ in $\R^k\times\R^{kq}$.
Note however, that for linear models with intercept the first column of each $\bX_i$ consists of 1's.
This means that the points~$(\by_i,\bX_i)$ are concentrated in a lower dimensional subset of $\R^k\times\R^{kq}$.
A similar situation occurs when all $\bX_i$ are equal to the same design matrix, such as in~Copt and Heritier~\cite{copt&heritier2007}.
In view of this, define $\mathcal{X}\subset\R^{kq}$ as the subset with the lowest dimension
$p=\text{dim}(\mathcal{X})\leq kq$ satisfying
\begin{equation}
\label{def:Xspace}
P(\bX\in \mathcal{X})=1.
\end{equation}
Hence, $P$ is then concentrated on the subset $\R^k\times \mathcal{X}$ of $\R^k\times\R^{kq}$, which
is of dimension~$k+p$, which may be smaller than $k+kq$.

The first condition that we require, expresses the fact that $P$
cannot have too much mass at infinity, in relation to the ratio $r_0=b_0/\sup\rho_0$.
\begin{itemize}
\item[$(\mathrm{C1}_\epsilon)$]
There exists a compact set $K_\epsilon\subset\R^k\times \mathcal{X}$,
such that $P(K_\epsilon)\geq r_0+\epsilon$.
\end{itemize}
The second condition requires that $P$ cannot have too much mass at arbitrarily thin strips in~$\R^k\times \mathcal{X}$.
For $\balpha\in\R^{k+kq}$, such that $\|\balpha\|=1$, $\ell\in\R$, and $\delta\geq0$, we define a strip
$H(\balpha,\ell,\delta)$ as follows:
\begin{equation}
\label{def:strip}
H(\balpha,\ell,\delta)
=
\left\{
\mathbf{s}\in\R^k\times\R^{kq}: \ell-\delta/2\leq \balpha^T\mathbf{s}\leq \ell+\delta/2
\right\}.
\end{equation}
Defined in this way, a strip is the area between two parallel hyperplanes
which are symmetric around the hyperplane $H(\balpha,\ell,0)
=
\left\{
\mathbf{s}\in\R^k\times\R^{kq}: \balpha^T\mathbf{s}=\ell
\right\}$.
Since the distance between two parallel hyperplanes
$\balpha^T\mathbf{s}=\ell_1$ and $\balpha^T\mathbf{s}=\ell_2$ is $|\ell_1-\ell_2|$,
the strip $H(\balpha,\ell,\delta)$ defined in~\eqref{def:strip} has width~$\delta$.
We require the following condition.
\begin{itemize}
\item[$(\mathrm{C2}_\epsilon)$]
The value
$
\delta_\epsilon
=
\inf
\left\{
\delta:
P\left(H(\balpha,\ell,\delta)
\right)\geq \epsilon,
\balpha\in\R^{k+kq},\|\balpha\|=1,\ell\in\R,\delta\geq 0
\right\}$
\newline
is strictly positive.
\end{itemize}
According to~\eqref{def:Xspace},  in $(\mathrm{C2}_\epsilon)$ one only needs to consider strips in $\R^k\times \mathcal{X}$.

Both conditions are satisfied for any $0<\epsilon\leq 1-r_0$ by any probability measure $P$
that is absolutely continuous.
Clearly, condition~$(\mathrm{C1}_\epsilon)$ holds for any $0\leq\epsilon\leq 1-r_0$ for
the empirical measure~$\mathbb{P}_n$ corresponding to a collection of $n$ points
$\mathcal{S}_n=\{\mathbf{s}_1,\ldots,\mathbf{s}_n\}\subset\R^k\times \mathcal{X}$.
Condition~$(\mathrm{C2}_\epsilon)$ with $\epsilon=(k+p+1)/n$
is also satisfied by the empirical measure $\mathbb{P}_n$, when the collection~$\mathcal{S}_n$ is in \emph{general position}, i.e.,
no subset $J\subset \mathcal{S}_n$ of $k+p+1$ points is contained in the same hyperplane
in~$\R^k\times \mathcal{X}$.
Conditions $(\mathrm{C1}_\epsilon)$ and~$(\mathrm{C2}_\epsilon)$ are the same as in
Lopuha\"a \textit{et al}~\cite{lopuhaa-gares-ruizgazen2023} and they
are similar to condition~$(\mathrm{C}_\epsilon)$
in Lopuha\"a~\cite{lopuhaa1989}.
The reason that $(\mathrm{C1}_\epsilon)$ slightly deviates from Lopuha\"a~\cite{lopuhaa1989}, is to handle
the presence of $\bX$ in minimizing~\eqref{eq:ineq RP}.

To establish existence of $\sigma(P)$ we follow the reasoning in Yohai~\cite{yohai1987}.
We require the following condition.
\begin{itemize}
\item[(C0)]
For 
$
E_0
=
\left\{
(\by,\bX)\in\R^k\times\R^{kq}:
\|\by-\bX\bbeta_0(P)\|=0
\right\}$,
it holds $P(E_0)<1-b_0/\sup\rho_0$.
\end{itemize}
We then have the following lemma.
\begin{lemma}
\label{lem:existence sigma}
Let $\rho_0$ satisfy (R1)-(R3) and let $(\bbeta_0(P),\btheta_0(P))\in\R^{q}\times\bTheta$ be the pair of initial functionals at $P$,
such that~(C0) holds.
Then a solution $\sigma(P)>0$ to~\eqref{def:sigma} exists and is unique.
\end{lemma}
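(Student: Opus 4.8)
The plan is to study the single real function
$g(\sigma):=\int\rho_0\bigl(d(\bs)/\sigma\bigr)\,\dd P(\bs)$, $\sigma\in(0,\infty)$, where
$d(\bs):=\sqrt{(\by-\bX\bbeta_0(P))^T\bGamma(\btheta_0(P))^{-1}(\by-\bX\bbeta_0(P))}$,
and to show that $g$ attains the value $b_0$ at exactly one point. First I would record the elementary structural facts: $d$ is a finite, nonnegative, Borel-measurable function on $\R^k\times\R^{kq}$, and $d(\bs)=0$ if and only if $\bs\in E_0$, since $\bGamma(\btheta_0(P))^{-1}\in\text{PDS}(k)$; moreover (R1)--(R3) together imply that $\rho_0$ is continuous and non-decreasing on $[0,\infty)$ with $\rho_0(0)=0$ and $\sup\rho_0=\rho_0(c)<\infty$, attained precisely on $[c,\infty)$. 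Hence $0\le\rho_0(d(\bs)/\sigma)\le\sup\rho_0$ for all $\bs,\sigma$, and for each fixed $\bs$ the map $\sigma\mapsto\rho_0(d(\bs)/\sigma)$ is continuous and non-increasing; dominated convergence then gives that $g$ is continuous and non-increasing on $(0,\infty)$.

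Next I would compute the two boundary limits. As $\sigma\to\infty$, $d(\bs)/\sigma\to0$, so $\rho_0(d(\bs)/\sigma)\to\rho_0(0)=0$ for every $\bs$, and dominated convergence yields $g(\sigma)\to0<b_0$. As $\sigma\downarrow0$, for $\bs\notin E_0$ we have $d(\bs)/\sigma\to\infty$, so $\rho_0(d(\bs)/\sigma)=\sup\rho_0$ for all sufficiently small $\sigma$, whereas for $\bs\in E_0$ the integrand is identically $0$; dominated convergence now gives $g(\sigma)\to\sup\rho_0\,(1-P(E_0))$, and this limit is strictly larger than $b_0$ by (C0), which states exactly that $P(E_0)<1-b_0/\sup\rho_0$. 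Choosing $\sigma$ small enough that $g(\sigma)>b_0$ and $\sigma$ large enough that $g(\sigma)<b_0$, the intermediate value theorem produces a solution $\sigma(P)\in(0,\infty)$ of~\eqref{def:sigma}.

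For uniqueness — the one step I expect to require some care — suppose $g(\sigma_1)=g(\sigma_2)=b_0$ with $0<\sigma_1<\sigma_2$. Since $\rho_0(d(\bs)/\sigma_1)\ge\rho_0(d(\bs)/\sigma_2)$ for every $\bs$, while the two integrals are both equal to $b_0$, we must have $\rho_0(d(\bs)/\sigma_1)=\rho_0(d(\bs)/\sigma_2)$ for $P$-almost every $\bs$. For $\bs\notin E_0$ the inequality $d(\bs)/\sigma_1>d(\bs)/\sigma_2$ is strict, and because $\rho_0$ is strictly increasing on $[0,c]$ and constant on $[c,\infty)$, equality of $\rho_0$ at two distinct arguments can only occur when the smaller argument is at least $c$; hence $d(\bs)/\sigma_2\ge c$ and $\rho_0(d(\bs)/\sigma_2)=\sup\rho_0$ for $P$-almost every $\bs\notin E_0$. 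Integrating gives $b_0=g(\sigma_2)=\sup\rho_0\,(1-P(E_0))$, i.e. $P(E_0)=1-b_0/\sup\rho_0$, which contradicts (C0). Therefore the solution of~\eqref{def:sigma} is unique. The same argument applied to the empirical measure $\mathbb{P}_n$ gives existence and uniqueness of $\sigma_n$ as a special case; the only genuinely nontrivial ingredient is the uniqueness step, existence being a routine dominated-convergence-plus-intermediate-value argument whose single use of (C0) is to keep the $\sigma\downarrow0$ limit of $g$ strictly above $b_0$.
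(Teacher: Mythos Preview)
Your proof is correct and follows essentially the same route as the paper: define the continuous, non-increasing function $g(\sigma)$, compute its limits at $0$ and $\infty$ via dominated convergence, invoke the intermediate value theorem for existence, and use strict monotonicity of $\rho_0$ on $[0,c]$ together with (C0) for uniqueness. The paper's proof is terser on the uniqueness step, simply asserting it follows from strict monotonicity and (C0), whereas you spell out the contradiction argument in full; both arguments are the same in substance.
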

To establish the existence of $(\bbeta_1(P),\bgamma(P))$, we follow the reasoning in Lopuha\"a \textit{et al}~\cite{lopuhaa-gares-ruizgazen2023}.
The idea is to argue that one can restrict oneself to a compact set for finding solutions to
minimizing~$R_P(\bbeta,\bV(\bgamma))$ subject to $|\bV(\bgamma)|=1$.
When $R_P(\bbeta,\bV(\bgamma))$ is continuous, this immediately yields the existence of a minimum.
To this end, we assume the following condition.
\begin{itemize}
\item[(V1)]
The mapping $\btheta\mapsto\bV(\btheta)$ is continuous.
\end{itemize}
To restrict oneself to $(\bbeta,\bgamma)$ in a compact set, we make use of Lemma~4.1
in Lopuha\"a \textit{et al}~\cite{lopuhaa-gares-ruizgazen2023}.
It requires that the identity is an element of
$\mathcal{V}=\{\bV(\btheta)\in\text{PDS}(k):\btheta\in\bTheta\subset\R^l\}$
and that $\mathcal{V}$ is closed under multiplication with a positive scalar.
\begin{itemize}
\item[(V2)]
There exists a $\btheta\in\bTheta\subset\R^l$, such that $\bV(\btheta)=\bI_k$.
For any $\bV(\btheta)\in \mathcal{V}$ and any $\alpha>0$, it holds that
$\alpha\bV(\btheta)=\bV(\btheta')$, for some $\btheta'\in\bTheta\subset\R^l$.
\end{itemize}
Conditions (V1)-(V2) are not very restrictive.
For example, all examples discussed in Section~\ref{sec:structured covariance model}
satisfy these conditions.
Also note that (V2) implies that~\eqref{def:theta1} has a solution $\btheta_1(P)$
and similarly for~\eqref{eq:update theta}.

Lemma~4.1 in Lopuha\"a \textit{et al}~\cite{lopuhaa-gares-ruizgazen2023} will ensure that there exists a compact set
in $\R^q\times\text{PDS}(k)$
that contains all pairs $(\bbeta,\bV(\bgamma))$ that correspond to possible minima
$(\bbeta,\bgamma)$ of~$R_P(\bbeta,\bV(\bgamma))$.
To establish that there also exists a compact set in $\mathfrak{D}$ that contains all 
possible minima $(\bbeta,\bgamma)$ of~$R_P(\bbeta,\bV(\bgamma))$,
we need that the pre-image
$\{\btheta\in\bTheta: \bV(\btheta)\in K\}$ of a compact set $K\subset\R^{k\times k}$ is again compact.
Recall that subsets of $\R^l$ are compact if and only if they are closed and bounded,
and note that the pre-image of a continuous mapping of a closed set is closed.
Hence, in view of condition (V1), it suffices to require the following condition.
\begin{itemize}
\item[(V3)]
The mapping $\btheta\mapsto \bV(\btheta)$ is such that the pre-image of a bounded set is bounded.
\end{itemize}
We then have the following theorem.
\begin{theorem}
\label{th:existence}
Let $\rho_0$ and $\rho_1$ satisfy~(R1)-(R2) and~\eqref{eq:ineq rho functions}.
Suppose $\rho_1$ is continuous and suppose that~$\bV$ satisfies (V1)-(V3).
Suppose $P$ satisfies $(\text{C1}_\epsilon)$ and $(\text{C2}_\epsilon)$, for some $0<\epsilon\leq 1-r_0$,
where $r_0=b_0/\sup\rho_0$. 
Let $(\bbeta_0(P),\btheta_0(P))\in\R^{q}\times\bTheta$ be the pair of initial functionals at $P$
and let $\sigma(P)$ be a solution to~\eqref{def:sigma}.
Then there exists a pair $(\bbeta_1(P),\bgamma(P))\in \mathfrak{D}$
that minimizes~$R_P(\bbeta,\bV(\bgamma))$
and a vector~$\btheta_1(P)\in\bTheta$ that is the unique solution of~\eqref{def:theta1}.
\end{theorem}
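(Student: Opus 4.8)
The plan is to follow the route sketched just before the statement: first reduce the minimization of $R_P(\bbeta,\bV(\bgamma))$ over $\mathfrak{D}$ to a minimization over a compact subset of $\mathfrak{D}$, then use continuity of $R_P$ to obtain the minimizer, and finally use (V2) together with identifiability to produce $\btheta_1(P)$. First I would record that $(\bbeta,\bC)\mapsto R_P(\bbeta,\bC)$ is continuous on $\R^q\times\text{PDS}(k)$: for each fixed $\bs=(\by,\bX)$ the integrand in~\eqref{def:R_P} is continuous in $(\bbeta,\bC)$ because $\bC\mapsto\bC^{-1}$ is continuous on $\text{PDS}(k)$ and $\rho_1$ is continuous, and it is bounded by $\sup\rho_1<\infty$ by (R2), so dominated convergence applies; composing with $(\bbeta,\bgamma)\mapsto(\bbeta,\bV(\bgamma))$, which is continuous by (V1), gives continuity of $(\bbeta,\bgamma)\mapsto R_P(\bbeta,\bV(\bgamma))$ on $\mathfrak{D}$.

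Next I would bound the objective at the initial value. Writing $d_0(\bs)=\sqrt{(\by-\bX\bbeta_0(P))^T\bGamma(\btheta_0(P))^{-1}(\by-\bX\bbeta_0(P))}$ and using the defining equation~\eqref{def:sigma} for $\sigma(P)$ together with~\eqref{eq:ineq rho functions}, which gives $\rho_1\le(\sup\rho_1/\sup\rho_0)\,\rho_0$ pointwise,
\[
R_P\bigl(\bbeta_0(P),\bGamma(\btheta_0(P))\bigr)
=\int\rho_1\!\left(\frac{d_0(\bs)}{\sigma(P)}\right)\dd P(\bs)
\le\frac{\sup\rho_1}{\sup\rho_0}\int\rho_0\!\left(\frac{d_0(\bs)}{\sigma(P)}\right)\dd P(\bs)
=r_0\sup\rho_1<\sup\rho_1,
\]
since $r_0=b_0/\sup\rho_0<1$. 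By (V2) there is $\bgamma_0\in\bTheta$ with $\bV(\bgamma_0)=\bGamma(\btheta_0(P))$, and $|\bGamma(\btheta_0(P))|=1$ shows $(\bbeta_0(P),\bgamma_0)\in\mathfrak{D}$, so $\inf_{\mathfrak{D}}R_P(\bbeta,\bV(\bgamma))\le r_0\sup\rho_1<\sup\rho_1$.

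The core step is the compactness reduction, for which I would invoke Lemma~4.1 of~\cite{lopuhaa-gares-ruizgazen2023}, whose hypotheses are (V2) together with $(\mathrm{C1}_\epsilon)$ and $(\mathrm{C2}_\epsilon)$: it yields a compact set $K\subset\R^q\times\text{PDS}(k)$ such that any $(\bbeta,\bC)$ with $|\bC|=1$ and $R_P(\bbeta,\bC)\le r_0\sup\rho_1$ lies in $K$. The mechanism is that if $\|\bbeta\|$ is large or the smallest eigenvalue of $\bC$ is small, then $(\by-\bX\bbeta)^T\bC^{-1}(\by-\bX\bbeta)$ is large off an arbitrarily thin strip, so by $(\mathrm{C2}_\epsilon)$ the value $R_P(\bbeta,\bC)$ is forced arbitrarily close to $\sup\rho_1$, while with $|\bC|=1$ a lower bound on the eigenvalues also bounds them above. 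Letting $K'$ be the (compact) projection of $K$ onto the $\text{PDS}(k)$-coordinate, which by $K'\subset\text{PDS}(k)$ has eigenvalues bounded away from $0$, the discussion preceding the theorem shows, via (V1) and (V3), that $\{\bgamma\in\bTheta:\bV(\bgamma)\in K'\}$ is compact; hence
\[
\widetilde K=\bigl\{(\bbeta,\bgamma)\in\mathfrak{D}:(\bbeta,\bV(\bgamma))\in K\bigr\}
\]
is a compact subset of $\mathfrak{D}$, and it is nonempty since $(\bbeta_0(P),\bgamma_0)\in\widetilde K$.

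To conclude, by continuity the function $(\bbeta,\bgamma)\mapsto R_P(\bbeta,\bV(\bgamma))$ attains a minimum on $\widetilde K$, say at $(\bbeta_1(P),\bgamma(P))$, and for $(\bbeta,\bgamma)\in\mathfrak{D}\setminus\widetilde K$ one has $(\bbeta,\bV(\bgamma))\notin K$, hence $R_P(\bbeta,\bV(\bgamma))>r_0\sup\rho_1\ge R_P(\bbeta_0(P),\bGamma(\btheta_0(P)))\ge R_P(\bbeta_1(P),\bV(\bgamma(P)))$, so $(\bbeta_1(P),\bgamma(P))$ minimizes $R_P$ over all of $\mathfrak{D}$ and in particular satisfies~\eqref{eq:ineq RP}. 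Finally, $\sigma^2(P)\bV(\bgamma(P))$ is a positive multiple of an element of $\mathcal V$, so by (V2) it equals $\bV(\btheta_1(P))$ for some $\btheta_1(P)\in\bTheta$, the required solution of~\eqref{def:theta1}, and its uniqueness follows at once from the standing identifiability assumption $\bV(\btheta_1)=\bV(\btheta_2)\Rightarrow\btheta_1=\btheta_2$. The step I expect to be the main obstacle is the compactness reduction: transplanting Lemma~4.1 of~\cite{lopuhaa-gares-ruizgazen2023} from the S-estimation setting to the present MM-setting (loss $\rho_1$ and the auxiliary scale $\sigma(P)$ fixed from Stage~2), and then passing compactness back from $\text{PDS}(k)$ to the parameter space $\bTheta$ via (V3).
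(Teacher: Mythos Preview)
Your proposal is correct and follows essentially the same route as the paper: bound $R_P$ at the initial value by $r_0\sup\rho_1$ via~\eqref{eq:ineq rho functions} and~\eqref{def:sigma}, translate this into a cylinder-probability lower bound and apply Lemma~4.1 of~\cite{lopuhaa-gares-ruizgazen2023} (the paper uses parts~(i) and~(iii) separately rather than as a single package) to confine $(\bbeta,\bV(\bgamma))$ to a compact set, pull back to $\mathfrak D$ via (V1)--(V3), use continuity of $R_P$ (dominated convergence) to obtain the minimizer, and conclude with (V2) plus identifiability for $\btheta_1(P)$. The only cosmetic differences are that you establish continuity up front and phrase the output of Lemma~4.1 as a single compact set, whereas the paper derives the eigenvalue and $\|\bbeta\|$ bounds in sequence.
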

Theorem~\ref{th:existence} has a direct corollary for the existence of the MM-estimators, when dealing with a collections of points.
Let $\mathcal{S}_n=\{\bs_1,\ldots,\bs_n\}$, with $\bs_i=(\by_i,\bX_i)$, be a collection of $n$ points
in~$\R^k\times \mathcal{X}$.
Define
\begin{equation}
\label{def:k(S)}
\kappa(\mathcal{S}_n)
=
\text{maximal number of points of $\mathcal{S}_n$ lying on the same hyperplane in~$\R^k\times \mathcal{X}$.}
\end{equation}
For example, if the distribution $P$ is absolutely continuous, then
$\kappa(\mathcal{S}_n)\leq k+p$ with probability one.
Existence of $\sigma_n$ can be obtained from Lemma~\ref{lem:existence sigma}.
Suppose that~\eqref{eq:cond beta0 theta0} holds and 
that~$\#\{i: 1\leq i\leq n,\, 
\|\by_i-\bX_i\bbeta_{0,n}\|=0
\}
<n(1-b_0/\sup\rho_0)$.
Then $\mathbb{P}_n$ satisfies condition~(C0), so that the solution $\sigma_n$ of~\eqref{def:initial estimators}
exists and is unique, according to Lemma~\ref{lem:existence sigma}.
We then have the following corollary.
\begin{corollary}
\label{cor:existence estimator structured}
Suppose that $\rho_0$, $\rho_1$, and $\bV$ satisfy the conditions of Theorem~\ref{th:existence}.
For a collection $\mathcal{S}_n=\{\bs_1,\ldots,\bs_n\}\subset\R^k\times \mathcal{X}$,
with $\bs_i=(\by_i,\bX_i)$, for $i=1,\ldots,n$, 
let~$(\bbeta_{0,n},\btheta_{0,n})\in\R^{q}\times\R^l$ be the pair of initial estimators
satisfying~\eqref{eq:cond beta0 theta0} and let $\sigma_n$ be a solution to~\eqref{def:initial estimators}.
If $\kappa(\mathcal{S}_n)+1\leq n(1-r_0)$, where $r_0=b_0/\sup\rho_0$,
then there exists a pair~$(\bbeta_{1,n},\bgamma_n)\in \mathfrak{D}$ that minimizes~$R_n(\bbeta,\bV(\bgamma))$
and a vector~$\btheta_{1,n}$ that is the unique solution of~\eqref{eq:update theta}.
\end{corollary}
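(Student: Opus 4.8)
The plan is to obtain the corollary by applying Theorem~\ref{th:existence} to the empirical measure $\mathbb{P}_n$ associated with $\mathcal{S}_n$, and then translating the resulting functionals into the corresponding estimators. Concretely, I would take $P=\mathbb{P}_n$ and $\epsilon=(\kappa(\mathcal{S}_n)+1)/n$. The hypothesis $\kappa(\mathcal{S}_n)+1\leq n(1-r_0)$ is exactly the statement that $0<\epsilon\leq 1-r_0$, so this $\epsilon$ is admissible for the theorem. It then remains to verify that $\mathbb{P}_n$ satisfies $(\mathrm{C1}_\epsilon)$ and $(\mathrm{C2}_\epsilon)$ for this $\epsilon$, and that the functional-level objects $\sigma(\mathbb{P}_n)$, $R_{\mathbb{P}_n}$, and equation~\eqref{def:theta1} coincide with their sample counterparts $\sigma_n$, $R_n$, and~\eqref{eq:update theta}.

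Checking $(\mathrm{C1}_\epsilon)$ is immediate: the finite set $\mathcal{S}_n\subset\R^k\times\mathcal{X}$ is compact with $\mathbb{P}_n(\mathcal{S}_n)=1\geq r_0+\epsilon$, so one may take $K_\epsilon=\mathcal{S}_n$. For $(\mathrm{C2}_\epsilon)$ I would show $\delta_\epsilon>0$ by the general-position argument indicated after the condition. Any strip $H(\balpha,\ell,\delta)$ with $\mathbb{P}_n(H(\balpha,\ell,\delta))\geq\epsilon=(\kappa(\mathcal{S}_n)+1)/n$ must contain at least $\kappa(\mathcal{S}_n)+1$ points of $\mathcal{S}_n$, hence a subset $J$ of exactly $\kappa(\mathcal{S}_n)+1$ of them; by the definition~\eqref{def:k(S)} of $\kappa(\mathcal{S}_n)$, such a $J$ is not contained in a hyperplane of $\R^k\times\mathcal{X}$. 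The minimal width $w(J)$ of a strip in $\R^k\times\mathcal{X}$ containing $J$, obtained by minimizing $\max_{\bs\in J}\balpha^T\bs-\min_{\bs\in J}\balpha^T\bs$ over the compact set $\{\|\balpha\|=1\}$ (within the relevant subspace), is attained and is strictly positive precisely because $J$ lies in no hyperplane. Since $\mathcal{S}_n$ has only finitely many subsets of cardinality $\kappa(\mathcal{S}_n)+1$, the width of every $\epsilon$-heavy strip is bounded below by $\min_J w(J)>0$, so $\delta_\epsilon>0$.

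Finally I would use~\eqref{eq:cond beta0 theta0} to identify the initial functionals at $\mathbb{P}_n$ with the initial estimators, $(\bbeta_0(\mathbb{P}_n),\btheta_0(\mathbb{P}_n))=(\bbeta_{0,n},\btheta_{0,n})$. Substituting $P=\mathbb{P}_n$ turns~\eqref{def:sigma} into~\eqref{def:initial estimators}, so the given $\sigma_n$ is a solution of~\eqref{def:sigma} at $\mathbb{P}_n$ in the sense required by Theorem~\ref{th:existence}; likewise $R_{\mathbb{P}_n}(\bbeta,\bC)=R_n(\bbeta,\bC)$, the constraint~\eqref{eq:ineq RP} becomes~\eqref{eq:ineq Rn}, and~\eqref{def:theta1} becomes~\eqref{eq:update theta}. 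Since $\rho_0,\rho_1,\bV$ satisfy the hypotheses of Theorem~\ref{th:existence} by assumption, all its hypotheses hold for $P=\mathbb{P}_n$, and the theorem supplies a pair $(\bbeta_1(\mathbb{P}_n),\bgamma(\mathbb{P}_n))\in\mathfrak{D}$ minimizing $R_{\mathbb{P}_n}(\bbeta,\bV(\bgamma))=R_n(\bbeta,\bV(\bgamma))$ together with the unique solution $\btheta_1(\mathbb{P}_n)$ of~\eqref{eq:update theta}; setting $(\bbeta_{1,n},\bgamma_n)=(\bbeta_1(\mathbb{P}_n),\bgamma(\mathbb{P}_n))$ and $\btheta_{1,n}=\btheta_1(\mathbb{P}_n)$ gives the claim. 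The only step calling for genuine care is the verification of $(\mathrm{C2}_\epsilon)$, i.e. the finiteness argument bounding $\delta_\epsilon$ away from zero; this is where the hypothesis $\kappa(\mathcal{S}_n)+1\leq n(1-r_0)$ does double duty, making $\epsilon$ admissible and forcing every $\epsilon$-heavy strip to have positive width. The remainder is bookkeeping matching sample quantities to their population analogues.
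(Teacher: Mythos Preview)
Your proposal is correct and follows essentially the same route as the paper: apply Theorem~\ref{th:existence} to $P=\mathbb{P}_n$ with $\epsilon=(\kappa(\mathcal{S}_n)+1)/n$, verify $(\mathrm{C1}_\epsilon)$ and $(\mathrm{C2}_\epsilon)$, and use~\eqref{eq:cond beta0 theta0} to match the functionals to the estimators. The paper simply asserts that $\mathbb{P}_n$ satisfies the two conditions (having motivated this in the discussion after their statement), whereas you spell out the $(\mathrm{C2}_\epsilon)$ verification in detail; otherwise the arguments are the same.
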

For the multivariate linear regression model of Example~\ref{ex:multivariate linear regression},
Kudraszow and Maronna~\cite{kudraszow-maronna2011} prove existence of $\bbeta_{1,n}=\vc(\bB_{1,n}^T)$
and $\bV(\bgamma_n)$, assuming $\kappa(\mathcal{S}_n)<n/2$.
Hence, their Theorem~1 follows from our Corollary~\ref{cor:existence estimator structured},
as long as $r_0\leq1/2-1/n$.
This holds for example, when S-estimators with maximal breakdown point are used as initial estimators
(see Theorem~6.1 in Lopuha\"a \textit{et al}~\cite{lopuhaa-gares-ruizgazen2023}).
Existence of the corresponding functionals is not discussed in Kudraszow and Maronna~\cite{kudraszow-maronna2011}.
This now follows from our Theorem~\ref{th:existence}.
For the multivariate location and scatter model in Example~\ref{ex:multivariate location-scatter}, 
the MM-functionals coincide with the multivariate location and scatter M-functionals defined
with loss function $\rho_1$ and with auxiliary scale~$\sigma(P)$,
defined as the solution of~\eqref{def:sigma}. 
Tatsuoka and Tyler~\cite{tatsuoka&tyler2000} establish existence for these functionals
under the assumption
\begin{equation}
\label{eq:condition tatsuoka&tyler}
\inf_{(\bbeta,\bgamma)\in \mathfrak{D}}
R_P(\bbeta,\bV(\bgamma))
<
(1-P(B))\sup\rho_1,
\end{equation}
for all hyperplanes $B\subset\R^k$.
It can be seen, using~\eqref{def:initial estimators} and~\eqref{eq:ineq rho functions}, that if our condition $(\text{C2}_\epsilon)$ holds for some $\epsilon<1-r_0$, 
then condition~\eqref{eq:condition tatsuoka&tyler} is satisfied.

Existence of MM-estimators has been obtained from the existence of MM-functionals at the empirical measure $\mathbb{P}_n$,
which converges to $P$, as $n$ tends to infinity.
The following corollary shows that existence can be established in general,
for probability measures that are close to~$P$.
This will become useful when we want to establish existence at 
perturbed measures~$(1-h)P+h\delta_{\bs}$, for $h$ sufficiently small, in order to determine the influence function of the functionals at~$P$
(see Section~\ref{sec:IF}).
It requires the following condition on $P$.
\begin{itemize}
\item[(C3)]
Let $\mathfrak{C}$ be the class of all measurable convex subsets of $\R^k\times \R^{kq}$.
Every $C\in \mathfrak{C}$ is a $P$-continuity set, i.e., $P(\partial C)=0$,
where~$\partial C$ denotes the boundary of $C$.
\end{itemize}
Condition (C3) is needed to apply Theorem 4.2 in Ranga Rao~\cite{rangarao1962}.
Clearly, this condition is satisfied if $P$ is absolutely continuous.
\begin{corollary}
\label{cor:existence weak convergence}
Suppose that $\rho_0$ satisfies the conditions of Lemma~\ref{lem:existence sigma}.
Let $P$ satisfy~(C0) and~(C3), and let $(\bbeta_0(P),\btheta_0(P))\in\R^{q}\times\bTheta$ 
be the pair of initial functionals at $P$.
Let $P_t$, $t\geq0$, be a sequence of probability measures on $\R^k\times \R^{kq}$ that converges weakly to~$P$, as $t\to\infty$.
Suppose that $(\bbeta_0(P_t),\btheta_0(P_t))$ exist, for $t$ sufficiently large, 
such that~$\bbeta_0(P_t)\to\bbeta_0(P)$.
Then
\begin{itemize}
\item[(i)]
for $t$ sufficiently large,
equation~\eqref{def:sigma} with $P=P_t$, has a unique solution $\sigma(P_t)$.
\end{itemize}
In addition, suppose that $\rho_0$, $\rho_1$, and $\bV$ satisfy the conditions of Theorem~\ref{th:existence},
and suppose that $P$ satisfies~$(\text{C1}_{\epsilon'})$ and $(\text{C2}_\epsilon)$,
for some $0<\epsilon<\epsilon'\leq 1-r_0$, where $r_0=b_0/\sup\rho_0$.
Then
\begin{itemize}
\item[(ii)]
for $t$ sufficiently large,
there exists~$(\bbeta_1(P_t),\bgamma(P_t))\in \mathfrak{D}$ that minimizes~$R_{P_t}(\bbeta,\bV(\bgamma))$
and a vector $\btheta_1(P_t)\in\bTheta$ that is the unique solution of equation~\eqref{def:theta1} with~$P=P_t$.
\end{itemize}
\end{corollary}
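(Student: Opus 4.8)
The plan is to establish Corollary~\ref{cor:existence weak convergence} in two parts, mirroring the structure of the statement, with each part a continuity/perturbation argument built on the existence results already in hand.

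For part (i), I would follow the uniqueness-and-continuity argument underlying Lemma~\ref{lem:existence sigma}. Write $d_i(P_t)=\|\by-\bX\bbeta_0(P_t)\|$-type residual norms; more precisely, set
\[
g_t(\sigma)
=
\int
\rho_0\!\left(
\frac{\sqrt{(\by-\bX\bbeta_0(P_t))^T\bGamma(\btheta_0(P_t))^{-1}(\by-\bX\bbeta_0(P_t))}}{\sigma}
\right)
\dd P_t(\bs),
\]
and similarly $g(\sigma)$ with $P$ and the limiting functionals. Each $g_t$ is continuous and non-increasing in $\sigma$, with $g_t(\sigma)\to\sup\rho_0$ as $\sigma\downarrow0$ off the set where the residual vanishes and $g_t(\sigma)\to 0$ as $\sigma\to\infty$; by (C0) applied to $P$ together with the convergence $\bbeta_0(P_t)\to\bbeta_0(P)$ and weak convergence $P_t\to P$, the analogue of (C0) holds for $P_t$ for $t$ large (the mass on the vanishing-residual set is upper semicontinuous under the perturbation), so $g_t$ crosses the level $b_0$ exactly once, giving a unique $\sigma(P_t)$. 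The only subtlety is that $\rho_0$ need not be continuous at its corner $c$, but (C3)/(C0) handle that: the set of $\bs$ where the argument equals $c$ is a $P$-null surface, so $g$ is genuinely continuous and the crossing point is well-defined; monotonicity then gives uniqueness for each fixed $t$.

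For part (ii), I would use Theorem~\ref{th:existence} directly at $P_t$. The point is that the hypotheses $(\mathrm{C1}_\epsilon)$ and $(\mathrm{C2}_\epsilon)$, which the theorem needs at $P_t$, must be deduced from the assumption that $P$ satisfies $(\mathrm{C1}_{\epsilon'})$ and $(\mathrm{C2}_\epsilon)$ for some $\epsilon<\epsilon'\le1-r_0$ — this is exactly why the statement carries the strict inequality $\epsilon<\epsilon'$ and the condition (C3). For $(\mathrm{C1}_\epsilon)$: take the compact set $K_{\epsilon'}$ from $(\mathrm{C1}_{\epsilon'})$ with $P(K_{\epsilon'})\ge r_0+\epsilon'$, enlarge it slightly to a compact set whose interior still captures mass at least $r_0+\epsilon'$ under $P$; then the Portmanteau theorem (open sets) gives $\liminf_t P_t(\mathrm{int})\ge r_0+\epsilon'>r_0+\epsilon$, so $(\mathrm{C1}_\epsilon)$ holds at $P_t$ for $t$ large. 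For $(\mathrm{C2}_\epsilon)$: here (C3) enters, because strips $H(\balpha,\ell,\delta)$ are convex and hence $P$-continuity sets, so Theorem 4.2 of Ranga Rao~\cite{rangarao1962} gives $P_t(H)\to P(H)$ uniformly over the class of all such strips of a fixed width; since $\delta_\epsilon>0$ for $P$ by $(\mathrm{C2}_\epsilon)$, this uniform convergence forces the corresponding infimum for $P_t$ to stay bounded away from zero for $t$ large, giving $(\mathrm{C2}_\epsilon)$ at $P_t$. Finally, $\sigma(P_t)$ from part (i) plays the role of the auxiliary scale, and Theorem~\ref{th:existence} applied to $P_t$ yields $(\bbeta_1(P_t),\bgamma(P_t))\in\mathfrak{D}$ minimizing $R_{P_t}(\bbeta,\bV(\bgamma))$; condition (V2), as already noted, makes equation~\eqref{def:theta1} at $P_t$ solvable and (V1)+(V3) give uniqueness of $\btheta_1(P_t)$.

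The main obstacle is the uniform-over-strips control of $(\mathrm{C2}_\epsilon)$ under weak convergence: a single strip being a continuity set only gives pointwise convergence $P_t(H)\to P(H)$, which is not enough to conclude that $\delta$ stays bounded below simultaneously for all directions $\balpha$ and offsets $\ell$. This is precisely what the Ranga Rao result buys — uniform convergence over the Vapnik–Chervonenkis–type class of half-spaces (hence of strips) — and invoking it correctly, with (C3) ensuring every member of the class is a $P$-continuity set, is the crux of part (ii). Everything else is a routine repackaging of Lemma~\ref{lem:existence sigma} and Theorem~\ref{th:existence}.
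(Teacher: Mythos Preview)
Your overall strategy matches the paper's: verify the hypotheses of Lemma~\ref{lem:existence sigma} and Theorem~\ref{th:existence} for $P_t$ by transferring the conditions on $P$ via weak convergence, with Ranga Rao's uniform convergence over convex sets as the workhorse. Two points need sharpening, though.

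In part~(i), your justification that ``the mass on the vanishing-residual set is upper semicontinuous under the perturbation'' is not enough. The set $E_{t,0}=\{\bs:\by=\bX\bbeta_0(P_t)\}$ moves with $t$, so Portmanteau for closed sets does not apply directly; nor can you trap $E_{t,0}$ inside a fixed closed neighborhood of $E_0$ with small $P$-mass, because the set is unbounded in the $\bX$-direction. The paper handles this exactly by invoking (C3) and Ranga Rao already here in part~(i): since $E_{t,0}$ is convex, the uniform bound gives $P_t(E_{t,0})-P(E_{t,0})\to 0$, and then $\bbeta_0(P_t)\to\bbeta_0(P)$ gives $P(E_{t,0})\to P(E_0)$, whence $P_t(E_{t,0})\to P(E_0)<1-r_0$. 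You correctly flag Ranga Rao as the crux for part~(ii), but you should recognize it is equally essential for part~(i). (Also, under (R1)--(R3) the function $\rho_0$ \emph{is} continuous everywhere, including at $c$; the side remark about a corner is a red herring.)

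In part~(ii), your uniform-convergence argument does not deliver $(\mathrm{C2}_\epsilon)$ for $P_t$, only $(\mathrm{C2}_{\epsilon+\eta})$ for any small $\eta>0$: from $P_t(H)\geq\epsilon$ you only get $P(H)\geq\epsilon-o(1)$, which does not let you invoke $\delta_\epsilon(P)>0$. This is precisely why the paper introduces the slack $0<\eta<\epsilon'-\epsilon$ and shows that $P_t$ satisfies both $(\mathrm{C1}_{\epsilon+\eta})$ and $(\mathrm{C2}_{\epsilon+\eta})$; Theorem~\ref{th:existence} requires the \emph{same} level in both conditions. Your $(\mathrm{C1})$ argument in fact already yields $P_t(K')\geq r_0+\epsilon'-o(1)\geq r_0+\epsilon+\eta$, so once you adjust the target level to $\epsilon+\eta$ throughout, the argument closes.
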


\section{Continuity and consistency}
\label{sec:continuity}
Consider a sequence $P_t$, $t\geq0$, of probability measures on $\R^k\times\R^{kq}$ that converges weakly to $P$, as $t\to\infty$.
By continuity of the MM-functional $(\bbeta_1(P),\btheta_1(P))$ we mean that 
$(\bbeta_1(P_t),\btheta_1(P_t))\to(\bbeta_1(P),\btheta_1(P))$, as $t\to\infty$.
An example of such a sequence is the sequence of empirical measures $\mathbb{P}_n$, $n=1,2,\ldots$, that converges weakly to $P$, almost surely.
Continuity of the MM-functional for this sequence would then mean that the MM-estimator 
$(\bbeta_{1,n},\btheta_{1,n})$ is consistent,
i.e., $(\bbeta_1(\mathbb{P}_n),\btheta_1(\mathbb{P}_n))\to(\bbeta_1(P),\btheta_1(P))$,
almost surely.

We have the following theorem establishing continuity of the MM-functionals.
\begin{theorem}
\label{th:continuity}
Let $\rho_0$ satisfy (R2)-(R3) and $\bV$ satisfy~(V1).
Let $(\bbeta_0(P),\btheta_0(P))\in\R^{q}\times\bTheta$ be the pair of initial functionals at $P$.
Let $P_t$, $t\geq0$, be a sequence of probability measures on $\R^k\times \R^{kq}$ that converges weakly to~$P$, as $t\to\infty$.
Suppose that $(\bbeta_0(P_t),\btheta_0(P_t))$ exist, for~$t$ sufficiently large, 
and suppose that $(\bbeta_0(P_t),\btheta_0(P_t))\to(\bbeta_0(P),\btheta_0(P))$.
Let $\sigma(P)$ be the unique solution of~\eqref{def:sigma}
and let $\sigma(P_t)$ be a solution of~\eqref{def:sigma}, with~$P=P_t$.
Then
\begin{itemize}
\item[(i)]
$\sigma(P_t)\to\sigma(P)$, as $t\to\infty$.
\end{itemize}
In addition, suppose that $\rho_1$ satisfies~\eqref{eq:ineq rho functions}
and~(R2)-(R3),
and that $\bV$ satisfies~(V3).
Suppose that~$P$ satisfies~(C3), as well as~$(\text{C1}_{\epsilon'})$ and $(\text{C2}_\epsilon)$,
for some $0<\epsilon<\epsilon'\leq 1-r_0$,
where $r_0=b_0/\sup\rho_0$.
For $t$ sufficiently large, 
let~$(\bbeta_1(P_t),\bgamma(P_t))\in \mathfrak{D}$ 
be a local minimum of~$R_{P_t}(\bbeta,\bV(\bgamma))$ that satisfies~\eqref{eq:ineq RP} for $P=P_t$,
and let~$(\bbeta_1(P),\bgamma(P))\in \mathfrak{D}$ be the unique minimizer of $R_P(\bbeta,\bV(\bgamma))$.
Then
\begin{itemize}
\item[(ii)]
$(\bbeta_1(P_t),\bgamma(P_t))\to(\bbeta_1(P),\bgamma(P))$, as $t\to\infty$;
\end{itemize}
Let $\btheta_1(P)$ and $\btheta_1(P_t)$ be solutions of~\eqref{def:theta1}
and~\eqref{def:theta1} with $P=P_t$, respectively.
Then
\begin{itemize}
\item[(iii)]
$\btheta_1(P_t)\to \btheta_1(P)$, as $t\to\infty$.
\end{itemize}
\end{theorem}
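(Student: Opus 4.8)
The plan is to establish~(i), (ii) and~(iii) in turn, in each case by a subsequence argument anchored to a uniqueness statement. Existence of $\sigma(P_t)$ and of the minimisers $(\bbeta_1(P_t),\bgamma(P_t))$, $\btheta_1(P_t)$ for $t$ large is granted by Corollary~\ref{cor:existence weak convergence}. Throughout I will use the following standard extension of the portmanteau theorem: if $Q_t\Rightarrow Q$ on $\R^k\times\R^{kq}$, if the $h_t$ are measurable and uniformly bounded, and if $h_t(\bs_t)\to h(\bs)$ whenever $\bs_t\to\bs$, then $\int h_t\,\dd Q_t\to\int h\,\dd Q$ (via a Skorokhod coupling and bounded convergence); a one-sided version, with $h_t\ge0$ and only $\liminf h_t(\bs_t)\ge h(\bs)$, gives $\liminf\int h_t\,\dd Q_t\ge\int h\,\dd Q$ by Fatou. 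Write $d(\bs;\bbeta,\btheta)=\sqrt{(\by-\bX\bbeta)^T\bGamma(\btheta)^{-1}(\by-\bX\bbeta)}$, which by~(V1) is jointly continuous in $(\bs,\bbeta,\btheta)$ because $\bGamma(\btheta)$ is then continuous and positive definite.

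For~(i), I would first show $\sigma(P_t)$ bounded and bounded away from~$0$. If $\sigma(P_{t_j})\to\infty$ along a subsequence, the integrand of~\eqref{def:sigma} at $P_{t_j}$ converges continuously to $\rho_0(0)=0$ while staying bounded by $\sup\rho_0$, so the fact above forces $b_0=0$, a contradiction. If $\sigma(P_{t_j})\to0$, that integrand converges to $\sup\rho_0$ at every point of the open set $E_0^c=\{(\by,\bX):\|\by-\bX\bbeta_0(P)\|>0\}$ (using $\bbeta_0(P_{t_j})\to\bbeta_0(P)$ and $\btheta_0(P_{t_j})\to\btheta_0(P)$), so the Fatou version gives $b_0\ge\sup\rho_0\cdot P(E_0^c)>b_0$ by~(C0), again a contradiction. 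Hence every subsequence of $\sigma(P_t)$ has a further subsequence with a limit $\sigma^\ast\in(0,\infty)$; along it the integrand converges continuously and boundedly to $\rho_0\big(d(\bs;\bbeta_0(P),\btheta_0(P))/\sigma^\ast\big)$, so $\sigma^\ast$ solves~\eqref{def:sigma} for $P$ and therefore equals $\sigma(P)$ by Lemma~\ref{lem:existence sigma}; thus $\sigma(P_t)\to\sigma(P)$.

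For~(ii), Step~1 is to confine all the $(\bbeta_1(P_t),\bgamma(P_t))$, for $t$ large, to a single compact subset of $\mathfrak D$. Since $\rho_1\le(\sup\rho_1/\sup\rho_0)\,\rho_0$ by~\eqref{eq:ineq rho functions} and $\sigma(P_t)$ is calibrated so that $\int\rho_0\big(d(\bs;\bbeta_0(P_t),\btheta_0(P_t))/\sigma(P_t)\big)\,\dd P_t=b_0$, one gets $R_{P_t}(\bbeta_0(P_t),\bGamma(\btheta_0(P_t)))\le r_0\sup\rho_1$, hence by~\eqref{eq:ineq RP} also $R_{P_t}(\bbeta_1(P_t),\bV(\bgamma(P_t)))\le r_0\sup\rho_1<\sup\rho_1$. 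Conversely, if $(\bbeta_1(P_{t_j}),\bgamma(P_{t_j}))$ were to leave every compact subset of $\mathfrak D$, then, using $|\bV(\bgamma)|=1$ and~(V3), along a sub-subsequence either $\|\bbeta_1(P_{t_j})\|\to\infty$ or $\lambda_{\min}(\bV(\bgamma(P_{t_j})))\to0$; since $\sigma(P_{t_j})$ is bounded by~(i), the set on which the $\rho_1$-argument in $R_{P_{t_j}}(\bbeta_1(P_{t_j}),\bV(\bgamma(P_{t_j})))$ has not saturated at its cut-off lies, after a short computation in the lifted coordinates $\bs$, in a strip $H(\balpha_j,\ell_j,\delta_j)$ with $\delta_j\to0$ (the degeneration making either the strip thin or its defining linear functional long, its squared norm being $1+\|\bbeta_1(P_{t_j})\|^2\ge1$). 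By~$(\mathrm{C2}_\epsilon)$ for $P$ and weak convergence of $P_{t_j}$ — portmanteau applied to the closed convex strip, plus tightness to keep $\ell_j$ bounded — this set has $P_{t_j}$-mass below $\epsilon$ for $j$ large, so $R_{P_{t_j}}(\bbeta_1(P_{t_j}),\bV(\bgamma(P_{t_j})))\ge(1-\epsilon)\sup\rho_1>r_0\sup\rho_1$, contradicting the bound above. This is the argument of Theorem~\ref{th:existence} made uniform in $t$: the strict gap $\epsilon<\epsilon'$ with~$(\mathrm{C1}_{\epsilon'})$ lets the mass conditions pass from $P$ to $P_{t_j}$, and~(V1), (V3) transfer compactness between $\text{PDS}(k)$ and $\R^q\times\bTheta$.

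For~(ii), Step~2, take a subsequence with $(\bbeta_1(P_{t_j}),\bgamma(P_{t_j}))\to(\bbeta^\ast,\bgamma^\ast)$ in that compact set, so $(\bbeta^\ast,\bgamma^\ast)\in\mathfrak D$ with $\bV(\bgamma^\ast)$ positive definite. By~(V1), $\bV(\bgamma(P_{t_j}))\to\bV(\bgamma^\ast)$ and $\bGamma(\btheta_0(P_{t_j}))\to\bGamma(\btheta_0(P))$; by~(i), $\sigma(P_{t_j})\to\sigma(P)>0$; and $\rho_1$ is bounded and continuous on $[0,\infty)$, so the extended portmanteau fact yields $R_{P_{t_j}}(\bbeta_1(P_{t_j}),\bV(\bgamma(P_{t_j})))\to R_P(\bbeta^\ast,\bV(\bgamma^\ast))$ and $R_{P_{t_j}}(\bbeta_0(P_{t_j}),\bGamma(\btheta_0(P_{t_j})))\to R_P(\bbeta_0(P),\bGamma(\btheta_0(P)))$; combined with~\eqref{eq:ineq RP} at $P_{t_j}$, this already shows $(\bbeta^\ast,\bgamma^\ast)$ satisfies~\eqref{eq:ineq RP} at $P$. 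I expect the identification $(\bbeta^\ast,\bgamma^\ast)=(\bbeta_1(P),\bgamma(P))$ to be the main obstacle: since $(\bbeta_1(P_t),\bgamma(P_t))$ is merely a \emph{local} minimum of $R_{P_t}$, one cannot compare its value directly with those of competitors, so instead one must pass to the limit in the stationarity relations (the $M$-estimating equations) characterising a constrained local minimum, concluding that $(\bbeta^\ast,\bgamma^\ast)$ is a local minimum of $R_P$ subject to~\eqref{eq:ineq RP} and hence coincides with $(\bbeta_1(P),\bgamma(P))$ by the uniqueness hypothesis at $P$. Since every subsequence of $(\bbeta_1(P_t),\bgamma(P_t))$ then has a further subsequence converging to $(\bbeta_1(P),\bgamma(P))$, part~(ii) follows. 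Finally, part~(iii) is routine: by~(V2) the equations~\eqref{def:theta1} for $P$ and for $P_t$ have solutions, unique by identifiability of $\btheta$; parts~(i) and~(ii) give $\bV(\btheta_1(P_t))=\sigma^2(P_t)\bV(\bgamma(P_t))\to\sigma^2(P)\bV(\bgamma(P))=\bV(\btheta_1(P))$; by~(V3) the $\btheta_1(P_t)$ are bounded, so any subsequence has a further subsequence $\btheta_1(P_{t_j})\to\btheta^\ast$, and~(V1) with identifiability gives $\btheta^\ast=\btheta_1(P)$, whence $\btheta_1(P_t)\to\btheta_1(P)$.
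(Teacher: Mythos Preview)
Your parts~(i) and~(iii) are essentially correct and close in spirit to the paper's, though organised differently; for~(i) the paper argues directly via strict monotonicity of $\sigma\mapsto\int\rho_0(d/\sigma)\,\dd P$ (comparing at $\sigma(P)\pm\delta$) rather than through a compactness/subsequence argument, and in particular does not invoke~(C0), which is not among the stated hypotheses.

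The genuine gap is in part~(ii), Step~2. You propose to identify the subsequential limit $(\bbeta^\ast,\bgamma^\ast)$ with $(\bbeta_1(P),\bgamma(P))$ by passing to the limit in the $M$-estimating equations. This fails for two reasons. First, the score equations of Proposition~\ref{prop:score equations} require $\rho_1$ to satisfy~(R4) and $\bV$ to satisfy~(V4), neither of which is assumed in Theorem~\ref{th:continuity}; only (R2)--(R3) and (V1), (V3) are available. Second, even granting differentiability, a zero of the score equations is merely a stationary point of the constrained problem, and a stationary point---or even a local minimum---of $R_P$ need not coincide with the unique \emph{global} minimizer postulated at~$P$; the uniqueness hypothesis says nothing about local minima, so your chain ``stationary $\Rightarrow$ local minimum $\Rightarrow$ equals $(\bbeta_1(P),\bgamma(P))$'' does not close.

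The paper's route is a direct value comparison, which you dismissed too quickly. After confining the sequence to a compact set (via~(C3) and Ranga Rao's uniform convergence over convex sets, transferring the cylinder bound from $P_t$ to $P$, then invoking Lemma~4.1 of~\cite{lopuhaa-gares-ruizgazen2023}), the paper shows both $R_{P_{t_j}}(\bbeta_{1,t_j},\bV(\bgamma_{t_j}))\to R_P(\bbeta^\ast,\bV(\bgamma^\ast))$ and $R_{P_{t_j}}(\bbeta_1(P),\bV(\bgamma(P)))\to R_P(\bbeta_1(P),\bV(\bgamma(P)))$. If $(\bbeta^\ast,\bgamma^\ast)\neq(\bbeta_1(P),\bgamma(P))$, unique minimality at~$P$ produces a strict gap in the limiting $R_P$-values, which feeds back to $R_{P_{t_j}}(\bbeta_{1,t_j},\bV(\bgamma_{t_j}))>R_{P_{t_j}}(\bbeta_1(P),\bV(\bgamma(P)))$ for large~$j$. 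You are right that this last inequality only contradicts \emph{global} minimality of $(\bbeta_{1,t_j},\bgamma_{t_j})$, and the paper's proof indeed concludes with ``not the minimizer'' at that step; so the local-versus-global tension you flagged is real. But the $M$-equation route does not resolve it. The workable fix is either to take $(\bbeta_1(P_t),\bgamma(P_t))$ to be the global minimizer of $R_{P_t}$ (which exists by Corollary~\ref{cor:existence weak convergence}), or to supply a separate argument that, under uniqueness of the minimum at~$P$, any local minimum of $R_{P_t}$ satisfying~\eqref{eq:ineq RP} is eventually global.
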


Continuity of the MM-functionals will be used to derive the influence function of the MM-functionals in Section~\ref{sec:IF}.
Another convenient consequence of the continuity of the MM-functionals is that one can directly obtain consistency of the MM-estimators.
Let $\mathcal{S}_n=\{\bs_1,\ldots,\bs_n\}$, with $\bs_i=(\by_i,\bX_i)$, be a collection of $n$ points
in~$\R^k\times \mathcal{X}$.
We apply Theorem~\ref{th:continuity} to the sequence $\mathbb{P}_n$, $n=1,2,\ldots$,
of probability measures, where~$\mathbb{P}_n$ is the empirical measure corresponding to $\mathcal{S}_n$.
\begin{corollary}
\label{cor:consistency}
Let $\rho_0$ and $\bV$ satisfy the conditions of Theorem~\ref{th:continuity}(i).
For a collection $\mathcal{S}_n=\{\bs_1,\ldots,\bs_n\}\subset\R^k\times \mathcal{X}$,
with $\bs_i=(\by_i,\bX_i)$, for $i=1,\ldots,n$, 
let $(\bbeta_{0,n},\btheta_{0,n})\in\R^{q}\times\R^l$ be the pair of initial estimators
satisfying~\eqref{eq:cond beta0 theta0} 
and suppose that $(\bbeta_{0,n},\btheta_{0,n})\to(\bbeta_0(P),\btheta_0(P))$, with probability one.
Let $\sigma(P)$ the unique solution of~\eqref{def:sigma}
and let $\sigma_n$ be a solution of~\eqref{def:initial estimators}.
Then
\begin{itemize}
\item[(i)]
$\sigma_n\to\sigma(P)$, with probability one.
\end{itemize}
In addition, suppose $\rho_1$, $\bV$, and $P$ satisfy the conditions of Theorem~\ref{th:continuity}(ii).
Let~$(\bbeta_{1,n},\bgamma_n)\in \mathfrak{D}$ be a local minimum of $R_n(\bbeta,\bV(\bgamma))$
that satisfies~\eqref{eq:ineq Rn}, 
and let~$(\bbeta_1(P),\bgamma(P))\in \mathfrak{D}$ be the unique minimizer of~$R_P(\bbeta,\bV(\bgamma))$.
Then
\begin{itemize}
\item[(ii)]
$(\bbeta_{1,n},\bgamma_n)\to(\bbeta_1(P),\bgamma(P))$, with probability one;
\end{itemize}
Let $\btheta_1(P)$ and $\btheta_{1,n}$ be solutions of~\eqref{def:theta1} 
and~\eqref{eq:update theta}, respectively.
Then 
\begin{itemize}
\item[(iii)]
$\btheta_{1,n}\to \btheta_1(P)$, with probability one.
\end{itemize}
\end{corollary}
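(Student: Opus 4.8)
The plan is to derive Corollary~\ref{cor:consistency} as a direct instantiation of Theorem~\ref{th:continuity} applied to the specific sequence $P_t=\mathbb{P}_n$, with $t=n\to\infty$. First I would recall the classical fact that for i.i.d.\ observations $(\by_i,\bX_i)$, the empirical measures $\mathbb{P}_n$ converge weakly to $P$ with probability one (Varadarajan's theorem / Glivenko--Cantelli in $\R^{k+kq}$). Since the collection $\mathcal{S}_n$ lies in $\R^k\times\mathcal{X}$ by assumption, this is a sequence of probability measures on $\R^k\times\R^{kq}$ of the kind required by Theorem~\ref{th:continuity}. The hypothesis~\eqref{eq:cond beta0 theta0} identifies $(\bbeta_0(\mathbb{P}_n),\btheta_0(\mathbb{P}_n))$ with the initial estimators $(\bbeta_{0,n},\btheta_{0,n})$, and the assumed strong consistency $(\bbeta_{0,n},\btheta_{0,n})\to(\bbeta_0(P),\btheta_0(P))$ supplies exactly the convergence of initial functionals that Theorem~\ref{th:continuity} demands; likewise, by~\eqref{eq:cond beta0 theta0} we have $\sigma_n=\sigma(\mathbb{P}_n)$, $(\bbeta_{1,n},\bgamma_n)=(\bbeta_1(\mathbb{P}_n),\bgamma(\mathbb{P}_n))$, and $\btheta_{1,n}=\btheta_1(\mathbb{P}_n)$, so the three conclusions of Theorem~\ref{th:continuity} translate verbatim into the three conclusions of the corollary.

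For part~(i): on the event of probability one where $\mathbb{P}_n\Rightarrow P$ and the initial estimators converge, all hypotheses of Theorem~\ref{th:continuity}(i) hold with $P_t=\mathbb{P}_n$ --- namely $\rho_0$ satisfies (R2)--(R3), $\bV$ satisfies (V1), the initial functionals exist for $n$ large and converge, and $\sigma(P)$ is the unique solution of~\eqref{def:sigma} (note (C0) together with Lemma~\ref{lem:existence sigma} gives uniqueness). Theorem~\ref{th:continuity}(i) then yields $\sigma(\mathbb{P}_n)\to\sigma(P)$, i.e.\ $\sigma_n\to\sigma(P)$, on that event, which is the claim. One small point to verify is that $\sigma_n$ indeed exists for $n$ large; this is guaranteed on the same almost-sure event, since $P(E_0)<1-r_0$ (condition (C0)) and weak convergence give $\mathbb{P}_n(E_0^{\circ})$ eventually below $1-r_0$, hence $\mathbb{P}_n$ satisfies (C0) for $n$ large and Lemma~\ref{lem:existence sigma} applies --- but in fact this is already subsumed in the passage preceding Corollary~\ref{cor:consistency}.

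For parts~(ii) and~(iii): here I would additionally invoke that $P$ satisfies (C3) and $(\mathrm{C1}_{\epsilon'})$, $(\mathrm{C2}_\epsilon)$ for the stated $\epsilon<\epsilon'$, and that $\rho_1$ satisfies~\eqref{eq:ineq rho functions} and (R2)--(R3), and $\bV$ satisfies (V3) --- precisely the extra hypotheses of Theorem~\ref{th:continuity}(ii). Since $(\bbeta_{1,n},\bgamma_n)$ is, by hypothesis, a local minimum of $R_n(\bbeta,\bV(\bgamma))$ satisfying~\eqref{eq:ineq Rn}, and since \eqref{eq:ineq Rn} is exactly~\eqref{eq:ineq RP} at $P=\mathbb{P}_n$, the pair $(\bbeta_1(\mathbb{P}_n),\bgamma(\mathbb{P}_n))$ meets the requirements of Theorem~\ref{th:continuity}(ii), whose conclusion $(\bbeta_1(\mathbb{P}_n),\bgamma(\mathbb{P}_n))\to(\bbeta_1(P),\bgamma(P))$ is exactly~(ii). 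For~(iii), $\btheta_{1,n}$ solves~\eqref{eq:update theta}, which is~\eqref{def:theta1} at $P=\mathbb{P}_n$; Theorem~\ref{th:continuity}(iii) then gives $\btheta_1(\mathbb{P}_n)\to\btheta_1(P)$, i.e.\ $\btheta_{1,n}\to\btheta_1(P)$. The only genuine obstacle --- the continuity of the map $P\mapsto(\sigma(P),\bbeta_1(P),\bgamma(P),\btheta_1(P))$ along weakly convergent sequences, which requires the compactness/tightness arguments and the handling of local-versus-global minima under~\eqref{eq:ineq Rn} --- has already been absorbed into Theorem~\ref{th:continuity}; what remains for the corollary is the routine verification that the empirical-measure sequence and the consistency assumption on the initial estimators fit its hypotheses, carried out on the almost-sure event $\{\mathbb{P}_n\Rightarrow P\}\cap\{(\bbeta_{0,n},\btheta_{0,n})\to(\bbeta_0(P),\btheta_0(P))\}$.
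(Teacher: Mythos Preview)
Your proposal is correct and follows essentially the same approach as the paper: establish almost-sure weak convergence of the empirical measures $\mathbb{P}_n$ to $P$, use condition~\eqref{eq:cond beta0 theta0} to identify the estimators with the functionals evaluated at $\mathbb{P}_n$, and then read off parts (i)--(iii) directly from Theorem~\ref{th:continuity}(i)--(iii). Your write-up is in fact somewhat more careful than the paper's (e.g.\ explicitly flagging the almost-sure event on which the argument runs and the existence of $\sigma_n$ for large $n$), but the underlying argument is identical.
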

When $\bV$ also satisfies~(V1), then for the covariance MM-estimator 
it follows from Corollary~\ref{cor:consistency} that 
$\bV(\btheta_{1,n})\to\bV(\btheta_1(P))=\sigma^2(P)\bV(\bgamma(P))$,
with probability one.
This extends Theorem~5 in Kudraszow and Maronna~\cite{kudraszow-maronna2011}.
Their result applies to MM-estimators for the multivariate models
in Examples~\ref{ex:multivariate linear regression} and~\ref{ex:multivariate location-scatter},
but is obtained only for distributions with an elliptical contoured density.

Theorem~\ref{th:continuity} and Corollary~\ref{cor:consistency} require that~$(\bbeta_1(P),\bgamma(P))\in \mathfrak{D}$ 
uniquely minimizes $R_P(\bbeta,\bV(\bgamma))$.
This situation is very similar to that of multivariate location-scatter M-estimators with
auxiliary scale, considered by Tatsuoka and Tyler~\cite{tatsuoka&tyler2000}.
For the special case that $\bX=\bI_k$, their Theorem~4.2 shows that~$R_P(\bbeta,\bC)$ has a unique minimum
for a broad class of distributions,
consisting of affine transformations of distributions on $\R^k$,
which are invariant under permutations and sign changes of its components and which have densities $g$ such that
$g\circ \exp$ is Schur-concave (see~\cite{tatsuoka&tyler2000} for details), i.e.,
\begin{equation}
\label{def:tyler density}
f_{\bmu,\bSigma}(\by)
=
|\bSigma|^{-1/2}
g(\bSigma^{-1/2}(\by-\bmu)).
\end{equation}
The next theorem is a direct consequence of that result.
Note that elliptically contoured densities 
are special cases of~\eqref{def:tyler density}.
Let $\E_{\bmu,\bSigma}$ denote the expectation with respect to $f_{\bmu,\bSigma}$.
\begin{theorem}
\label{th:davies}
Let $\rho_0$ satisfy (R1)-(R3) and suppose~$\rho_1$ is continuous and
satisfies~(R2) and~\eqref{eq:ineq rho functions}.
Suppose that $P$ is absolutely continuous,
such that for some $(\bbeta^*,\btheta^*)\in\R^q\times\bTheta$,
for all $\bX$, the distribution of $\by\mid\bX$ has density~$f_{\bmu,\bSigma}$ from~\eqref{def:tyler density}, with $\bmu=\bX\bbeta^*$ and $\bSigma=\bV(\btheta^*)$.
Suppose that $g$ in~\eqref{def:tyler density}
is strictly $M$-concave (see~\cite[Definition 4.4]{tatsuoka&tyler2000}).
Suppose~$\bV$ satisfies (V1)-(V3) and suppose $\bX$ has full rank with probability one.
Let $(\bbeta_0(P),\btheta_0(P))\in\R^{q}\times\bTheta$ be the pair of initial functionals at~$P$
satisfying $(\bbeta_0(P),\btheta_0(P))=(\bbeta^*,\btheta^*)$. 
Then, the following holds with probability one. 
\begin{itemize}
\item[(i)]
Equation~\eqref{def:sigma} has a unique solution $\sigma(P)$ 
and the function
$R_P(\bbeta,\bV(\bgamma))$ has a unique minimum $(\bbeta_1(P),\bgamma(P))\in \mathfrak{D}$,
that satisfies
$\bbeta_1(P)=\bbeta^*$  and 
$\bV(\bgamma(P))=\bSigma/|\bSigma|^{1/k}$.
\item[(ii)]
When $\bV(\alpha\btheta)=\alpha\bV(\btheta)$, for all $\alpha>0$, then $\btheta_1(P)=\btheta^*\sigma^2(P)/|\bSigma|^{1/k}$.
\item[(iii)]
When $b_0=\E_{\mathbf{0},\bI_k}\rho_0(\|\bz\|)$, then $\sigma(P)=|\bSigma|^{1/(2k)}$.
\end{itemize}
\end{theorem}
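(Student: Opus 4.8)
The existence and uniqueness of $\sigma(P)$ follow at once from Lemma~\ref{lem:existence sigma}: with $\bbeta_0(P)=\bbeta^*$ the set $E_0=\{(\by,\bX):\|\by-\bX\bbeta^*\|=0\}$ has $P(E_0)=0<1-b_0/\sup\rho_0$ because $P$ is absolutely continuous, so (C0) holds. The plan for the remaining statements is to reduce the minimization of $R_P(\bbeta,\bV(\bgamma))$ over $\mathfrak{D}$ to the location--scatter M-functional with auxiliary scale of Tatsuoka and Tyler~\cite{tatsuoka&tyler2000}, whose Theorem~4.2 identifies the unique minimizer when the generating density is strictly $M$-concave. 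The two features that separate our set-up from theirs are the design matrix $\bX$ and the restriction of the scatter to the structured family $\mathcal{V}$; I would argue that neither is an obstruction.

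For the reduction, take $\bX$ of full column rank (rank $q$), which happens with probability one; condition on $\bX$ and substitute $\bz=\bSigma^{-1/2}(\by-\bX\bbeta^*)$, which turns the conditional law $f_{\bX\bbeta^*,\bSigma}$ of $\by$ given $\bX$ into the law with density $g$ for $\bz$. Writing $\ba=\bX(\bbeta-\bbeta^*)$ and expanding the quadratic form yields
\[
(\by-\bX\bbeta)^T\bV(\bgamma)^{-1}(\by-\bX\bbeta)
=
|\bSigma|^{1/k}\bigl(\bz-\bSigma^{-1/2}\ba\bigr)^T\bL^{-1}\bigl(\bz-\bSigma^{-1/2}\ba\bigr),
\qquad
\bL=|\bSigma|^{1/k}\bSigma^{-1/2}\bV(\bgamma)\bSigma^{-1/2},
\]
where $|\bL|=1$ since $|\bV(\bgamma)|=1$. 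Hence, with $s=\sigma(P)/|\bSigma|^{1/(2k)}>0$ and Fubini's theorem,
\[
R_P(\bbeta,\bV(\bgamma))
=
\int\!\!\int
\rho_1\!\left(
\frac{\sqrt{\bigl(\bz-\bSigma^{-1/2}\ba\bigr)^T\bL^{-1}\bigl(\bz-\bSigma^{-1/2}\ba\bigr)}}{s}
\right)
g(\bz)\,\dd\bz\,\dd P(\bs),
\]
and the inner integral is exactly Tatsuoka and Tyler's objective at the density $g$, evaluated at the location $\bSigma^{-1/2}\ba$, the shape $\bL$ with $|\bL|=1$, and the auxiliary scale $s$.

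By Theorem~4.2 of Tatsuoka and Tyler~\cite{tatsuoka&tyler2000}, strict $M$-concavity of $g$ makes the inner integral minimal, over all location vectors in $\R^k$ and all shape matrices of determinant one, uniquely at location $\mathbf{0}$ and shape $\bI_k$. Since this minimum value is a constant independent of $\bX$, the iterated integral is bounded below by it, with equality if and only if, for $P$-almost every $\bX$, $\bSigma^{-1/2}\bX(\bbeta-\bbeta^*)=\mathbf{0}$ and $\bL=\bI_k$. Full column rank of $\bX$ turns the first requirement into $\bbeta=\bbeta^*$, and the second is equivalent to $\bV(\bgamma)=\bSigma/|\bSigma|^{1/k}$. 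By (V2) there is $\bgamma(P)\in\bTheta$ with $\bV(\bgamma(P))=|\bV(\btheta^*)|^{-1/k}\bV(\btheta^*)$, which has determinant one, so $(\bbeta^*,\bgamma(P))\in\mathfrak{D}$ attains the lower bound, and by identifiability of $\btheta$ it is the unique minimizer; this proves~(i). For~(ii), the homogeneity $\bV(\alpha\btheta)=\alpha\bV(\btheta)$ applied with $\alpha=\sigma^2(P)/|\bSigma|^{1/k}$ gives $\bV\bigl(\btheta^*\sigma^2(P)/|\bSigma|^{1/k}\bigr)=\sigma^2(P)\bSigma/|\bSigma|^{1/k}=\bV(\btheta_1(P))$, so identifiability yields $\btheta_1(P)=\btheta^*\sigma^2(P)/|\bSigma|^{1/k}$. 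For~(iii), the same change of variables in~\eqref{def:sigma}, with $\bbeta_0(P)=\bbeta^*$ and $\bGamma(\btheta_0(P))=\bSigma/|\bSigma|^{1/k}$, reduces the defining equation to $\int\rho_0\bigl(|\bSigma|^{1/(2k)}\|\bz\|/\sigma\bigr)g(\bz)\,\dd\bz=b_0$ (the $\bX$-integration being trivial); $\sigma=|\bSigma|^{1/(2k)}$ solves it because the left-hand side is then $\E_{\mathbf{0},\bI_k}\rho_0(\|\bz\|)=b_0$, and uniqueness from Lemma~\ref{lem:existence sigma} concludes.

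The step I expect to require the most care is the reduction: verifying that conditioning on $\bX$ transforms $R_P$ into the Tatsuoka--Tyler objective (the bookkeeping with $\bSigma^{1/2}$, $\bSigma^{-1/2}$ and the determinant normalisation), that the structured constraint $\bgamma\in\bTheta$ does not rule out the unconstrained optimizer $\bSigma/|\bSigma|^{1/k}$ --- which is exactly where (V2) enters --- and that full column rank of $\bX$ is precisely what makes $\bbeta_1(P)=\bbeta^*$ the unique minimizer in the regression coordinate. Everything else then follows by invoking Lemma~\ref{lem:existence sigma} and Theorem~4.2 of \cite{tatsuoka&tyler2000}.
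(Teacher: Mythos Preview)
Your proof is correct and follows essentially the same route as the paper: condition on $\bX$, reduce the conditional objective to the Tatsuoka--Tyler location--scatter M-functional with auxiliary scale, invoke their Theorem~4.2 under strict $M$-concavity to identify the unique inner minimizer as $(\mathbf{0},\bI_k)$, then use (V2) to check that the unconstrained optimum $\bSigma/|\bSigma|^{1/k}$ lies in the structured family and full column rank of $\bX$ to force $\bbeta=\bbeta^*$. The only cosmetic differences are that the paper works in the original $\by$-coordinates rather than your standardized $\bz$-coordinates, and that the paper separately invokes Theorem~\ref{th:existence} for existence of a minimizer before comparing it to the Tatsuoka--Tyler optimum, whereas you exhibit the minimizer directly as attaining the pointwise lower bound on the inner integral.
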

An example of initial functionals $(\bbeta_0(P),\btheta_0(P))$ that satisfy the 
conditions of Theorem~\ref{th:davies}, are the S-functionals defined with loss function~$\rho_0$,
see Theorem~5.3 in Lopuha\"a \emph{et al}~\cite{lopuhaa-gares-ruizgazen2023}
or Theorem~1 in Davies~\cite{davies1987} for the multivariate location-scatter model.

The proof of Theorem~\ref{th:davies} depends heavily on the application of
Theorem~4.2 in Tatsuoka and Tyler~\cite{tatsuoka&tyler2000} on the uniqueness
of multivariate M-functionals with auxiliary scale.
It considers strict M-concave densities $g$ in~\eqref{def:tyler density}, 
which is a broad class of densities that includes spherical symmetric densities, 
among others, see Tatsuoka and Tyler~\cite{tatsuoka&tyler2000} for details.
In this way, Theorem~\ref{th:davies} can be seen as an extension
of Theorem~1 in Davies~\cite{davies1987} on the uniqueness of multivariate 
location-scatter S-functionals at distributions with an elliptically contoured density.

\section{Global robustness: the breakdown point}
\label{sec:bdp}
Consider a collection of points $\mathcal{S}_n=\{\bs_i=(\by_i,\bX_i),i=1,\ldots,n\}\subset \R^k\times \mathcal{X}$.
To investigate the global robustness of the estimators,
we compute their finite-sample (replacement) breakdown point.
For a given collection~$\mathcal{S}_n$, the finite-sample breakdown point
(see Donoho and Huber~\cite{donoho&huber1983})
of an estimator is defined as the smallest proportion of points
from~$\mathcal{S}_n$ that one needs to replace in order to
send the estimator to the boundary of its parameter space.
To emphasize the dependence on the collection $\mathcal{S}_n$,
denote an estimator for the regression parameter by $\bbeta_n(\mathcal{S}_n)$
and an estimator for the vector of covariance parameters by $\btheta_n(\mathcal{S}_n)$.
For a given collection $\mathcal{S}_n$, the finite-sample breakdown point
of a regression estimator $\bbeta_n$ is defined as
\begin{equation}
\label{def:BDP beta}
\epsilon_n^*(\bbeta_n,\mathcal{S}_n)
=
\min_{1\leq m\leq n}
\left\{
\frac{m}{n}:
\sup_{\mathcal{S}_m'}
\left\|
\bbeta_n(\mathcal{S}_n)-\bbeta_n(\mathcal{S}_m')
\right\|
=\infty
\right\},
\end{equation}
where the minimum runs over all possible collections $\mathcal{S}_m'$ that can be obtained from $\mathcal{S}_n$
by replacing~$m$ points of $\mathcal{S}_n$ by arbitrary points in $\R^k\times \mathcal{X}$.

An estimator $\btheta_n$ for the vector of covariance parameters determines the covariance estimator~$\bV(\btheta_n)$.
For this reason it seems natural to let the breakdown point of $\btheta_n$ correspond to the breakdown of a covariance estimator.
For any $k\times k$ matrix $\bA$, let $\lambda_k(\bA)\leq\cdots\leq\lambda_1(\bA)$ denote the eigenvalues of $\bA$.
We define the finite sample (replacement) breakdown point of an estimator~$\btheta_n$ at a collection~$\mathcal{S}_n$, as
\begin{equation}
\label{def:BDP theta}
\epsilon_n^*(\btheta_n,\mathcal{S}_n)
=
\min_{1\leq m\leq n}
\left\{
\frac{m}{n}:
\sup_{\mathcal{S}_m'}
\text{dist}(\bV(\btheta_n(\mathcal{S}_n))),\bV(\btheta_n(\mathcal{S}_m'))
=\infty
\right\},
\end{equation}
with $\text{dist}(\cdot,\cdot)$ defined as
$\text{dist}(\bA,\mathbf{B})
=
\max\left\{
\left|\lambda_1(\bA)-\lambda_1(\mathbf{B})\right|,
\left|\lambda_k(\bA)^{-1}-\lambda_k(\mathbf{B})^{-1}\right|
\right\}$,
where the minimum runs over all possible collections $\mathcal{S}_m'$ that can be obtained from $\mathcal{S}_n$
by replacing~$m$ points of $\mathcal{S}_n$ by arbitrary points in $\R^k\times \mathcal{X}$.
So the breakdown point of $\btheta_n$ is the smallest proportion of points from~$\mathcal{S}_n$ that one needs to replace in order to
make the largest eigenvalue of $\bV(\btheta(\mathcal{S}_m'))$ arbitrarily large (explosion), or
to make the smallest eigenvalue of~$\bV(\btheta(\mathcal{S}_m'))$ arbitrarily small (implosion).

Good global robustness is illustrated by a high breakdown point.
The breakdown point of the MM-estimators is given in the theorem below.
\begin{theorem}
\label{th:bdp}
Let $\rho_0$ satisfy (R1)-(R3).
Let $\rho_1$ satisfy (R2) and~\eqref{eq:ineq rho functions}
and suppose $\bV$ satisfies (V1)-(V3).
Let $\mathcal{S}_n\subset \R^k\times \mathcal{X}$ be a collection of~$n$ points $\bs_i=(\by_i,\bX_i)$,  $i=1,\ldots,n$.
Let $r_0=b_0/\sup\rho_0$ and suppose that $0<\lfloor nr_0\rfloor <n-\kappa(\mathcal{S}_n)$,
where~$\kappa(\mathcal{S}_n)$ is defined by~\eqref{def:k(S)}.
Let $(\bbeta_{0,n},\btheta_{0,n})$ be initial estimators for $(\bbeta,\btheta)$.
Let $(\bbeta_{1,n},\bgamma_n)\in \mathfrak{D}$ satisfy~\eqref{eq:ineq Rn}
and let $\btheta_{1,n}$ be a solution of~\eqref{eq:update theta}.
Then
\[
\epsilon^*_n(\bbeta_{1,n},\bgamma_n,\btheta_{1,n},\mathcal{S}_n)
\geq
\min\left\{
\epsilon^*_n(\bbeta_{0,n},\btheta_{0,n},\mathcal{S}_n),
\frac{\lceil nr_0\rceil}{n},
\frac{\lceil n-nr_0\rceil-\kappa(\mathcal{S}_n)}{n}
\right\}.
\]
\end{theorem}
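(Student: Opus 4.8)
The plan is to fix a number $m$ of replaced points with $m/n$ strictly below the minimum on the right‑hand side, and to show that, uniformly over all collections $\mathcal{S}_m'$ obtained from $\mathcal{S}_n$ by replacing $m$ points, none of $\bbeta_{1,n}$, $\bgamma_n$, $\btheta_{1,n}$ can be pushed to the boundary of its parameter space. From $m/n<\epsilon^*_n(\bbeta_{0,n},\btheta_{0,n},\mathcal{S}_n)$ I get a fixed compact set in $\R^q\times\text{PDS}(k)$ containing every $(\bbeta_{0,n}(\mathcal{S}_m'),\bV(\btheta_{0,n}(\mathcal{S}_m')))$, with the covariance part having eigenvalues in a fixed interval $[\underline\lambda,\overline\lambda]\subset(0,\infty)$; the same then holds for the shape $\bGamma(\btheta_{0,n})$. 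I will also use that, by the definition of $\kappa(\mathcal{S}_n)$ and a compactness argument, there is a fixed $\delta_0=\delta_0(\mathcal{S}_n)>0$ such that every strip $H(\balpha,\ell,\delta)$ with $\delta<\delta_0$ contains at most $\kappa(\mathcal{S}_n)$ points of $\mathcal{S}_n$. Finally, combining the defining equation~\eqref{def:initial estimators} for $\sigma_n$ with~\eqref{eq:ineq rho functions} and~\eqref{eq:ineq Rn} yields the key inequality $R_n(\bbeta_{1,n},\bV(\bgamma_n))\leq R_n(\bbeta_{0,n},\bGamma(\btheta_{0,n}))\leq r_0\sup\rho_1$, where I abbreviate $d_i(\bbeta,\bC)=\sqrt{(\by_i-\bX_i\bbeta)^T\bC^{-1}(\by_i-\bX_i\bbeta)}$.

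\emph{Step 1: the auxiliary scale stays bounded and bounded away from zero.} First, for any fixed $\bbeta$ the set $\{(\by,\bX):\by=\bX\bbeta\}$ lies in a hyperplane of $\R^k\times\R^{kq}$, so at most $\kappa(\mathcal{S}_n)$ retained points satisfy $\by_i=\bX_i\bbeta_{0,n}$; together with $m+\kappa(\mathcal{S}_n)<n(1-r_0)$ this shows (C0) holds at every contaminated empirical measure, so $\sigma_n(\mathcal{S}_m')$ exists and is unique by Lemma~\ref{lem:existence sigma}. For the bounds I argue by contradiction along a sequence of contaminations, passing to a subsequence on which the set of retained indices is constant and the initial estimators converge. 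If $\sigma_n\to\infty$, every retained residual norm $d_i(\bbeta_{0,n},\bGamma(\btheta_{0,n}))$ converges to a finite limit, hence $\rho_0(d_i/\sigma_n)\to\rho_0(0)=0$ by (R1), while the replaced points contribute at most $\tfrac{m}{n}\sup\rho_0<r_0\sup\rho_0=b_0$ because $m<\lceil nr_0\rceil$; this contradicts~\eqref{def:initial estimators}. If $\sigma_n\to0$, then for each of the at least $(n-m)-\kappa(\mathcal{S}_n)$ retained indices whose limiting residual is nonzero one has $\rho_0(d_i/\sigma_n)\to\sup\rho_0$ by (R2), so the left‑hand side of~\eqref{def:initial estimators} exceeds $b_0$ in the limit whenever $(n-m)-\kappa(\mathcal{S}_n)>nr_0$, i.e.\ whenever $m<\lceil n-nr_0\rceil-\kappa(\mathcal{S}_n)$; again a contradiction. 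Thus $\sigma_n(\mathcal{S}_m')$ stays in a fixed interval $[\underline\sigma,\overline\sigma]\subset(0,\infty)$.

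\emph{Step 2: the regression and shape estimators stay in a compact set.} Write $\bbeta=\bbeta_{1,n}$ and $\bC=\bV(\bgamma_n)$ with $|\bC|=1$. Since each summand of $R_n$ equals $\sup\rho_1$ as soon as $d_i(\bbeta,\bC)\geq c_1\sigma_n$, the bound $R_n(\bbeta,\bC)\leq r_0\sup\rho_1$ forces at most $\lfloor nr_0\rfloor$ indices to have $d_i(\bbeta,\bC)\geq c_1\sigma_n$, hence at least $\lceil n-nr_0\rceil-m>\kappa(\mathcal{S}_n)$ retained indices $i$ satisfy $d_i(\bbeta,\bC)<c_1\sigma_n\leq c_1\overline\sigma$. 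Taking $\bv$ a unit eigenvector of $\bC$ for its smallest eigenvalue $\lambda_k(\bC)$, each such retained point obeys $|\bv^T(\by_i-\bX_i\bbeta)|\leq\sqrt{\lambda_k(\bC)}\,c_1\overline\sigma$; writing $\bv^T(\by_i-\bX_i\bbeta)=\balpha^T\bs_i$ with $\balpha$ carrying $\bv$ in the $\by$‑block and $-v_r\bbeta$ in the $r$‑th $\bx$‑block, so $\|\balpha\|^2=1+\|\bbeta\|^2$, these points all lie in the strip $H(\balpha/\|\balpha\|,0,\delta)$ with $\delta=2\sqrt{\lambda_k(\bC)}\,c_1\overline\sigma/\sqrt{1+\|\bbeta\|^2}$. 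If along a sequence of contaminations $\|\bbeta\|\to\infty$ or $\lambda_1(\bC)\to\infty$ — the latter forcing $\lambda_k(\bC)\to0$ since $|\bC|=1$ — then $\delta\to0$, so eventually $\delta<\delta_0(\mathcal{S}_n)$ and the strip contains at most $\kappa(\mathcal{S}_n)$ points of $\mathcal{S}_n$, contradicting the count above. Hence $\|\bbeta_{1,n}\|$ and $\lambda_1(\bV(\bgamma_n))$, and therefore (via $|\bV(\bgamma_n)|=1$) also $\lambda_k(\bV(\bgamma_n))$, stay in fixed compact sets. Consequently $\bV(\btheta_{1,n})=\sigma_n^2\bV(\bgamma_n)$ has largest eigenvalue bounded and smallest eigenvalue bounded away from zero, so $\text{dist}(\bV(\btheta_{1,n}(\mathcal{S}_n)),\bV(\btheta_{1,n}(\mathcal{S}_m')))$ stays bounded, $\bV(\bgamma_n)$ stays in a fixed compact subset of $\text{PDS}(k)$, and by (V1) and (V3) $\bgamma_n$ stays bounded; no configuration of $m$ replaced points can drive any of the three estimators to the boundary, which gives the claimed lower bound.

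I expect the main obstacle to be Step 1, specifically the control of scale implosion: because $d_i(\bbeta_{0,n},\bGamma(\btheta_{0,n}))$ need not be bounded away from zero, one must combine the stability of the initial estimators with the hyperplane bound $\kappa(\mathcal{S}_n)$, and it is precisely this interplay that produces the third term $(\lceil n-nr_0\rceil-\kappa(\mathcal{S}_n))/n$ in the minimum. A secondary difficulty is the passage, in Step 2, from the Mahalanobis constraint $d_i(\bbeta,\bC)<c_1\sigma_n$ to a thin strip in $\R^k\times\R^{kq}$ whose normal and width are controlled by $\bbeta$ and $\lambda_k(\bC)$, so that the general‑position property encoded in $\delta_0(\mathcal{S}_n)$ can be invoked; the remaining bookkeeping with floors and ceilings is routine.
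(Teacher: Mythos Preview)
Your proposal is correct and follows essentially the same route as the paper: bound the auxiliary scale via the defining equation~\eqref{def:initial estimators} using the stability of the initial estimators, then use~\eqref{eq:ineq Rn} together with~\eqref{eq:ineq rho functions} to force at least $\lceil n-nr_0\rceil-m>\kappa(\mathcal{S}_n)$ retained points into the cylinder $\{d_i<c_1\sigma_n\}$, and exploit the general-position property of $\mathcal{S}_n$ to bound $\lambda_k(\bV(\bgamma_n))$ and $\|\bbeta_{1,n}\|$. The only notable differences are stylistic: you argue by contradiction along subsequences where the paper gives explicit bounds, you establish a lower bound on $\sigma_n$ that the paper bypasses (it controls $\lambda_k(\sigma_m^2\bV(\bgamma_m))$ directly), and your self-contained strip argument replaces the paper's appeal to a simplex lemma from~\cite{lopuhaa-gares-ruizgazen2023}.
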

An example of initial estimators $(\bbeta_{0,n},\btheta_{0,n})$ with high breakdown point,
are S-estimators defined with the function $\rho_0$,
as discussed in Lopuha\"a \emph{et al}~\cite{lopuhaa-gares-ruizgazen2023}.
According to their Theorem~6.1 and Remark~3,
it holds that 
$\epsilon^*_n(\bbeta_{0,n},\btheta_{0,n},\mathcal{S}_n)\geq \min\{\lceil nr_0\rceil,\lceil n-nr_0\rceil-\kappa(\mathcal{S}_n)\}/n$.
In this case, the lower bound in Theorem~\ref{th:bdp} simplifies to
$\min\{\lceil nr_0\rceil,\lceil n-nr_0\rceil-\kappa(\mathcal{S}_n)\}/n$.
The largest possible value of this lower bound is attained when $r_0=(n-\kappa(\mathcal{S}_n))/(2n)$.
In this case 
$\lceil nr_0\rceil=\lceil n-nr_0\rceil-\kappa(\mathcal{S}_n)
=\lceil (n-\kappa(\mathcal{S}_n))/2\rceil
=\lfloor (n-\kappa(\mathcal{S}_n)+1)/2\rfloor$.
When the collection $\mathcal{S}_n$ is in general position, then $\kappa(\mathcal{S}_n)= k+p$.
In that case the breakdown point of the MM-estimators is at least equal
to~$\lfloor (n-k-p+1)/2\rfloor/n$.
When all $\bX_i$ are equal to the same $\bX$, 
as in the multivariate location-scatter model, but 
also in the linear mixed effects models considered in Copt and Victoria-Feser~\cite{copt2006high}
and Copt and Heritier~\cite{copt&heritier2007},
one has $p=0$ and $\kappa(\mathcal{S}_n)=k$.
In that case, the lower bound of the breakdown point is equal to $\lfloor (n-k+1)/2\rfloor/n$.
This value coincides with the maximal breakdown point for affine equivariant estimators
for $k\times k$ covariance matrices (see Davies~\cite[Theorem 6]{davies1987}).

The breakdown point for (a simpler version of) regression MM-estimators for the
linear mixed effects model~\eqref{def:linear mixed effects model Copt} 
has only been discussed in Copt and Heritier~\cite{copt&heritier2007}.
They conjecture that the exact value can be derived using the technique in
Van Aelst and Willems~\cite{vanaelst&willems2005}, but do not pursue a rigorous derivation.
The result in Theorem~\ref{th:bdp} applies to the current more extensive version of MM-estimators 
for the linear mixed effects model~\eqref{def:linear mixed effects model Copt}.
Furthermore, for $0<r_0\leq (n-\kappa(\mathcal{S}_n))/(2n)$, it holds that 
$\lceil nr_0\rceil\leq \lceil n-nr_0\rceil-\kappa(\mathcal{S}_n)$.
In this case, the lower bound for the breakdown point in Theorem~\ref{th:bdp}
coincides with that of the regression MM-estimator considered in Lopuha\"a~\cite{lopuhaa2023}.

For the multivariate linear regression model,
Kudraszow and Maronna~\cite{kudraszow-maronna2011} take $r_0=1/2$ and consider the case $\kappa(\mathcal{S}_n)<n/2$.
For this situation $\lceil nr_0\rceil>\lceil n-nr_0\rceil-\kappa(\mathcal{S}_n)$.
Hence, their Theorem~3 follows from our Theorem~\ref{th:bdp} for the case $r_0=1/2$.
For the multivariate location-scatter model, Salibi\'an-Barrera \emph{et al}~\cite{SalibianBarrera-VanAelst-Willems2006}
consider MM-estimators with S-estimators as initial estimators.
Our Theorem~\ref{th:bdp} then coincides with their Theorem~1.
For the MM-estimators in this model, Tyler~\cite{tyler2002} considers the gross error breakdown point,
which for finite collections is related to the finite sample contamination breakdown point.

\section{Score equations}
\label{sec:equations}
Up to this point, properties of MM-functionals and MM-estimators have been derived from 
minimizing $R_n(\bbeta,\bV(\bgamma))$ and $R_P(\bbeta,\bV(\bgamma))$,
as defined in~\eqref{def:Rn} and~\eqref{def:R_P}, respectively.
To obtain the influence function and to establish the limiting distribution of MM-estimators,
we use the score equations that can be found by differentiation of the Lagrangian corresponding to the
constrained minimization problem.
To this end, we require the following additional condition on the function~$\rho_1$,
\begin{itemize}
\item[(R4)]
$\rho_1$ is continuously differentiable and $u_1(s)=\rho_1'(s)/s$ is continuous,
\end{itemize}
and the following condition on the mapping $\btheta\mapsto\bV(\btheta)$,
\begin{itemize}
\item[(V4)]
$\bV(\btheta)$ is continuously differentiable.
\end{itemize}
Obviously, condition (V4) implies the former condition~(V1).
For 
$\by\in\R^k,\bt\in\R^k$, 
and $\bC\in\text{PDS}(k)$, define the Mahalanobis distances by
\begin{equation}
\label{def:Mahalanobis distance}
d^2(\by,\bt,\bC)=(\by-\bt)^T\bC^{-1}(\by-\bt).
\end{equation}
We then have the following proposition.
\begin{proposition}
\label{prop:score equations}
Let $\rho_1$ satisfy~(R2) and~(R4), and $\bV$ satisfy~(V4), and suppose that~$\E\|\bX\|<\infty$.
Let $(\bbeta_0(P),\btheta_0(P))$ be the pair of initial functionals and let $\sigma(P)$ be a solution of~\eqref{def:sigma}.
Then any local minimum $\bxi(P)=(\bbeta_1(P),\bgamma(P))\in \mathfrak{D}$ of $R_P(\bbeta,\bV(\bgamma))$ satisfies
\begin{equation}
\label{eq:Psi=0}
\int
\Psi(\bs,\bxi,\sigma(P))
\,\dd P(\bs)
=
\mathbf{0},
\end{equation}
where $\Psi=(\Psi_{\bbeta},\Psi_{\bgamma})$, with $\Psi_{\bbeta}$ and $\Psi_{\bgamma}=(\Psi_{\bgamma,1},\ldots,\Psi_{\bgamma,l})$
given by
\begin{equation}
\label{def:Psi}
\begin{split}
\Psi_{\bbeta}(\bs,\bxi,\sigma)
&=
u_1\left(\frac{d}{\sigma}\right)
\bX^T\bV^{-1}(\by-\bX\bbeta)\\
\Psi_{\bgamma,j}(\bs,\bxi,\sigma)
&=
u_1\left(\frac{d}{\sigma}\right)
(\by-\bX\bbeta)^T\bV^{-1}
\bH_{1,j}
\bV^{-1}(\by-\bX\bbeta)\\
&\qquad\qquad\qquad\qquad\quad-
\mathrm{tr}\left(\bV^{-1}\frac{\partial \bV}{\partial \gamma_j}\right)\log|\bV|,
\end{split}
\end{equation}
with $d=d(\by,\bX\bbeta,\bV(\bgamma))$, as defined in~\eqref{def:Mahalanobis distance}, 
and $\bV(\bgamma)$ is abbreviated by $\bV$, and where
\begin{equation}
\label{def:Hj}
\bH_{1,j}
=
\mathrm{tr}\left(\bV^{-1}\frac{\partial \bV}{\partial \gamma_j}\right)
\left(
\sum_{t=1}^l\gamma_t\frac{\partial \bV}{\partial \gamma_t}
\right)
-
\mathrm{tr}\left(\bV^{-1}\sum_{t=1}^l\gamma_t\frac{\partial \bV}{\partial \gamma_t}\right)
\frac{\partial \bV}{\partial \gamma_j},
\end{equation}
for $j=1,\ldots,l$.
\end{proposition}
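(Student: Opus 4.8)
The plan is to derive \eqref{eq:Psi=0} from the first-order (Lagrange) conditions for the constrained minimization that defines $(\bbeta_1(P),\bgamma(P))$: minimize $R_P(\bbeta,\bV(\bgamma))$ over $(\bbeta,\bgamma)\in\R^q\times\bTheta$ subject to the constraint $g(\bgamma):=\log|\bV(\bgamma)|=0$ coming from $\mathfrak{D}$. The preliminary step is to show that $R_P(\bbeta,\bV(\bgamma))$ is continuously differentiable near $(\bbeta_1(P),\bgamma(P))$ and to identify its gradient. Write $\bu=\by-\bX\bbeta$, $\bV=\bV(\bgamma)$, $\sigma=\sigma(P)$ and $Q(\bbeta,\bgamma)=\bu^T\bV^{-1}\bu=d^2$. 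Since $\rho_1'(s)=s\,u_1(s)$ and $\rho_1(0)=0$, a substitution shows $\rho_1(d/\sigma)=\psi(Q)$ with $\psi(t)=\tfrac1{2\sigma^2}\int_0^t u_1(\sqrt r/\sigma)\,\dd r$, so $\psi'(t)=u_1(\sqrt t/\sigma)/(2\sigma^2)$; since by (R2) and (R4) the function $u_1$ is continuous and bounded on $[0,\infty)$ and vanishes on $[c,\infty)$, $\psi$ is $C^1$, and as $Q$ is $C^1$ in $(\bbeta,\bgamma)$ by (V4), the integrand $\rho_1(d/\sigma)$ is $C^1$ in $(\bbeta,\bgamma)$ even at points with $\bu=\mathbf{0}$. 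Differentiation under the integral sign is legitimate because $u_1(d/\sigma)\neq0$ forces $d<c\sigma$, hence $\|\bu\|\leq c\sigma\sqrt{\lambda_1(\bV)}$ on a compact neighborhood, so the integrand's gradients are dominated by a constant multiple of $\|\bX\|$, which is $P$-integrable since $\E\|\bX\|<\infty$. This yields
\begin{equation*}
\frac{\partial R_P}{\partial\bbeta}=-\frac1{\sigma^2}\int u_1\!\left(\frac d\sigma\right)\bX^T\bV^{-1}\bu\,\dd P,\qquad
\frac{\partial R_P}{\partial\gamma_j}=-\frac1{2\sigma^2}\int u_1\!\left(\frac d\sigma\right)\bu^T\bV^{-1}\frac{\partial\bV}{\partial\gamma_j}\bV^{-1}\bu\,\dd P .
\end{equation*}

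Because $g$ does not depend on $\bbeta$, the map $\bbeta\mapsto R_P(\bbeta,\bV(\bgamma(P)))$ has an unconstrained local minimum at $\bbeta_1(P)$, so its gradient there vanishes — which is precisely $\int\Psi_{\bbeta}(\bs,\bxi(P),\sigma)\,\dd P=\mathbf{0}$. For the $\bgamma$-part, fix $\bbeta=\bbeta_1(P)$; as $\bgamma(P)$ minimizes the $C^1$ function $R_P(\bbeta_1(P),\bV(\cdot))$ on the level set $\{g=0\}$, the gradients $\nabla_\bgamma R_P$ and $\nabla g$ at $\bgamma(P)$ are linearly dependent. One of two things happens. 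Either $\nabla g(\bgamma(P))=\mathbf{0}$, i.e.\ $\tr(\bV^{-1}\partial\bV/\partial\gamma_j)=0$ for all $j$; then also $\tr(\bV^{-1}\sum_t\gamma_t\,\partial\bV/\partial\gamma_t)=0$, so every $\bH_{1,j}=\mathbf{0}$, and since $\log|\bV(\bgamma(P))|=0$ the integrand $\Psi_{\bgamma,j}(\bs,\bxi(P),\sigma)$ is identically zero and the $\bgamma$-equations hold trivially. Or there is a scalar $\mu\in\R$ such that, writing $B_j=\int u_1(d/\sigma)\,\bu^T\bV^{-1}(\partial\bV/\partial\gamma_j)\bV^{-1}\bu\,\dd P$, one has $B_j=\mu\,\tr(\bV^{-1}\partial\bV/\partial\gamma_j)$ for every $j$.

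It remains to check that the combination \eqref{def:Hj} is exactly the one that eliminates this multiplier. On $\mathfrak{D}$ the term $-\tr(\bV^{-1}\partial\bV/\partial\gamma_j)\log|\bV|$ of $\Psi_{\bgamma,j}$ drops out; expanding \eqref{def:Hj} and using linearity of the integral,
\begin{equation*}
\int\Psi_{\bgamma,j}(\bs,\bxi(P),\sigma)\,\dd P
=\tr\!\left(\bV^{-1}\frac{\partial\bV}{\partial\gamma_j}\right)\sum_{t}\gamma_t B_t
-\tr\!\left(\bV^{-1}\sum_{t}\gamma_t\frac{\partial\bV}{\partial\gamma_t}\right)B_j .
\end{equation*}
Substituting $B_t=\mu\,\tr(\bV^{-1}\partial\bV/\partial\gamma_t)$ gives $\sum_t\gamma_t B_t=\mu\,\tr(\bV^{-1}\sum_t\gamma_t\,\partial\bV/\partial\gamma_t)$, and the two terms coincide, so $\int\Psi_{\bgamma,j}(\bs,\bxi(P),\sigma)\,\dd P=0$ for each $j$; together with the $\bbeta$-equation this establishes \eqref{eq:Psi=0}. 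I expect the handling of the determinant constraint $|\bV(\bgamma)|=1$ to be the main obstacle: one must justify differentiation of $R_P$ under the integral sign (handled above by the boundedness of $\|\bu\|$ on the set where $u_1(d/\sigma)\neq0$) and invoke the Lagrange condition with care, including the degenerate branch where $\nabla g$ vanishes — which is exactly the branch in which the multiplier-free shape of $\bH_{1,j}$ earns its place; the remaining steps are routine linear algebra. (I am tacitly assuming $\bgamma(P)$ lies in the interior of $\bTheta$, so that $R_P(\bbeta,\bV(\bgamma))$ is differentiable there.)
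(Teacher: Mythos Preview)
Your proof is correct and follows essentially the same Lagrangian approach as the paper: compute the gradients of $R_P$ with respect to $\bbeta$ and $\bgamma$, justify differentiation under the integral via the bound $\|\bu\|\le c\sigma\sqrt{\lambda_1(\bV)}$ on the support of $u_1(d/\sigma)$, and eliminate the multiplier to arrive at the $\bH_{1,j}$ combination. Your treatment is in fact a bit more careful than the paper's in two places: you explicitly handle the degenerate branch $\nabla g(\bgamma(P))=\mathbf{0}$ (the paper tacitly divides by $\tr(\bV^{-1}\sum_t\gamma_t\,\partial\bV/\partial\gamma_t)$ when solving for $\lambda_P$), and your $\psi(Q)$ device makes differentiability of the integrand at $d=0$ explicit.
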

Since $d(\by,\bX\bbeta,\bV(\bgamma))/\sigma=d(\by,\bX\bbeta,\sigma^2\bV(\bgamma))$,
the regression score equation for $\Psi_{\bbeta}$ is similar to 
the one for the regression MM-estimator considered in Lopuha\"a~\cite{lopuhaa2023},
defined with initial covariance functional $\bV_0(P)$, 
and both score equations coincide when $\bV_0(P)=\sigma^2(P)\bV(\bgamma(P))$.
Similarly, the regression score equation for $\Psi_{\bbeta}$ with the empirical measure $\mathbb{P}_n$ for $P$ in~\eqref{eq:Psi=0}
is similar to equation~(8) for the regression MM-estimator in the linear mixed effects model considered in Copt and Heritier~\cite{copt&heritier2007},
defined with initial covariance estimator $\widehat{\bSigma}_S$, 
and both equations coincide when $\widehat{\bSigma}_S=\sigma_n^2\bV(\bgamma_n)$.
For the multivariate linear regression model, the empirical score equation for 
$\Psi_{\bbeta}$ coincides with equation~(2.10) for the regression MM-estimator 
discussed in Kudraszow and Maronna~\cite{kudraszow-maronna2011}. 
When the initial estimators $(\bbeta_{0,n},\btheta_{0,n})$ are S-estimators, then
$\sigma_n=|\bV(\btheta_{0,n})|^{1/(2k)}$.
In that case, for the multivariate location-scatter model,
the empirical score equation for $\Psi_{\bbeta}$ 
coincides with fixed point equation~(16) for the location MM-estimator 
considered in Salibi\'an-Barrera \textit{et al}~\cite{SalibianBarrera-VanAelst-Willems2006}.

The function~$\Psi_{\bgamma}$ simplifies in situations where~$\bV(\bgamma)$ has a linear structure, that is
\begin{equation}
\label{def:V linear}
\bV(\bgamma)=\sum_{j=1}^l\gamma_j\bL_j,
\end{equation}
The covariance structures in Examples~\ref{ex:LME model}, \ref{ex:multivariate linear regression}, and~\ref{ex:multivariate location-scatter},
satisfy this property.
\begin{proposition}
\label{prop:score equations linear}
Suppose the conditions of Proposition~\ref{prop:score equations} hold and that $\bV$
has a linear structure~\eqref{def:V linear}.
Let $v_1(s)=\rho_1'(s)s$ and let $\bL$ be the $k^2\times l$ matrix
\begin{equation}
\label{def:L}
\bL
=
\Big[
  \begin{array}{ccc}
    \vc\left(\bL_1\right)
 & \cdots &     \vc\left(\bL_l\right) \\
  \end{array}
\Big].
\end{equation}
Then any local minimum 
$\bxi(P)=(\bbeta_1(P),\bgamma(P))\in \mathfrak{D}$ of~$R_P(\bbeta,\bV(\bgamma))$
satisfies~\eqref{eq:Psi=0},
with $\Psi=(\Psi_{\bbeta},\Psi_{\bgamma})$, where
\begin{equation}
\label{def:Psi linear}
\begin{split}
\Psi_{\bbeta}(\bs,\bxi,\sigma)
&=
u_1\left(\frac{d}{\sigma}\right)
\bX^T\bV^{-1}(\by-\bX\bbeta)\\
\Psi_{\bgamma}(\bs,\bxi,\sigma)
&=
-\bL^T
\left(
\bV^{-1}
\otimes
\bV^{-1}
\right)
\vc\left(
\Psi_\bV(\bs,\bxi,\sigma)
\right),
\end{split}
\end{equation}
where 
\begin{equation}
\label{def:PsiV}
\Psi_\bV(\bs,\bxi,\sigma)
=
k
u_1\left(\frac{d}{\sigma}\right)
(\by-\bX\bbeta)(\by-\bX\bbeta)^T
-
v_1\left(\frac{d}{\sigma}\right)
\sigma^2\bV
-
\bV
\log|\bV|,
\end{equation}
with $d=d(\by,\bX\bbeta,\bV(\bgamma))$, as defined in~\eqref{def:Mahalanobis distance},
and $\bV(\bgamma)$ is abbreviated by $\bV$.
\end{proposition}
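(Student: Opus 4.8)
The plan is to reduce the statement to Proposition~\ref{prop:score equations}. That proposition already guarantees that any local minimum $\bxi(P)=(\bbeta_1(P),\bgamma(P))\in\mathfrak{D}$ of $R_P(\bbeta,\bV(\bgamma))$ satisfies $\int\Psi(\bs,\bxi,\sigma(P))\,\dd P(\bs)=\mathbf{0}$ with $\Psi_{\bgamma}=(\Psi_{\bgamma,1},\ldots,\Psi_{\bgamma,l})$ given componentwise by~\eqref{def:Psi} and~\eqref{def:Hj}, and moreover the regression component $\Psi_{\bbeta}$ in~\eqref{def:Psi linear} is literally the one in~\eqref{def:Psi}. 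Hence it suffices to show that, under the linear structure~\eqref{def:V linear}, the map $\bs\mapsto\Psi_{\bgamma}(\bs,\bxi(P),\sigma(P))$ defined by~\eqref{def:Psi}--\eqref{def:Hj} coincides with the map $\bs\mapsto-\bL^{T}(\bV^{-1}\otimes\bV^{-1})\vc\bigl(\Psi_{\bV}(\bs,\bxi(P),\sigma(P))\bigr)$ of~\eqref{def:Psi linear}--\eqref{def:PsiV}; here and below $\bV$ abbreviates $\bV(\bgamma(P))$, which is invertible because $\bxi(P)\in\mathfrak{D}$.

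First I would simplify $\bH_{1,j}$. Under~\eqref{def:V linear} we have $\partial\bV/\partial\gamma_{j}=\bL_{j}$ and $\sum_{t=1}^{l}\gamma_{t}\,\partial\bV/\partial\gamma_{t}=\bV(\bgamma)=\bV$, so $\tr\bigl(\bV^{-1}\sum_{t}\gamma_{t}\,\partial\bV/\partial\gamma_{t}\bigr)=\tr(\bI_{k})=k$, and~\eqref{def:Hj} collapses to $\bH_{1,j}=\tr(\bV^{-1}\bL_{j})\,\bV-k\,\bL_{j}$. Substituting this into~\eqref{def:Psi} and using $(\by-\bX\bbeta)^{T}\bV^{-1}\bV\bV^{-1}(\by-\bX\bbeta)=d^{2}$ with $d=d(\by,\bX\bbeta,\bV)$ gives
\[
\Psi_{\bgamma,j}=u_{1}\!\Bigl(\tfrac{d}{\sigma}\Bigr)\Bigl(d^{2}\,\tr(\bV^{-1}\bL_{j})-k\,(\by-\bX\bbeta)^{T}\bV^{-1}\bL_{j}\bV^{-1}(\by-\bX\bbeta)\Bigr)-\tr(\bV^{-1}\bL_{j})\log|\bV|.
\]
Next I would vectorize. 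By cyclicity of the trace, $\tr(\bV^{-1}\bL_{j})=\tr(\bV^{-1}\bL_{j}\bV^{-1}\bV)$ and $(\by-\bX\bbeta)^{T}\bV^{-1}\bL_{j}\bV^{-1}(\by-\bX\bbeta)=\tr\bigl(\bV^{-1}\bL_{j}\bV^{-1}(\by-\bX\bbeta)(\by-\bX\bbeta)^{T}\bigr)$, so the display equals $\tr(\bV^{-1}\bL_{j}\bV^{-1}\bM)$, where
\[
\bM=u_{1}\!\Bigl(\tfrac{d}{\sigma}\Bigr)d^{2}\,\bV-k\,u_{1}\!\Bigl(\tfrac{d}{\sigma}\Bigr)(\by-\bX\bbeta)(\by-\bX\bbeta)^{T}-\bV\log|\bV|.
\]
Since $u_{1}(s)=\rho_{1}'(s)/s$ and $v_{1}(s)=\rho_{1}'(s)s$ we have $u_{1}(d/\sigma)\,d^{2}=v_{1}(d/\sigma)\,\sigma^{2}$; and at a local minimum $|\bV(\bgamma(P))|=1$, so $\bV\log|\bV|=\mathbf{0}$, whence $\bM=-\Psi_{\bV}(\bs,\bxi(P),\sigma(P))$ with $\Psi_{\bV}$ as in~\eqref{def:PsiV}. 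Finally, the identities $\vc(\bA\bX\bB)=(\bB^{T}\otimes\bA)\vc(\bX)$ and $\vc(\bA)^{T}\vc(\bB)=\tr(\bA^{T}\bB)$ give $\vc(\bL_{j})^{T}(\bV^{-1}\otimes\bV^{-1})\vc(\Psi_{\bV})=\tr(\bL_{j}^{T}\bV^{-1}\Psi_{\bV}\bV^{-1})=\tr(\bV^{-1}\bL_{j}\bV^{-1}\Psi_{\bV})$, the last step using that $\bV^{-1}\Psi_{\bV}\bV^{-1}$ is symmetric together with cyclicity. Hence $\Psi_{\bgamma,j}=-\vc(\bL_{j})^{T}(\bV^{-1}\otimes\bV^{-1})\vc(\Psi_{\bV})$, and stacking over $j=1,\ldots,l$ yields $\Psi_{\bgamma}=-\bL^{T}(\bV^{-1}\otimes\bV^{-1})\vc(\Psi_{\bV})$ as in~\eqref{def:Psi linear}. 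Combined with Proposition~\ref{prop:score equations}, this establishes~\eqref{eq:Psi=0}.

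I expect the only point that needs genuine care is the bookkeeping of the term $\bV\log|\bV|$. It is kept in the general expression~\eqref{def:PsiV} because it is the natural object to linearize in the later developments, but it vanishes identically on $\mathfrak{D}$, so for the present proposition the compact form~\eqref{def:Psi linear}--\eqref{def:PsiV} and the componentwise form~\eqref{def:Psi}--\eqref{def:Hj} only have to agree modulo this vanishing term. Everything else is routine: one substitutes the linear structure, uses $\tr(\bV^{-1}\bV)=k$, and applies the standard $\vc$/Kronecker identities, each of which is an exact algebraic identity valid pointwise in $\bs$ at the fixed argument $(\bxi(P),\sigma(P))$. Note that symmetry of the $\bL_{j}$, which holds in all the examples of Section~\ref{sec:structured covariance model}, is not actually needed, since each $\bL_{j}$ enters only through its symmetric part.
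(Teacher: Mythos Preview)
Your proposal is correct and follows essentially the same route as the paper's proof: reduce to Proposition~\ref{prop:score equations}, simplify $\bH_{1,j}$ to $\tr(\bV^{-1}\bL_j)\bV-k\bL_j$ under the linear structure, and then use the trace/vec and Kronecker identities to assemble $\Psi_{\bgamma}$ into the form~\eqref{def:Psi linear}. Your explicit remark that the $\bV\log|\bV|$ term vanishes on $\mathfrak{D}$ is actually needed---the sign on this term in~\eqref{def:Psi} and in~\eqref{def:PsiV} would not otherwise match, a point the paper's proof glosses over---so your bookkeeping there is on point.
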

For the multivariate linear regression model in Example~\ref{ex:multivariate linear regression}, 
one has $\bV(\bgamma)=\bGamma=\bSigma/|\bSigma|^{1/k}$, with $\bgamma=\vch(\bGamma)$.
The matrix $\bL=\partial\vc(\bV)/\partial\bgamma^T$ is then equal to the so-called duplication matrix~$\mathcal{D}_k$,
which is the unique $k^2\times k(k+1)/2$ matrix, with the properties
$\mathcal{D}_k\vch(\bC)=\vc(\bC)$ and $(\mathcal{D}_k^T\mathcal{D}_k)^{-1}\mathcal{D}_k^T\vc(\bC)=\vch(\bC)$
(e.g., see~\cite[Ch.~3, Sec.~8]{magnus&neudecker1988}).
Because~$\bV(\bgamma)$ has full rank, it follows that equation~\eqref{eq:Psi=0} holds for $\Psi=(\Psi_{\bbeta},\Psi_{\bV})$.
When we insert $\sigma_n^2\bV(\bgamma_n)=\sigma_n^2\bGamma_n$, the resulting score equations for the empirical measure~$\mathbb{P}_n$ corresponding to observations
$(\by_i,\bX_i)$, for $i=1,\ldots,n$, are then equivalent to 
equations~(2.10) and~(2.11) found in
Kudraszow and Maronna~\cite{kudraszow-maronna2011} for the regression MM-estimators.
For the multivariate location-scatter model in Example~\ref{ex:multivariate location-scatter}, 
one also has  $\bV(\bgamma)=\bGamma$, with
$\bgamma=\vch(\bGamma)$,
so that again equation~\eqref{eq:Psi=0} holds for $\Psi=(\Psi_{\bbeta},\Psi_{\bV})$.
When the initial estimators~$(\bbeta_{0,n},\btheta_{0,n})$ are S-estimators, 
it can be shown that the empirical score equation for $\Psi_{\bV}$
coincides with fixed point equation~(17) for the covariance shape MM-estimator 
considered in Salibi\'an-Barrera \textit{et al}~\cite{SalibianBarrera-VanAelst-Willems2006}.

\section{Local robustness: the influence function}
\label{sec:IF}
For $0<h<1$ and $\bs=(\by,\bX)\in\R^k\times\R^{kq}$ fixed, define the perturbed probability measure
$P_{h,\bs}=(1-h)P+h\delta_{\bs}$,
where $\delta_{\bs}$ denotes the Dirac measure at $\bs\in\R^k\times\R^{kq}$.
The \emph{influence function} of a functional~$T(\cdot)$ at probability measure $P$,
is defined as
\begin{equation}
\label{def:IF}
\text{IF}(\bs;T,P)
=
\lim_{h\downarrow0}
\frac{T((1-h)P+h\delta_{\bs})-T(P)}{h},
\end{equation}
if this limit exists.
In contrast to the global robustness measured by the breakdown point,
the influence function measures the local robustness.
It describes the effect of an infinitesimal contamination at a single point $\bs$ on the functional
(see Hampel~\cite{hampel1974}).
Good local robustness is therefore illustrated by a bounded influence function.

\subsection{The general case}
\label{subsec:IF general}
We will investigate when the limit in~\eqref{def:IF} exists for the functionals $\sigma$,
$\bxi=(\bbeta_1,\bgamma)$, and~$\btheta_1$ 
and derive their expression at general $P$.
Since the value of $\btheta_1$ determines the covariance matrix~$\bV(\btheta_1)$,
we also include the influence function of the covariance functional.
Consider the MM-functional at~$P_{h,\bs}$.
From the Portmanteau theorem~\cite[Theorem 2.1]{billingsley1968} it can easily be seen that $P_{h,\bs}\to P$, weakly, as~$h\downarrow0$.
Therefore, under the conditions of Corollary~\ref{cor:existence weak convergence} and Theorem~\ref{th:continuity}, it follows
that there exist a unique solution~$\sigma(P_{h,\bs})$ 
of equation~\eqref{def:sigma} with $P=P_{h,\bs}$, 
a pair $\bxi(P_{h,\bs})=(\bbeta_1(P_{h,\bs}),\bgamma(P_{h,\bs}))$ 
that minimizes $R_{P_{h,\bs}}(\bbeta,\bV(\bgamma))$ over $(\bbeta,\bgamma)\in \mathfrak{D}$,
and a vector~$\btheta_1(P_{h,\bs})\in\bTheta$ that uniquely solves~\eqref{eq:ineq RP}, for $P=P_{h,\bs}$.
Moreover, under these conditions 
$(\sigma(P_{h,\bs}),\bxi(P_{h,\bs}),\btheta_1(P_{h,\bs}))\to(\sigma(P),\bxi(P),\btheta_1(P))$,
as $h\downarrow0$.

For $\bxi=(\bbeta,\bgamma)\in \mathfrak{D}$ and $\sigma>0$,
define
\begin{equation}
\label{def:Lambda}
\Lambda(\bxi,\sigma)
=
\int \Psi(\bs,\bxi,\sigma)\,\dd P(\bs),
\end{equation}
where $\Psi=(\Psi_{\bbeta},\Psi_{\bgamma})$ is defined in~\eqref{def:Psi},
and write $\Lambda=(\Lambda_{\bbeta},\Lambda_{\bgamma})$, where
\begin{equation}
\label{def:Lambda-beta-gamma}
\begin{split}
\Lambda_{\bbeta}(\bxi,\sigma)
&=
\int
\Psi_{\bbeta}(\bs,\bxi,\sigma)\,\dd P(\bs),
\qquad
\Lambda_{\bgamma}(\bxi,\sigma)
=
\int
\Psi_{\bgamma}(\bs,\bxi,\sigma)\,\dd P(\bs).
\end{split}
\end{equation}
By definition of the variance components MM-functional $\btheta_1$, 
it can be expected that for general~$P$,
the influence function of $\btheta_1$ depends
on the influence functions of $\sigma$ and $\bgamma$.
Furthermore, because the function $\Psi$ depends on $\sigma$, it can be expected that for general $P$,
the influence function of~$\bxi=(\bbeta_1,\bgamma)$ depends on the influence function of~$\sigma$,
see Lemma~\ref{lem:IF xi sigma} in the supplemental material~\cite{supplement2025}.
In turn, since the functional $\sigma$ is defined as a solution of~\eqref{def:sigma}, it can be expected that for general $P$,
its influence function will depend on the initial functionals $\bzeta_0=(\bbeta_0,\btheta_0)$,
see Lemma~\ref{lem:IF sigma zeta} in the supplemental material~\cite{supplement2025}.

The situation becomes a bit simpler if we assume some kind of symmetry of the distribution~$P$.
A vector $\bb\in\R^q$ is called a \emph{point of symmetry} of $P$, if for almost all $\bX$, it holds that
$P\left(\bX\bb+A\mid\bX\right)
=
P\left(\bX\bb-A\mid\bX\right)$,
for all measurable sets $A\subset\R^k$,
where for $\lambda\in\R$ and $\bb\in\R^q$,
$\bX\bb+\lambda A$ denotes the set $\{\bX\bb+\lambda \by:\by\in A\}$.
If $\bb$ is a point of symmetry of $P$, it has the property that
$\E[G(\by-\bX\bb)]=\mathbf{0}$,
for any~$G(\bz)$, which is an odd function of $\bz\in\R^k$.
Furthermore, in order to obtain simpler expressions we also require the following condition on the function $\rho_1$.
\begin{itemize}
\item[(R5)]
$\rho_1$ is twice continuously differentiable.
\end{itemize}
Condition (R5) is needed to ensure
that $\partial\Lambda_{\bbeta}/\partial\bbeta$ is continuous at $(\bxi(P),\sigma(P))$.
The expressions for $\text{\rm IF}(\bs;\bgamma,P)$ and $\text{\rm IF}(\bs;\sigma,P)$ 
given in Lemmas~\ref{lem:IF xi sigma} and~\ref{lem:IF sigma zeta} in~\cite{supplement2025}
do simplify, but without further knowledge on the influence function
of~$\bzeta_0$ we can still not provide an explicit expression.
The situation is different for~$\bbeta_1$,
for which the influence function is given in the next theorem.
\begin{theorem}
\label{th:point of symmetry}
Suppose~$\rho_1$ satisfies~(R4) and $\bV$ satisfies~(V1).
Let $\sigma(P)$ be a solution of~\eqref{def:sigma} and let 
$\bxi(P)\in \mathfrak{D}$ be a local minimum of~$R_P(\bbeta,\bV(\bgamma))$.
Let~$\sigma(P_{h,\bs})$ be a solution of~\eqref{def:sigma} with $P=P_{h,\bs}$
and let $\bxi(P_{h,\bs})\in \mathfrak{D}$ be a local minimum of $R_P(\bbeta,\bV(\bgamma))$ with~$P=P_{h,\bs}$.
Suppose that $(\bxi(P_{h,\bs}),\sigma(P_{h,\bs}))\to(\bxi(P),\sigma(P))$,
as $h\downarrow0$.
Let $\Lambda_{\bbeta}$ be defined by~\eqref{def:Lambda-beta-gamma}
with $\Psi_{\bbeta}$ from~\eqref{def:Psi}
and suppose $\Lambda_{\bbeta}$ is continuously differentiable  
with a non-singular derivative 
$\bD_{\bbeta}=\partial\Lambda_{\bbeta}/\partial\bbeta$ at $(\bxi(P),\sigma(P))$.
Moreover, suppose that~$\bbeta_1(P)$ is a point of symmetry of~$P$.
Then, for $\bs\in\R^k\times\R^{kq}$, we have $\text{\rm IF}(\bs;\bbeta_1,P)=-\bD_{\bbeta}^{-1}\Psi_{\bbeta}(\bs,\bxi(P),\sigma(P))$.
\end{theorem}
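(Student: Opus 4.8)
The plan is to treat $\bbeta_1(P)$ as an implicitly defined function of the contamination level $h$ via the score equation, and to differentiate that relation at $h=0$. Write $\bzeta(h)=(\bxi(P_{h,\bs}),\sigma(P_{h,\bs}))$ and abbreviate $\bxi(0)=\bxi(P)$, $\sigma(0)=\sigma(P)$. By Proposition~\ref{prop:score equations}, the local minimum $\bxi(P_{h,\bs})$ satisfies $\int\Psi_{\bbeta}(\bs',\bxi(P_{h,\bs}),\sigma(P_{h,\bs}))\,\dd P_{h,\bs}(\bs')=\mathbf{0}$, and since $P_{h,\bs}=(1-h)P+h\delta_{\bs}$ this reads
\begin{equation}
\label{eq:prop-implicit}
(1-h)\,\Lambda_{\bbeta}(\bxi(P_{h,\bs}),\sigma(P_{h,\bs}))
+
h\,\Psi_{\bbeta}(\bs,\bxi(P_{h,\bs}),\sigma(P_{h,\bs}))
=
\mathbf{0}.
\end{equation}
At $h=0$ the first term vanishes because $\bxi(P)$ satisfies the unperturbed score equation $\Lambda_{\bbeta}(\bxi(P),\sigma(P))=\mathbf{0}$.

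Next I would differentiate \eqref{eq:prop-implicit} with respect to $h$ and let $h\downarrow0$. Using continuity of $\bzeta(h)$ at $0$ (which is given by hypothesis), continuity of $\Psi_{\bbeta}$ in its arguments (from (R4) and (V1)), and continuous differentiability of $\Lambda_{\bbeta}$ at $(\bxi(P),\sigma(P))$, the derivative of the first term contributes $-\Lambda_{\bbeta}(\bxi(P),\sigma(P))+\bD_{\bbeta}\cdot\text{IF}(\bs;\bbeta_1,P)+\bD_{\bgamma}\cdot\text{IF}(\bs;\bgamma,P)+\bD_{\sigma}\cdot\text{IF}(\bs;\sigma,P)$, where $\bD_{\bgamma}=\partial\Lambda_{\bbeta}/\partial\bgamma$ and $\bD_{\sigma}=\partial\Lambda_{\bbeta}/\partial\sigma$, and the derivative of the second term contributes $\Psi_{\bbeta}(\bs,\bxi(P),\sigma(P))$. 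Since $\Lambda_{\bbeta}(\bxi(P),\sigma(P))=\mathbf{0}$, this yields
\[
\bD_{\bbeta}\,\text{IF}(\bs;\bbeta_1,P)
+
\bD_{\bgamma}\,\text{IF}(\bs;\bgamma,P)
+
\bD_{\sigma}\,\text{IF}(\bs;\sigma,P)
+
\Psi_{\bbeta}(\bs,\bxi(P),\sigma(P))
=\mathbf{0}.
\]
The crucial simplification comes from the point-of-symmetry hypothesis: when $\bbeta_1(P)$ is a point of symmetry of $P$, the residual $\by-\bX\bbeta_1(P)$ is conditionally symmetric about $\mathbf{0}$ given $\bX$, so $\bD_{\bgamma}$ and $\bD_{\sigma}$, which I claim are integrals of functions that are \emph{odd} in $\by-\bX\bbeta_1(P)$, vanish. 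Concretely, $\partial\Psi_{\bbeta}/\partial\gamma_j$ and $\partial\Psi_{\bbeta}/\partial\sigma$ each carry one leftover factor $(\by-\bX\bbeta)$ (the terms in $\Psi_{\bbeta}$ are linear in the residual, and differentiating $u_1(d/\sigma)$ or $\bV^{-1}$ produces only even factors of the residual multiplied by that single odd factor), hence integrate to zero against a $P$ for which $\bbeta_1(P)$ is a point of symmetry. Therefore $\bD_{\bbeta}\,\text{IF}(\bs;\bbeta_1,P)=-\Psi_{\bbeta}(\bs,\bxi(P),\sigma(P))$, and inverting the nonsingular $\bD_{\bbeta}$ gives the claimed formula $\text{IF}(\bs;\bbeta_1,P)=-\bD_{\bbeta}^{-1}\Psi_{\bbeta}(\bs,\bxi(P),\sigma(P))$.

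The main obstacle is making the differentiation of \eqref{eq:prop-implicit} at $h=0$ rigorous: one must justify that $h\mapsto\bzeta(h)$ is differentiable at $0$ (a priori only continuity is assumed), or else circumvent this by rearranging \eqref{eq:prop-implicit} into a difference quotient. The cleaner route is the latter — divide \eqref{eq:prop-implicit} by $h$, write $\Lambda_{\bbeta}(\bzeta(h))=\Lambda_{\bbeta}(\bzeta(h))-\Lambda_{\bbeta}(\bzeta(0))$, apply a first-order Taylor expansion of $\Lambda_{\bbeta}$ with the continuous derivative at $\bzeta(0)$ (the remainder is $o(\|\bzeta(h)-\bzeta(0)\|)$), and deduce that $(\bxi(P_{h,\bs})-\bxi(P))/h$ and the corresponding quotient for $\sigma$ have finite limits; the symmetry argument then simultaneously shows the $\bgamma$- and $\sigma$-quotient contributions drop out, so existence of $\text{IF}(\bs;\bbeta_1,P)$ and its value follow together. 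A secondary bookkeeping point is verifying the parity claims for $\partial\Psi_{\bbeta}/\partial\gamma_j$ and $\partial\Psi_{\bbeta}/\partial\sigma$; these are routine once one notes that every term in $\Psi_{\bbeta}$ from~\eqref{def:Psi} is of the form (even scalar in the residual)$\times\bX^T\bV^{-1}(\by-\bX\bbeta)$ and that differentiating in $\gamma_j$ or $\sigma$ acts only on the even scalar part or on $\bV^{-1}$, preserving the single odd factor.
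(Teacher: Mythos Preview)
Your overall strategy is right, but there is a genuine gap in the ``cleaner route'' you outline. After Taylor-expanding $\Lambda_{\bbeta}$ at $\bzeta(0)$ you obtain
\[
\mathbf{0}=\bD_{\bbeta}\,\Delta\bbeta + \bD_{\bgamma}\,\Delta\bgamma + \bD_{\sigma}\,\Delta\sigma
+ o(\|\Delta\bzeta\|) + h\,\Psi_{\bbeta}(\bs,\bxi(P),\sigma(P)) + o(h),
\]
where $\Delta\bzeta=(\Delta\bbeta,\Delta\bgamma,\Delta\sigma)$. Your symmetry argument correctly kills $\bD_{\bgamma}$ and $\bD_{\sigma}$, but the remainder is still $o(\|\Delta\bzeta\|)$, which involves $\|\Delta\bgamma\|$ and $|\Delta\sigma|$. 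The hypotheses of the theorem give only continuity of $\bgamma(P_{h,\bs})$ and $\sigma(P_{h,\bs})$ at $h=0$, not any rate; so you cannot conclude that this remainder is $o(h)$, and hence you cannot deduce from the $\bbeta$-equation alone that $\Delta\bbeta/h$ converges. Appealing to the full system to first get $\Delta\bgamma=O(h)$ and $\Delta\sigma=O(h)$ would require differentiability assumptions on $\Lambda_{\bgamma}$ and on the scale equation that are \emph{not} among the theorem's hypotheses.

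The fix is to use a stronger consequence of the point-of-symmetry hypothesis than the one you stated. Since $\Psi_{\bbeta}(\bs,\bbeta,\bgamma,\sigma)=u_1(d/\sigma)\,\bX^T\bV(\bgamma)^{-1}(\by-\bX\bbeta)$ is odd in $\by-\bX\bbeta$ for \emph{every} fixed $(\bgamma,\sigma)$, symmetry gives $\Lambda_{\bbeta}(\bbeta_1(P),\bgamma,\sigma)=\mathbf{0}$ identically in $(\bgamma,\sigma)$ --- not merely that its $(\bgamma,\sigma)$-derivatives vanish at the true value. The paper exploits this by expanding $\Lambda_{\bbeta}$ in $\bbeta$ only, around the \emph{moving} base point $(\bbeta_1(P),\bgamma_{h,\bs},\sigma_{h,\bs})$:
\[
\Lambda_{\bbeta}(\bxi_{h,\bs},\sigma_{h,\bs})
=\Lambda_{\bbeta}(\bbeta_1(P),\bgamma_{h,\bs},\sigma_{h,\bs})
+(\bD_{\bbeta}+o(1))\bigl(\bbeta_1(P_{h,\bs})-\bbeta_1(P)\bigr).
\]
The first term is exactly zero by the identity above, and the remainder is $o(\|\bbeta_1(P_{h,\bs})-\bbeta_1(P)\|)$, which involves only $\Delta\bbeta$. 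Substituting into your \eqref{eq:prop-implicit} then gives $\Delta\bbeta=O(h)$ and the claimed formula for the influence function, using nothing about $\Delta\bgamma$ or $\Delta\sigma$ beyond $o(1)$.
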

The expression of $\text{\rm IF}(\bs;\bbeta_1,P)$ in Theorem~\ref{th:point of symmetry} 
is similar to that of the influence function of the regression MM-functional considered in Lopuha\"a~\cite{lopuhaa2023}
defined with~$\rho_1$ and initial covariance functional $\bV_0$, and both expressions
coincide when $\bV_0(P_{h,\bs})\to\sigma^2(P)\bV(\bgamma(P))$.

If one is allowed to interchange integration and differentiation in $\bD_{\bbeta}$, then
the expression for $\text{\rm IF}(\bs_0;\bbeta_1,P)$ in Corollary~\ref{th:point of symmetry}
coincides with that of the regression MM-functional in the multiple linear regression model considered in 
Yohai~\cite{yohai1987}.
For linear mixed effects models, multivariate linear regression models, or multivariate location-scatter model, 
expressions for the influence function of MM-functionals are either not available
or are restricted to distributions with an elliptically contoured density.
This situation is discussed in the next section for model~\eqref{def:model}.

\subsection{Elliptically contoured densities}
\label{subsec:IF elliptical}
We can obtain an even more detailed expression for the influence functions,
when $\bV$ has a linear structure and~$P$ has the following property.
\begin{itemize}
\item[(E)]
There exists $(\bbeta^*,\btheta^*)\in\R^q\times\bTheta$,
such that for all $\bX$, the distribution of $\by\mid\bX$ has an elliptically contoured density
\begin{equation}
\label{eq:elliptical}
f_{\bmu,\bSigma}(\by)
=
|\bSigma|^{-1/2}
m\left(
(\by-\bmu)^T
\bSigma^{-1}
(\by-\bmu)
\right),
\end{equation}
with $\bmu=\bX\bbeta^*$ and $\bSigma=\bV(\btheta^*)$ and $m:[0,\infty)\to[0,\infty)$.
\end{itemize}
We require the following condition on the mapping $\btheta\mapsto\bV(\btheta)$.
\begin{itemize}
\item[(V5)]
$\bV(\btheta)$ is twice continuously differentiable.
\end{itemize}
Condition (V5) is needed to
interchange the order of integration and differentiation in $\partial\Lambda/\partial\bxi$,
where $\Lambda$ is defined in~\eqref{def:Lambda}.
Clearly, condition (V5) implies former conditions (V4) and~(V1).

When the MM-functionals are affine equivariant,
then it suffices to determine the influence function for the case $(\bmu,\bSigma)=(\mathbf{0},\bI_k)$.
However, this does not hold in general for the MM-functionals in our setting.
Nevertheless, for the general case with $\bmu\in\R^k$ and $\bSigma\in\text{PDS}(k)$,
we can still use the fact that, conditionally on $\bX$, the distribution of $\by$ is the same as that of
$\bSigma^{1/2}\bz+\bmu$, where~$\bz$ has a spherical density $f_{\mathbf{0},\bI_k}$.
Let 
\begin{equation}
\label{def:alpha1-gamma1}
\begin{split}
\alpha_1
&=
\mathbb{E}_{\mathbf{0},\bI_k}
\left[
\left(
1-\frac{1}{k}
\right)
\frac{\rho_1'\left(c_\sigma\|\bz\|\right)}{c_\sigma\|\bz\|}
+
\frac1k
\rho_1''\left(c_\sigma\|\bz\|\right)
\right],\\
\gamma_1
&=
\frac{\mathbb{E}_{0,\mathbf{I}_k}
\left[
\rho_1''(c_\sigma\|\bz\|)(c_\sigma\|\bz\|)^2+(k+1)\rho_1'(c_\sigma\|\bz\|)c_\sigma\|\bz\|
\right]}{k+2},
\end{split}
\end{equation}
where $c_\sigma=|\bSigma|^{1/(2k)}/\sigma(P)$.
The influence functions of the MM-functionals~$\bbeta_1$ 
and~$\bgamma$ at distributions $P$, such that~$\by\mid\bX$ has an elliptical contoured density, 
are now given by the following theorem.
\begin{theorem}
\label{th:IF elliptical}
Suppose $P$ satisfies~(E) for some $(\bbeta^*,\btheta^*)\in\R^q\times\bTheta$
and~$\E\|\bX\|^2<\infty$.
Let~$\rho_1$ satisfy (R2), (R4)-(R5) 
and suppose $\bV$ satisfies~(V5) and has a linear structure~\eqref{def:V linear}.
Let $\sigma(P)$ be a solution of~\eqref{def:sigma} and let 
$\bxi(P)\in \mathfrak{D}$ be a local minimum of~$R_P(\bbeta,\bV(\bgamma))$
that satisfies $\bbeta_1(P)=\bbeta^*$ and $\bV(\bgamma(P))=\bSigma/|\bSigma|^{1/k}$.
Let~$\sigma(P_{h,\bs})$ be a solution of~\eqref{def:sigma} with $P=P_{h,\bs}$
and let $\bxi(P_{h,\bs})\in \mathfrak{D}$ be a local minimum of $R_P(\bbeta,\bV(\bgamma))$ with~$P=P_{h,\bs}$.
Suppose that $(\bxi(P_{h,\bs}),\sigma(P_{h,\bs}))\to(\bxi(P),\sigma(P))$,
as $h\downarrow0$.
Let $\alpha_1$ be defined in~\eqref{def:alpha1-gamma1} and suppose that~$\alpha_1\ne0$.
Suppose that~$\bX$ has full rank with probability one.
Then, for $\bs_0\in\R^k\times\R^{kq}$,
\[
\text{\rm IF}(\bs_0;\bbeta_1,P)
=
\frac{u_1\left(c_\sigma\|\bz_0\|\right)}{\alpha_1}
\left(
\E\left[
\bX^T\bSigma^{-1}\bX
\right]
\right)^{-1}
\bX_0^T\bSigma^{-1/2}\bz_0,
\]
where $u_1(s)=\rho_1'(s)/s$,
$\bz_0=\bSigma^{-1/2}(\by_0-\bX_0\bbeta^*)$, and  $c_\sigma=|\bSigma|^{1/(2k)}/\sigma(P)$.
In addition, 
suppose that $\text{\rm IF}(\bs;\sigma,P)$ exists.
Let $\gamma_1$ be defined in~\eqref{def:alpha1-gamma1} and suppose that $\gamma_1>0$.
Suppose the $k^2\times l$ matrix~$\bL$, as defined in~\eqref{def:L}, has full rank.
Then, for $\bs_0\in\R^k\times\R^{kq}$,
\[
\begin{split}
\text{\rm IF}(\bs_0;\bgamma,P)
&=
\frac{ku_1\left(c_\sigma\|\bz_0\|\right)}{\sigma^2(P)\gamma_1}
\Big(\bL^T(\bSigma^{-1}\otimes\bSigma^{-1})\bL\Big)^{-1}
\bL^T\left(\bSigma^{-1/2}\otimes\bSigma^{-1/2}\right)
\vc\left(\bz_0\bz_0^T\right)\\
&\quad-
\frac{v_1\left(c_\sigma\|\bz_0\|\right)}{|\bSigma|^{1/k}\gamma_1}
\btheta^*,
\end{split}
\]
where $v_1(s)=\rho_1'(s)s$.
\end{theorem}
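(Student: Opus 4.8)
The plan is to differentiate the score equation of Proposition~\ref{prop:score equations linear} along the contamination path $P_{h,\bs_0}=(1-h)P+h\delta_{\bs_0}$ and then exploit the spherical symmetry granted by~(E) to evaluate every integral in closed form. \emph{Step 1 (linearization).} Both $\bxi(P)=(\bbeta_1(P),\bgamma(P))$ and $\bxi(P_{h,\bs_0})$ solve $\int\Psi(\bs,\bxi,\sigma(\cdot))\,\dd(\cdot)=\mathbf 0$ with $\Psi=(\Psi_{\bbeta},\Psi_{\bgamma})$ from~\eqref{def:Psi linear}. Writing $\Lambda(\bxi,\sigma)=\int\Psi(\bs,\bxi,\sigma)\,\dd P(\bs)$ one has $(1-h)\Lambda(\bxi(P_{h,\bs_0}),\sigma(P_{h,\bs_0}))+h\Psi(\bs_0,\bxi(P_{h,\bs_0}),\sigma(P_{h,\bs_0}))=\mathbf 0$. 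The continuity $(\bxi(P_{h,\bs_0}),\sigma(P_{h,\bs_0}))\to(\bxi(P),\sigma(P))$ is assumed; conditions~(R4)--(R5), (V5) and $\E\|\bX\|^2<\infty$ permit differentiation of $\Lambda$ under the integral sign; and $\Lambda(\bxi(P),\sigma(P))=\mathbf 0$. Differentiating at $h=0$ and using that $\text{\rm IF}(\bs_0;\sigma,P)$ exists gives
\[
\frac{\partial\Lambda}{\partial\bxi}\,\text{\rm IF}(\bs_0;\bxi,P)+\frac{\partial\Lambda}{\partial\sigma}\,\text{\rm IF}(\bs_0;\sigma,P)=-\Psi(\bs_0,\bxi(P),\sigma(P)),
\]
all derivatives evaluated at $(\bxi(P),\sigma(P))$; this is the content of the relevant influence-function lemma in~\cite{supplement2025}. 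For the $\bbeta_1$-block one may instead invoke Theorem~\ref{th:point of symmetry}, since under~(E) the center $\bbeta^*=\bbeta_1(P)$ is a point of symmetry of $P$.

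\emph{Step 2 (decoupling under (E)).} By~(E), conditionally on $\bX$ the vector $\by-\bX\bbeta^*$ is distributed as $\bSigma^{1/2}\bz$ with $\bz$ spherical and with conditional law not depending on $\bX$, and $\bxi(P)=(\bbeta^*,\bgamma(P))$ with $\bV(\bgamma(P))=\bSigma/|\bSigma|^{1/k}$; linearity of $\bV$ and full column rank of $\bL$ force $\bgamma(P)=\btheta^*/|\bSigma|^{1/k}$. Since $\Psi_{\bbeta}$ is odd and $\Psi_{\bgamma}$ (through $\Psi_{\bV}$) is even in $\by-\bX\bbeta^*$, the cross blocks $\partial\Lambda_{\bbeta}/\partial\bgamma$, $\partial\Lambda_{\bgamma}/\partial\bbeta$ and $\partial\Lambda_{\bbeta}/\partial\sigma$ all vanish by symmetry. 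The key observation is that a direct spherical computation gives $\int\Psi_{\bV}(\bs,\bxi(P),\sigma)\,\dd P(\bs)=\mathbf 0$ for \emph{every} $\sigma>0$: indeed the mean of $k\,u_1(d/\sigma)(\by-\bX\bbeta^*)(\by-\bX\bbeta^*)^T$ equals $\sigma^2\E[v_1(d/\sigma)]\,\bV(\bgamma(P))$, while $\log|\bV(\bgamma(P))|=0$; hence $\partial\Lambda_{\bgamma}/\partial\sigma=\mathbf 0$ too. Consequently the $\text{\rm IF}(\bs_0;\sigma,P)$-term drops out of both components, leaving $\text{\rm IF}(\bs_0;\bbeta_1,P)=-\bD_{\bbeta}^{-1}\Psi_{\bbeta}(\bs_0,\bxi(P),\sigma(P))$ and $\text{\rm IF}(\bs_0;\bgamma,P)=-\bD_{\bgamma}^{-1}\Psi_{\bgamma}(\bs_0,\bxi(P),\sigma(P))$, with $\bD_{\bbeta}=\partial\Lambda_{\bbeta}/\partial\bbeta$ and $\bD_{\bgamma}=\partial\Lambda_{\bgamma}/\partial\bgamma$ at $(\bxi(P),\sigma(P))$.

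\emph{Step 3 (evaluation).} Differentiating $\Psi_{\bbeta}=u_1(d/\sigma)\bX^T\bV^{-1}(\by-\bX\bbeta)$ in $\bbeta$, taking $\E[\,\cdot\mid\bX]$ and using $\E[\psi(\|\bz\|)\bz\bz^T]=k^{-1}\E[\psi(\|\bz\|)\|\bz\|^2]\bI_k$ together with $u_1(s)s^2=v_1(s)$ yields $\bD_{\bbeta}=-|\bSigma|^{1/k}\alpha_1\,\E[\bX^T\bSigma^{-1}\bX]$, nonsingular since $\bX$ has full rank a.s.\ and $\alpha_1\ne0$; inserting $\Psi_{\bbeta}(\bs_0,\bxi(P),\sigma(P))=u_1(c_\sigma\|\bz_0\|)|\bSigma|^{1/k}\bX_0^T\bSigma^{-1/2}\bz_0$ gives the first formula. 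For $\bD_{\bgamma}$ one differentiates the three pieces of $\Psi_{\bV}$ and the factor $\bV^{-1}\otimes\bV^{-1}$ in $\bgamma$, evaluates at $\bV=\bSigma/|\bSigma|^{1/k}$, and reduces the fourth-order spherical moments through $\E[\psi(\|\bz\|)(\bz^T\bA\bz)\bz\bz^T]=\{k(k+2)\}^{-1}\E[\psi(\|\bz\|)\|\bz\|^4]\,(\tr(\bA)\bI_k+\bA+\bA^T)$; together with $\int\Psi_{\bV}=\mathbf 0$ (which kills the prefactor term), $\bSigma^{-1}=|\bSigma|^{-1/k}\bV(\bgamma(P))^{-1}$, and the identity $\bL^T\vc(\bSigma^{-1})=\bL^T(\bSigma^{-1}\otimes\bSigma^{-1})\bL\,\btheta^*$, this produces $\bD_{\bgamma}=|\bSigma|^{2/k}\bigl(\sigma^2(P)\gamma_1\,\bL^T(\bSigma^{-1}\otimes\bSigma^{-1})\bL+c\,\bL^T\vc(\bSigma^{-1})\vc(\bSigma^{-1})^T\bL\bigr)$ for an explicit scalar $c$ arising from the term $-\bV\log|\bV|$. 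Using $\bL^T\vc(\bSigma^{-1})=\bL^T(\bSigma^{-1}\otimes\bSigma^{-1})\bL\,\btheta^*$, $\btheta^{*T}\bL^T\vc(\bSigma^{-1})=k$, and $u_1(s)s^2=v_1(s)$, a Sherman--Morrison inversion of $\bD_{\bgamma}$ applied to $\Psi_{\bgamma}(\bs_0,\bxi(P),\sigma(P))=-k\,u_1(c_\sigma\|\bz_0\|)|\bSigma|^{2/k}\bL^T(\bSigma^{-1/2}\otimes\bSigma^{-1/2})\vc(\bz_0\bz_0^T)+v_1(c_\sigma\|\bz_0\|)\sigma^2(P)|\bSigma|^{1/k}\bL^T\vc(\bSigma^{-1})$ makes the rank-one pieces cancel and leaves exactly the stated expression for $\text{\rm IF}(\bs_0;\bgamma,P)$ (its second term being the $\btheta^*$-part).

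The main obstacle is the evaluation of $\bD_{\bgamma}=\partial\Lambda_{\bgamma}/\partial\bgamma$ and the ensuing simplification: one must differentiate a matrix-valued map carrying both the $\bV^{-1}\otimes\bV^{-1}$ factor \emph{and} the Lagrange-type term $-\bV\log|\bV|$, reduce the fourth-order spherical moments, and then verify that all the rank-one contributions in the direction $\bL^T\vc(\bSigma^{-1})$ -- in particular the one generated by $-\bV\log|\bV|$, which is precisely what renders $\bD_{\bgamma}$ nonsingular on all of $\R^l$ rather than only on the tangent space of $\{|\bV(\bgamma)|=1\}$ -- combine into the clean closed form. A convenient consistency check is that the resulting $\text{\rm IF}(\bs_0;\bgamma,P)$ is orthogonal to $\bL^T\vc(\bSigma^{-1})$, i.e.\ tangent to the constraint surface, which follows from $\bL^T\vc(\bSigma^{-1})=\bL^T(\bSigma^{-1}\otimes\bSigma^{-1})\bL\,\btheta^*$ and $\btheta^{*T}\bL^T\vc(\bSigma^{-1})=k$.
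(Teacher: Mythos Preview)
Your proposal is correct and follows essentially the same route as the paper: linearize the score equation from Proposition~\ref{prop:score equations linear} along $P_{h,\bs_0}$ (Lemma~\ref{lem:IF xi sigma} and Theorem~\ref{th:point of symmetry}), use spherical symmetry under~(E) to get the block-diagonal structure of $\bD_{\bxi}$ and $\bD_\sigma=\mathbf 0$ (Lemmas~\ref{lem:Lambda derivative} and~\ref{lem:Dsigma=0}), compute $\bD_{\bbeta}$ and $\bD_{\bgamma}$ explicitly, and invert $\bD_{\bgamma}$ via Sherman--Morrison (Lemma~\ref{lem:inverse}). One small variation worth noting: for $\partial\Lambda_{\bgamma}/\partial\sigma=\mathbf 0$ the paper differentiates $\Psi_{\bV}$ in $\sigma$ and then integrates, whereas you observe directly that $\int\Psi_{\bV}(\bs,\bxi(P),\sigma)\,\dd P(\bs)=\mathbf 0$ for \emph{all} $\sigma>0$, which is a slightly cleaner argument. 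Your description of the scalar $c$ in $\bD_{\bgamma}$ is a bit loose---in the paper's notation $\bD_{\bgamma}=\omega_1\bL^T(\bSigma^{-1}\otimes\bSigma^{-1})\bL-\omega_2\bL^T\vc(\bSigma^{-1})\vc(\bSigma^{-1})^T\bL$ with $\omega_2=\omega_1/k+|\bSigma|^{2/k}$, so the rank-one coefficient receives contributions from all three pieces of $\partial\Psi_{\bV}/\partial\gamma_s$, not only from $-\bV\log|\bV|$---but your key observation that the $-\bV\log|\bV|$ contribution is precisely what makes $\omega_1-k\omega_2=-k|\bSigma|^{2/k}\ne0$ (and hence $\bD_{\bgamma}$ invertible) is correct and matches Lemma~\ref{lem:inverse}.
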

When $c_\sigma=1$ (e.g., see Theorem~\ref{th:davies}(iii)), 
then $\text{\rm IF}(\bs_0;\bbeta_1,P)$ coincides with the influence function
of the regression MM-functional considered in Lopuha\"a~\cite{lopuhaa2023}
defined with~$\rho_1$ and an initial Fisher consistent covariance functional $\bV_0$.
Moreover, it also coincides with the influence function of the regression S-functional defined with~$\rho_1$
for model~\eqref{def:model},
see Corollary~8.4 in Lopuha\"a \emph{et al}~\cite{lopuhaa-gares-ruizgazen2023}.
This confirms the claim made by Salibi\'an-Barrera \emph{et al}~\cite{SalibianBarrera-VanAelst-Willems2006}
about the influence function of the location MM-functional in the model
of Example~\ref{ex:multivariate location-scatter}.

For the linear mixed effects model~\eqref{def:linear mixed effects model Copt},
Copt and Heritier~\cite{copt&heritier2007} discuss the influence function, but an expression is not provided.
The expression for the influence function of the regression MM-functional now follows from 
Theorem~\ref{th:IF elliptical}.
For the multivariate linear regression model of Example~\ref{ex:multivariate linear regression}, 
the expression for $\text{\rm IF}(\bs_0;\bbeta_1,P)$ in Theorem~\ref{th:IF elliptical}
coincides with one found for the regression MM-functional in Kudraszow and Maronna~\cite{kudraszow-maronna2011}.
Since the multivariate location-scatter model of Example~\ref{ex:multivariate location-scatter} 
is a special case of the multivariate linear regression model
by taking $\bx=1$ and $\bB^T=\bmu$, 
this also yields the expression for the influence function of the location MM-functional.
Finally, there is an interesting connection with the CM-functionals considered in Kent and Tyler~\cite{kent&tyler1996},
whose influence function depends on a parameter $\lambda_0$. 
The expression for $\text{\rm IF}(\bs_0;\bbeta_1,P)$ in Theorem~\ref{th:IF elliptical} is similar
to the one for the location CM-functional and they both coincide when $c_\sigma=1/\sqrt{\lambda_0}$.

It remains to determine the influence function of the variance component MM-functional~$\btheta_1$.
This is done in the next corollary.
\begin{corollary}
\label{cor:IF theta1}
Suppose that the conditions hold of Theorem~\ref{th:IF elliptical}
and suppose that~$\rho_0$ satisfies~(R2) and~(R4).
Let $\bzeta_0=(\bbeta_0,\btheta_0)$ be the pair of initial functionals
satisfying $(\bbeta_0(P),\btheta_0(P))=(\bbeta^*,\btheta^*)$,
and suppose that $\text{\rm IF}(\bs,\bzeta_0,P)$ exists.
Let $\btheta_1(P)$ and~$\btheta_1(P_{h,\bs_0})$ be solutions of equation~\eqref{def:theta1}
and equation~\eqref{def:theta1} with~$P=P_{h,\bs_0}$, respectively.
Let $u_1(s)=\rho_1'(s)/s$ and $v_1(s)=\rho_1'(s)s$,
and let $\bL$ be defined in~\eqref{def:L}.
Let $c_\sigma=|\bSigma|^{1/(2k)}/\sigma(P)$ and suppose that 
$\E_{\mathbf{0},\bI_k}[
\rho_0'
\left(
c_\sigma\|\bz\|
\right)
c_\sigma\|\bz\|]
>0$.
Then, for $\bs_0\in\R^k\times\R^{kq}$,
\[
\begin{split}
\text{\rm IF}(\bs_0,\btheta_1,P)
&=
\frac{ku_1\left(c_\sigma\|\bz_0\|\right)}{\gamma_1}
\Big(\bL^T(\bSigma^{-1}\otimes\bSigma^{-1})\bL\Big)^{-1}
\bL^T\left(\bSigma^{-1/2}\otimes\bSigma^{-1/2}\right)
\vc\left(\bz_0\bz_0^T\right)\\
&\qquad-
\frac{1}{c_\sigma^2}
\left(
\frac{v_1(c_\sigma\|\bz_0\|)}{\gamma_1}
-
\frac{2(\rho_0(c_\sigma\|\bz_0\|)-b_0)}{\E_{\mathbf{0},\bI_k}
\left[
\rho_0'(c_\sigma\|\bz\|)c_\sigma\|\bz\|
\right]}
\right)
\btheta^*,
\end{split}
\]
where $\bz_0=\bSigma^{-1/2}(\by_0-\bX_0\bbeta^*)$.
\end{corollary}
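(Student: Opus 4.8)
The plan is to reduce the influence function of $\btheta_1$ to those of $\sigma$ and $\bgamma$, using the explicit algebraic link between these functionals supplied by the linear structure~\eqref{def:V linear}, and then to insert the expression for $\text{\rm IF}(\bs_0;\bgamma,P)$ already obtained in Theorem~\ref{th:IF elliptical} together with a short direct computation of $\text{\rm IF}(\bs_0;\sigma,P)$.

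First I would observe that, since $\bV(\bgamma)=\sum_j\gamma_j\bL_j$ and $\bL$ from~\eqref{def:L} has full rank, the map $\bgamma\mapsto\bV(\bgamma)$ is injective, so equation~\eqref{def:theta1} forces the functional identity $\btheta_1(Q)=\sigma^2(Q)\,\bgamma(Q)$ at every $Q$ for which the right-hand side is defined; by Corollary~\ref{cor:existence weak convergence} and Theorem~\ref{th:continuity} this includes $Q=P$ and $Q=P_{h,\bs_0}$ for $h$ small, with $(\sigma(P_{h,\bs_0}),\bgamma(P_{h,\bs_0}))\to(\sigma(P),\bgamma(P))$. At $P$, the assumed identity $\bV(\bgamma(P))=\bSigma/|\bSigma|^{1/k}$ combined with $\bV(\btheta^*)=\bSigma$ and linearity gives $\bgamma(P)=\btheta^*/|\bSigma|^{1/k}$, hence $\btheta_1(P)=\btheta^*/c_\sigma^2$. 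Because $\text{\rm IF}(\bs_0;\bgamma,P)$ exists by Theorem~\ref{th:IF elliptical} and $\text{\rm IF}(\bs_0;\sigma,P)$ exists by assumption, the product (Leibniz) rule for influence functions yields
\[
\text{\rm IF}(\bs_0;\btheta_1,P)
=
\frac{2\sigma(P)}{|\bSigma|^{1/k}}\,\text{\rm IF}(\bs_0;\sigma,P)\,\btheta^*
+
\sigma^2(P)\,\text{\rm IF}(\bs_0;\bgamma,P).
\]

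Next I would compute $\text{\rm IF}(\bs_0;\sigma,P)$ by differentiating equation~\eqref{def:sigma} with $P$ replaced by $P_{h,\bs_0}$ at $h=0$; interchanging differentiation and integration is justified by condition~(R4) for $\rho_0$, condition~(V5) on $\bV$, and $\E\|\bX\|^2<\infty$. Writing $\by=\bX\bbeta^*+\bSigma^{1/2}\bz$ with $\bz$ spherical and using $\bGamma(\btheta^*)^{-1}=|\bSigma|^{1/k}\bSigma^{-1}$, the Mahalanobis distance in~\eqref{def:sigma} evaluated at $(\bbeta_0(P),\btheta_0(P))=(\bbeta^*,\btheta^*)$ equals $|\bSigma|^{1/(2k)}\|\bz\|$, so its ratio to $\sigma(P)$ is $c_\sigma\|\bz\|$. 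The chain rule applied to equation~\eqref{def:sigma} at $P_{h,\bs_0}$ produces four contributions: the $\partial/\partial\sigma$ term, equal to $-\sigma(P)^{-1}\E_{\mathbf{0},\bI_k}[\rho_0'(c_\sigma\|\bz\|)c_\sigma\|\bz\|]\cdot\text{\rm IF}(\bs_0;\sigma,P)$, which is nonzero by the hypothesis on that expectation; the $\text{\rm IF}(\bs_0;\bbeta_0,P)$ term, which vanishes because $\bbeta^*=\bbeta_0(P)$ is a point of symmetry of $P$ and the corresponding integrand is odd in $\by-\bX\bbeta^*$; the $\text{\rm IF}(\bs_0;\btheta_0,P)$ term, which vanishes because, after taking the spherical expectation, it is proportional to $\tr\bigl(\bGamma(\btheta^*)^{-1}\partial\bGamma(\btheta_0(P))/\partial\theta_{0,j}\bigr)=\partial\log|\bGamma(\btheta_0)|/\partial\theta_{0,j}=0$, the last equality holding because $|\bGamma(\btheta_0)|\equiv1$ by construction of $\bGamma$; and the Dirac term $\rho_0(c_\sigma\|\bz_0\|)-b_0$. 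Solving gives
\[
\text{\rm IF}(\bs_0;\sigma,P)
=
\frac{\sigma(P)\bigl(\rho_0(c_\sigma\|\bz_0\|)-b_0\bigr)}
{\E_{\mathbf{0},\bI_k}\bigl[\rho_0'(c_\sigma\|\bz\|)c_\sigma\|\bz\|\bigr]}.
\]

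Finally I would substitute this and the formula for $\text{\rm IF}(\bs_0;\bgamma,P)$ from Theorem~\ref{th:IF elliptical} into the displayed identity, cancel the factor $\sigma^2(P)$ against the $\sigma^{-2}(P)$ in the quadratic-form term of $\text{\rm IF}(\bs_0;\bgamma,P)$, and use $\sigma^2(P)/|\bSigma|^{1/k}=c_\sigma^{-2}$ to combine the two remaining multiples of $\btheta^*$ — namely $-v_1(c_\sigma\|\bz_0\|)/(c_\sigma^2\gamma_1)$ coming from $\text{\rm IF}(\bs_0;\bgamma,P)$ and $2(\rho_0(c_\sigma\|\bz_0\|)-b_0)/\bigl(c_\sigma^2\E_{\mathbf{0},\bI_k}[\rho_0'(c_\sigma\|\bz\|)c_\sigma\|\bz\|]\bigr)$ coming from $\text{\rm IF}(\bs_0;\sigma,P)$ — into the asserted coefficient of $\btheta^*$; this reproduces the stated formula. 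The main obstacle is the computation of $\text{\rm IF}(\bs_0;\sigma,P)$, and within it the vanishing of the $\btheta_0$-contribution, which rests on the determinant-one normalisation of $\bGamma$; along the way one must also check that the spherical expectations occurring (in particular through $\E_{\mathbf{0},\bI_k}[\rho_0'(c_\sigma\|\bz\|)\bz\bz^T/\|\bz\|]=k^{-1}\E_{\mathbf{0},\bI_k}[\rho_0'(c_\sigma\|\bz\|)\|\bz\|]\,\bI_k$) are finite, which follows from~(R2) for $\rho_0$ since then $\rho_0'$ is bounded with bounded support.
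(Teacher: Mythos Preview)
Your proposal is correct and follows essentially the same approach as the paper: reduce $\btheta_1=\sigma^2\bgamma$ via the linear structure, apply the product rule to get $\text{\rm IF}(\bs_0;\btheta_1,P)=\sigma^2(P)\,\text{\rm IF}(\bs_0;\bgamma,P)+2\sigma(P)\bgamma(P)\,\text{\rm IF}(\bs_0;\sigma,P)$ with $\bgamma(P)=\btheta^*/|\bSigma|^{1/k}$, and then insert the expressions for $\text{\rm IF}(\bs_0;\bgamma,P)$ from Theorem~\ref{th:IF elliptical} and for $\text{\rm IF}(\bs_0;\sigma,P)$. The paper packages the computation of $\text{\rm IF}(\bs_0;\sigma,P)$ into a separate lemma (Lemma~\ref{lem:IF sigma elliptical}, built on Lemmas~\ref{lem:IF sigma zeta} and~\ref{lem:D0zeta=0}), whereas you compute it inline; your observation that the $\btheta_0$-contribution vanishes because $\tr\bigl(\bGamma^{-1}\partial\bGamma/\partial\theta_j\bigr)=\partial\log|\bGamma|/\partial\theta_j=0$ is a slightly cleaner way to phrase what the paper verifies by direct expansion of $\partial\bGamma/\partial\theta_j$.
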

Note that for $\bV$ with a linear structure, one has $\vc(\bV(\btheta_1(P)))=\bL\btheta_1(P)$.
Hence, the influence function for the covariance MM-functional follows immediately from Corollary~\ref{cor:IF theta1}:
\begin{equation}
\label{eq:IF Vtheta1}
\text{\rm IF}(\bs_0,\vc(\bV(\btheta_1)),P)
=
\bL
\text{\rm IF}(\bs_0,\btheta_1,P).
\end{equation}
Since $\rho_1'(s)s$ and $\rho_0(s)$ are bounded,
it follows that the influence functions of $\bgamma$, $\btheta_1$, and~$\bV(\btheta_1)$
are bounded uniformly in both $\by_0$ and $\bX_0$,
whereas $\mathrm{IF}(\bs_0,\bbeta_1,P)$ is bounded uniformly in $\by_0$, but not in $\bX_0$.

The expressions for the influence function of the covariance MM-functionals in Corollary~\ref{cor:IF theta1} and~\eqref{eq:IF Vtheta1}
are characterized by two real-valued functions,
\begin{equation}
\label{def:alphaC betaC}
\alpha_C(s)=\frac{k\rho_1'(s)}{\gamma_1s},
\quad
\beta_C(s)=
\frac{1}{c_\sigma^2}
\left(
\frac{\rho_1'(s)s}{\gamma_1}
-
\frac{2(\rho_0(s)-b_0)}{\E_{\mathbf{0},\bI_k}
\left[
\rho_0'(c_\sigma\|\bz\|)c_\sigma\|\bz\|
\right]}
\right).
\end{equation}
When $c_\sigma=1$, using that for $\bV$ with a linear structure, it holds $\bL\btheta^*=\vc(\bSigma)$,
this matches with the characterization of general structured covariance functionals obtained in Lopuha\"a~\cite{lopuhaa2025}. 
Such a characterization was already observed by Croux and Haesbroeck~\cite{croux-haesbroeck2000} for affine equivariant covariance functionals.
The function $\alpha_C$ in~\eqref{def:alphaC betaC} coincides with the one for the covariance S-functional defined with~$\rho_1$,
see Corollary~8.4 in Lopuha\"a \emph{et al}~\cite{lopuhaa-gares-ruizgazen2023}.
The function~$\beta_C$ in~\eqref{def:alphaC betaC} (with $c_\sigma=1$) has the same structure as the one for covariance S-functionals,
but the first term is built from~$\rho_1$, whereas the second term is built from $\rho_0$.
As expected, when $\rho_0=\rho_1=\rho$, the above characterization coincides with the 
influence function of the covariance S-functional defined with loss function~$\rho$.

Furthermore, for covariance functionals~$\bC$, it holds that
the influence function of a scale invariant mapping~$H(\bC)$,
i.e., $H(\lambda\bC)=H(\bC)$, for $\lambda>0$, only depends on the function~$\alpha_C$,
see~(8.3) in Kent and Tyler~\cite{kent&tyler1996} for covariance CM-functionals
or see Lemma~2 in Lopuha\"a~\cite{lopuhaa2025} for linearly structured covariance functionals.
Because the characterizations of the influence functions of covariance MM- and S-functionals have
the same function $\alpha_C$, it follows that the influence functions of any scale invariant mapping 
of covariance MM- and S-functionals are the same.
A typical example is the shape component $\bGamma(\btheta_1)$ of the covariance 
MM-functional, where $\bGamma$ is defined in~\eqref{def:Gamma}.
Lemma~2 in Lopuha\"a~\cite{lopuhaa2025}, together with Corollary~\ref{cor:IF theta1} and~\eqref{eq:IF Vtheta1},
yields that $\bGamma(\btheta_1)$ has influence function
\begin{equation}
\label{eq:IF Vgamma}
\begin{split}
\text{IF}(\bs_0;\bGamma(\btheta_1),P)
=
\frac{ku_1(c_\sigma\|\bz_0\|)}{\sigma^2(P)\gamma_1}
\bigg\{
\bL\Big(\bL^T(\bSigma^{-1}\otimes\bSigma^{-1})\bL)\Big)^{-1}
\bL^T
\left(\bSigma^{-1/2}\otimes\bSigma^{-1/2}\right)\\
\vc\left(\bz_0\bz_0^T\right)
-
\frac{\|\bz_0\|^2}{k}
\vc(\bSigma)
\bigg\},
\end{split}
\end{equation}
where $\bz_0=\bSigma^{-1/2}(\by_0-\bX_0\bbeta^*)$ 
and $\gamma_1$ is defined in~\eqref{def:alpha1-gamma1}.
When $c_\sigma=1$, then this is the same as the influence function of the shape S-functional defined with~$\rho_1$,
see Example~6 in Lopuha\"a~\cite{lopuhaa2025}.
For the location-scatter model of Example~\ref{ex:multivariate location-scatter},
this confirms the claim made in Salibi\'an-Barrera \emph{et al}~\cite{SalibianBarrera-VanAelst-Willems2006}
for the shape MM-functional.

Similarly, the influence function of a scale invariant mapping of variance component functionals
only depends on the function $\alpha_C$, see Lemma~2 in Lopuha\"a~\cite{lopuhaa2025}.
Since the characterizations of the variance components MM- and S-functional share the same function $\alpha_C$,
see Corollary~\ref{cor:IF theta1} and Corollary~8.4 in Lopuha\"a \emph{et al}~\cite{lopuhaa-gares-ruizgazen2023},
it follows that the influence functions of any scale invariant mapping of variance component MM- and S-functionals are the same.
Examples are $\btheta/\|\btheta\|$ or $\btheta/|\bV(\btheta)|^{1/k}$, for linear covariance structures,
which represent a direction component of $\btheta$.
\begin{remark}
\label{rem:IF Vgamma and gamma}
From~\eqref{def:theta1} and the fact that $|\bV(\bgamma)|=1$, 
it follows that $\bGamma(\btheta_1)=\bV(\bgamma)$.
This means that $\bV(\bgamma)$ represents the shape component of $\bV(\btheta_1)$,
so that the expression in~\eqref{eq:IF Vgamma}
also coincides with the
influence function of~$\bV(\bgamma)$.
Similarly, the expression for~$\text{\rm IF}(\bs_0;\bgamma,P)$ in Theorem~\ref{th:IF elliptical}
coincides with the influence function of the direction component~$\btheta_1/|\bV(\btheta_1)|^{1/k}$, 
corresponding to the variance components MM-functional $\btheta_1$.
The influence function of the direction component $\btheta_1/\|\btheta_1\|$
can be found in Example~7 in Lopuha\"a~\cite{lopuhaa2025}.
\end{remark}
The results in Corollary~\ref{cor:IF theta1},  and in~\eqref{eq:IF Vtheta1} and~\eqref{eq:IF Vgamma} can be applied
to derive influence functions for the covariance functionals in the multivariate statistical models
of Examples~\ref{ex:LME model}, \ref{ex:multivariate linear regression}, and~\ref{ex:multivariate location-scatter}.
Details are given in the supplemental material~\cite{supplement2025}.
\section{Asymptotic normality}
\label{sec:asymp norm}
To establish asymptotic normality of the MM-estimators, we use the score equations
obtained in Proposition~\ref{prop:score equations}.
We will use score equation~\eqref{eq:Psi=0}, 
with~$P$ replaced by the empirical measure $\mathbb{P}_n$
corresponding to observations
$\bs_1,\ldots,\bs_n$, with $\bs_i=(\by_i,\bX_i)\in\R^k\times\R^{kq}$.
From Proposition~\ref{prop:score equations}, we see that any local minimum $\bxi_n=(\bbeta_{1,n},\bgamma_n)\in \mathfrak{D}$ 
of~$R_n(\bbeta,\bV(\bgamma))$ with $|\bV(\gamma)|=1$, must satisfy
\begin{equation}
\label{eq:M-equation estimator}
\int
\Psi(\mathbf{s},\bxi_n,\sigma_n)\,\dd \mathbb{P}_n(\mathbf{s})
=
\mathbf{0},
\end{equation}
where $\Psi=(\Psi_{\bbeta},\Psi_{\bgamma})$ is defined in~\eqref{def:Psi}
and $\sigma_n$ is a solution of~\eqref{def:initial estimators}.

\subsection{The general case}
\label{subsec:asymp norm general}
Since $\Psi$ also depends on $\sigma$ it can be expected that for general~$P$, 
the limiting behavior of $\bxi_n$ will depend on that of $\sigma_n$,
see Lemma~\ref{lem:asymp relation xi sigma} in the supplemental material~\cite{supplement2025}.
In turn, since $\sigma_n$ is a solution of~\eqref{def:initial estimators}, it can be expected
that in general its limiting behavior depends on that of the initial estimators 
$\bzeta_{0,n}=(\bbeta_{0,n},\btheta_{0,n})$, 
see Lemma~\ref{lem:asymp relation sigma zeta} in the supplemental material~\cite{supplement2025}.

Similar to Section~\ref{sec:IF}, the situation becomes somewhat simpler if the distribution $P$ 
has a point of symmetry.
The asymptotic expansion for $\sigma_n-\sigma(P)$ obtained in Lemma~\ref{lem:asymp relation sigma zeta} 
in~\cite{supplement2025} does simplify,
but details on the limiting distribution of $\bxi_n-\bxi(P)$ can still not be provided without further information on the limiting
behavior of $\bzeta_{0,n}-\bzeta_0(P)$.
The situation differs for $\bbeta_{1,n}-\bbeta_1(P)$, for which the 
limiting distribution is given by the following corollary.
\begin{theorem}
\label{th:asymp norm symmetry}
Suppose $\rho_1$ satisfies (R2) and~(R4), such that $u_1(s)$ is of bounded variation.
Suppose~$\bV$ satisfies~(V1) and $\E\|\bs\|^2<\infty$
Let $\sigma_n$ and $\sigma(P)$ be solutions of~\eqref{def:initial estimators} and~\eqref{def:sigma}, respectively,
and let $\bxi_n$ and~$\bxi(P)$ be local minima of $R_n(\bbeta,\bV(\bgamma))$ and 
$R_P(\bbeta,\bV(\bgamma))$, respectively.
Suppose $(\bxi_n,\sigma_n)\to(\bxi(P),\sigma(P))$, in probability,
and that~$\bbeta_1(P)$ is a point of symmetry of $P$.
Let $\Lambda_{\bbeta}$ be defined by~\eqref{def:Lambda-beta-gamma}
with $\Psi_{\bbeta}$ from~\eqref{def:Psi}
and suppose that $\Lambda_{\bbeta}$ is continuously differentiable with a non-singular derivative 
$\bD_{\bbeta}=\partial\Lambda_{\bbeta}/\partial\bbeta$ at $(\bxi(P),\sigma(P))$.
Then $\sqrt{n}(\bbeta_{1,n}-\bbeta_1(P))$ is asymptotically normal with mean zero and variance
\[
\bD_{\bbeta}^{-1}
\E\left[
\Psi_{\bbeta}(\mathbf{s},\bxi(P),\sigma(P))
\Psi_{\bbeta}(\mathbf{s},\bxi(P),\sigma(P))^T
\right]
\bD_{\bbeta}^{-1}.
\]
\end{theorem}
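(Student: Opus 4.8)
The starting point is the regression block of the empirical score equation~\eqref{eq:M-equation estimator},
\[
\frac1n\sum_{i=1}^{n}\Psi_{\bbeta}(\bs_i,\bxi_n,\sigma_n)=\mathbf{0}.
\]
Write $\bzeta_n=(\bxi_n,\sigma_n)$ and $\bzeta_0=(\bxi(P),\sigma(P))$, and decompose this as
\[
(\mathbb{P}_n-P)\Psi_{\bbeta}(\cdot,\bzeta_0)
+(\mathbb{P}_n-P)\left(\Psi_{\bbeta}(\cdot,\bzeta_n)-\Psi_{\bbeta}(\cdot,\bzeta_0)\right)
+\Lambda_{\bbeta}(\bzeta_n)=\mathbf{0},
\]
where we used $\Lambda_{\bbeta}(\bzeta_0)=\mathbf{0}$. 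This last identity is precisely where the point-of-symmetry hypothesis enters: fixing $\bX$ and writing the residual as $\by-\bX\bbeta_1(P)$, the integrand $\Psi_{\bbeta}(\bs,\bxi(P),\sigma(P))=u_1\left(d/\sigma(P)\right)\bX^{T}\bV(\bgamma(P))^{-1}(\by-\bX\bbeta_1(P))$ is an odd function of that residual, because $u_1(d/\sigma(P))$ depends on it only through the even quantity $d$, and an odd function of $\by-\bX\bbeta_1(P)$ has zero $P$-expectation when $\bbeta_1(P)$ is a point of symmetry.

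For the first term, the $\bs_i$ are i.i.d.\ and $\E\|\bs\|^{2}<\infty$ makes $\Psi_{\bbeta}(\bs,\bzeta_0)$ square-integrable, so the multivariate central limit theorem gives that $\sqrt n\,(\mathbb{P}_n-P)\Psi_{\bbeta}(\cdot,\bzeta_0)$ is asymptotically $N\big(\mathbf{0},\E[\Psi_{\bbeta}(\bs,\bzeta_0)\Psi_{\bbeta}(\bs,\bzeta_0)^{T}]\big)$. For the second, empirical-process term I would combine the consistency $(\bxi_n,\sigma_n)\to(\bxi(P),\sigma(P))$ with the hypothesis that $u_1$ is of bounded variation: $\Psi_{\bbeta}(\bs,\bzeta)$ depends on $\bzeta$ only through $u_1$ composed with a smooth function of $\bzeta$, multiplied by factors polynomial in $\bs$ with an $\|\bs\|^{2}$-integrable envelope, so the class $\{\bs\mapsto\Psi_{\bbeta}(\bs,\bzeta):\bzeta\text{ in a neighbourhood of }\bzeta_0\}$ is Donsker and $\sqrt n\,(\mathbb{P}_n-P)\left(\Psi_{\bbeta}(\cdot,\bzeta_n)-\Psi_{\bbeta}(\cdot,\bzeta_0)\right)=o_P(1)$ by stochastic equicontinuity. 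For the third term I would Taylor-expand the continuously differentiable map $\Lambda_{\bbeta}$ about $\bzeta_0$.

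The decisive simplification is that the Taylor expansion of $\Lambda_{\bbeta}(\bzeta_n)$ collapses to $\bD_{\bbeta}(\bbeta_{1,n}-\bbeta_1(P))+o(\|\bzeta_n-\bzeta_0\|)$: the oddness argument above in fact shows that $\Lambda_{\bbeta}(\bbeta_1(P),\bgamma,\sigma)=\mathbf{0}$ for \emph{all} $(\bgamma,\sigma)$ near $(\bgamma(P),\sigma(P))$, since the integrand stays odd in $\by-\bX\bbeta_1(P)$ for every such $(\bgamma,\sigma)$. Hence the cross-derivatives $\partial\Lambda_{\bbeta}/\partial\bgamma$ and $\partial\Lambda_{\bbeta}/\partial\sigma$ both vanish at $\bzeta_0$, and only $\bD_{\bbeta}=\partial\Lambda_{\bbeta}/\partial\bbeta$ survives. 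Multiplying the decomposition by $\sqrt n$ and using that $\bbeta_{1,n}$, $\bgamma_n$ and $\sigma_n$ are $\sqrt n$-consistent—first $\|\bbeta_{1,n}-\bbeta_1(P)\|=O_P(n^{-1/2})$ by the non-singularity of $\bD_{\bbeta}$, and then $\|\bzeta_n-\bzeta_0\|=O_P(n^{-1/2})$ via Lemmas~\ref{lem:asymp relation xi sigma} and~\ref{lem:asymp relation sigma zeta} in~\cite{supplement2025}—the remainder is $o_P(n^{-1/2})$ and we obtain
\[
\sqrt n\,(\bbeta_{1,n}-\bbeta_1(P))
=-\bD_{\bbeta}^{-1}\,\sqrt n\,(\mathbb{P}_n-P)\Psi_{\bbeta}(\cdot,\bzeta_0)+o_P(1).
\]
Slutsky's lemma together with the central limit theorem above then yields asymptotic normality with mean zero and the stated sandwich covariance $\bD_{\bbeta}^{-1}\,\E[\Psi_{\bbeta}\Psi_{\bbeta}^{T}]\,\bD_{\bbeta}^{-1}$.

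I expect the main obstacle to be the empirical-process step, i.e.\ showing $\sqrt n\,(\mathbb{P}_n-P)\left(\Psi_{\bbeta}(\cdot,\bzeta_n)-\Psi_{\bbeta}(\cdot,\bzeta_0)\right)=o_P(1)$ uniformly over shrinking neighbourhoods of $\bzeta_0$: the bounded-variation hypothesis on $u_1$ controls the entropy of the indexing class, but the unbounded $\bX$ and $\by$ must be handled through a careful envelope bound, which is exactly where $\E\|\bs\|^{2}<\infty$ is used. A secondary subtlety is threading the $\sqrt n$-consistency of the nuisance block $(\bgamma_n,\sigma_n)$ into the argument at the right place, so that the quadratic-order remainder of the Taylor expansion is genuinely $o_P(n^{-1/2})$.
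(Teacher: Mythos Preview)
Your decomposition, the CLT step, and the stochastic-equicontinuity step all match the paper's proof (the paper invokes equation~(96) of Lemma~11.8 in~\cite{supplement} for the latter, which is exactly the bounded-variation argument you sketch). The only substantive difference is in how you handle the deterministic term $\Lambda_{\bbeta}(\bxi_n,\sigma_n)$, and there your route introduces an unnecessary dependence on the nuisance rate.

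You expand $\Lambda_{\bbeta}$ in all of $(\bbeta,\bgamma,\sigma)$ about $\bzeta_0$, observe that the cross-derivatives in $(\bgamma,\sigma)$ vanish at $\bzeta_0$, and obtain a remainder $o(\|\bzeta_n-\bzeta_0\|)$. To make this $o_P(n^{-1/2})$ you then need $\sqrt n$-consistency of $(\bgamma_n,\sigma_n)$, for which you appeal to Lemmas~\ref{lem:asymp relation xi sigma} and~\ref{lem:asymp relation sigma zeta}. But those lemmas assume~(V4), non-singularity of the full $\bD_{\bxi}$, and regularity of $\rho_0$ and $\Lambda_0$, none of which are among the hypotheses of Theorem~\ref{th:asymp norm symmetry}; the theorem only assumes~(V1) and differentiability of $\Lambda_{\bbeta}$ in $\bbeta$. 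So as written, your argument requires strictly stronger assumptions than the statement.

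The fix is already contained in your own observation that $\Lambda_{\bbeta}(\bbeta_1(P),\bgamma,\sigma)=\mathbf{0}$ holds \emph{identically} for all $(\bgamma,\sigma)$, not merely that its $(\bgamma,\sigma)$-derivatives vanish at one point. The paper exploits this directly: expand in $\bbeta$ only, at the random base point $(\bbeta_1(P),\bgamma_n,\sigma_n)$,
\[
\Lambda_{\bbeta}(\bxi_n,\sigma_n)
=\Lambda_{\bbeta}(\bbeta_1(P),\bgamma_n,\sigma_n)
+\bigl(\bD_{\bbeta}+o_P(1)\bigr)\bigl(\bbeta_{1,n}-\bbeta_1(P)\bigr),
\]
where the first term is \emph{exactly} zero by symmetry and the derivative converges to $\bD_{\bbeta}$ by mere consistency of $(\bgamma_n,\sigma_n)$ and continuity of $\partial\Lambda_{\bbeta}/\partial\bbeta$. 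The remainder is then $o_P(\|\bbeta_{1,n}-\bbeta_1(P)\|)$ alone, so no rate on the nuisance is needed. This is what makes the theorem hold under just~(V1) and the $\Lambda_{\bbeta}$-hypothesis.
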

The limiting distribution of~$\bbeta_{1,n}$ given in Theorem~\ref{th:asymp norm symmetry}
is similar to that of the regression MM-estimator~$\bbeta_1$ in Lopuha\"a~\cite{lopuhaa2023}
defined with loss function $\rho_1$ and initial covariance estimator $\bV_{0,n}$,
and they both coincide when $\bV_{0,n}\to\sigma^2(P)\bV(\bgamma(P))$, in probability.

If one is allowed to interchange integration and differentiation in $\bD_{\bbeta}$, 
then for the multiple linear regression model,
the limiting distribution of $\bbeta_{1,n}$ established in Theorem~\ref{th:asymp norm symmetry}
coincides with that of the regression MM-estimator considered in 
Yohai~\cite{yohai1987}.
For linear mixed effects models, multivariate linear regression models, 
or multivariate location-scatter models, the limiting distribution of MM-estimators is only available 
at distributions with an elliptically contoured density.
This situation is discussed in the next section for model~\eqref{def:model}.

\subsection{Elliptical contoured densities}
\label{subsec:asymp norm elliptical}
Consider the special case that $P$ satisfies~(E).
As before, when determining the limiting normal distribution of the MM-estimators,
we cannot use affine equivariance and restrict ourselves to the case~$(\mathbf{0},\bI_k)$.
Instead, we use some of the results obtained in Section~\ref{subsec:IF elliptical}
to establish the limiting normal distributions of the MM-estimators 
$\bxi_n=(\bbeta_{1,n},\bgamma_n)$,
$\btheta_{1,n}$, and $\bV(\btheta_{1,n})$.
Let 
\begin{equation}
\label{def:sigma1}
\sigma_1
=
\frac{k\E_{\mathbf{0},\bI_k}\left[\rho_1'(c_\sigma\|\bz\|)^2(c_\sigma\|\bz\|)^2\right]}{(k+2)\gamma_1^2},
\end{equation}
where $c_\sigma=|\bSigma|^{1/(2k)}/\sigma(P)$
and $\gamma_1$ is defined in~\eqref{def:alpha1-gamma1}.
The limiting distribution of MM-estimators~$\bbeta_{1,n}$ and $\bgamma_n$ 
at distributions $P$, such that~$\by\mid\bX$ has an elliptical contoured density, are now given by the following theorem.
\begin{theorem}
\label{th:asymp norm elliptical}
Suppose $P$ satisfies~(E) for some $(\bbeta^*,\btheta^*)\in\R^q\times\bTheta$
and~$\E\|\bs\|^2<\infty$.
Suppose $\rho_1$ satisfies (R1)-(R5), such that $u_1(s)$ is of bounded variation,
and suppose~$\bV$ satisfies~(V5) and has a linear structure~\eqref{def:V linear}.
Let $\sigma_n$ and $\sigma(P)$ be solutions of~\eqref{def:initial estimators} and~\eqref{def:sigma}, respectively,
and suppose that $\sigma_n-\sigma(P)=O_P(1/\sqrt{n})$.
Let $\bxi_n=(\bbeta_{1,n},\bgamma_n)$ and~$\bxi(P)=(\bbeta_1(P),\bgamma(P))$ be local minima of $R_n(\bbeta,\bV(\bgamma))$ and 
$R_P(\bbeta,\bV(\bgamma))$, respectively, 
and suppose that $\bxi_n\to\bxi(P)$, in probability.
Suppose that $\bbeta_1(P)=\bbeta^*$ and that $\bV(\bgamma(P))=\bSigma/|\bSigma|^{1/k}$.
Let $\alpha_1$ and $\gamma_1$ be defined in~\eqref{def:alpha1-gamma1}
and suppose that $\alpha_1\ne0$ and $\gamma_1>0$.
Suppose $\bX$ has full rank with probability one
and $\bL$, as defined in~\eqref{def:L}, has full rank.
Then~$\sqrt{n}(\bbeta_{1,n}-\bbeta^*)$ and $\sqrt{n}(\bgamma_n-\bgamma(P))$ are 
asymptotically independent.

Furthermore, $\sqrt{n}(\bbeta_{1,n}-\bbeta^*)$ is asymptotically normal with mean zero and variance
\[
\frac{\E_{\mathbf{0},\bI_k}\left[\rho_1'(c_\sigma\|\bz\|)^2\right]}{c_\sigma^2k\alpha_1^2}
\left(
\mathbb{E}\left[\mathbf{X}^T\bSigma^{-1}\mathbf{X}\right]
\right)^{-1},
\]
where $c_\sigma=|\bSigma|^{1/(2k)}/\sigma(P)$,
and $\sqrt{n}(\bgamma_n-\bgamma(P))$ is asymptotically normal with mean zero and variance
\[
\frac{2\sigma_1}{|\bSigma|^{2/k}}
\left\{
\Big(\bL^T\left(\bSigma^{-1}\otimes\bSigma^{-1}\right)\bL\Big)^{-1}
-
\frac{1}{k}\btheta^*(\btheta^*)^T
\right\},
\]
where $\sigma_1$ is defined in~\eqref{def:sigma1}.
\end{theorem}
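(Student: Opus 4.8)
The plan is to start from the empirical score equation~\eqref{eq:M-equation estimator}, turn it into an asymptotically linear statistic by a $Z$-estimator argument, and then specialize the resulting expansion to the elliptical case~(E). Since $\bV$ has a linear structure, Proposition~\ref{prop:score equations linear} applies, so $\bxi_n=(\bbeta_{1,n},\bgamma_n)\in\mathfrak{D}$ satisfies $\int\Psi(\bs,\bxi_n,\sigma_n)\,\dd\mathbb{P}_n(\bs)=\mathbf{0}$ with $\Psi=(\Psi_{\bbeta},\Psi_{\bgamma})$ given by~\eqref{def:Psi linear}--\eqref{def:PsiV}. Conditions (R2), (R4)--(R5) and the bounded variation of $u_1$ imply stochastic equicontinuity of the empirical process indexed by $(\bxi,\sigma)$ near $(\bxi(P),\sigma(P))$, with $\|\bX\|^2$ serving as an integrable envelope (here $\E\|\bs\|^2<\infty$ is used), so that $\sqrt{n}\int[\Psi(\bs,\bxi_n,\sigma_n)-\Psi(\bs,\bxi(P),\sigma(P))]\,\dd(\mathbb{P}_n-P)(\bs)=o_P(1)$. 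Combining this with a first-order Taylor expansion of $\Lambda(\bxi,\sigma)=\int\Psi\,\dd P$ about $(\bxi(P),\sigma(P))$ --- differentiation under the integral being legitimate by~(R5) and~(V5) --- and using $\bxi_n\to\bxi(P)$ in probability, $\sigma_n-\sigma(P)=O_P(1/\sqrt{n})$, and $\Lambda(\bxi(P),\sigma(P))=\mathbf{0}$, one obtains
\[
\mathbf{0}=\sqrt{n}\!\int\!\Psi(\bs,\bxi(P),\sigma(P))\,\dd\mathbb{P}_n(\bs)+\bD_{\bxi}\sqrt{n}(\bxi_n-\bxi(P))+\bD_\sigma\sqrt{n}(\sigma_n-\sigma(P))+o_P(1),
\]
with $\bD_{\bxi}=\partial\Lambda/\partial\bxi$ and $\bD_\sigma=\partial\Lambda/\partial\sigma$ at $(\bxi(P),\sigma(P))$; the constraint $|\bV(\bgamma_n)|=1$ is built into $\Psi_{\bgamma}$ through $\bH_{1,j}$, and since $|\bV(\bgamma_n)|=|\bV(\bgamma(P))|=1$ the increment $\sqrt{n}(\bgamma_n-\bgamma(P))$ is, up to $o_P(1)$, tangent to $\{|\bV(\bgamma)|=1\}$ at $\bgamma(P)$, so the $\bgamma$-block of $\bD_{\bxi}$ is to be inverted on that tangent space.

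Next I would exploit property~(E). Conditionally on $\bX$, write $\by=\bSigma^{1/2}\bz+\bX\bbeta^*$ with $\bz$ spherical; at $\bxi(P)$ one has $\bV(\bgamma(P))=\bSigma/|\bSigma|^{1/k}$ and $d(\by,\bX\bbeta^*,\bV(\bgamma(P)))/\sigma(P)=c_\sigma\|\bz\|$. Then $\Psi_{\bbeta}(\cdot,\bxi(P),\sigma(P))$ is odd in $\bz$ while $\Psi_{\bgamma}(\cdot,\bxi(P),\sigma(P))$ is even in $\bz$, so $\E[\Psi_{\bbeta}\Psi_{\bgamma}^T]=\mathbf{0}$ and the off-diagonal blocks of $\bD_{\bxi}$ vanish, being integrals of odd functions of $\bz$. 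By the spherical identity $\E[f(\|\bz\|)\bz\bz^T]=\tfrac1k\E[f(\|\bz\|)\|\bz\|^2]\bI_k$, the $\bbeta$-block of $\bD_{\bxi}$ is a nonzero multiple of $\alpha_1\,\E[\bX^T\bSigma^{-1}\bX]$ (invertible since $\bX$ is a.s.\ of full rank and $\alpha_1\ne0$), and the $\bgamma$-block restricted to the tangent space is a nonzero multiple of $\gamma_1$ times the corresponding restriction of $\bM:=\bL^T(\bSigma^{-1}\otimes\bSigma^{-1})\bL$. Moreover $\partial\Psi_{\bbeta}/\partial\sigma$ is odd in $\bz$, so $\bD_{\bbeta\sigma}=\mathbf{0}$, and integrating $\partial\Psi_\bV/\partial\sigma$ from~\eqref{def:PsiV} yields a multiple of $\bSigma$, so $\bD_{\bgamma\sigma}$ is a multiple of $\bL^T(\bSigma^{-1}\otimes\bSigma^{-1})\vc(\bSigma)=\bM\btheta^*$, which is precisely the constraint-normal direction at $\bgamma(P)$. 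Hence the $\sqrt{n}(\sigma_n-\sigma(P))$ contribution is annihilated by the tangent-space inverse of the $\bgamma$-block, so the limiting law of $\sqrt{n}(\bgamma_n-\bgamma(P))$ does not involve $\sigma_n$; and $\sqrt{n}(\bbeta_{1,n}-\bbeta^*)$ never depended on it. This explains the absence of $\sigma_n$ from either variance.

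It then remains to assemble the pieces. From the above, $\sqrt{n}(\bbeta_{1,n}-\bbeta^*)=-\bD_{\bbeta\bbeta}^{-1}n^{-1/2}\sum_{i}\Psi_{\bbeta}(\bs_i,\bxi(P),\sigma(P))+o_P(1)$, and with the tangent-space (generalized) inverse of the $\bgamma$-block an analogous expansion holds for $\sqrt{n}(\bgamma_n-\bgamma(P))$ in terms of $n^{-1/2}\sum_i\Psi_{\bgamma}(\bs_i,\bxi(P),\sigma(P))$. Both summands have mean zero (the $\bbeta$-part by oddness, the $\bgamma$-part since $\Lambda(\bxi(P),\sigma(P))=\mathbf{0}$) and finite second moment (because $\rho_1'(s)$ and $\rho_1'(s)s$ are bounded and $\E\|\bs\|^2<\infty$), so the multivariate Lindeberg CLT applies; since $\E[\Psi_{\bbeta}\Psi_{\bgamma}^T]=\mathbf{0}$, the two limiting Gaussian vectors are uncorrelated, hence independent. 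Conditioning on $\bX$ and using the spherical identity shows $\E[\Psi_{\bbeta}\Psi_{\bbeta}^T]$ is proportional to $\E_{\mathbf{0},\bI_k}[\rho_1'(c_\sigma\|\bz\|)^2]\,\E[\bX^T\bSigma^{-1}\bX]$, and after sandwiching with $\bD_{\bbeta\bbeta}^{-1}$ the residual powers of $|\bSigma|^{1/k}$ and $c_\sigma$ combine to the stated constant. For the $\bgamma$-variance, the scale term of $\Psi_{\bgamma}$ (the $v_1(\cdot)\sigma^2\bV$ piece in~\eqref{def:PsiV}) is again a multiple of $\bM\btheta^*$ and drops out under the tangent-space inverse, so only the $u_1(\cdot)\vc(\bz\bz^T)$ piece survives; its covariance is computed from the fourth-moment identity for spherical $\bz$ --- expressing $\E[f(\|\bz\|)\vc(\bz\bz^T)\vc(\bz\bz^T)^T]$ through $\bI_{k^2}$, the commutation matrix, and $\vc(\bI_k)\vc(\bI_k)^T$ --- which produces the scalar $\sigma_1$ of~\eqref{def:sigma1}, while the tangent-space inverse of the $\bgamma$-block contributes $\bM^{-1}-\tfrac1k\btheta^*(\btheta^*)^T$, the rank-one correction being exactly the constraint projection (using $\bM\btheta^*=\bL^T\vc(\bSigma^{-1})$ and $(\btheta^*)^T\bM\btheta^*=\tr(\bSigma\bSigma^{-1})=k$); tracking the factor $|\bSigma|^{2/k}$ then gives the stated $\bgamma$-variance.

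The delicate points are the equicontinuity step, which must be carried out with an unbounded design $\bX$ --- this is exactly what the bounded-variation hypothesis on $u_1$ buys, together with the $\|\bX\|^2$-envelope and $\E\|\bs\|^2<\infty$ --- and the bookkeeping for the constraint $|\bV(\bgamma_n)|=1$: one must verify that $\sqrt{n}(\bgamma_n-\bgamma(P))$ is asymptotically confined to the tangent space, that the $\bgamma$-block of $\bD_{\bxi}$ restricted there is invertible (using $\gamma_1>0$ and $\bL$ of full rank), and that both $\bD_{\bgamma\sigma}$ and the scale term of $\Psi_{\bgamma}$ lie in the normal direction and hence vanish after projection. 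This last point rests on the specific form of $\Psi_{\bgamma}$ through $\bH_{1,j}$ (in particular on $\sum_j\gamma_j\bH_{1,j}=\mathbf{0}$), and is the step I would expect to require the most care.
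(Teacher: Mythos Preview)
Your overall architecture---empirical score equation, stochastic equicontinuity to kill the remainder, Taylor expansion of $\Lambda$, block structure from odd/even symmetry in $\bz$, CLT---matches the paper's proof exactly, including the use of the bounded-variation hypothesis on $u_1$ and the $\E\|\bs\|^2<\infty$ envelope for the Donsker step (the paper delegates this to Lemma~11.8 of \cite{supplement}).

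Where you diverge is in the handling of the constraint and the $\sigma$-dependence, and here you are making your life harder than necessary. You invoke a tangent-space/generalized-inverse machinery for the $\bgamma$-block, arguing that $\bD_{\bgamma\sigma}$ and the scale piece of $\Psi_{\bgamma}$ lie in the constraint-normal direction $\bM\btheta^*$ and are therefore annihilated by projection. The paper avoids all of this: because $\Psi_{\bgamma}$ from~\eqref{def:Psi linear}--\eqref{def:PsiV} already carries the extra $-\bV\log|\bV|$ term (added precisely to break the linear dependence $\sum_j\gamma_j\bH_{1,j}=\mathbf{0}$), the resulting $\bD_{\bgamma}=\omega_1\bM-\omega_2\bL^T\vc(\bSigma^{-1})\vc(\bSigma^{-1})^T\bL$ is a genuine rank-$l$ matrix, and Sherman--Morrison gives an explicit $\bD_{\bgamma}^{-1}$ (the paper's Lemma on the inverse). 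No tangent space is needed. Moreover, a direct computation (the paper's Lemma establishing $\bD_\sigma=\mathbf{0}$) shows that $\int\partial_\sigma\Psi_\bV\,\dd P=\mathbf{0}$ exactly---not merely a multiple of $\bSigma$ in the normal direction---because the two terms in $\partial_\sigma\Psi_\bV$ cancel via $\E_{\mathbf{0},\bI_k}[\bu\bu^T]=k^{-1}\bI_k$. So the $\sqrt{n}(\sigma_n-\sigma(P))$ contribution drops out before any inversion, not after a projection.

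Your approach is not wrong in spirit---a constrained $Z$-estimator argument with tangent-space inversion would also lead to the stated variance---but as written it sits uneasily with the $\Psi_{\bgamma}$ you are actually using: that $\Psi_{\bgamma}$ is the augmented one, and its Jacobian is full rank, so talking about ``inverting on the tangent space'' is inconsistent with the score you chose. The ``delicate points'' you flag at the end (confinement of $\sqrt{n}(\bgamma_n-\bgamma(P))$ to the tangent space, invertibility only on the tangent space, projection killing the normal components) are simply non-issues once you recognize that the $\log|\bV|$ term makes $\bD_{\bgamma}$ honestly invertible and $\bD_\sigma$ honestly zero. The paper then sandwiches $\E[\Psi_{\bgamma}\Psi_{\bgamma}^T]=\sigma_P^4|\bSigma|^{2/k}\{2\delta_1\bE^T\bE+\delta_2\bE^T\vc(\bI_k)\vc(\bI_k)^T\bE\}$ (with $\delta_2=-2\delta_1/k$) by the full $\bD_{\bgamma}^{-1}$ from Sherman--Morrison, and the rank-one correction $-k^{-1}\btheta^*(\btheta^*)^T$ falls out of that algebra rather than from a projection argument.
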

When $c_\sigma=1$, 
then similar to Theorem~\ref{th:asymp norm symmetry}, we find that the limiting distribution of 
$\bbeta_{1,n}$ coincides with that of the regression MM-estimator considered in Lopuha\"a~\cite{lopuhaa2023},
defined with loss function~$\rho_1$ and an initial covariance estimator $\bV_{0,n}$ that is consistent for $\bSigma$.
Moreover, it also coincides with the limiting distribution of the regression S-estimator defined with loss function~$\rho_1$,
see Corollary~9.2 in Lopuha\"a \emph{et al}~\cite{lopuhaa-gares-ruizgazen2023}.
This confirms the claim made by Salibi\'an-Barrera \emph{et al}~\cite{SalibianBarrera-VanAelst-Willems2006}
about the location MM-estimator in the model of Example~\ref{ex:multivariate location-scatter}.

For the linear mixed effects model, the limiting distribution of $\bbeta_{1,n}$ obtained in Theorem~\ref{th:asymp norm elliptical}
extends Theorem~1 in Copt and Heritier~\cite{copt&heritier2007}, which is restricted to $\bX_i=\bX$.
For the multivariate linear regression model, the limiting distribution of $\bbeta_{1,n}$ in 
Theorem~\ref{th:asymp norm elliptical} coincides with the one found for the regression MM-estimator in 
Kudraszow and Maronna~\cite{kudraszow-maronna2011}. 
This also applies to the location MM-estimator in the multivariate location-scatter model, 
since this model is a special case of the multivariate linear regression model.
Furthermore, there is a connection with CM-estimators considered in
Kent and Tyler~\cite{kent&tyler1996},
whose limiting distribution depends on a parameter~$\lambda_0$.
The limiting distribution of $\sqrt{n}(\bbeta_{1,n}-\bbeta^*)$ obtained in Theorem~\ref{th:asymp norm elliptical} 
is similar to that of the location CM-estimator, see~(7.9) in Kent and Tyler~\cite{kent&tyler1996},
and they both coincide when $c_\sigma=1/\sqrt{\lambda_0}$.

Let
\begin{equation}
\label{def:sigma3}
\sigma_3=
\frac{4\E_{\mathbf{0},\bI_k}
\left[
\left(\rho_0(c_\sigma\|\bz\|)-b_0\right)^2
\right]}{
\big(\E_{\mathbf{0},\bI_k}
\left[
\rho_0'(c_\sigma\|\bz\|)c_\sigma\|\bz\|
\right]
\big)^2},
\end{equation}
where $c_\sigma=|\bSigma|^{1/(2k)}/\sigma(P)$.
It remains to determine the limiting distribution of the variance components MM-estimator~$\btheta_{1,n}$.
This is given in the next corollary.
\begin{corollary}
\label{cor:asymp norm theta1}
Suppose that the conditions hold of Theorem~\ref{th:asymp norm elliptical}
and suppose that~$\rho_0$ satisfies~(R1), (R2) and~(R4).
Let $\bzeta_{0,n}=(\bbeta_{0,n},\btheta_{0,n})$ be the pair of initial estimators
and let $\bzeta_0=(\bbeta_0,\btheta_0)$ be the corresponding functional.
Suppose that $(\bbeta_0(P),\btheta_0(P))=(\bbeta^*,\btheta^*)$ and that $\bzeta_{0,n}-\bzeta_0(P)=O_P(1/\sqrt{n})$.
Let $\sigma_n$ and $\sigma(P)$ be solutions of~\eqref{def:initial estimators} and~\eqref{def:sigma}, respectively,
and suppose that $\sigma_n\to\sigma(P)$, in probability.
Let $\btheta_{1,n}$ and $\btheta_1(P)$ be solutions of~\eqref{eq:update theta} and~\eqref{def:theta1}, respectively,
and suppose that $\E_{\mathbf{0},\bI_k}[\rho_0'(c_\sigma\|\bz\|)c_\sigma\|\bz\|]>0$,
where~$c_\sigma=|\bSigma|^{1/(2k)}/\sigma(P)$.
Then $\sqrt{n}(\btheta_{1,n}-\btheta_1(P))$ is asymptotically normal with mean zero and variance
\[
\frac{2\sigma_1}{c_\sigma^2}
\Big(\bL^T\left(\bSigma^{-1}\otimes\bSigma^{-1}\right)\bL\Big)^{-1}
+
\left(
-\frac{2\sigma_1}{kc_\sigma^2}
+
\sigma_3
\right)
\btheta^*(\btheta^*)^T,
\]
where $\sigma_1$ and $\sigma_3$ are defined in~\eqref{def:sigma1} and~\eqref{def:sigma3}.
\end{corollary}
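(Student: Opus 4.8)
The plan is to obtain the limit law of $\btheta_{1,n}$ from the already established joint limiting behaviour of $(\bgamma_n,\sigma_n)$ and then push it through the algebraic relation imposed by the update step~\eqref{eq:update theta}. Since $\bV$ has a linear structure and $\bL$ in~\eqref{def:L} has full column rank, equation~\eqref{eq:update theta} forces $\btheta_{1,n}=\sigma_n^2\bgamma_n$ and likewise~\eqref{def:theta1} gives $\btheta_1(P)=\sigma^2(P)\bgamma(P)$; moreover, combining the normalisation $\bV(\bgamma(P))=\bSigma/|\bSigma|^{1/k}$ assumed in Theorem~\ref{th:asymp norm elliptical} with the linear structure and $\bV(\btheta^*)=\bSigma$ from~(E) yields $\bgamma(P)=\btheta^*/|\bSigma|^{1/k}$. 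A first-order expansion of $(\sigma,\bgamma)\mapsto\sigma^2\bgamma$ around $(\sigma(P),\bgamma(P))$ then gives
\[
\sqrt{n}\bigl(\btheta_{1,n}-\btheta_1(P)\bigr)
=
\sigma^2(P)\sqrt{n}\bigl(\bgamma_n-\bgamma(P)\bigr)
+2\sigma(P)\bgamma(P)\sqrt{n}\bigl(\sigma_n-\sigma(P)\bigr)+o_P(1),
\]
so it suffices to identify the joint limiting law of the pair $\bigl(\sqrt{n}(\bgamma_n-\bgamma(P)),\sqrt{n}(\sigma_n-\sigma(P))\bigr)$.

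Next I would produce the asymptotically linear representation of $\sigma_n$. Expanding the defining equation~\eqref{def:initial estimators} around $(\bbeta^*,\btheta^*,\sigma(P))$ yields a leading linear term in the empirical process together with linear terms in $\sqrt{n}(\bbeta_{0,n}-\bbeta^*)$ and $\sqrt{n}(\btheta_{0,n}-\btheta^*)$, both $O_P(1)$ by assumption. The crucial point, and the place where~(E) enters, is that the coefficients of these two terms vanish: the one attached to the location part because its integrand is odd in $\bz=\bSigma^{-1/2}(\by-\bX\bbeta^*)$ and, conditionally on $\bX$, $\by$ is spherically distributed after standardisation; the one attached to the shape part because $|\bGamma(\btheta_0)|\equiv 1$, so the relevant directional derivative of $\bGamma^{-1}$ has trace zero and the spherical expectation $\E_{\mathbf{0},\bI_k}[h(\|\bz\|)\bz^T\bM\bz]=k^{-1}\E_{\mathbf{0},\bI_k}[h(\|\bz\|)\|\bz\|^2]\,\tr(\bM)$ vanishes. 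Hence $\sqrt{n}(\sigma_n-\sigma(P))=n^{-1/2}\sum_i\psi_\sigma(\bs_i)+o_P(1)$ with $\psi_\sigma(\bs)=\sigma(P)\bigl(\rho_0(c_\sigma\|\bz\|)-b_0\bigr)/\E_{\mathbf{0},\bI_k}[\rho_0'(c_\sigma\|\bz\|)c_\sigma\|\bz\|]$, which is asymptotically normal with variance $\tfrac14\sigma^2(P)\sigma_3$, where $\sigma_3$ is as in~\eqref{def:sigma3}; in particular the limit law of $\sigma_n$ does not involve the (unknown) influence function of the initial estimators.

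I would then show that $\sqrt{n}(\bgamma_n-\bgamma(P))$ and $\sqrt{n}(\sigma_n-\sigma(P))$ are asymptotically independent. Both are asymptotically linear (for $\bgamma_n$ this is inherited from the proof of Theorem~\ref{th:asymp norm elliptical}), so joint normality follows from the multivariate central limit theorem and it is enough to check that the cross-covariance of their influence functions vanishes. Writing the influence function of $\bgamma$ from Theorem~\ref{th:IF elliptical} as $\sigma^{-2}(P)\bigl(k u_1(c_\sigma\|\bz\|)\gamma_1^{-1}\bM\vc(\bz\bz^T)-v_1(c_\sigma\|\bz\|)c_\sigma^{-2}\gamma_1^{-1}\btheta^*\bigr)$ with $\bM=(\bL^T(\bSigma^{-1}\otimes\bSigma^{-1})\bL)^{-1}\bL^T(\bSigma^{-1/2}\otimes\bSigma^{-1/2})$, and using the radial--angular decomposition of $\bz$ together with the identity $\bM\vc(\bI_k)=\btheta^*$ (which follows from $(\bL^T(\bSigma^{-1}\otimes\bSigma^{-1})\bL)\btheta^*=\bL^T\vc(\bSigma^{-1})$), the two contributions proportional to $\btheta^*$ cancel against each other when taking the covariance with $\psi_\sigma$, so that covariance is $\mathbf{0}$.

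Putting the pieces together, $\sqrt{n}(\btheta_{1,n}-\btheta_1(P))$ is asymptotically normal with variance equal to $\sigma^4(P)$ times the variance of $\sqrt{n}(\bgamma_n-\bgamma(P))$ given in Theorem~\ref{th:asymp norm elliptical} plus $4\sigma^2(P)\bgamma(P)\bgamma(P)^T$ times $\tfrac14\sigma^2(P)\sigma_3$; substituting $\bgamma(P)=\btheta^*/|\bSigma|^{1/k}$ and $c_\sigma^2=|\bSigma|^{1/k}/\sigma^2(P)$ and collecting the $\btheta^*(\btheta^*)^T$ terms gives the stated expression. (Equivalently, one may note that $\btheta_{1,n}$ is then asymptotically linear with influence function equal to $\text{\rm IF}(\bs;\btheta_1,P)$ from Corollary~\ref{cor:IF theta1} and compute its second moment directly.) I expect the main obstacle to be the second step: verifying cleanly that the fluctuations of the initial estimators $(\bbeta_{0,n},\btheta_{0,n})$ do not propagate into the limit law of $\sigma_n$ — this is exactly what makes the final variance depend on $\rho_0$ only through $\sigma_3$ rather than through the influence function of the initial estimators — while a secondary technical point is that the delta-method expansion in the first step needs the asymptotic linearity of the constrained estimator $\bgamma_n$, which must be extracted from the proof of Theorem~\ref{th:asymp norm elliptical}.
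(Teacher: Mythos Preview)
Your proposal is correct and follows essentially the same route as the paper: the delta-method expansion $\btheta_{1,n}-\btheta_1(P)=\sigma_P^2(\bgamma_n-\bgamma_P)+2\sigma_P\bgamma_P(\sigma_n-\sigma_P)+o_P(n^{-1/2})$, the vanishing of the coefficients of the initial-estimator fluctuations in the expansion of $\sigma_n$ (the paper packages this as $\bD_{0,\bzeta}=\mathbf{0}$, proved exactly via the oddness and trace-zero arguments you sketch), and the vanishing cross-covariance between the influence functions of $\bgamma$ and $\sigma$. The only cosmetic difference is that the paper verifies this last orthogonality at the level of $\Psi_\bV$ (showing $\E[\Psi_\bV(\bs,\bxi_P,\sigma_P)\Psi_0(\bs,\bzeta_0(P),\sigma_P)]=\mathbf{0}$ directly), whereas you argue with the influence function of $\bgamma$ and the identity $\bM\vc(\bI_k)=\btheta^*$; unwinding either computation gives the same cancellation $u_1(c_\sigma\|\bz\|)\|\bz\|^2=v_1(c_\sigma\|\bz\|)/c_\sigma^2$ together with $\sigma^2(P)c_\sigma^2=|\bSigma|^{1/k}$.
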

For linearly structured $\bV$, one has $\vc(\bV(\btheta_1(P)))=\bL\btheta_1(P)$ and $\vc(\bSigma)=\bL\btheta^*$.
Hence, application of the delta-method yields that
$\sqrt{n}(\vc(\bV(\btheta_{1,n}))-\vc(\bSigma))$ is asymptotically normal with 
mean zero and variance
\begin{equation}
\label{eq:asymp var theta1}
\frac{2\sigma_1}{c_\sigma^2}
\bL\Big(\bL^T\left(\bSigma^{-1}\otimes\bSigma^{-1}\right)\bL\Big)^{-1}\bL^T
+
\left(
-\frac{2\sigma_1}{kc_\sigma^2}
+
\sigma_3
\right)
\vc(\bSigma)
\vc(\bSigma)^T.
\end{equation}
The expressions for the limiting variances of the covariance
MM-estimators in Corollary~\ref{cor:asymp norm theta1} and~\eqref{eq:asymp var theta1} are characterized by
two scalars $\sigma_1/c_\sigma^2$ and $\sigma_2=-2\sigma_1/(kc_\sigma^2)+\sigma_3$.
When $c_\sigma=1$,
this matches with the characterization of general structured covariance estimators,
see Corollary~2 in Lopuha\"a~\cite{lopuhaa2025}.
Such a characterization was already observed by Tyler~\cite{tyler1982} for 
affine equivariant covariance estimators in the multivariate location-scatter model.
The constant~$\sigma_1$ (with $c_\sigma=1$) coincides with the one for the covariance S-estimator defined
with loss function~$\rho_1$.
The constant $\sigma_2$ (with $c_\sigma=1$) has the same structure as the one for covariance S-estimators,
but the first term $-2\sigma_1/k$ is built from $\rho_1$,
whereas the second term $\sigma_3$ is built from~$\rho_0$.
As expected, when $\rho_0=\rho_1=\rho$, the above characterization coincides with the 
one for the covariance S-estimator defined with loss function~$\rho$.

Note that the limiting variance of scale invariant mappings $H(\bC_n)$ of a covariance estimator~$\bC_n$,
only depends on the scalar~$\sigma_1$, see~(8.2) in Kent and Tyler~\cite{kent&tyler1996} for
affine equivariant covariance estimators or Theorem~3 in Lopuha\"a~\cite{lopuhaa2025}
for estimators of a linearly structured covariance.
Because the characterizations of the limiting variances of covariance MM- and S-estimators have
the same scalar $\sigma_1$, it follows that the limiting distributions of any scale invariant mapping 
of covariance MM- and S-estimators are the same.
A typical example is the shape component $\bGamma(\btheta_{1,n})$ of the covariance MM-estimator,
where $\bGamma$ is defined in~\eqref{def:Gamma}.
Theorem~2 and Example~4 in Lopuha\"a~\cite{lopuhaa2025}, 
together with~\eqref{eq:asymp var theta1},
yield that $\sqrt{n}(\vc(\bGamma(\btheta_{1,n}))-\vc(\bGamma(\btheta_1(P))))$
is asymptotically normal with mean zero and variance
\begin{equation}
\label{eq:asymp var shape}
\frac{2\sigma_1}{c_\sigma^2|\bSigma|^{2/k}}
\left\{
\bL\Big(\bL^T\left(\bSigma^{-1}\otimes\bSigma^{-1}\right)\bL\Big)^{-1}\bL^T
-
\frac{1}{k}
\vc(\bSigma)
\vc(\bSigma)^T
\right\}.
\end{equation}
When $c_\sigma=1$, this coincides with the limiting distribution
of the shape S-estimator defined with $\rho_1$,
see Examples~3 and~4 in Lopuha\"a~\cite{lopuhaa2025}.
For the location-scatter model in Example~\ref{ex:multivariate location-scatter},
this confirms the claim made in Salibi\'an-Barrera \emph{et al}~\cite{SalibianBarrera-VanAelst-Willems2006}
for the shape MM-estimator.

Similarly, the limiting distribution of a scale invariant mapping of variance component estimators
only depends on the scalar $\sigma_1$, see Theorem~2 in Lopuha\"a~\cite{lopuhaa2025}.
Since the characterizations of the limiting distribution of the variance components MM- and S-estimators share the same scalar $\sigma_1$,
see Corollary~\ref{cor:asymp norm theta1} and Corollary~9.2 in Lopuha\"a \emph{et al}~\cite{lopuhaa-gares-ruizgazen2023},
it follows that the limiting distributions of any scale invariant mapping of variance component MM- and S-estimators are the same.
Examples are direction components, such as $\btheta/\|\btheta\|$ or~$\btheta/|\bV(\btheta)|^{1/k}$, for linear covariance structures.

\begin{remark}
\label{rem:asymp distr Vgamma and gamma}
From~\eqref{eq:update theta} and the fact that $|\bV(\bgamma_n)|=1$, 
it follows that $\bGamma(\btheta_{1,n})=\bV(\bgamma_n)$.
This means that $\bV(\bgamma_n)$ represents the shape component of $\bV(\btheta_{1,n})$,
so that the limiting distribution of the shape component of $\bV(\btheta_{1,n})$
is the same as that of~$\bV(\bgamma_n)$.
Similarly, the limiting distribution of $\sqrt{n}(\bgamma_n-\bgamma(P))$ established
in Theorem~\ref{th:asymp norm elliptical}, 
coincides with that of the direction component~$\btheta/|\bV(\btheta)|^{1/k}$, 
corresponding to the variance components MM-estimator~$\btheta_{1,n}$.
The limiting distribution of the direction component $\btheta_{1,n}/\|\btheta_{1,n}\|$
can be found in Example~5 in Lopuha\"a~\cite{lopuhaa2025}.
\end{remark}
The results in Corollary~\ref{cor:asymp norm theta1},  and in~\eqref{eq:asymp var theta1} 
and~\eqref{eq:asymp var shape} can be applied
to derive the limiting distributions for the covariance estimators in the multivariate statistical models
of Examples~\ref{ex:LME model}, \ref{ex:multivariate linear regression}, and~\ref{ex:multivariate location-scatter}.
Details are given in the supplemental material~\cite{supplement2025}.

\section{Application}
\label{sec:application}
We apply our results to MM-estimators and MM-functionals to linear model~\eqref{def:model}.
Consider a distribution $P$ that satisfies~(E), where $\bV$ is has linear structure~\eqref{def:V linear}.
The loss functions $\rho_0$ and~$\rho_1$ are constructed from Tukey's biweight, as defined in~\eqref{def:biweight}, by taking
$\rho_j(s)=\rho_{\text{B}}(s;c_j)$, for $j=0,1$, such that $0<c_0\leq c_1<\infty$.
As initial estimators we use the S-estimators $(\bbeta_{0,n},\btheta_{0,n})$,
defined by minimizing $|\bV(\btheta)|$, subject to
\[
\frac{1}{n}
\sum_{i=1}^{n}
\rho_0\left(
d(\by_i,\bX_i\bbeta,\bV(\btheta))
\right)
=
b_0,
\]
where $d$ is defined in~\eqref{def:Mahalanobis distance} and $b_0=\E_{\mathbf{0},\bI_k}[\rho_0(\|\bz\|)]$,
and where the minimum is taken over all $\bbeta\in\R^{q}$ and $\btheta\in\bTheta\subset\R^l$,
such that $\bV(\btheta)\in\text{PDS}(k)$.
The cut-off $c_0$ is chosen such that $b_0/(c_0^2/6)=0.5$, so that the initial S-estimator has (asymptotic) breakdown point 0.50.
This means that according to Theorem~\ref{th:asymp norm elliptical},
the scalar $\lambda=\E_{\mathbf{0},\bI_k}[\rho_1'(\|\bz\|)^2]/(k\alpha_1^2)$,
where $\alpha_1$ is defined in~\eqref{def:alpha1-gamma1} with $c_\sigma=1$, 
represents the asymptotic efficiency of the regression MM-estimator relative to
the least squares estimator (for which $\lambda=1$).
Similarly, according to (part two of) Theorem~\ref{th:asymp norm elliptical} and~\eqref{eq:asymp var shape}, the scalar $\sigma_1=k\E_{\mathbf{0},\bI_k}\left[\rho_1'(\|\bz\|)^2(\|\bz\|)^2\right]/((k+2)\gamma_1^2)$,
where $\gamma_1$ is defined in~\eqref{def:alpha1-gamma1} with $c_\sigma=1$,
represents the asymptotic relative efficiency of both the MM-estimator of shape
as well as the MM-estimator for the direction of the variance components, relative
to the least squares estimators of shape and direction, respectively (for which $\sigma_1=1$).
Hence, the scalars $\lambda$ and $\sigma_1$ only depend on the function~$\rho_1$.
By keeping $c_0$ fixed, the breakdown point of the MM-estimators remains unaffected,
and by varying $c_1\geq c_0$ we will investigate how the scalars $\lambda$ and $\sigma_1$ for the asymptotic relative efficiency 
will vary.

We further investigate how at the same time the gross error sensitivity (GES) of the corresponding MM-functionals will vary.
For simplicity we only consider perturbations in~$\by$ and leave~$\bX$ unchanged.
According to Theorem~\ref{th:IF elliptical}, it can be seen that for any norm,
$\|\text{IF}(\bs;\bbeta_1,P)\|$ is proportional to $\alpha_1^{-1}\left|\rho_1'(d(\by))\right|$,
where $\alpha_1$ is defined in~\eqref{def:alpha1-gamma1} with $c_\sigma=1$ and 
$d(\by)^2=(\by-\bX\bbeta^*)^T\bSigma^{-1}(\by-\bX\bbeta^*)$.
Therefore, we propose the scalar
\[
G_1=\frac{1}{\alpha_1}\sup_{s>0}\left|\rho_1'(s)\right|,
\]
as an index for the GES of regression MM-functionals.
This coincides with the GES index for location CM-functionals in Kent and Tyler~\cite{kent&tyler1996}.
Similarly, from (part two of) Theorem~\ref{th:IF elliptical} and~\eqref{eq:IF Vgamma} it follows 
that $\|\text{IF}(\by;\bgamma,P)\|$ and $\|\text{IF}(\by;\bV(\bgamma),P)\|$ are proportional 
to~$\gamma_1^{-1}\left|\rho_1'(d(\by))d(\by)\right|$,
where $\gamma_1$ is defined in~\eqref{def:alpha1-gamma1} with $c_\sigma=1$.
We propose scalar
\[
G_2=\frac{k}{(k+2)\gamma_1}\sup_{s>0}|\rho_1'(s)s|,
\]
as an index for the GES of shape and direction functionals.
In this way, $G_2$ coincides with the GES index for CM-functionals of shape in Kent and Tyler~\cite{kent&tyler1996}.

We investigate how the asymptotic efficiency at the multivariate normal of MM-estimators, 
and the GES of the corresponding MM-functionals behave as we vary the cut-off constant~$c_1\geq c_0$.
\begin{figure}[t]
  \centering
  \includegraphics[width=\textwidth]{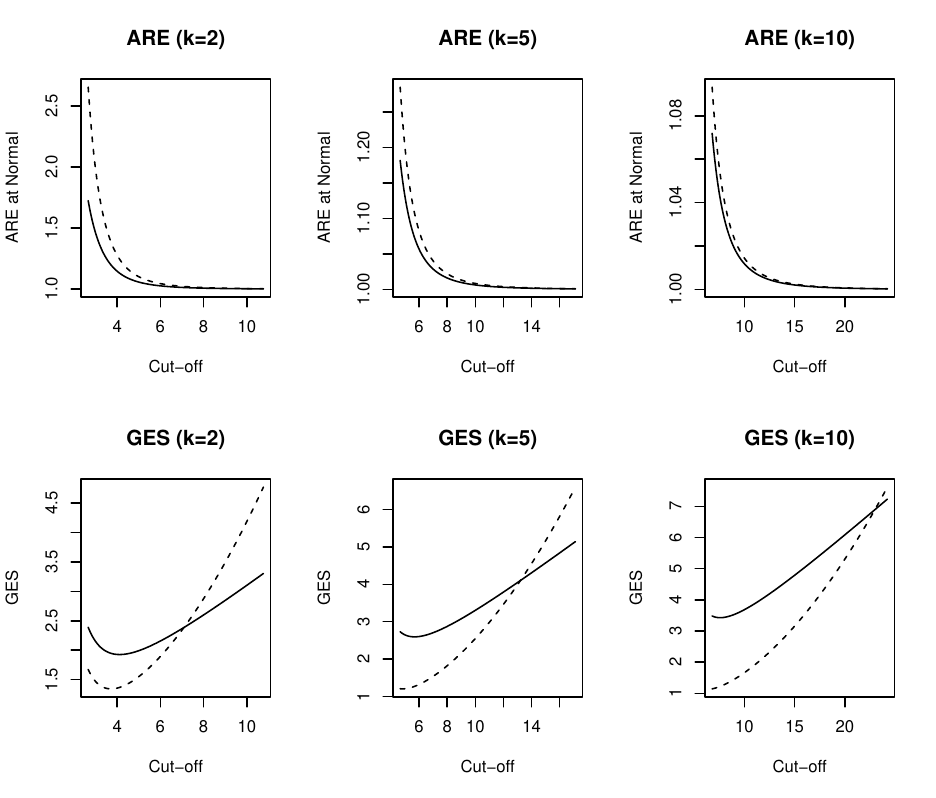}
  \caption{Asymptotic efficiencies at the multivariate normal distribution (top row) 
  for the MM-estimators of regression (solid) and for shape and direction (dashed) 
  and the GES (bottom row) of the corresponding MM-functionals for dimensions $k=2,5,10$.}
  \label{fig:ARE-GES-Normal}
\end{figure}
In Figure~\ref{fig:ARE-GES-Normal}, on the top row we plotted the indices $\lambda$ (solid lines) 
and $\sigma_1$ (dashed lines) together as a function of $c_1\geq c_0$ in dimensions $k=2,5$ and 10.
In dimension $k=2$, the asymptotic efficiencies $\lambda=1.725$ and $\sigma_1=2.656$ of the MM-estimators at cut-off $c_1=c_0=2.661$
are the same as that of the initial 50\% breakdown S-estimators.
When increasing the cut-off $c_1$, one can both gain efficiency and lower the GES.
For example, the GES index for the shape and direction MM-functional attains its minimal value 
$G_2=1.344$ at $c_1=3.724$.
For this cut-off value the GES index for the regression MM-functional is $G_1=1.947$
and the asymptotic efficiencies are $\lambda=1.197$ and $\sigma_1=1.383$.
Similarly, the GES index for the regression MM-functional attains its minimal value 
$G_1=1.927$ at $c_1=4.113$.
This would yield $G_2=1.368$, $\lambda=1.131$ and $\sigma_1=1.246$.

In dimension $k=5$, the asymptotic efficiencies at cut-off $c_1=c_0=4.652$ are $\lambda=1.182$ and $\sigma_1=1.285$.
The GES index for the regression MM-functional attains its minimal value $G_1=2.595$ at $c_1=5.675$.
For this cut-off value the GES index $G_2=1.270$ and the asymptotic efficiencies are
$\lambda=1.073$ and $\sigma_1=1.107$.
The index for the shape and direction MM-functional attains its minimal value 
$G_2=1.204$ at $c_1=c_0=4.652$.

In dimension $k=10$, the asymptotic efficiencies at cut-off $c_1=c_0=6.776$ are $\lambda=1.072$ and $\sigma_1=1.093$.
The GES index for the regression MM-functional attains its minimal value $G_1=3.426$ at $c_1=7.580$.
For this cut-off value the GES index $G_2=1.270$ and the asymptotic efficiencies are
$\lambda=1.042$ and $\sigma_1=1.053$.
The GES index for the shape and direction MM-functional attains its minimal value 
$G_2=1.142$ at $c_1=c_0=6.776$.

\begin{figure}[t]
  \centering
  \includegraphics[width=\textwidth]{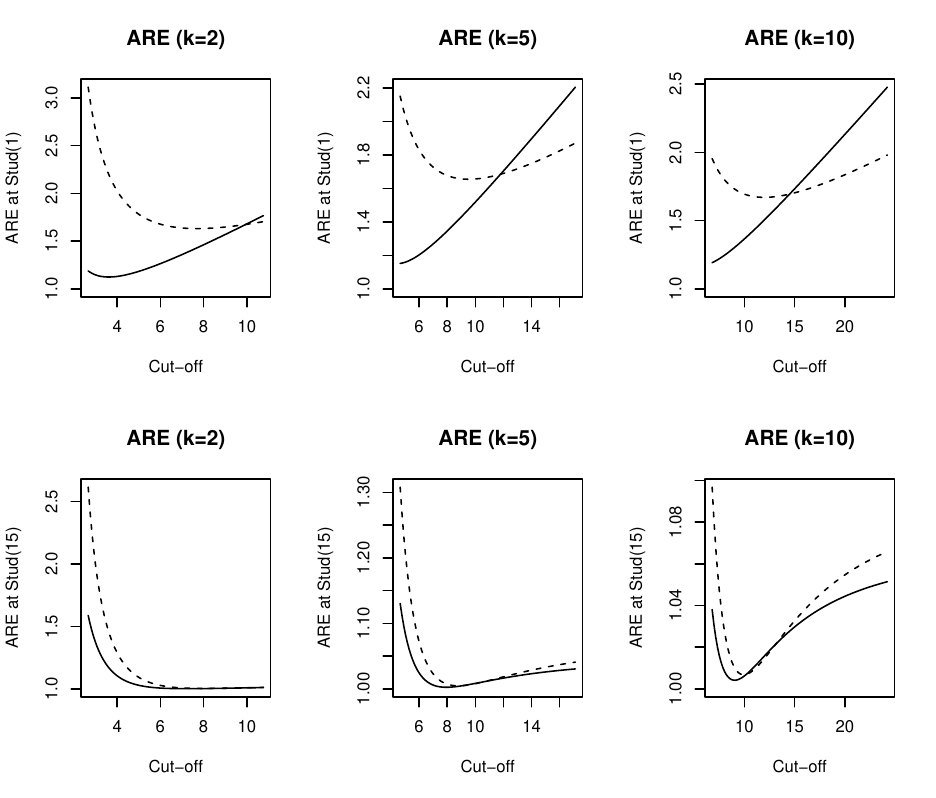}
  \caption{Asymptotic efficiencies at the multivariate Student distribution with degrees of freedom $\nu=1$ (top row) 
  and $\nu=15$ (bottom row) for the MM-estimators of regression (solid) and for shape and direction (dashed) 
  for dimensions $k=2,5,10$.}
  \label{fig:ARE-Student}
\end{figure}

In the top row of Figure~\ref{fig:ARE-GES-Normal} it can be seen that the asymptotic efficiencies become closer to one when the dimension is large.
This is a well known phenomenon observed when the efficiency is computed under a multivariate normal setting.
As a comparison, we have investigated whether this behavior is observed in a neighborhood of the multivariate normal.
We have computed asymptotic efficiencies relative to the maximum likelihood estimator at the $k$-variate Student distribution
with degrees of freedom $\nu=1$ and $\nu=15$.
The scalars $\lambda$ and~$\sigma_1$ for the ML estimator at the Student($\nu$) distribution are given by
\[
\lambda^{\text{ML}}
=
\frac{k\E_{\nu}[w_1(\|\bz\|)^2\|\bz\|^2]}{
\Big(k\E_{\nu}[w_1(\|\bz\|)+\E_{\nu}[w_1'(\|\bz\|)\|\bz\|]
\Big)^2};
\quad
\sigma_1^{\text{ML}}
=
\frac{k(k+2)\E_{\nu}[w_1(\|\bz\|)^2\|\bz\|^4]}{
\Big(\E_{\nu}[w_1'(\|\bz\|)\|\bz\|^3+k(k+2)\Big)^2},
\]
where $w_1(s)=(\nu+k)/(k+s^2)$.

The asymptotic efficiencies relative to the ML estimator at the $k$-variate $\text{Student}(\nu)$ distribution with
$\nu\in\{1,15\}$ are visible in Figure~\ref{fig:ARE-Student}.
The graphs in the top row correspond to $\nu=1$ and are quite different from the ones in the top row in Figure~\ref{fig:ARE-GES-Normal}.
Moreover, the behavior of the MM-regression estimator (solid lines) differs from that of the MM-estimators of shape and direction (dashed lines).
The best efficiencies for the regression MM-estimator $\lambda=1.124, 1.152, 1.192$, for $k=2,5,10$, 
are obtained for values of $c_1$ very close or equal to~$c_0$,
whereas the efficiency for the MM-estimators of shape and direction can be improved for larger values of $c_1$
and are equal to $\sigma_1=1.631,1.655,1.671$ at $c_1=7.656,9.561,11.893$, for $k=2,5,10$.
As expected both MM-estimators with large values for $c_1$ have poor efficiencies, because they tend to behave similar
to the least squares estimators. 

For the Student distribution with $\nu=15$ degrees of freedom, the behavior of the efficiency is more or less 
in between the ones at the multivariate normal and the Student distribution with $\nu=1$ degrees of freedom.
The graphs in the bottom row of Figure~\ref{fig:ARE-Student} are more similar
to the ones in the top row of Figure~\ref{fig:ARE-GES-Normal}, although in higher dimensions the efficiencies get worse.
The best efficiencies for the regression MM-estimator $\lambda=1.001, 1.002, 1.004$, for $k=2,5,10$, 
are obtained at $c_1=7.246,7.925,9.070$,
and the best efficiencies for the MM-estimators of shape and direction 
are equal to $\sigma_1=1.003,1.005,1.008$, for $k=2,5,10$,
obtained at $c_1=8.065,8.806,9.952$.

\bibliographystyle{abbrv}
\bibliography{MMLME}       

\newpage

\setcounter{page}{1}

\section{Supplemental Material}

\subsection{Proofs for Section~\ref{sec:existence}}
For any $k\times k$ matrix $\bA$, let $\lambda_k(\bA)\leq\cdots\leq\lambda_1(\bA)$ denote the eigenvalues of $\bA$.

\paragraph*{Proof of Lemma~\ref{lem:existence sigma}}
\begin{proof}
From (R1)-(R2) we have that $\rho_0$ is bounded and continuous at zero.
Hence, by dominated convergence, it follows that
\[
\lim_{\sigma\to\infty}
\int
\rho_0
\left(
\frac{\displaystyle\sqrt{(\by-\bX\bbeta_0(P))^T\bGamma(\btheta_0(P))^{-1}(\by-\bX\bbeta_0(P))}}{\sigma}
\right)
\,
\text{d}P(\bs)
=0.
\]
Similarly, together with (C0) and (R3), we find that
\[
\begin{split}
\lim_{\sigma\downarrow0}
\int
\rho_0
\left(
\frac{\displaystyle\sqrt{(\by-\bX\bbeta_0(P))^T\bGamma(\btheta_0(P))^{-1}(\by-\bX\bbeta_0(P))}}{\sigma}
\right)
\,
\text{d}P(\bs)\\
=
(\sup\rho_0)(1-P(E_0))
>
b_0.
\end{split}
\]
Since $\rho_0$ is continuous and $0<b_0<\sup\rho_0$, we conclude that there exists a solution $\sigma(P)>0$ to~\eqref{def:sigma}.
Because $\rho_0$ is strictly increasing on $[0,c_0]$, together with (C0), it follows that $\sigma(P)$ is unique.
\end{proof}

\paragraph*{Proof of Theorem~\ref{th:existence}}
\begin{proof}
For $(\bbeta,\bgamma)\in\R^k\times\R^l$,
define cylinder
\begin{equation}
\label{def:cylinder gamma}
\mathcal{C}(\bbeta,\bgamma,c)=
\left\{
(\by,\bX)\in\R^k\times\R^{kq}:
(\by-\bX\bbeta)^T
\bV(\bgamma)^{-1}
(\by-\bX\bbeta)
\leq
c^2
\right\}.
\end{equation}
According to (V2), there exists $\bgamma_0\in\bTheta$, such that
\[
\bV(\bgamma_0)=\frac{\bV(\btheta_0(P))}{|\bV(\btheta_0(P))|^{1/k}}=\bGamma(\btheta_0(P)),
\]
and clearly $|\bV(\bgamma_0)|=1$.
If $(\bbeta,\bgamma)\in \mathfrak{D}$ minimizes~$R_P(\bbeta,\bV(\bgamma))$, then together with~\eqref{eq:ineq rho functions}
and~\eqref{def:sigma},
it must satisfy
\begin{equation}
\label{eq:lower bound prob cylinder}
\begin{split}
&
P\left(
\mathcal{C}\left(\bbeta,\bgamma,c_1\sigma(P)\right)
\right)\\
&\geq
1-\frac{1}{\sup\rho_1}
\int
\rho_1\left(
\frac{\displaystyle\sqrt{(\by-\bX\bbeta)^T\bV(\bgamma)^{-1}(\by-\bX\bbeta)}}{\sigma(P)}
\right)
\text{d}P(\bs)\\
&\geq
1-\frac{1}{\sup\rho_1}
\int
\rho_1\left(
\frac{\displaystyle\sqrt{(\by-\bX\bbeta_0(P))^T\bV(\bgamma_0)^{-1}(\by-\bX\bbeta_0(P))}}{\sigma(P)}
\right)
\text{d}P(\bs)\\
&\geq
1-\frac{1}{\sup\rho_0}
\int
\rho_0
\left(
\frac{\displaystyle\sqrt{(\by-\bX\bbeta_0(P))^T\bGamma_0(P)^{-1}(\by-\bX\bbeta_0(P))}}{\sigma(P)}
\right)
\text{d}P(\bs)\\
&=
1-\frac{b_0}{\sup\rho_0}
=
1-r_0
\geq
\epsilon.
\end{split}
\end{equation}
Since $P$ satisfies $(\text{C2}_\epsilon)$ for $1-r_0\geq \epsilon$, 
from Lemma~4.1(i) in Lopuha\"a \textit{et al}~\cite{lopuhaa-gares-ruizgazen2023}, 
it follows that there exist $a_1>0$, only depending on $c_1$ and $(\mathrm{C2}_{\epsilon})$, 
such that~$\lambda_k(\sigma^2(P)\bV(\bgamma))\geq a_1$, so that $\lambda_k(\bV(\bgamma))\geq a_1/\sigma^2(P)>0$.
Since $|\bV(\bgamma)|=1$, it immediately follows that
\[
\lambda_1(\bV(\bgamma))
\leq
a_2:=\left(\frac{\sigma^2(P)}{a_1}\right)^{k-1}<\infty.
\]
Let $K\subset\R^{k}\times \mathcal{X}$ be a compact set, such that $P(K)\geq r_0+\epsilon$, which exists according to condition~$(\mathrm{C1}_{\epsilon})$.
From Lemma~4.1(iii) in Lopuha\"a \textit{et al}~\cite{lopuhaa-gares-ruizgazen2023}, 
it follows that $\|\bbeta\|\leq M<\infty$, for some $M>0$ that only depends on $c_1$, $a_2$, $\sigma(P)$, $K$ and $(\text{C2}_\epsilon)$.
We conclude that~$\bbeta$ is in a compact subset of $\R^q$ and $\bV(\bgamma)$ 
is in a compact set $B\subset \R^{k\times k}$.
By identifiability, the mapping $\bgamma\mapsto \bV(\gamma)$ is one-to-one, 
so we can restrict~$\bgamma$ to the pre-image~$\bV^{-1}(B)$.
Then with conditions (V1) and (V3), it follows that also $\bV^{-1}(B)$ is a compact set in~$\bTheta$.
We conclude that for minimizing $R_P(\bbeta,\bV(\bgamma))$
we can restrict ourselves to a compact set~$B'\subset \mathfrak{D}$.

Because $\rho_1$ is continuous, together with condition~(V1) and dominated convergence,
it follows that $R_P(\bbeta,\bV(\bgamma))$ is a continuous function of
$(\bbeta,\bgamma)$, so that it must attain a minimum on~$B'$.
Hence, there exists a pair $(\bbeta_1(P),\bgamma(P))\in \mathfrak{D}$ that minimizes~$R_P(\bbeta,\bV(\bgamma))$.
Finally, condition~(V2) immediately yields that there exists a $\btheta_1(P)$ that solves~\eqref{def:theta1}
and by identifiability it follows that $\btheta_1(P)$ is unique.
\end{proof}

\paragraph*{Proof of Corollary~\ref{cor:existence estimator structured}}
\begin{proof}
Let $\mathbb{P}_n$ be the empirical measure corresponding to the collection $\mathcal{S}_n$.
Then~$\mathbb{P}_n$ satisfies~$(\text{C1}_\epsilon)$ for any $0<\epsilon\leq 1-r_0$ and
satisfies $(\text{C2}_\epsilon)$, for $\epsilon=(\kappa(\mathcal{S}_n)+1)/n$.
Clearly, $0<(\kappa(\mathcal{S}_n)+1)/n\leq 1-r_0$, where $r_0=b_0/\sup\rho_0$.
Furthermore, since $(\bbeta_{0,n},\btheta_{0,n})$ satisfies~\eqref{eq:cond beta0 theta0},
it follows that~$\sigma_n=\sigma(\mathbb{P}_n)$ is a solution of~\eqref{def:sigma}, with $P=\mathbb{P}_n$.
Hence, according to Theorem~\ref{th:existence} there exists 
a pair $(\bbeta_1(\mathbb{P}_n),\bgamma(\mathbb{P}_n))$ that minimizes~$R_P(\bbeta,\bV(\bgamma))$, with $P=\mathbb{P}_n$,
and a vector $\btheta_1(\mathbb{P}_n)$ that is the unique solution of~\eqref{def:theta1}, with $P=\mathbb{P}_n$.
But this is equivalent with saying that there exists a pair $(\bbeta_{1,n},\bgamma_n)\in \mathfrak{D}$ that minimizes~$R_n(\bbeta,\bV(\bgamma))$
and a vector~$\btheta_{1,n}$ that is the unique solution of~\eqref{eq:update theta}.
\end{proof}

\paragraph*{Proof of Corollary~\ref{cor:existence weak convergence}}
\begin{proof}
Because $P$ satisfies condition (C3), according to Ranga Rao~\cite[Theorem 4.2]{rangarao1962} we have
\begin{equation}
\label{eq:ranga rao structured}
\sup_{C\in \mathfrak{C}}
\left|
P_t(C)-P(C)
\right|
\to
0,
\quad
\text{as }t\to\infty.
\end{equation}
Consider the set $E_{t,0}=
\left\{
(\by,\bX)\in\R^k\times\R^{kq}:
\|\by-\bX\bbeta_0(P_t)\|=0
\right\}$.
Then $E_{t,0}\in \mathfrak{C}$,
so that  $P_t(E_{t,0})-P(E_{t,0})\to0$, as $t\to\infty$.
Since $\bbeta_0(P_t)\to\bbeta_0(P)$, as $t\to\infty$,  it follows that $P(E_{t,0})\to P(E_0)$, 
which implies that
\[
P_t(E_{t,0})
=
P_t(E_{t,0})-P(E_{t,0})+P(E_{t,0})\to
P(E_0)
<
1-\frac{b_0}{\sup\rho_0}.
\]
Therefore, $P_t$ satisfies~(C0), for $t$ sufficiently large.
According to Lemma~\ref{lem:existence sigma}, a solution~$\sigma(P_t)$ 
of~\eqref{def:sigma} with $P=P_t$ exists and is unique.
This proves part(i).

The argument that minimizing
$R_{P_t}(\bbeta,\bV(\bgamma))$ over $(\bbeta,\bgamma)\in\mathfrak{D}$
has at least one solution, is similar to the proof of Corollary~4.4 in Lopuha\"a \textit{et al}~\cite{lopuhaa-gares-ruizgazen2023}.
First note there exists $0<\eta<\epsilon'-\epsilon$.
Because strips $H(\balpha,\ell,\delta)\in \mathfrak{C}$,
property~\eqref{eq:ranga rao structured} implies that every strip with $P_t(H(\balpha,\ell,\delta))\geq \epsilon+\eta$
must also satisfy $P(H(\balpha,\ell,\delta))\geq \epsilon$,
for $t$ sufficiently large.
Together with the fact that~$P$ satisfies~$(\text{C2}_{\epsilon})$,
this means that, for $t$ sufficiently large,
\[
\inf
\left\{
\delta:P_t(H(\balpha,\ell,\delta))\geq \epsilon+\eta
\right\}
\geq
\inf
\left\{
\delta:P(H(\balpha,\ell,\delta))\geq \epsilon
\right\}>0.
\]
It follows that, for $t$ sufficiently large, $P_t$ satisfies condition $(\text{C2}_{\epsilon+\eta})$.
Next, consider the compact set $K$ from ($\text{C1}_{\epsilon'}$).
Without loss of generality we may assume that it belongs to~$\mathfrak{C}$.
Therefore,
as $P(K)\geq r_0+\epsilon'$, for $t$ sufficiently large $P_t(K)\geq r_0+\epsilon+\eta$.
It follows that, for~$t$ sufficiently large, $P_t$ satisfies condition $(\text{C1}_{\epsilon+\eta})$.
Since $\epsilon+\eta<1-r_0$, according to Theorem~\ref{th:existence},
for $t$ sufficiently large,
there exists a pair $(\bbeta_1(P_t),\bgamma(P_t))\in \mathfrak{D}$ that minimizes~$R_{P_t}(\bbeta,\bV(\bgamma))$
and a vector $\btheta_1(P_t)$ that is the unique solution of~\eqref{def:theta1} with $P=P_t$.
This proves part(ii).
\end{proof}

\subsection{Proofs for Section~\ref{sec:continuity}}

\paragraph*{Proof of Theorem~\ref{th:continuity}}
\begin{proof}
Let $\bbeta_{0,t}=\bbeta_0(P_t)$ and $\bbeta_{0,P}=\bbeta_0(P)$.
Let $\bGamma$ be the functional defined in~\eqref{def:Gamma},
and define $\bGamma_{0,t}=\bGamma(\btheta_0(P_t))$ and $\bGamma_{0,P}=\bGamma(\btheta_0(P))$.
Since $\rho_0$ satisfies (R2)-(R3) and $\bV$ satisfies~(V1),
we can apply Lemma~3.2 from Lopuha\"a~\cite{lopuhaa1989}.
As $(\bbeta_{0,t},\bGamma_{0,t})\to(\bbeta_{0,P},\bGamma_{0,P})$, 
it follows that for $s$ fixed,
\begin{equation}
\label{eq:LemmaBillingsley}
\int
\rho_0
\left(
\frac{d(\by,\bX\bbeta_{0,t},\bGamma_{0,t})}{s}
\right)
\,
\text{d}P_t(\bs)
\to
\int
\rho_0
\left(
\frac{d(\by,\bX\bbeta_{0,P},\bGamma_{0,P})}{s}
\right)
\,
\text{d}P(\bs),
\end{equation}
where $d$ is defined in~\eqref{def:Mahalanobis distance}.
Let $\sigma(P)$ be the unique solution of~\eqref{def:sigma}.
Let $\delta>0$ and suppose that $\liminf_{t\to\infty} \sigma(P_t)>\sigma(P)+\delta$.
Since $\rho_0$ is strictly increasing on $[0,c_0]$,
together with~\eqref{eq:LemmaBillingsley}, 
it follows that
\[
\begin{split}
\int
\rho_0
\left(
\frac{d(\by,\bX\bbeta_{0,t},\bGamma_{0,t})}{\sigma(P_t)}
\right)
\,
\text{d}P_t(\bs)
&\leq
\int
\rho_0
\left(
\frac{d(\by,\bX\bbeta_{0,t},\bGamma_{0,t})}{\sigma(P)+\delta}
\right)
\,
\text{d}P_t(\bs)\\
&\to
\int
\rho_0
\left(
\frac{d(\by,\bX\bbeta_{0,P},\bGamma_{0,P})}{\sigma(P)+\delta}
\right)
\,
\text{d}P(\bs)\\
&<
\int
\rho_0
\left(
\frac{d(\by,\bX\bbeta_{0,P},\bGamma_{0,P})}{\sigma(P)}
\right)
\,
\text{d}P(\bs)
=
b_0,
\end{split}
\]
which is in contradiction with the definition of $\sigma(P_t)$.
The argument is similar for $\sigma(P_t)<\sigma(P)-\delta$.
We conclude that $|\sigma(P_t)-\sigma(P)|<\delta$, for $t$ sufficiently large.
Since $\delta>0$ is arbitrary, this means that $\sigma(P_t)\to\sigma(P)$.
This proves part~(i).

To prove part~(ii), 
first note that there exists $0<\eta<\epsilon'-\epsilon$.
Because $(\bbeta_{1,t},\bgamma_t)=(\bbeta_1(P_t),\bgamma(P_t))$
is a local minimum of $R_{P_t}(\bbeta,\bV(\bgamma))$
that satisfies~\eqref{eq:ineq RP}, we have
\begin{equation}
\label{eq:ineq RPt}
R_{P_t}(\bbeta_{1,t},\bV(\bgamma_t))\leq R_{P_t}(\bbeta_{0,t},\bGamma_{0,t}).
\end{equation}
Then, together with~\eqref{eq:ineq rho functions}, similar to~\eqref{eq:lower bound prob cylinder}
we find that
$P_t(\mathcal{C}(\bbeta_{1,t},\bgamma_t,c_1\sigma(P_t)))\geq 1-r_0$.
Therefore, since $P$ satisfies (C3) and $\mathcal{C}(\bbeta_{1,t},\bgamma_t,c_1\sigma(P_t))\in \mathfrak{C}$,
and $1-r_0>1-r_0-\eta$,
it follows from~\eqref{eq:ranga rao structured} that
\begin{equation}
\label{eq:bound P(Ct)}
\begin{split}
P(\mathcal{C}(\bbeta_{1,t},\bgamma_t,c_1\sigma(P_t))
&\geq
P_t(\mathcal{C}(\bbeta_{1,t},\bgamma_t,c_1\sigma(P_t)))
-
\sup_{C\in \mathfrak{C}}
\left|
P_t(C)-P(C)
\right|\\
&\geq
1-r_0-\eta,
\end{split}
\end{equation}
for $t$ sufficiently large.
Since $1-r_0-\eta>\epsilon$, 
according to Lemma~4.1(i) in Lopuha\"a \textit{et al}~\cite{lopuhaa-gares-ruizgazen2023},
there exists $a_1>0$ only depending only depending on $c_1$ and $(\mathrm{C2}_{\epsilon})$, such that
$\lambda_k(\sigma^2(P_t)\bV(\bgamma_t))\geq a_1$.
Hence,
\[
\lambda_k(\bV(\bgamma_t))\geq 
\frac{a_1}{\sigma^2(P_t)},
\]
for $t$ sufficiently large.
Since $|\bV(\bgamma_t)|=1$, it immediately follows that
\[
\lambda_1(\bV(\bgamma_t))\leq \left(\frac{\sigma^2(P_t)}{a_1}\right)^{k-1}.
\]
Because $\sigma(P_t)\to\sigma(P)$ according to part~(i), there exists $0<L_1<L_2<\infty$, such that
\begin{equation}
\label{eq:bounds V1t}
L_1\leq \lambda_k(\bV(\bgamma_t))
\leq
\lambda_1(\bV(\bgamma_t))\leq L_2,
\end{equation}
for $t$ sufficiently large.
Let $K\subset\R^{k}\times \mathcal{X}$ be a compact set, such that $P(K)\geq r_0+\epsilon'\geq r_0+\epsilon+\eta$, 
which exists according to condition~$(\mathrm{C1}_{\epsilon'})$.
Then according to~\eqref{eq:bound P(Ct)}, it follows from
Lemma~4.1(iii) in Lopuha\"a \textit{et al}~\cite{lopuhaa-gares-ruizgazen2023} 
with $a=1-r_0-\eta$, that $\|\bbeta_{1,t}\|\leq M<\infty$, 
for some $M>0$ that only depends on $c_1$, $a_2$, $\sigma(P)$, $K$ and $(\text{C2}_\epsilon)$.
We conclude that for $t$ sufficiently large the sequence~$\{\bbeta_{1,t}\}$ 
lies in a compact subset of $\R^q$ and 
the sequence~$\{\bV(\bgamma_t)\}$ lies in a compact set $B\subset\R^{k\times k}$.
Then similar to the second part of the proof of Theorem~\ref{th:existence},
conditions~(V1) and~(V3) yield there exists a compact set $B'\subset\R^{q+l}$,
such that for $t$ sufficiently large 
the sequence $\{(\bbeta_{1,t},\bgamma_t)\}\subset B'$.

Since $\rho_1$ satisfies (R2)-(R3),
together with of part~(i), similar to~\eqref{eq:LemmaBillingsley},
for fixed $(\bbeta,\bgamma)\in\R^k\times\bTheta\subset\R^k\times\R^l$,
\begin{equation}\label{eq:bound difference}
R_{P_t}(\bbeta,\bV(\bgamma))\to R_P(\bbeta,\bV(\bgamma)),
\end{equation}
where $R_P$ is defined in~\eqref{def:R_P}.
For the sake of brevity, let us write $R_t=R_{P_t}$.
Since the sequence~$\{(\bbeta_{1,t},\bgamma_t)\}$ lies in a compact set, 
it has a convergent subsequence
$(\bbeta_{1,t_j},\bgamma_{t_j})\to(\bbeta_{1,L},\bgamma_{L})$.
Since $\rho_1$ satisfies~(R2)-(R3) and $\bV$ satisfies (V1), 
similar to~\eqref{eq:LemmaBillingsley}, it follows that
\[
\lim_{j\to\infty}
R_{t_j}(\bbeta_{1,t_j},\bV(\bgamma_{t_j}))
=
R_P(\bbeta_{1,L},\bV(\bgamma_{L})).
\]
Now, suppose that $(\bbeta_{1,L},\bgamma_{L})\neq (\bbeta_1(P),\bgamma(P))$.
Then, since $R_P(\bbeta,\bV(\bgamma))$ is uniquely minimized at $(\bbeta_1(P),\bgamma(P))$,
this would mean that there exists $\epsilon>0$, such that together with~\eqref{eq:bound difference},
\[
\begin{split}
R_{t_j}(\bbeta_{1,t_j},\bV(\bgamma_{t_j}))
&>
R_P(\bbeta_{1,L},\bV(\bgamma_{L}))-\epsilon
\geq
R_P(\bbeta_1(P),\bV(\bgamma(P)))+2\epsilon\\
&\geq
R_{t_j}(\bbeta_1(P),\bV(\bgamma(P)))+\epsilon
>
R_{t_j}(\bbeta_1(P),\bV(\bgamma(P))),
\end{split}
\]
for $t_j$ sufficiently large.
This would mean that $(\bbeta_{1,t_j},\bgamma_{t_j})$ is not the minimizer of $R_{t_j}(\bbeta,\bV(\bgamma))$.
We conclude that $(\bbeta_{1,L},\bgamma_{L})=(\bbeta_1(P),\bgamma(P))$, which proves part~(ii).

Finally, from part~(i) and (V1), we have that
\[
\bV(\btheta_1(P_t))
=
\sigma^2(P_t)\bV(\bgamma(P_t))
\to
\sigma^2(P)\bV(\bgamma(P))
=
\bV(\btheta_1(P)).
\]
Because $\bV$ is continuous and one-to-one, part~(iii) follows.
\end{proof}

\paragraph*{Proof of Corollary~\ref{cor:consistency}}
\begin{proof}
Let $\mathbb{P}_n$ be the empirical measure corresponding to the collection $\mathcal{S}_n$.
According to the Portmanteau Theorem (e.g., see Theorem~2.1 in~\cite{billingsley1968}),
$\mathbb{P}_n$ converges weakly to $P$, with probability one.
Because $(\bbeta_{0,n},\btheta_{0,n})$ satisfies~\eqref{eq:cond beta0 theta0}~,
it follows that $(\bbeta_0(\mathbb{P}_n),\btheta_0(\mathbb{P}_n))=(\bbeta_{0,n},\btheta_{0,n})\to (\bbeta_0(P),\btheta_0(P))$
and that $\sigma_n=\sigma(\mathbb{P}_n)$ the unique solution of~\eqref{def:sigma}, with $P=\mathbb{P}_n$.
Hence, part~(i) follows immediately from Theorem~\ref{th:continuity}(i).
Furthermore, $(\bbeta_{1,n},\bgamma_n)=(\bbeta_1(\mathbb{P}_n),\bgamma(\mathbb{P}_n))$
is a local minimum of $R_P(\bbeta,\bV(\bgamma))$, with $P=\mathbb{P}_n$,
that satisfies~\eqref{eq:ineq RP}, for $P=\mathbb{P}_n$.
Therefore, part~(ii) follows immediately from Theorem~\ref{th:continuity}(ii).
Finally, since $\btheta_{1,n}=\btheta_1(\mathbb{P}_n)$,
part~(iii) follows immediately from Theorem~\ref{th:continuity}(iii).
\end{proof}

\paragraph*{Proof of Theorem~\ref{th:davies}}
\begin{proof}
First consider the multivariate location-scatter M-functional with auxiliary scale at the distribution $F$ of $\by\mid\bX$, for some~$\bX$ fixed.
So $F$ has density $f_{\bmu,\bSigma}$ from~\eqref{def:tyler density}.
Tatsuoka and Tyler~\cite{tatsuoka&tyler2000} define location-scale M-functionals with auxiliary scale by means of a function
$\widetilde{\rho}:[0,\infty)\to[0,1]$.
It relates to our $\rho_1$-function as $\rho_1(d)=\widetilde{\rho}(d^2)$.
The M-functionals with auxiliary scale $\sigma(F)$ are defined 
to be $\balpha(F)$ and $\bA(F)=\sigma^2(F)\bG(F)$, where $(\balpha(F),\bG(F))$ minimizes
\begin{equation}
\label{eq:R1}
\int \rho_1
\left(
\frac{\displaystyle\sqrt{(\mathbf{y}-\balpha)^T\bG^{-1}(\mathbf{y}-\balpha)}}{\sigma(F)}
\right)
f_{\bmu,\bSigma}(\by)
\,\dd \mathbf{y},
\end{equation}
over all $\balpha\in\R^q$ and $\bG\in\text{PDS}(k)$ with $|\bG|=1$.
Our conditions (R1)-(R2) on $\rho_1$
imply Condition~2.1 on $\widetilde{\rho}$ imposed in Tatsuoka and Tyler~\cite{tatsuoka&tyler2000}.
It then follows from Theorem~4.2 in Tatsuoka and Tyler~\cite{tatsuoka&tyler2000}, together with the affine equivariance of the M-functional,
that
for any minimizer $(\balpha(F),\bG(F))$,
the multivariate M-functional $(\balpha(F),\bA(F))=(\balpha(F),\sigma^2(F)\bG(F))$ has a unique
solution 
\[
(\balpha(F),\bA(F))=(\bmu,\bSigma)=(\bX\bbeta^*,\bV(\btheta^*)).
\]
Since this solution is unique, any candidate minimizer of~\eqref{eq:R1} must
be of the form 
\[
(\balpha,\bG)
=
\left(
\bX\bbeta,\frac{\bV(\btheta)}{\sigma^2(F)}
\right)
=
\left(
\bX\bbeta,
\bV(\bgamma)
\right),
\]
for some $\bbeta\in\R^q$ and $\bgamma\in\R^\ell$,
where we use that $\bV$ satisfies (V2).
Furthermore, 
\[
|\bSigma|=|\bA(F)|=\sigma^{2k}(F)|\bG(F)|=\sigma^{2k}(F),
\]
so that $\sigma(F)=|\bSigma|^{1/(2k)}$.
Then, for $\bX$ fixed, minimizing~\eqref{eq:R1} over all $\balpha\in\R^q$ and  $\bG\in\text{PDS}(k)$ with $|\bG|=1$ is equivalent to 
minimizing 
\begin{equation}
\label{eq:R1 with X}
\int \rho_1
\left(
\frac{d(\by,\bX\bbeta,\bV(\bgamma))}{|\bSigma|^{1/(2k)}}
\right)
f_{\bmu,\bSigma}(\by)
\,\dd \mathbf{y},
\end{equation}
over $\bbeta\in\R^q$ and $\bgamma\in\R^\ell$, such that $|\bV(\bgamma)|=1$,
where $d$ is defined in~\eqref{def:Mahalanobis distance}.
As a consequence, for $\bX$ fixed, this minimization problem has a unique solution $(\widetilde\bbeta,\widetilde\bgamma)$,
which satisfies $\bX\widetilde\bbeta=\bmu=\bX\bbeta^*$ and 
\begin{equation}
\label{eq:prop Vgamma}
\bV(\widetilde\bgamma)
=
\frac{\bV(\btheta^*)}{\sigma^2(F)}
=
\frac{\bSigma}{|\bSigma|^{1/k}}.
\end{equation}
Since $\bX$ has full rank, with probability one, it follows that
$\widetilde\bbeta=\bbeta^*$.

We can transfer this result to the minimization of $R_P(\bbeta,\bV(\bgamma))$ as follows.
Because~$P$ is absolutely continuous, it satisfies condition~(C0).
Hence, according to Lemma~\ref{lem:existence sigma} the solution $\sigma(P)$ of~\eqref{def:sigma} is unique.
Because~$\bbeta_0(P)=\bbeta^*$ and $\bGamma(\btheta_0(P))=\bGamma(\btheta^*)=\bV(\btheta^*)/|\bV(\btheta^*)|^{1/k}=\bSigma/|\bSigma|^{1/k}$,
in~\eqref{def:sigma} we have that
\[
\int
\rho_0
\left(
\frac{d(\by,\bX\bbeta_0(P),\bGamma(\btheta_0(P)))}{\sigma}
\right)
\,
\text{d}P(\bs)
=
\E
\left[
\E
\left[
\rho_0
\left(
\frac{d(\by,\bX\bbeta^*,\bSigma)}{\sigma/|\bSigma|^{1/(2k)}}
\right)
\bigg|
\bX
\right]
\right].
\]
The inner expectation is the conditional expectation of $\by\mid\bX$,
which has the same distribution as $\bSigma^{1/2}\bz+\bX\bbeta^*$,
where $\bz$ has density $f_{\textbf{0},\bI_k}$.
This means that we must solve $\sigma(P)$ from
\begin{equation}
\label{eq:sigma elliptical}
\E_{\mathbf{0},\bI_k}
\rho_0
\left(
\frac{\|\bz\|}{\sigma/|\bSigma|^{1/(2k)}}
\right)
=
b_0.
\end{equation}
Furthermore, since $P$ is absolutely continuous, it satisfies 
satisfies $(\text{C1}_\epsilon)$ and $(\text{C2}_\epsilon)$, for some $0<\epsilon\leq 1-r_0$,
where $r_0=b_0/\sup\rho_0$. 
Hence, since $\bV$ satisfies (V1)-(V3), according to Theorem~\ref{th:existence} there exists $(\bbeta_1(P),\bgamma(P))\in \mathfrak{D}$ 
that minimizes $R_P(\bbeta,\bV(\bgamma))$ and $\btheta_1(P)\in\bTheta$ that is the unique solution of~\eqref{def:theta1}.
But then, there must be an $\bX$ such that
\[
\begin{split}
\int \rho_1
\left(
\frac{d(\by,\bX\bbeta_1(P),\bV(\bgamma(P)))}{\sigma(P)}
\right)
f_{\bmu,\bSigma}(\by)
\,\dd \mathbf{y}
\leq
\int \rho_1
\left(
\frac{d(\by,\bX\widetilde\bbeta,\bV(\widetilde\bgamma))}{\sigma(P)}
\right)
f_{\bmu,\bSigma}(\by)
\,\dd \mathbf{y}.
\end{split}
\]
But since for any $\bX$ fixed, $(\widetilde{\bbeta},\widetilde{\bgamma})$ is the unique minimizer of~\eqref{eq:R1 with X},
with probability one, we must have $\bbeta_1(P)=\widetilde{\bbeta}=\bbeta^*$ and $\bgamma(P)=\widetilde{\bgamma}$,
with probability one.
Together with~\eqref{eq:prop Vgamma} this proves part(i).

To prove part~(ii), note that $\btheta_1(P)$ satisfies
\[
\bV(\btheta_1(P))
=
\sigma^2(P)\bV(\bgamma(P))
=
\frac{\sigma^2(P)}{|\bSigma|^{1/k}}\bSigma
=
\frac{\sigma^2(P)}{|\bSigma|^{1/k}}
\bV(\btheta^*)
=
\bV\left(\frac{\btheta^*\sigma^2(P)}{|\bSigma|^{1/k}}\right),
\]
where we use that $\bV$ has a linear structure.
By identifiability
this means that $\btheta_1(P)=\btheta^*\sigma^2(P)/|\bSigma|^{1/k}$.

Finally, for part~(iii) suppose that $b_0=\E_{\mathbf{0},\bI_k}\rho_0(\|\bz\|)$.
It follows immediately from solving~\eqref{eq:sigma elliptical} that $\sigma(P)=|\bSigma|^{1/(2k)}$.
This finishes the proof.
\end{proof}

\subsection{Proofs for Section~\ref{sec:bdp}}

\paragraph*{Proof of Theorem~\ref{th:bdp}}
\begin{proof}
Suppose we replace $m$ points, where $m$ is such that
\[
\begin{split}
m&\leq 
\min\Big(
\lceil nr_0\rceil,
\lceil n-nr_0\rceil-\kappa(\mathcal{S}_n),
n\epsilon_n^*(\bbeta_{0,n},\btheta_{0,n},\mathcal{S}_n)
\Big)
-1
\end{split}
\]
Let $\mathcal{S}_m'$ be the corrupted collection of points.
Write $(\bbeta_{0,m},\btheta_{0,m})=(\bbeta_{0,n}(\mathcal{S}_m'),\btheta_{0,n}(\mathcal{S}_m'))$,
$\bV_{0,m}=\bV(\btheta_{0,n}(\mathcal{S}_m'))$, 
$\bGamma_{0,m}=\bGamma(\btheta_{0,n}(\mathcal{S}_m'))$.
Let $\mathbb{P}_m'$ be the empirical measure corresponding to the
corrupted collection~$\mathcal{S}_m'$ of~$n$ points.
We must show that there exists a pair
$(\bbeta_{1,m},\bgamma_m)=(\bbeta_{1,n}(\mathcal{S}_m'),\bgamma(\mathcal{S}_m'))\in \mathfrak{D}$ 
that satisfies~\eqref{eq:ineq Rn}, for the corrupted collection~$\mathcal{S}_m'$,
and $\btheta_{1,m}=\btheta_{1,n}(\mathcal{S}_m')\in\bTheta$ 
that satisfies $\bV(\btheta_{1,m})=\sigma_m^2\bV(\bgamma_m)$,
and that all pairs $(\bbeta_{1,m},\bgamma_m)$ satisfying~\eqref{eq:ineq Rn} and $\btheta_{1,m}$ do not break down.

We first show that a solution $\sigma_m=\sigma_n(\mathcal{S}_m')$ of
equation~\eqref{def:initial estimators} exists, for the collection~$\mathcal{S}_m'$.
Note that the maximum number of points of~$\mathcal{S}_m'$ that lie in the same hyperplane 
is $m+\kappa(\mathcal{S}_n)$.
Because $m\leq \lceil n-nr_0\rceil-\kappa(\mathcal{S}_n)-1$, it follows that
\[
m+\kappa(\mathcal{S}_n)
\leq
\lceil n-nr_0\rceil-1
<
n(1-r_0).
\]
This means that $\mathbb{P}_m'$ satisfies condition~(C0).
Since $\rho_0$ satisfies (R1)-(R3), according to Lemma~\ref{lem:existence sigma},
there exists a unique solution~$\sigma_m>0$ of~\eqref{def:initial estimators}.
Furthermore, from (R1)-(R3) we have that~$\rho_0(s)$, for $s\in[0,c_0]$, 
varies continuously between zero and $\sup\rho_0$.
Since $m\leq \lceil nr_0\rceil-1<nr_0$, there exists $\eta>0$, such that
\[
\frac{m}{n}\sup\rho_0+\eta<b_0.
\]
Because $\bbeta_{0,m}$ and $\btheta_{0,m}$ do not break down, there exist $M>0$ and $0<L_1\leq L_2<\infty$,
not depending on $\mathcal{S}_m'$,
such that $\|\bbeta_{0,m}\|\leq M$ and $L_1\leq \lambda_{k}(\bV_{0,m})\leq \lambda_{1}(\bV_{0,m})\leq L_2$.
This means that for all~$\bs_i\in \mathcal{S}_n$:
\[
\sqrt{(\by_i-\bX_i\bbeta_{0,m})^T\bGamma_{0,m}^{-1}(\by_i-\bX_i\bbeta_{0,m})}
\leq
\frac{\|\by_i-\bX_i\bbeta_{0,m}\|}{\sqrt{\lambda_k(\bGamma_{0,m})}}
\leq
\frac{\|\by_i\|+\|\bX_i\|\|\bbeta_{0,m}\|}{\sqrt{\lambda_k(\bGamma_{0,m})}},
\]
where
\[
\lambda_k(\bGamma_{0,m})
=
\frac{\lambda_k(\bV_{0,m})}{|\bV_{0,m}|^{1/k}}
\geq
\frac{L_1}{(L_2^k)^{1/k}}=\frac{L_1}{L_2}.
\]
Hence, for all $\bs_i\in \mathcal{S}_n$, we obtain
\[
d_{i,0}=\sqrt{(\by_i-\bX_i\bbeta_{0,m})^T\bGamma_{0,m}^{-1}(\by_i-\bX_i\bbeta_{0,m})}
\leq
\left(
\max_{\bs_i\in \mathcal{S}_n}\|\by_i\|
+
M
\max_{\bs_i\in \mathcal{S}_n}\|\bX_i\|
\right)
\sqrt{L_2/L_1}.
\]
This means, there exists $0<K<\infty$, only depending on $\mathcal{S}_n$,
such that $\max_{\bs_i\in \mathcal{S}_n}d_{i,0}\leq K$.
Since~$\rho_0$ is continuous and $0<\eta<b_0$, we can define $\delta>0$, such that $\rho_0(\delta)=\eta$.
Let $s_0=K/\delta$.
Then,
\[
\begin{split}
\frac1n
\sum_{(\by_i,\bX_i)\in \mathcal{S}_m'}
\rho_0
\left(
\frac{d_{i,0}}{s_0}
\right)
&=
\frac1n
\sum_{(\by_i,\bX_i)\in \mathcal{S}_m'\cap \mathcal{S}_n}
\rho_0
\left(
\frac{d_{i,0}}{s_0}
\right)
+
\frac1n
\sum_{(\by_i,\bX_i)\in \mathcal{S}_m'\setminus\mathcal{S}_n}
\rho_0
\left(
\frac{d_{i,0}}{s_0}
\right)\\
&\leq
\frac{n-m}{n}\rho_0
\left(
\frac{K}{s_0}
\right)
+
\frac{m}{n}\sup\rho_0\\
&\leq
\rho_0(\delta)+\frac{m}{n}\sup\rho_0\\
&=
\eta+\frac{m}{n}\sup\rho_0<b_0.
\end{split}
\]
As $\sigma_m$ is the solution of~\eqref{def:initial estimators}, 
we must have that $\sigma_m\leq s_0$.

Next, we show that there exists a pair
$(\bbeta_{1,m},\bgamma_m)=(\bbeta_{1,n}(\mathcal{S}_m'),\bgamma(\mathcal{S}_m'))\in \mathfrak{D}$ 
that minimizes
\[
R_m(\bbeta,\bV(\bgamma))
=
\frac1n
\sum_{\bs_i\in \mathcal{S}_m'}
\rho_1\left(
\frac{\displaystyle\sqrt{(\by_i-\bX_i\bbeta)^T\bV(\bgamma)^{-1}(\by_i-\bX_i\bbeta)}}{\sigma_m}
\right).
\]
Any minimizer $(\bbeta_{1,m},\bgamma_m)$ of $R_m(\bbeta,\bV(\bgamma))$ must satisfy~\eqref{eq:ineq Rn}.
Together with~\eqref{eq:ineq rho functions}
similar to~\eqref{eq:lower bound prob cylinder}, this implies
\[
\mathbb{P}_m'\left(\mathcal{C}(\bbeta_{1,m},\bgamma_m,c_1\sigma_m)\right)
\geq 
1-r_0.
\]
It then follows that the cylinder $\mathcal{C}(\bbeta_{1,m},\bgamma_m,c_1\sigma_m)$
contains at least $\lceil n-nr_0\rceil$ number of points from the corrupted collection~$\mathcal{S}_m'$.
Furthermore, any corrupted collection $\mathcal{S}'_m$ contains
\begin{equation}
\label{eq:points in cylinder}
\lceil n-nr_0\rceil-m
\geq
\kappa(\mathcal{S}_n)+1
\end{equation}
points of the original collection $\mathcal{S}_n$.
This means that the cylinder $\mathcal{C}(\bbeta_{1,m},\bgamma_m,c_1\sigma_m)$
must contain a non-empty simplex only depending on the original collection $\mathcal{S}_n$.
This implies that
\begin{equation}
\label{eq:lwb lamdak}
\lambda_k(\sigma_m^2\bV(\bgamma_m))\geq a_1>0,
\end{equation}
where~$a_1$ only depends on the original collection $\mathcal{S}_n$.
Hence,
$\lambda_k(\bV(\bgamma_m))\geq a_1/s_0^2>0$.
Since~$|\bV(\bgamma_m)|=1$, it immediately follows that
$\lambda_1(\bV(\bgamma_m))\leq (s_0^2/a_1)^{k-1}<\infty$.
Furthermore, recall that
$\mathcal{C}(\bbeta_{1,m},\bgamma_m,c_1\sigma_m)$ contains a subset~$J_0$ of
$\kappa(\mathcal{S}_n)+1$ points from the original collection~$\mathcal{S}_n$,
according to~\eqref{eq:points in cylinder}.
By definition, $\kappa(\mathcal{S}_n)+1$ original points cannot be on the same hyperplane, so that
\[
\alpha_n
=
\inf_{J\subset \mathcal{S}_n}
\inf_{\|\balpha\|=1}\max_{\bs\in J}
\|\bX\balpha\|>0.
\]
where the first infimum runs over all subsets $J\subset \mathcal{S}_n$ of $\kappa(\mathcal{S}_n)+1$ points.
By definition of~$\alpha_n$,
there exists an original point~$\bs_0\in J_0\subset \mathcal{S}_n\cap
\mathcal{C}(\bbeta_{1,m},\bgamma_m,c_1\sigma_m)$, such that
\[
\|\bbeta_{1,m}\|
=
\|\bX_0\bbeta_{1,m}\|
\times
\frac{\|\bbeta_{1,m}\|}{\|\bX_0\bbeta_{1,m}\|}
\leq
\frac{1}{\alpha_n}
\|\bX_0\bbeta_{1,m}\|.
\]
Because $\bs_0\in \mathcal{C}(\bbeta_{1,m},\bgamma_m,c_1\sigma_m)$,
it follows that
\[
\begin{split}
\|\by_0-\bX_0\bbeta_{1,m}\|^2
&\leq
(\by_0-\bX_0\bbeta_{1,m})^T
(\sigma_m^2\bV(\bgamma_m))^{-1}
(\by_0-\bX_0\bbeta_{1,m})\\
&\leq
c_1^2
\lambda_1(\sigma_m^2\bV(\bgamma_m))
\leq 
a_2,
\end{split}
\]
where~$a_2=c_1^2s_0^2(s_0^2/a_1)^{k-1}$ only depends on the original collection $\mathcal{S}_n$.
Because $\bs_0\in \mathcal{S}_n$, we have that
\[
\|\bX_0\bbeta_{1,m}\|
\leq
\sqrt{a_2}
+
\max_{(\by_i,\bX_i)\in \mathcal{S}_n}
\|\by_i\|
<\infty.
\]
We conclude that there exists a compact set of $\R^q$ that contains $\bbeta_{1,m}$
and a compact set $B\subset\R^{k\times k}$ that contains $\bV(\bgamma_m)$.
This means that $\bgamma_m$ is in the pre-image~$\bV^{-1}(B)$,
and with conditions~(V1) and (V3), it follows that~$\bV^{-1}(B)$ is a compact set in~$\bTheta$.
We conclude that for minimizing~$R_m(\bbeta,\bV(\bgamma))$ 
we can restrict ourselves to a compact set $B'\subset \mathfrak{D}$, only depending on the original collection~$\mathcal{S}_n$.
Because $\rho_1$ and $\bV$ are continuous, it follows that
$R_m(\bbeta,\bV(\bgamma))$ is a continuous function of $(\bbeta,\bgamma)$,
so it attains a minimum on $B'$.
This also means that there exists a pair $(\bbeta_{1,m},\bgamma_m)\in \mathfrak{D}$ 
that satisfies~\eqref{eq:ineq Rn}, for the corrupted collection~$\mathcal{S}_m'$.
Now, consider any pair~$(\bbeta_{1,m},\bgamma_m)$ that satisfies~\eqref{eq:ineq Rn}.
Then the reasoning above yields that there is a compact set~$B'$, 
only depending on the original collection~$\mathcal{S}_n$, that contains $(\bbeta_{1,m},\bgamma_m)$.
Hence, $\bbeta_{1,m}$ and~$\bgamma_m$ do not break down.

Finally, because $\bV$ satisfies (V2), there exists 
$\btheta_{1,m}=\btheta_{1,n}(\mathcal{S}_m')\in\bTheta$ that satisfies $\bV(\btheta_{1,m})=\sigma_m^2\bV(\bgamma_m)$.
From~\eqref{eq:lwb lamdak}, we have that 
$\lambda_k(\bV(\btheta_{1,m}))=\lambda_k(\sigma_m^2\bV(\bgamma_m))\geq a_1>0$,
where~$a_1$ only depends on the original collection $\mathcal{S}_n$.
Furthermore,
\[
\lambda_1(\bV(\btheta_{1,m}))
=
\sigma_m^2
\lambda_1(\bV(\bgamma_m))
\leq
s_0^2\left(\frac{s_0^2}{a_1}\right)^{k-1}
<\infty.
\]
This means that $\btheta_{1,m}$ does not break down.
\end{proof}

\subsection{Proofs for Section~\ref{sec:equations}}

\paragraph*{Proof of Proposition~\ref{prop:score equations}}
\begin{proof}
In STAGE 3 for the MM-functional, we are considering local minima
of $R_P(\bbeta,\bV(\bgamma))$ that satisfy $|\bV(\bgamma)|=1$,
or equivalently $\log|\bV(\bgamma)|=0$.
The Lagrangian corresponding to this constrained minimization problem
is given by
\[
L_P(\bxi,\lambda)
=
R_P(\bbeta,\bV(\bgamma))-
\lambda\log|\bV(\bgamma)|.
\]
Suppose $\bxi(P)=(\bbeta_1(P),\bgamma(P))$ is a local minimum of $R_P(\bbeta,\bV(\bgamma))$.
Then this is also a zero of the partial derivatives
$\partial L_P/\partial \bbeta$,
$\partial L_P/\partial \bgamma$,
and $\partial L_P/\partial \lambda$.
Let us write $\sigma_P=\sigma(P)$ and consider
\[
R_P(\bbeta,\bV(\bgamma))
=
\int
\rho_1\left(\frac{d}{\sigma_P}\right)\,\text{d}P(\bs)
\]
where $d=d(\by,\bX\bbeta,\bV(\bgamma))$, as defined in~\eqref{def:Mahalanobis distance}.
We find that
\[
\begin{split}
\frac{\partial\rho_1\left(d/\sigma_P\right)}{\partial\bbeta}
&=
\frac{1}{2\sigma_P}
u_1\left(\frac{d}{\sigma_P}\right)
\bX^T\bV^{-1}(\by-\bX\bbeta)\\
\frac{\partial\rho_1\left(d/\sigma_P\right)}{\partial\gamma_j}
&=
\frac{1}{2\sigma_P}
u_1\left(\frac{d}{\sigma_P}\right)
(\by-\bX\bbeta)^T\bV^{-1}
\frac{\partial \bV}{\partial \gamma_j}
\bV^{-1}(\by-\bX\bbeta),
\end{split}
\]
for $j=1,\ldots,l$.
Similar to the proof of Lemma~11.2 in Lopuha\"a \emph{et al}~\cite{lopuhaa-gares-ruizgazen2023},
using that~$\rho_1$ and $\bV$ satisfy (R2), (R4) and (V4), respectively,
we find that at $\bxi(P)$ it holds that
$\|\partial\rho_1\left(d_P/\sigma_P\right)/\partial\bbeta\|\leq C_1\|\bX\|$
and 
$\|\partial\rho_1\left(d_P/\sigma_P\right)/\partial\gamma_j\|\leq C_2$,
for universal constants $0<C_1,C_2<\infty$ only depending on $P$ and $\sigma_P$,
where $d_P=d(\by,\bX\bbeta_1(P),\bV(\bgamma(P)))$.
Since~$\E_P\|\bX\|<\infty$, this implies that under conditions~(R4) and~(V4), 
we may interchange the order of integration and differentiation
in $\partial L_P/\partial \bbeta$ and $\partial L_P/\partial \bgamma$, on a neighborhood of~$\bxi(P)$.
Similar to the derivation of equations~(21) in Lopuha\"a \textit{et al}~\cite{lopuhaa-gares-ruizgazen2023},
it follows that besides the constraint $\log|\bV(\bgamma)|=0$,
the pair $(\bxi(P),\lambda_P)$ satisfies
\begin{equation}
\label{eq:M-equations}
\begin{split}
\int
u_1\left(\frac{d}{\sigma_P}\right)
\bX^T\bV^{-1}(\by-\bX\bbeta)
\,\dd P(\bs)
&=
\mathbf{0}\\
\frac{1}{2\sigma_P}
\int
u_1\left(\frac{d}{\sigma_P}\right)
(\by-\bX\bbeta)^T\bV^{-1}
\frac{\partial \bV}{\partial \gamma_j}
\bV^{-1}(\by-\bX\bbeta)
\,\dd P(\mathbf{s})\\
+
\lambda
\,
\mathrm{tr}\left(\bV^{-1}\frac{\partial \bV}{\partial \gamma_j}\right)
&=0,
\end{split}
\end{equation}
for $j=1,\ldots,l$,
where $u_1(s)=\rho_1'(s)/s$ and $d=d(\by,\bX\bbeta,\bV(\bgamma))$, as defined by~\eqref{def:Mahalanobis distance},
and where we abbreviate~$\bV(\bgamma)$ by $\bV$.
To solve $\lambda_P$ from the second set of equations, we multiply the $j$-th equation by~$\gamma_j$ and then
sum over $j=1,\ldots,l$.
This leads to
\[
\begin{split}
\frac{1}{2\sigma_P}
\int
u_1\left(\frac{d}{\sigma_P}\right)
(\by-\bX\bbeta)^T\bV^{-1}
\left(
\sum_{j=1}^l\gamma_j\frac{\partial \bV}{\partial \gamma_j}
\right)
\bV^{-1}(\by-\bX\bbeta)
\,\dd P(\mathbf{s})\\
+
\lambda\,\mathrm{tr}\left(\bV^{-1}\sum_{j=1}^l\gamma_j\frac{\partial \bV}{\partial \gamma_j}\right)
&=0,
\end{split}
\]
which is solved by
\[
\lambda_P
=
\frac{\displaystyle
-\int
u_1(d/\sigma_P)
(\by-\bX\bbeta)^T\bV^{-1}
\left(
\sum_{t=1}^l\gamma_t(\partial \bV/\partial \gamma_t)
\right)
\bV^{-1}(\by-\bX\bbeta)
\,\dd P(\mathbf{s})}{\displaystyle 
2\sigma_P\mathrm{tr}\left(\bV^{-1}
\sum_{t=1}^l\gamma_t(\partial \bV/\partial \gamma_t)\right)}.
\]
When we insert this back into the second equation in~\eqref{eq:M-equations}, we find
\[
\begin{split}
&
-\mathrm{tr}\left(\bV^{-1}\sum_{t=1}^l\gamma_t\frac{\partial \bV}{\partial \gamma_t}\right)
\int
u_1\left(\frac{d}{\sigma_P}\right)
(\by-\bX\bbeta)^T\bV^{-1}
\frac{\partial \bV}{\partial \gamma_j}
\bV^{-1}(\by-\bX\bbeta)
\,\dd P(\mathbf{s})\\
&\quad+
\mathrm{tr}\left(\bV^{-1}\frac{\partial \bV}{\partial \gamma_j}\right)
\int
u_1\left(\frac{d}{\sigma_P}\right)
(\by-\bX\bbeta)^T\bV^{-1}
\left(
\sum_{t=1}^l\gamma_t\frac{\partial \bV}{\partial \gamma_t}
\right)
\bV^{-1}(\by-\bX\bbeta)
\,\dd P(\mathbf{s})
=0,
\end{split}
\]
or briefly
\begin{equation}
\label{eq:M-equations linear dependent}
\int
u_1\left(\frac{d}{\sigma_P}\right)
(\by-\bX\bbeta)^T\bV^{-1}
\bH_{1,j}
\bV^{-1}(\by-\bX\bbeta)
\,\dd P(\mathbf{s})
=0,
\quad
j=1,\ldots,l,
\end{equation}
where
\[
\bH_{1,j}
=
\mathrm{tr}\left(\bV^{-1}\frac{\partial \bV}{\partial \gamma_j}\right)
\left(
\sum_{t=1}^l\gamma_t\frac{\partial \bV}{\partial \gamma_t}
\right)
-
\mathrm{tr}\left(\bV^{-1}\sum_{t=1}^l\gamma_t\frac{\partial \bV}{\partial \gamma_t}\right)
\frac{\partial \bV}{\partial \gamma_j}.
\]
Because $\sum_{j=1}^l \gamma_j\bH_{1,j}=\mathbf{0}$, the system of equations~\eqref{eq:M-equations linear dependent} is linearly dependent.
Similar to Lopuha\"a \textit{et al}~\cite{lopuhaa-gares-ruizgazen2023}, we subtract the constraint from each equation.
Here, for each $j=1,\ldots,l$, we subtract the term
\[
\mathrm{tr}\left(\bV^{-1}\frac{\partial \bV}{\partial \gamma_j}\right)
\log|\bV|
\]
from the left hand side of equation~\eqref{eq:M-equations linear dependent}.
This finishes the proof.
\end{proof}

\paragraph*{Proof of Proposition~\ref{prop:score equations linear}}
\begin{proof}
When $\bV$ is of the form~\eqref{def:V linear}, then $\partial\bV/\partial\gamma_j=\bL_j$
and $\sum_{j=1}^l\gamma_j \partial\bV/\partial\gamma_j=\bV$.
In this case, 
$\bH_{1,j}=
\mathrm{tr}\left(\bV^{-1}\bL_j\right)\bV
-
k\bL_j$,
and $\Psi_{\bgamma,j}$ in Proposition~\ref{prop:score equations} becomes
\[
\begin{split}
\Psi_{\bgamma,j}(\bs,\bxi,\sigma)
&=
\mathrm{tr}\left(\bV^{-1}\bL_j\right)
\left\{
u_1\left(\frac{d}{\sigma}\right)d^2-\log|\bV|
\right\}\\
&\quad
-
k
u_1\left(\frac{d}{\sigma}\right)
(\by-\bX\bbeta)^T\bV^{-1}
\bL_j
\bV^{-1}(\by-\bX\bbeta).
\end{split}
\]
Using that 
\begin{equation}
\label{eq:trace vec}
\tr(\bA^T\bB)=\vc(\bA)^T\vc(\bB)
\end{equation}
the right hand side can be written as
\[
\begin{split}
-\vc\left(
k
u_1\left(\frac{d}{\sigma}\right)
(\by-\bX\bbeta)(\by-\bX\bbeta)^T
-
v_1\left(\frac{d}{\sigma}\right)\sigma^2\bV
-
\bV\log|\bV|
\right)^T
\vc\left(
\bV^{-1}
\bL_j
\bV^{-1}
\right),
\end{split}
\]
where $u_1(s)=\rho_1'(s)/s$ and
$v_1(s)=u_1(s)s^2=\rho_1'(s)s$.
The functions $\Psi_{\bgamma,j}$, for $j=1,\ldots,l$, can be combined in one expression for the
vector valued function $\Psi_{\bgamma}$ as follows.
First note that
\[
\vc\left(
\bV^{-1}
\bL_j
\bV^{-1}
\right)
=
\left(
\bV^{-1}
\otimes
\bV^{-1}
\right)
\vc\left(
\bL_j
\right)
\]
for $j=1,\ldots,l$.
Then, the column vector $\Psi_{\bgamma}=(\Psi_{\bgamma,1},\ldots,\Psi_{\gamma,l})$ can be written as
\[
\Psi_{\bgamma}(\bs,\bxi,\sigma)
=
-\bL^T
\left(
\bV^{-1}
\otimes
\bV^{-1}
\right)
\vc\left(
\Psi_\bV(\bs,\bxi,\sigma)
\right),
\]
with $\Psi_{\bV}$ defined in~\eqref{def:PsiV}.
\end{proof}

\subsection{Proofs for Section~\ref{sec:IF}}
\begin{lemma}
\label{lem:IF xi sigma}
Suppose that~$\rho_1$ satisfies~(R4) and that $\bV$ satisfies~(V4).
Let $\sigma(P)$ be the solution of equation~\eqref{def:sigma} and 
let $\bxi(P)\in \mathfrak{D}$ be a local minimum of $R_P(\bbeta,\bV(\bgamma))$. 
Suppose that $\text{\rm IF}(\bs;\sigma,P)$ exists.
Let~$\bxi(P_{h,\bs})\in \mathfrak{D}$ 
be a local minimum of $R_P(\bbeta,\bV(\bgamma))$ with~$P=P_{h,\bs}$,
and suppose that $\bxi(P_{h,\bs})\to\bxi(P)$, as~$h\downarrow0$.
Let $\Lambda$ be defined by~\eqref{def:Lambda} with~$\Psi$ from~\eqref{def:Psi}
and suppose $\Lambda$ is continuously differentiable with a non-singular derivative 
$\bD_{\bxi}=\partial\Lambda/\partial\bxi$ 
and derivative $\bD_{\sigma}=\partial\Lambda/\partial\sigma$ at $(\bxi(P),\sigma(P))$.
Then for $\bs\in\R^k\times\R^{kq}$,
\begin{itemize}
\item[(i)]
$\text{\rm IF}(\bs;\bxi,P)
=
-\bD_{\bxi}^{-1}
\Big\{
\Psi(\bs,\bxi(P),\sigma(P))
+
\bD_\sigma
\text{\rm IF}(\bs;\sigma,P)
\Big\}$.
\end{itemize}
Let $\btheta_1(P)$ and~$\btheta_1(P_{h,\bs})$ be solutions of equation~\eqref{def:theta1}
and equation~\eqref{def:theta1} with~$P=P_{h,\bs}$, respectively.
\begin{itemize}
\item[(ii)]
Then 
\[
\begin{split}
\mathrm{IF}(\bs;\vc\bV(\btheta_1),P)
&=
2\sigma(P)
\vc(\bV(\bgamma(P)))
\mathrm{IF}(\bs;\sigma,P)\\
&\qquad+
\sigma^2(P)
\frac{\partial\vc(\bV(\bgamma(P)))}{\partial\bgamma^T}
\text{\rm IF}(\bs;\bgamma,P).
\end{split}
\]
\end{itemize}
In addition, suppose that the $k^2\times l$ matrix $\bD_V=\partial\vc(\bV(\btheta_1(P)))/\partial\btheta^T$ has full rank.
Then 
\begin{itemize}
\item[(iii)]
$\mathrm{IF}(\bs;\btheta_1,P)
=
(\bD_V^T\bD_V)^{-1}
\bD_V^T
\mathrm{IF}(\bs;\vc\bV(\btheta_1),P)$.
\end{itemize}
\end{lemma}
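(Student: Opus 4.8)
The plan is to differentiate the score equation of Proposition~\ref{prop:score equations} along the contamination path $h\mapsto P_{h,\bs}$. Writing $\Lambda_h(\bxi,\sigma)=\int\Psi(\bs',\bxi,\sigma)\,\dd P_{h,\bs}(\bs')=(1-h)\Lambda(\bxi,\sigma)+h\,\Psi(\bs,\bxi,\sigma)$, Proposition~\ref{prop:score equations} applied at $P$ and at $P_{h,\bs}$ gives $\Lambda(\bxi(P),\sigma(P))=\mathbf{0}$ and $\Lambda_h(\bxi(P_{h,\bs}),\sigma(P_{h,\bs}))=\mathbf{0}$, hence
\[
\Lambda\big(\bxi(P_{h,\bs}),\sigma(P_{h,\bs})\big)=-\frac{h}{1-h}\,\Psi\big(\bs,\bxi(P_{h,\bs}),\sigma(P_{h,\bs})\big).
\]
Expanding the left-hand side to first order around $(\bxi(P),\sigma(P))$, using that $\Lambda$ is continuously differentiable there and vanishes there, yields
\[
\bD_{\bxi}\big(\bxi(P_{h,\bs})-\bxi(P)\big)+\bD_\sigma\big(\sigma(P_{h,\bs})-\sigma(P)\big)+o\big(\|\bxi(P_{h,\bs})-\bxi(P)\|+|\sigma(P_{h,\bs})-\sigma(P)|\big)=-\tfrac{h}{1-h}\,\Psi\big(\bs,\bxi(P_{h,\bs}),\sigma(P_{h,\bs})\big).
\]

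First I would establish that $\bxi(P_{h,\bs})-\bxi(P)=O(h)$. The term $\sigma(P_{h,\bs})-\sigma(P)$ is $O(h)$ since $\mathrm{IF}(\bs;\sigma,P)$ exists; the right-hand side is $O(h)$ by continuity of $\Psi$ together with $(\bxi(P_{h,\bs}),\sigma(P_{h,\bs}))\to(\bxi(P),\sigma(P))$; and $\bD_{\bxi}$ is non-singular. Solving the displayed identity for $\bxi(P_{h,\bs})-\bxi(P)$ and absorbing the $o(\cdot)$ term for $h$ small (a standard bootstrapping step) gives $\|\bxi(P_{h,\bs})-\bxi(P)\|\le Ch$. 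Dividing by $h$: the right-hand side tends to $-\Psi(\bs,\bxi(P),\sigma(P))$, the term $\bD_\sigma(\sigma(P_{h,\bs})-\sigma(P))/h$ tends to $\bD_\sigma\,\mathrm{IF}(\bs;\sigma,P)$, and $o(\cdot)/h\to0$; hence $\bD_{\bxi}\big(\bxi(P_{h,\bs})-\bxi(P)\big)/h$ converges, and since $\bD_{\bxi}$ is invertible so does $(\bxi(P_{h,\bs})-\bxi(P))/h$. Its limit is $\mathrm{IF}(\bs;\bxi,P)=-\bD_{\bxi}^{-1}\{\Psi(\bs,\bxi(P),\sigma(P))+\bD_\sigma\,\mathrm{IF}(\bs;\sigma,P)\}$, which is part~(i).

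For part~(ii) I would vectorize the defining relation~\eqref{def:theta1}, obtaining $\vc\bV(\btheta_1(P_{h,\bs}))=\sigma^2(P_{h,\bs})\,\vc\bV(\bgamma(P_{h,\bs}))$ for all small $h\ge0$. The right-hand side is differentiable in $h$ at $0$ because $\mathrm{IF}(\bs;\sigma,P)$ exists, $\bV$ satisfies~(V4), and $\mathrm{IF}(\bs;\bgamma,P)$ exists by part~(i); hence $\mathrm{IF}(\bs;\vc\bV(\btheta_1),P)$ exists and, by the product and chain rules,
\[
\mathrm{IF}(\bs;\vc\bV(\btheta_1),P)=2\sigma(P)\,\vc(\bV(\bgamma(P)))\,\mathrm{IF}(\bs;\sigma,P)+\sigma^2(P)\,\frac{\partial\vc(\bV(\bgamma(P)))}{\partial\bgamma^T}\,\mathrm{IF}(\bs;\bgamma,P).
\]
For part~(iii), put $v(\btheta)=\vc\bV(\btheta)$, so that $v(\btheta_1(P_{h,\bs}))$ is the differentiable curve just obtained and $\btheta_1(P_{h,\bs})\to\btheta_1(P)$ as $h\downarrow0$ (as in Theorem~\ref{th:continuity}(iii)). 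Since the Jacobian $\bD_V=\partial v/\partial\btheta^T$ has full column rank $l$ at $\btheta_1(P)$, the map $v$ admits a continuously differentiable local left-inverse there, so $\btheta_1(P_{h,\bs})$ is differentiable in $h$, that is, $\mathrm{IF}(\bs;\btheta_1,P)$ exists; differentiating $v(\btheta_1(P_{h,\bs}))$ gives $\bD_V\,\mathrm{IF}(\bs;\btheta_1,P)=\mathrm{IF}(\bs;\vc\bV(\btheta_1),P)$, and left-multiplication by $(\bD_V^T\bD_V)^{-1}\bD_V^T$ delivers part~(iii).

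The main obstacle is the opening step: turning the implicit relation $\Lambda_h(\bxi(P_{h,\bs}),\sigma(P_{h,\bs}))=\mathbf{0}$ into the genuine derivative statement $\bxi(P_{h,\bs})-\bxi(P)=O(h)$ with an actual limit. This relies on the non-singularity of $\bD_{\bxi}$, the continuous differentiability of $\Lambda$ at $(\bxi(P),\sigma(P))$, the prior existence of $\mathrm{IF}(\bs;\sigma,P)$, and the convergence $\bxi(P_{h,\bs})\to\bxi(P)$ guaranteed by Corollary~\ref{cor:existence weak convergence} and Theorem~\ref{th:continuity}; once this is secured, parts~(ii) and~(iii) are routine applications of the product, chain, and inverse-function rules.
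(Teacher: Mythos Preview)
Your proposal is correct and follows essentially the same route as the paper's proof: both decompose the perturbed score equation into $(1-h)\Lambda(\bxi_{h,\bs},\sigma_{h,\bs})+h\Psi(\bs,\cdot,\cdot)=\mathbf{0}$, Taylor-expand $\Lambda$ around $(\bxi(P),\sigma(P))$, use non-singularity of $\bD_{\bxi}$ together with $\sigma(P_{h,\bs})-\sigma(P)=O(h)$ to get $\bxi(P_{h,\bs})-\bxi(P)=O(h)$, and then pass to the limit after dividing by $h$. The only cosmetic differences are that the paper expands $\Lambda$ in two steps (first in $\bxi$, then in $\sigma$) whereas you expand jointly, and for part~(iii) the paper argues directly from the first-order expansion $\vc\bV(\btheta_1(P_{h,\bs}))-\vc\bV(\btheta_1(P))=\bD_V(\btheta_1(P_{h,\bs})-\btheta_1(P))+o(\cdot)$ rather than invoking a local left-inverse; neither difference is substantive.
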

\begin{proof}
Denote $\bxi_{h,\bs}=\bxi(P_{h,\bs})=(\bbeta_1(P_{h,\bs}),\bgamma(P_{h,\bs}))$
and $\sigma_{h,\bs}=\sigma(P_{h,\bs})$,
and write~$\bxi_P=\bxi(P)$ and $\sigma_P=\sigma(P)$.
Then $(\bxi_{h,\bs},\sigma_{h,\bs})$ satisfies the score equation~\eqref{eq:Psi=0} for $P$ equal to $P_{h,\bs}$.
We decompose as follows
\begin{equation}
\label{eq:decomposition IF}
\begin{split}
0
&=
\int \Psi(\bs,\bxi_{h,\bs},\sigma_{h,\bs})\,\dd P_{h,\bs}(\bs)\\
&=
(1-h)
\Lambda(\bxi_{h,\bs},\sigma_{h,\bs})
+
h\Big(
\Psi(\bs,\bxi_{h,\bs},\sigma_{h,\bs})-\Psi(\bs,\bxi_P,\sigma_P)
\Big)
+
h\Psi(\bs,\bxi_P,\sigma_P),
\end{split}
\end{equation}
where $\Psi$ and $\Lambda$ are defined by~\eqref{def:Psi} and~\eqref{def:Lambda}, respectively.
We first determine the order of $\bxi_{h,\bs}-\bxi_P$, as $h\downarrow0$.
Because $\rho_1$ and $\bV$ satisfy~(R4) and~(V4), respectively, it follows that
$\Psi(\bs,\bxi_{h,\bs},\sigma_{h,\bs})\to\Psi(\bs,\bxi_P,\sigma_P)$, as $h\downarrow0$.
Because $\Lambda$ is continuous differentiable at $(\bxi_P,\sigma_P)$,
we have that
\begin{equation}
\label{eq:term1}
\begin{split}
\Lambda(\bxi_{h,\bs},\sigma_{h,\bs})
&=
\Lambda(\bxi_P,\sigma_{h,\bs})
+
\frac{\partial\Lambda(\bxi_P,\sigma_{h,\bs})}{\partial\bxi}
(\bxi_{h,\bs}-\bxi_P)
+
o(\|\bxi_{h,\bs}-\bxi_P\|)\\
&=
\Lambda(\bxi_P,\sigma_{h,\bs})
+
\frac{\partial\Lambda(\bxi_P,\sigma_P)}{\partial\bxi}
(\bxi_{h,\bs}-\bxi_P)
+
o(\|\bxi_{h,\bs}-\bxi_P\|)\\
&=
\Lambda(\bxi_P,\sigma_{h,\bs})
+
\bD_{\bxi}
(\bxi_{h,\bs}-\bxi_P)
+
o(\|\bxi_{h,\bs}-\bxi_P\|).
\end{split}
\end{equation}
In general $\Lambda(\bxi_P,\sigma_{h,\bs})\ne \mathbf{0}$.
Since $\text{IF}(\bs;\sigma,P)$ exists, we find
\begin{equation}
\label{eq:term2}
\begin{split}
\Lambda(\bxi_P,\sigma_{h,\bs})
&=
\Lambda(\bxi_P,\sigma_P)
+
\frac{\partial\Lambda(\bxi_P,\sigma_P)}{\partial\sigma}(\sigma_{h,\bs}-\sigma_P)
+
o(|\sigma_{h,\bs}-\sigma_P|)\\
&=
h
\bD_{\sigma}
\text{IF}(\bs;\sigma,P)
+
o(h).
\end{split}
\end{equation}
Here we also use that $\Lambda(\bxi_P,\sigma_P)=\mathbf{0}$, because $\bxi_P$ is a solution of~\eqref{eq:Psi=0}.
Inserting~\eqref{eq:term1} and~\eqref{eq:term2} in~\eqref{eq:decomposition IF}, yields
\[
\mathbf{0}
=
h
\bD_{\sigma}
\text{IF}(\bs;\sigma,P)
+
\bD_{\bxi}
(\bxi_{h,\bs}-\bxi_P)
+
h\Psi(\bs,\bxi_P,\sigma_P)
+
o(\|\bxi_{h,\bs}-\bxi_P\|)
+
o(h).
\]
Since $\bD_{\bxi}$ is non-singular, this means that $\bxi_{h,\bs}-\bxi_P=O(h)$.
When we insert this in the previous equation and divide by~$h$, we obtain
\[
\frac{\bxi_{h,\bs}-\bxi_P}{h}
=
-\bD_{\bxi}^{-1}
\left\{
\bD_{\sigma}
\text{IF}(\bs;\sigma,P)
+
\Psi(\bs,\bxi_P,\sigma_P)
\right\}
+
o(1).
\]
After letting $h\downarrow0$, this proves the first part of the lemma.

Since $\bV$ satisfies~(V4), it holds that
\[
\vc(\bV(\bgamma_{h,\bs}))-\vc(\bV(\bgamma_P))
=
\frac{\partial\vc(\bV(\bgamma_P))}{\partial\bgamma^T}
(\bgamma_{h,\bs}-\bgamma_P)
+
o(\|\bgamma_{h,\bs})-\bgamma_P\|).
\]
From part~(i), this means that $\text{IF}(\bs;\vc\bV(\bgamma),P)$ exists, and is given by
\[
\vc(
\text{IF}(\bs;\bV(\bgamma),P))
=
\frac{\partial\vc(\bV(\bgamma_P))}{\partial\bgamma^T}
\text{IF}(\bs;\bgamma,P).
\]
Since $\btheta_1(P)$ satisfies~\eqref{def:theta1},
it follows that $\bV(\btheta_1(P))=\sigma^2_P\bV(\bgamma_P)$,
and similarly for $\btheta_1(P_{h,\bs})$.
This implies that
\[
\begin{split}
&
\vc\bV(\btheta_1(P_{h,\bs}))-\vc\bV(\btheta_1(P))\\
&=
\sigma_{h,\bs}^2
\vc\bV(\bgamma_{h,\bs})
-
\sigma_P^2
\vc\bV(\bgamma_P)\\
&=
\vc\bV(\bgamma_P)
\left(
\sigma_{h,\bs}^2-\sigma_P^2
\right)
+
\sigma_{h,\bs}^2
\left(
\vc\bV(\bgamma_{h,\bs})-\vc\bV(\bgamma_P)
\right)\\
&=
\vc\bV(\bgamma_P)(2\sigma_P+o(1))
\left(
\sigma_{h,\bs}-\sigma_P
\right)
+
(\sigma_P+o(1))
\left(
\vc\bV(\bgamma_{h,\bs})-\vc\bV(\bgamma_P)
\right).
\end{split}
\]
After dividing by $h$ and letting $h\downarrow0$, this proves the second part of the lemma.

Finally, since $\bV$ satisfies~(V4), as before we can write 
\[
\vc(\bV(\btheta_1(P_{h,\bs}))-\vc(\bV(\btheta_1(P))
=
\bD_V
(\btheta_1(P_{h,\bs})-\btheta_1(P))
+
o(\|\btheta_1(P_{h,\bs})-\btheta_1(P)\|).
\]
Because $\text{IF}(\bs;\vc\bV(\btheta_1),P)$ exists and $\bD_V$ has full rank,
it follows that $\btheta_1(P_{h,\bs})-\btheta_1(P)=O(h)$.
When insert this in the previous equation, then after dividing by $h$ and letting $h\downarrow0$,
the limit exists and we obtain
\[
\text{IF}(\bs;\vc\bV(\btheta_1),P)
=
\bD_V
\text{IF}(\bs;\btheta_1,P).
\]
Because $\bD_V$ has full rank, we can multiply from the left with $(\bD_V^T\bD_V)^{-1}\bD_V^T$,
which proves part three.
\end{proof}

For $\bzeta=(\bbeta,\btheta)\in\R^q\times\bTheta$, let
\begin{equation}
\label{def:Psi0}
\Psi_0(\bs,\bzeta,\sigma)
=
\rho_0
\left(
\frac{\displaystyle\sqrt{(\by-\bX\bbeta)^T\bGamma(\btheta)^{-1}(\by-\bX\bbeta)}}{\sigma}
\right)
-
b_0,
\end{equation}
where $\bGamma$ is defined in~\eqref{def:Gamma}, and define
\begin{equation}
\label{def:Lambda0}
\Lambda_0(\bzeta,\sigma)
=
\int \Psi_0(\bs,\bzeta,\sigma)\,\dd P(\bs).
\end{equation}

\begin{lemma}
\label{lem:IF sigma zeta}
Suppose that $\rho_0$ satisfies~(R4) and that~$\bV$ satisfies~(V4).
Let $\bzeta_0=(\bbeta_0,\btheta_0)$ be the pair of initial functionals and suppose that
$\text{\rm IF}(\bs,\bzeta_0,P)$ exists.
Let $\sigma(P)$ and $\sigma(P_{h,\bs})$ be solutions of equation~\eqref{def:sigma} 
and equation~\eqref{def:sigma} with $P=P_{h,\bs}$, respectively,
and suppose that $\sigma(P_{h,\bs})\to\sigma(P)$, as $h\downarrow0$.
Let $\Lambda_0$ be defined in~\eqref{def:Lambda0} and suppose  it is continuously differentiable with derivatives 
$D_{0,\sigma}=\partial\Lambda_0/\partial\sigma\ne0$ 
and $\bD_{0,\bzeta}=\partial\Lambda_0/\partial\bzeta\in\R^{q+l}$ at~$(\bzeta_0(P),\sigma(P))$. 
Then for $\bs\in\R^k\times\R^{kq}$,
\[
\text{\rm IF}(\bs;\sigma,P)
=
-D_{0,\sigma}^{-1}
\bigg\{
\Psi_0(\bs,\bzeta_0(P),\sigma(P))
+
\bD_{0,\bzeta}^T
\text{\rm IF}(\bs;\bzeta_0,P)
\bigg\}.
\]
\end{lemma}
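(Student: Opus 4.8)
The plan is to mimic the standard implicit-function argument for the influence function of an M-functional defined by an estimating equation, exactly as in the first part of Lemma~\ref{lem:IF xi sigma}, but now applied to the scalar equation~\eqref{def:sigma}. Recall that $\sigma(P)$ is defined as the solution of $\int\Psi_0(\bs,\bzeta_0(P),\sigma)\,\dd P(\bs)=0$, where $\Psi_0$ is given by~\eqref{def:Psi0}; hence $\Lambda_0(\bzeta_0(P),\sigma(P))=0$. At the perturbed measure $P_{h,\bs}$ we have
\[
0
=
\int \Psi_0(\bs',\bzeta_0(P_{h,\bs}),\sigma(P_{h,\bs}))\,\dd P_{h,\bs}(\bs').
\]
Write $\bzeta_{h}=\bzeta_0(P_{h,\bs})$, $\sigma_h=\sigma(P_{h,\bs})$, $\bzeta_P=\bzeta_0(P)$, $\sigma_P=\sigma(P)$. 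Splitting $P_{h,\bs}=(1-h)P+h\delta_{\bs}$ gives the decomposition
\[
0
=
(1-h)\Lambda_0(\bzeta_h,\sigma_h)
+
h\big(\Psi_0(\bs,\bzeta_h,\sigma_h)-\Psi_0(\bs,\bzeta_P,\sigma_P)\big)
+
h\Psi_0(\bs,\bzeta_P,\sigma_P).
\]
Because $\rho_0$ satisfies~(R4) and $\bV$ satisfies~(V4), the map $(\bzeta,\sigma)\mapsto\Psi_0(\bs,\bzeta,\sigma)$ is continuous at $(\bzeta_P,\sigma_P)$, so the middle term is $o(1)$ after the factor $h$ is accounted for (it contributes $h\cdot o(1)$); this uses the hypothesis $\sigma_h\to\sigma_P$ together with $\bzeta_h=\bbeta_0(P_{h,\bs})\to\bzeta_P$, which follows from the assumed existence of $\text{\rm IF}(\bs,\bzeta_0,P)$.

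Next I would Taylor-expand $\Lambda_0(\bzeta_h,\sigma_h)$ around $(\bzeta_P,\sigma_P)$ using the assumed continuous differentiability of $\Lambda_0$:
\[
\Lambda_0(\bzeta_h,\sigma_h)
=
\bD_{0,\bzeta}^T(\bzeta_h-\bzeta_P)
+
D_{0,\sigma}(\sigma_h-\sigma_P)
+
o(\|\bzeta_h-\bzeta_P\|)
+
o(|\sigma_h-\sigma_P|),
\]
where we use $\Lambda_0(\bzeta_P,\sigma_P)=0$. Since $\text{\rm IF}(\bs,\bzeta_0,P)$ exists, $\bzeta_h-\bzeta_P=h\,\text{\rm IF}(\bs;\bzeta_0,P)+o(h)$. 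Substituting the two expansions into the decomposition, dividing by $h$, and using $D_{0,\sigma}\ne0$, one first deduces $\sigma_h-\sigma_P=O(h)$ (so the $o$-terms are genuinely $o(h)$), and then, after re-substituting and letting $h\downarrow0$, obtains
\[
0
=
\bD_{0,\bzeta}^T\,\text{\rm IF}(\bs;\bzeta_0,P)
+
D_{0,\sigma}\,\text{\rm IF}(\bs;\sigma,P)
+
\Psi_0(\bs,\bzeta_P,\sigma_P),
\]
which rearranges to the claimed formula.

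The only mild obstacle is the bookkeeping that justifies discarding the $o$-terms before one knows $\sigma_h-\sigma_P=O(h)$; this is handled exactly as in the proof of Lemma~\ref{lem:IF xi sigma}, by a two-pass argument (first establish the $O(h)$ rate from the leading-order identity, then refine). A secondary point worth a sentence is confirming that $\Psi_0$ is indeed continuous in $(\bzeta,\sigma)$ near $(\bzeta_P,\sigma_P)$: this is immediate from (R1)--(R2) (continuity of $\rho_0$) together with (V4) and the fact that $\bGamma(\btheta)=\bV(\btheta)/|\bV(\btheta)|^{1/k}$ is continuous and positive definite on $\bTheta$, so the Mahalanobis-type argument of $\rho_0$ is continuous and bounded. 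Everything else is a direct transcription of the implicit-function/estimating-equation computation already carried out in this paper.
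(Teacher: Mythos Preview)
Your proposal is correct and follows essentially the same approach as the paper's own proof: the same decomposition of $\int\Psi_0\,\dd P_{h,\bs}$, the same Taylor expansion of $\Lambda_0$ around $(\bzeta_0(P),\sigma(P))$, and the same two-pass argument (first establish $\sigma_h-\sigma_P=O(h)$, then refine). The only cosmetic difference is that the paper expands $\Lambda_0$ sequentially (first in $\sigma$, then in $\bzeta$) rather than jointly, which is immaterial.
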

\begin{proof}
Denote $\bzeta_{h,\bs}=\bzeta_0(P_{h,\bs})$, $\sigma_{h,\bs}=\sigma(P_{h,\bs})$,
and write $\bzeta_{0,P}=\bzeta_0(P)$ and $\sigma_P=\sigma(P)$.
By definition, $\sigma_{h,\bs}$ satisfies equation~\eqref{def:sigma}  for $P$ equal to $P_{h,\bs}$.
Similar to~\eqref{eq:decomposition IF}, we decompose as follows
\begin{equation}
\label{eq:decomposition IFsigma}
\begin{split}
0
=
(1-h)
\Lambda_0(\bzeta_{h,\bs},\sigma_{h,\bs})
&+
h\Big(
\Psi_0(\bs,\bzeta_{h,\bs},\sigma_{h,\bs})-\Psi_0(\bs,\bzeta_{0,P},\sigma_P)
\Big)\\
&+
h\Psi_0(\bs,\bzeta_{0,P},\sigma_P),
\end{split}
\end{equation}
where $\Psi_0$ and $\Lambda_0$ are defined by~\eqref{def:Psi0} and~\eqref{def:Lambda0}, respectively.
We first determine the order of $\sigma_{h,\bs}-\sigma_P$, as $h\downarrow0$.
Because $\rho_0$ satisfies~(R4) and $\bV$ satisfies~(V4), it follows that
$\Psi_0(\bs,\bzeta_{h,\bs},\sigma_{h,\bs})\to\Psi_0(\bs,\bzeta_{0,P},\sigma_P)$, as $h\downarrow0$.
Since~$\Lambda_0$ is continuous differentiable at~$(\bzeta_{0,P},\sigma_P)$,
similar to~\eqref{eq:term1} 
we have that
\[
\begin{split}
\Lambda_0(\bzeta_{h,\bs},\sigma_{h,\bs})
&=
\Lambda_0(\bzeta_{h,\bs},\sigma_P)
+
D_{0,\sigma}
(\sigma_{h,\bs}-\sigma_P)+
o(\sigma_{h,\bs}-\sigma_P).
\end{split}
\]
In general $\Lambda_0(\bzeta_{h,\bs},\sigma_P)\ne \mathbf{0}$, so that the behavior of $\sigma_{h,\bs}-\sigma_P$
depends on the behavior of
\[
\begin{split}
\Lambda_0(\bzeta_{h,\bs},\sigma_P)
&=
\Lambda_0(\bzeta_{0,P},\sigma_P)
+
\frac{\partial\Lambda_0(\bzeta_{0,P},\sigma_P)}{\partial\bzeta}
(\bzeta_{h,\bs}-\bzeta_{0,P})
+
o(\|\bzeta_{h,\bs}-\bzeta_{0,P}\|)\\
&=
\bD_{0,\bzeta}^T
(\bzeta_{h,\bs}-\bzeta_{0,P})
+
o(\|\bzeta_{h,\bs}-\bzeta_{0,P}\|).
\end{split}
\]
Here we also use that $\Lambda_0(\bzeta_{0,P},\sigma_P)=0$, because $\sigma_P$ is a solution of~\eqref{def:sigma}.
Because~$\text{IF}(\bs;\bzeta_0,P)$ exists, we conclude that
$\Lambda_0(\bzeta_{h,\bs},\sigma_P)=O(h)$, and therefore 
\[
\Lambda_0(\bzeta_{h,\bs},\sigma_{h,\bs})
=
D_{0,\sigma}
(\sigma_{h,\bs}-\sigma_P)+
o(\sigma_{h,\bs}-\sigma_P)
+O(h).
\]
When we insert this in the right hand side of~\eqref{eq:decomposition IFsigma}, if follows that
$\sigma_{h,\bs}-\sigma_P=O(h)$.
Then, again from~\eqref{eq:decomposition IFsigma}, we find that
\[
0
=
D_{0,\sigma}
(\sigma_{h,\bs}-\sigma_P)
+
\bD_{0,\bzeta}^T
(\bzeta_{h,\bs}-\bzeta_{0,P})
+
h\Psi_0(\bs,\bzeta_{0,P},\sigma_P)
+
o(h).
\]
After dividing by $h$, it follows that
\[
\frac{\sigma_{h,\bs}-\sigma_P}{h}
=
-D_{0,\sigma}^{-1}
\left\{
\bD_{0,\bzeta}^T
\frac{\bzeta_{h,\bs}-\bzeta_{0,P}}{h}
+
\Psi_0(\bs,\bzeta_{0,P},\sigma_P)
\right\}
+
o(1).
\]
When we let $h\downarrow0$ and use that $\text{IF}(\bs;\bzeta_0,P)$ exists, 
we conclude that the limit of the left hand side exists and is given by
\[
\text{IF}(\bs;\sigma,P)
=
-D_{0,\sigma}^{-1}
\Big\{
\bD_{0,\bzeta}^T
\text{IF}(\bs;\bzeta_0,P)
+
\Psi_0(\bs,\bzeta_{0,P},\sigma_P)
\Big\}.
\]
This proves the lemma.
\end{proof}

\paragraph*{Proof of Theorem~\ref{th:point of symmetry}}
\begin{proof}
Denote $\bxi_{h,\bs}=\bxi(P_{h,\bs})=(\bbeta_1(P_{h,\bs}),\bgamma(P_{h,\bs}))$,
$\sigma_{h,\bs}=\sigma(P_{h,\bs})$,
and write $\bxi_P=\bxi(P)$ and $\sigma_P=\sigma(P)$.
Then $(\bxi_{h,\bs},\sigma_{h,\bs})$ satisfies the regression score equation in~\eqref{eq:Psi=0} for $P$ equal to $P_{h,\bs}$.
Similar to~\eqref{eq:decomposition IF} we decompose the regression score equation~\eqref{eq:Psi=0} 
as follows
\begin{equation}
\label{eq:decom Psi beta}
\begin{split}
\mathbf{0}
&=
\int \Psi_{\bbeta}(\bs,\bxi_{h,\bs},\sigma_{h,\bs})\,\dd P_{h,\bs}(\bs)\\
&=
(1-h)
\Lambda_{\bbeta}(\bxi_{h,\bs},\sigma_{h,\bs})
+
h\Big(
\Psi_{\bbeta}(\bs,\bxi_{h,\bs},\sigma_{h,\bs})-\Psi_{\bbeta}(\bs,\bxi_P,\sigma_P)
\Big)\\
&\phantom{= (1-h) \Lambda_{\bbeta}(\bxi_{h,\bs},\sigma_{h,\bs})}
+
h\Psi_{\bbeta}(\bs,\bxi_P,\sigma_P),
\end{split}
\end{equation}
where $\Psi_{\bbeta}$ and $\Lambda_{\bbeta}$ are defined by~\eqref{def:Psi} and~\eqref{def:Lambda}, respectively.
Because $\rho_1$ and $\bV$ satisfy~(R4) and~(V1), respectively, it follows that
$\Psi_{\bbeta}(\bs,\bxi_{h,\bs},\sigma_{h,\bs})\to\Psi_{\bbeta}(\bs,\bxi_P,\sigma_P)$, as $h\downarrow0$.
Because~$\partial\Lambda_{\bbeta}/\partial\bbeta$ is continuous at $(\bxi_P,\sigma_P)$
and
$(\bgamma_{h,\bs},\sigma_{h,\bs})\to(\bgamma_P,\sigma_P)$, we find that
\[
\begin{split}
\Lambda_{\bbeta}(\bxi_{h,\bs},\sigma_{h,\bs})
&=
\Lambda_{\bbeta}(\bbeta_1(P),\bgamma_{h,\bs},\sigma_{h,\bs})
+
\left(
\bD_{\bbeta}+o(1)
\right)
(\bbeta_1(P_{h,\bs})-\bbeta_1(P)).
\end{split}
\]
Because $\bbeta_1(P)$ is a point of symmetry and $\Psi_{\bbeta}$ is an 
odd function of $\by-\bX\bbeta$, 
it follows that~$\Lambda_{\bbeta}(\bbeta_1(P),\bgamma_{h,\bs},\sigma_{h,\bs})=\mathbf{0}$.
This means that 
\[
\Lambda_{\bbeta}(\bxi_{h,\bs},\sigma_{h,\bs})
=
\bD_{\bbeta}
(\bbeta_1(P_{h,\bs})-\bbeta_1(P))
+
o(\|\bbeta_1(P_{h,\bs})-\bbeta_1(P)\|).
\]
Together with~\eqref{eq:decom Psi beta}, we find that 
\[
\mathbf{0}
=
\bD_{\bbeta}
(\bbeta_1(P_{h,\bs})-\bbeta_1(P))
+
h\Psi_{\bbeta}(\bs,\bxi_P,\sigma_P)
+
o(\|\bbeta_1(P_{h,\bs})-\bbeta_1(P)\|)
+
o(h).
\]
Since $\bD_{\bbeta}$ is non-singular, this implies that $\bbeta_1(P_{h,\bs})-\bbeta_1(P)=O(h)$.
When we insert this in the previous equation and divide by $h$, we obtain
\[
\frac{\bbeta_1(P_{h,\bs})-\bbeta_1(P)}{h}
=
-\bD_{\bbeta}^{-1}\Psi_{\bbeta}(\bs,\bxi_P,\sigma_P)
+
o(1).
\]
When we let $h\downarrow0$, this finishes the proof.
\end{proof}

\begin{lemma}
\label{lem:change order int diff Lambda0}
Let $\Lambda_0$ be defined by~\eqref{def:Lambda0} with~$\Psi_0$ defined in~\eqref{def:Psi0}
and suppose that~$\E\|\bX\|<\infty$.
Suppose that $\rho_0$ and~$\bV$ satisfy (R2), (R4) and (V4), respectively.
Let $\bzeta_0(P)=(\bbeta_0(P),\btheta_0(P))$ be the pair of initial functionals
and let~$\sigma(P)$ be a solution of~\eqref{def:sigma}.
Let $N\subset \R^q\times\bTheta\times(0,\infty)$ be an open neighborhood of~$(\bzeta_0(P),\sigma(P))$.
Then~$\Lambda_0$ is continuous differentiable at $(\bzeta_0(P),\sigma(P))$ and
for all $(\bzeta,\sigma)\in N$,
\[
\frac{\partial\Lambda_0(\bzeta,\sigma)}{\partial \bzeta}
=
\int
\frac{\partial\Psi_0(\bs,\bzeta,\sigma)}{\partial \bzeta}\,\dd P(\bs)
\quad\text{and}\quad
\frac{\partial\Lambda_0(\bzeta,\sigma)}{\partial \sigma}
=
\int
\frac{\partial\Psi_0(\bs,\bzeta,\sigma)}{\partial \sigma}\,\dd P(\bs).
\]
\end{lemma}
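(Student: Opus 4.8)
}

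The plan is to reduce everything to the classical theorem on differentiation under the integral sign, so the real work is twofold: (a) show that $\Psi_0$ is continuously differentiable in $(\bzeta,\sigma)$ on a neighborhood of $(\bzeta_0(P),\sigma(P))$, and (b) exhibit a single $P$-integrable function dominating all its partial derivatives uniformly on that neighborhood. This mirrors the argument used for the analogous statement with $\rho_1$ in the proof of Lemma~11.2 in Lopuha\"a \emph{et al}~\cite{lopuhaa-gares-ruizgazen2023}. Abbreviate $d^2=d^2(\by,\bX\bbeta,\bGamma(\btheta))=(\by-\bX\bbeta)^T\bGamma(\btheta)^{-1}(\by-\bX\bbeta)$ and set $u_0(s)=\rho_0'(s)/s$, $v_0(s)=\rho_0'(s)s$.

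For (a), write $\Psi_0(\bs,\bzeta,\sigma)=\widetilde{\rho}_0(d^2/\sigma^2)-b_0$ with $\widetilde{\rho}_0(x)=\rho_0(\sqrt{x})$. By (R4) the function $u_0$ is continuous, which forces $\rho_0'(0)=0$, and then $\widetilde{\rho}_0$ is $C^1$ on $[0,\infty)$ with derivative $x\mapsto\tfrac12 u_0(\sqrt{x})$. By (V4) (which contains (V1)) the map $\btheta\mapsto\bV(\btheta)$ is $C^1$, and since the determinant and the matrix inverse are smooth on $\text{PDS}(k)$, after shrinking $N$ so that $\bV(\btheta)\in\text{PDS}(k)$ on it, $\btheta\mapsto\bGamma(\btheta)^{-1}$ is $C^1$; hence $(\bbeta,\btheta,\sigma)\mapsto d^2/\sigma^2$ is $C^1$ on $N$. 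The chain rule then gives that $\Psi_0$ is $C^1$ on $N$, with
\begin{equation*}
\frac{\partial\Psi_0}{\partial\bbeta}=-\frac{1}{\sigma^2}u_0\!\left(\frac{d}{\sigma}\right)\bX^T\bGamma(\btheta)^{-1}(\by-\bX\bbeta),\qquad \frac{\partial\Psi_0}{\partial\theta_j}=\frac{1}{2\sigma^2}u_0\!\left(\frac{d}{\sigma}\right)(\by-\bX\bbeta)^T\frac{\partial\bGamma(\btheta)^{-1}}{\partial\theta_j}(\by-\bX\bbeta),\qquad \frac{\partial\Psi_0}{\partial\sigma}=-\frac{1}{\sigma}v_0\!\left(\frac{d}{\sigma}\right).
\end{equation*}

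For (b), fix a compact neighborhood $\bar N\subset N$ of $(\bzeta_0(P),\sigma(P))$ on which, by continuity, the eigenvalues of $\bGamma(\btheta)$ lie in a fixed $[\lambda_-,\lambda_+]\subset(0,\infty)$, $\sigma\in[\sigma_-,\sigma_+]\subset(0,\infty)$, and $\|\partial\bGamma(\btheta)^{-1}/\partial\theta_j\|\le\Gamma_1<\infty$. The key point: by (R2) one has $\rho_0'\equiv0$ on $[c_0,\infty)$, so $u_0(d/\sigma)$ and $v_0(d/\sigma)$ vanish unless $d\le c_0\sigma$, and on $\{d\le c_0\sigma\}$ we get $\|\by-\bX\bbeta\|\le\sqrt{\lambda_1(\bGamma(\btheta))}\,d\le\sqrt{\lambda_+}\,c_0\sigma_+$. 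Combined with boundedness of $u_0$ and $v_0$ (each is continuous on $[0,c_0]$ and zero on $[c_0,\infty)$), it follows that on $\bar N$ the quantities $u_0(d/\sigma)\|\by-\bX\bbeta\|$, $u_0(d/\sigma)\|\by-\bX\bbeta\|^2$ and $v_0(d/\sigma)$ are bounded by a constant depending only on $\bar N$; together with the bounds $\|\bGamma(\btheta)^{-1}\|\le\lambda_-^{-1}$, $\sigma\ge\sigma_-$, $\|\partial\bGamma(\btheta)^{-1}/\partial\theta_j\|\le\Gamma_1$ this yields $\|\partial\Psi_0/\partial\bbeta\|\le C\|\bX\|$, $\|\partial\Psi_0/\partial\theta_j\|\le C$, $|\partial\Psi_0/\partial\sigma|\le C$ for all $(\bzeta,\sigma)\in\bar N$, with $C=C(\bar N)<\infty$. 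Hence $g(\bs)=C(1+\|\bX\|)$ dominates every partial derivative of $\Psi_0$ on $\bar N$, and $g$ is $P$-integrable since $\E\|\bX\|<\infty$.

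With (a) and (b) the standard dominated-convergence argument for differentiation under the integral sign gives that $\Lambda_0$ is differentiable at every interior point of $\bar N$, with $\partial\Lambda_0/\partial\bzeta=\int\partial\Psi_0/\partial\bzeta\,\dd P(\bs)$ and $\partial\Lambda_0/\partial\sigma=\int\partial\Psi_0/\partial\sigma\,\dd P(\bs)$; continuity of these derivatives on $N$ follows from continuity of the integrands (from (a)) together with the same domination, so $\Lambda_0$ is $C^1$ on $N$ as claimed. The main obstacle is the domination in the $\btheta$-direction, where $\bGamma(\btheta)^{-1}$ enters both $d$ and the Jacobian factor; it is resolved precisely by the observation that $\rho_0'$ has compact support, so on the region where the derivatives are nonzero the residual $\by-\bX\bbeta$ is automatically bounded — uniformly over $\bar N$ once the eigenvalues of $\bGamma(\btheta)$ are pinned between two positive constants.
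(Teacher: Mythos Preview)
Your proposal is correct and follows essentially the same approach as the paper: compute the three partial derivatives of $\Psi_0$ explicitly, bound each uniformly on a compact neighborhood using (R2), (R4), (V4) together with the fact that $\rho_0'$ has compact support (equivalently, that $\rho_0'(s)$, $\rho_0'(s)s$, $\rho_0'(s)s^2$ are bounded), obtain the dominating function $C(1+\|\bX\|)$, and apply dominated convergence. Your framing via $\widetilde{\rho}_0(x)=\rho_0(\sqrt{x})$ and the explicit observation that $\|\by-\bX\bbeta\|$ is bounded on the support of $\rho_0'(d/\sigma)$ is a slightly cleaner way of packaging the same estimates, but the argument is the paper's.
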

\begin{proof}
Let $(\bzeta,\sigma)\in N$.
Consider $\bzeta\mapsto\Lambda_0(\bzeta,\sigma)$ with $\sigma\in(0,\infty)$ fixed.
From~\eqref{def:Psi0} we find
\[
\frac{\partial\Psi_0(\bs,\bzeta,\sigma)}{\partial \bbeta}
=
\rho'_0\left(\frac{d_{\Gamma}}{\sigma}\right)
\frac{1}{2d_{\Gamma}\sigma}
\bX^T\bGamma^{-1}(\by-\bX\bbeta)
=
\frac{1}{2\sigma^2}
u_0\left(\frac{d_{\Gamma}}{\sigma}\right)
\bX^T\bGamma^{-1}(\by-\bX\bbeta)
\]
where $u_0(s)=\rho_0(s)/s$ and $d_\Gamma^2=(\by-\bX\bbeta)^T\bGamma^{-1}(\by-\bX\bbeta)$,
and where we write $\bGamma$ for $\bGamma(\btheta)$, as defined in~\eqref{def:Gamma}.
Similar to the proof of Lemma~11.2 in Lopuha\"a \textit{et al}~\cite{lopuhaa-gares-ruizgazen2023} we obtain
\[
\left\|
\bX^T\bGamma^{-1}(\by-\bX\bbeta)
\right\|^2
\leq
d_{\Gamma}^2\|\bX\|^2\lambda_1(\bGamma^{-1}).
\]
This means that
\[
\left\|
\frac{\partial\Psi_0(\bs,\bzeta,\sigma)}{\partial \bbeta}
\right\|
\leq
\frac{1}{2\sigma}
\left|
u_0\left(\frac{d_{\Gamma}}{\sigma}\right)
\frac{d_{\Gamma}}{\sigma}
\right|
\|\bX\|
\sqrt{\lambda_1(\bGamma^{-1})}.
\]
From (R2) and (R4) it follows that $u_0(s)s=\rho_0'(s)$ is bounded
and for $(\bzeta,\sigma)$ in the neighborhood $N$ of $(\bzeta_0(P),\sigma(P))$,
we have that $1/\sigma$ and $\lambda_1(\bGamma(\btheta)^{-1})$ are uniformly bounded.
This means there exists a universal constant $0<C_1<\infty$, such that
\[
\left\|
\frac{\partial\Psi_0(\bs,\bzeta,\sigma)}{\partial \bbeta}
\right\|
\leq C_1\|\bX\|.
\]
Since $\E\|\bX\|<\infty$, by dominated convergence, it follows that
for $(\bzeta,\sigma)$ in the neighborhood~$N$ of $(\bzeta_0(P),\sigma(P))$, it holds that
\[
\frac{\partial\Lambda_0(\bzeta,\sigma)}{\partial \bbeta}
=
\int
\frac{\partial\Psi_0(\bs,\bzeta,\sigma)}{\partial \bbeta}\,\dd P(\bs),
\]
and that $\partial\Lambda_0/\partial\bbeta$ is continous at $(\bzeta_0(P),\sigma(P))$.
Furthermore, from~\eqref{def:Psi0} we find
\[
\frac{\partial\Psi_0(\bs,\bzeta,\sigma)}{\partial \theta_j}
=
\frac{1}{2\sigma}
\rho'_0\left(\frac{d_{\Gamma}}{\sigma}\right)
(\by-\bX\bbeta)^T\bGamma^{-1}\frac{\partial\bGamma}{\partial\theta_j}\bGamma^{-1}(\by-\bX\bbeta).
\]
for any $j=1,\ldots,l$.
Similar to the proof of Lemma~11.2 in Lopuha\"a \textit{et al}~\cite{lopuhaa-gares-ruizgazen2023},
we find
\[
\left|
(\by-\bX\bbeta)^T\bGamma^{-1}
\frac{\partial \bGamma}{\partial \theta_j}
\bGamma^{-1}(\by-\bX\bbeta)
\right|
\leq
d_\Gamma^2
\left\|\frac{\partial \bGamma}{\partial \theta_j}\right\|
\lambda_1(\bGamma^{-1}).
\]
Furthermore, according to~(V4),
the mapping $\btheta\mapsto\bGamma(\btheta)=\bV(\btheta)/|\bV(\btheta)|^{1/k}$ is continuously differentiable.
This means that
there exists a universal constant $0<M_1<\infty$, such that
\begin{equation}
\label{eq:bound max dV}
\max_{1\leq j\leq l}
\sup_{(\bbeta,\btheta)\in N}
\left\|
\frac{\partial \bGamma(\btheta)}{\partial \theta_j}\right\|\leq M_1.
\end{equation}
We find that
\[
\left\|
\frac{\partial\Psi_0(\bs,\bzeta,\sigma)}{\partial \theta_j}
\right\|
\leq
\frac{\sigma M_1}{2}
\left|
\rho'_0\left(\frac{d_{\Gamma}}{\sigma}\right)
\right|
\frac{d_\Gamma^2}{\sigma^2}
\lambda_1(\bGamma^{-1}).
\]
From~(R2) and~(R4), it follows that $\rho_0'(s)s^2$ is bounded
and for $(\bzeta,\sigma)$ in the neighborhood $N$ of $(\bzeta_0(P),\sigma(P))$,
we have that $\sigma$ and $\lambda_1(\bGamma(\btheta)^{-1})$ are uniformly bounded.
This means there exists a universal constant $0<C_2<\infty$, such that
\[
\left\|
\frac{\partial\Psi_0(\bs,\bzeta,\sigma)}{\partial \theta_j}
\right\|
\leq C_2,
\]
for all $j=1,\ldots,l$.
By dominated convergence, it follows that
for $(\bzeta,\sigma)$ in the neighborhood~$N$ of $(\bzeta_0(P),\sigma(P))$, it holds that
\[
\frac{\partial\Lambda_0(\bzeta,\sigma)}{\partial \btheta}
=
\int
\frac{\partial\Psi_0(\bs,\bzeta,\sigma)}{\partial \btheta}\,\dd P(\bs),
\]
and that $\partial\Lambda_0/\partial\btheta$ is continous at $(\bzeta_0(P),\sigma(P))$.
Finally, from~\eqref{def:Psi0} we obtain
\[
\frac{\partial\Psi_0(\bs,\bzeta,\sigma)}{\partial \sigma}
=
-
\frac{1}{\sigma}
\rho_0'\left(\frac{d_{\Gamma}}{\sigma}\right)
\left(\frac{d_{\Gamma}}{\sigma}\right).
\]
From (R2) and (R4), it follows that $\rho_0'(s)s$ is bounded,
and for $(\bzeta,\sigma)$ in the neighborhood $N$ of $(\bzeta_0(P),\sigma(P))$,
we have that $1/\sigma$ is uniformly bounded.
This means there exists a universal constant $0<C_3<\infty$, such that
\[
\left\|
\frac{\partial\Psi_0(\bs,\bzeta,\sigma)}{\partial \sigma}
\right\|
\leq C_3.
\]
By dominated convergence, it follows that
for $(\bzeta,\sigma)$ in the neighborhood~$N$ of $(\bzeta_0(P),\sigma(P))$, it holds that
\[
\frac{\partial\Lambda_0(\bzeta,\sigma)}{\partial \sigma}
=
\int
\frac{\partial\Psi_0(\bs,\bzeta,\sigma)}{\partial \sigma}\,\dd P(\bs),
\]
and that $\partial\Lambda_0/\partial\sigma$ is continous at $(\bzeta_0(P),\sigma(P))$.
\end{proof}

For convenience we state the following result about spherically contoured densities,
e.g., see Lemma~5.1 in~\cite{lopuhaa1989}.
This lemma uses the commutation matrix $\mathbf{K}_{k,k}$, which
is the $k^2\times k^2$ block matrix with the $(i,j)$-block being equal to the $k\times k$ matrix $\mathbf{\Delta}_{ji}$
consisting of zero's except a 1 at entry $(j,i)$.
A useful property (e.g., see~\cite[Section 3.7]{magnus&neudecker1988}) is that for any $k\times k$ matrix $\bA$,
it holds that
\begin{equation}
\label{eq:prop K}
\bK_{k,k}\vc(\bA)=\vc(\bA^T).
\end{equation}
\begin{lemma}
\label{lem:Lemma 5.1}
Suppose that $\bz$ has a $k$-variate elliptical contoured density
defined in~\eqref{eq:elliptical}, with parameters $\bmu=\mathbf{0}$ and $\bSigma=\bI_k$.
Then $\bu=\bz/\|\bz\|$ is independent of $\|\bz\|$,
has mean zero and covariance matrix $(1/k)\bI_k$.
Furthermore, $\mathbb{E}_{\mathbf{0},\bI_k}[\bu\bu^T\bu]=\mathbf{0}$
and
\[
\mathbb{E}_{\mathbf{0},\bI_k}
\left[
\vc(\bu\bu^T)\vc(\bu\bu^T)^T
\right]
=
\sigma_1(\bI_{k^2}+\mathbf{K}_{k,k})+\sigma_2\vc(\bI_k)\vc(\bI_k)^T,
\]
where $\sigma_1=\sigma_2=(k(k+2))^{-1}$.
\end{lemma}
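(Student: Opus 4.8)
The plan is to exploit the orthogonal invariance of the distribution of $\bz$ throughout. Since the density in~\eqref{eq:elliptical} with $\bmu=\mathbf{0}$ and $\bSigma=\bI_k$ equals $m(\by^T\by)$, which depends on $\by$ only through $\|\by\|$, the law of $\bz$ is invariant under $\bz\mapsto\bO\bz$ for every orthogonal $k\times k$ matrix $\bO$. Writing $\bz$ in polar coordinates as the pair $(\|\bz\|,\bu)$ with $\bu=\bz/\|\bz\|\in S^{k-1}$, the radial dependence of the density makes the joint density factor into a function of $\|\bz\|$ times the uniform surface measure on $S^{k-1}$; this yields the first assertion, namely that $\bu$ is independent of $\|\bz\|$ and uniformly distributed on the unit sphere. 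The consequence I will use repeatedly is that $\bO\bu\stackrel{d}{=}\bu$ for every orthogonal $\bO$, and in particular $-\bu\stackrel{d}{=}\bu$.

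From $-\bu\stackrel{d}{=}\bu$ it follows that every odd-order moment of $\bu$ vanishes, giving $\E_{\mathbf{0},\bI_k}[\bu]=\mathbf{0}$ and $\E_{\mathbf{0},\bI_k}[\bu\bu^T\bu]=\mathbf{0}$. For the covariance, the matrix $\bM=\E_{\mathbf{0},\bI_k}[\bu\bu^T]$ satisfies $\bO\bM\bO^T=\E[(\bO\bu)(\bO\bu)^T]=\bM$ for all orthogonal $\bO$, so $\bM$ must be a scalar multiple of $\bI_k$; taking traces and using $\|\bu\|=1$ gives $\bM=(1/k)\bI_k$.

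For the fourth-moment identity I will reduce to the Gaussian case. Because the law of $\bu$ depends only on the spherical symmetry of $\bz$ and not on its radial part, $\bu$ has the same distribution as $\bv/\|\bv\|$ for $\bv\sim N(\mathbf{0},\bI_k)$, and $\|\bv\|^2\sim\chi^2_k$ is independent of $\bv/\|\bv\|$. Hence, with $v_i$ and $u_i$ the components, $\E[v_iv_jv_sv_t]=\E[\|\bv\|^4]\,\E[u_iu_ju_su_t]$; since $\E[\|\bv\|^4]=k(k+2)$ and the Isserlis formula gives $\E[v_iv_jv_sv_t]=\delta_{ij}\delta_{st}+\delta_{is}\delta_{jt}+\delta_{it}\delta_{js}$, we obtain $\E[u_iu_ju_su_t]=\big(\delta_{ij}\delta_{st}+\delta_{is}\delta_{jt}+\delta_{it}\delta_{js}\big)/(k(k+2))$. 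It then remains to recognise, entry by entry, that the three terms on the right correspond respectively to $\vc(\bI_k)\vc(\bI_k)^T$, $\bI_{k^2}$, and the commutation matrix $\bK_{k,k}$: indeed $(\vc(\bI_k))_{(j-1)k+i}=\delta_{ij}$, the $((j-1)k+i,(t-1)k+s)$ entry of $\bI_{k^2}$ is $\delta_{is}\delta_{jt}$, and using $\bK_{k,k}\vc(\bA)=\vc(\bA^T)$ from~\eqref{eq:prop K} one checks that the corresponding entry of $\bK_{k,k}$ is $\delta_{it}\delta_{js}$. Assembling these gives the claimed identity with $\sigma_1=\sigma_2=(k(k+2))^{-1}$.

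The only delicate point is this last identification of index patterns with $\bI_{k^2}$ and $\bK_{k,k}$, which is purely bookkeeping with the $\vc$ convention but requires keeping the ordering of the double index $(j-1)k+i$ straight; the independence, second-moment, and odd-moment statements are all immediate consequences of orthogonal invariance together with standard Gaussian moment formulas.
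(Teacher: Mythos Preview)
Your proof is correct and complete. The paper itself does not give a proof of this lemma but simply refers the reader to Lemma~5.1 in Lopuha\"a~\cite{lopuhaa1989}, so your argument---using orthogonal invariance for the independence, mean, and covariance statements, and then passing to the Gaussian representation combined with the Isserlis formula for the fourth-moment identity---is more explicit than what appears in the present paper and is a standard route to this result.
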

\begin{proof}
See e.g.~the proof of Lemma~5.1 in Lopuha\"a~\cite{lopuhaa1989}.
\end{proof}

\begin{lemma}
\label{lem:D0zeta=0}
Suppose that~$P$ satisfies~(E) for some $\bzeta^*=(\bbeta^*,\btheta^*)\in\R^q\times\bTheta$
and suppose that $\E\|\bX\|<\infty$.
Suppose that $\rho_0$ and $\bV$ satisfy (R2), (R4) and (V4), respectively.
Let $\bzeta_0=(\bbeta_0,\btheta_0)$ be the pair of initial functionals
satisfying $(\bbeta_0(P),\btheta_0(P))=(\bbeta^*,\btheta^*)$.
Let~$\sigma(P)$ be the unique solution of~\eqref{def:sigma}
and let $\Lambda_0$ be defined in~\eqref{def:Lambda0} with $\Psi_0$ from~\eqref{def:Psi0}.
Then, 
\[
\bD_{0,\bzeta}
=
\frac{\partial\Lambda_0(\bzeta_0(P),\sigma(P))}{\partial\bzeta}
=
\mathbf{0},
\]
and
\[
D_{0,\sigma}
=
\frac{\partial\Lambda_0(\bzeta_{0}(P),\sigma(P))}{\partial\sigma}
=
-\frac{1}{\sigma(P)}
\E_{\mathbf{0},\bI_k}
\left[
\rho_0'
\left(
c_\sigma\|\bz\|
\right)
c_\sigma\|\bz\|
\right],
\]
where $c_\sigma=|\bSigma|^{1/(2k)}/\sigma(P)$.
\end{lemma}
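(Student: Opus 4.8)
The plan is to start from Lemma~\ref{lem:change order int diff Lambda0}, which under the hypotheses in force ($\E\|\bX\|<\infty$, $\rho_0$ satisfying (R2) and (R4), $\bV$ satisfying (V4)) lets us pull the differentiation inside the integral, so that
\[
\bD_{0,\bzeta}
=
\int
\frac{\partial\Psi_0}{\partial\bzeta}(\bs,\bzeta_0(P),\sigma(P))\,\dd P(\bs),
\qquad
D_{0,\sigma}
=
\int
\frac{\partial\Psi_0}{\partial\sigma}(\bs,\bzeta_0(P),\sigma(P))\,\dd P(\bs).
\]
I would then insert the explicit formulas for $\partial\Psi_0/\partial\bbeta$, $\partial\Psi_0/\partial\theta_j$ and $\partial\Psi_0/\partial\sigma$ recorded in the proof of Lemma~\ref{lem:change order int diff Lambda0}, and evaluate them at $\bzeta_0(P)=(\bbeta^*,\btheta^*)$, $\sigma=\sigma(P)$. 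The key simplification is that at $\btheta^*$ one has $\bGamma(\btheta^*)=\bV(\btheta^*)/|\bV(\btheta^*)|^{1/k}=\bSigma/|\bSigma|^{1/k}$, and that under (E), conditionally on $\bX$, the law of $\by-\bX\bbeta^*$ is that of $\bSigma^{1/2}\bz$ with $\bz$ spherically distributed according to $f_{\mathbf{0},\bI_k}$; hence $d_\Gamma/\sigma(P)=c_\sigma\|\bz\|$, where $d_\Gamma^2=(\by-\bX\bbeta^*)^T\bGamma(\btheta^*)^{-1}(\by-\bX\bbeta^*)$ and $c_\sigma=|\bSigma|^{1/(2k)}/\sigma(P)$.

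For the $\bbeta$-block of $\bD_{0,\bzeta}$, substituting these identities turns $\partial\Psi_0/\partial\bbeta$ into a scalar function of $\|\bz\|$ times $\bX^T\bSigma^{-1/2}\bz$; conditioning on $\bX$ and writing $\bz=\|\bz\|\bu$ with $\bu=\bz/\|\bz\|$, Lemma~\ref{lem:Lemma 5.1} gives that $\bu$ is independent of $\|\bz\|$ with $\E_{\mathbf{0},\bI_k}[\bu]=\mathbf{0}$, so this conditional expectation vanishes for almost every $\bX$. For the $\theta_j$-block, $\partial\Psi_0/\partial\theta_j$ becomes a scalar function of $\|\bz\|$ times the quadratic form $\bu^T\bB_j\bu$ with $\bB_j=\bSigma^{1/2}\bGamma(\btheta^*)^{-1}(\partial\bGamma(\btheta^*)/\partial\theta_j)\bGamma(\btheta^*)^{-1}\bSigma^{1/2}$; since $\E_{\mathbf{0},\bI_k}[\bu\bu^T]=(1/k)\bI_k$ by Lemma~\ref{lem:Lemma 5.1}, the conditional expectation is proportional to $\tr(\bB_j)$. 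Using cyclicity of the trace and $\bSigma=|\bSigma|^{1/k}\bGamma(\btheta^*)$ one gets $\tr(\bB_j)=|\bSigma|^{1/k}\tr(\bGamma(\btheta^*)^{-1}\partial\bGamma(\btheta^*)/\partial\theta_j)$, and because $|\bGamma(\btheta)|=1$ identically in $\btheta$, differentiating $\log|\bGamma(\btheta)|=0$ (Jacobi's formula) yields $\tr(\bGamma(\btheta)^{-1}\partial\bGamma(\btheta)/\partial\theta_j)=0$. Thus every component of the conditional expectation is zero, and integrating over $\bX$ gives $\bD_{0,\bzeta}=\mathbf{0}$.

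For $D_{0,\sigma}$, the expression $\partial\Psi_0/\partial\sigma=-\sigma^{-1}\rho_0'(d_\Gamma/\sigma)(d_\Gamma/\sigma)$ becomes $-\sigma(P)^{-1}\rho_0'(c_\sigma\|\bz\|)(c_\sigma\|\bz\|)$ at the evaluation point, which depends only on $\|\bz\|$ and not on $\bX$; integrating and using that $\bz\sim f_{\mathbf{0},\bI_k}$ immediately gives $D_{0,\sigma}=-\sigma(P)^{-1}\E_{\mathbf{0},\bI_k}[\rho_0'(c_\sigma\|\bz\|)c_\sigma\|\bz\|]$. All integrability and the legitimacy of differentiating under the integral are already supplied by Lemma~\ref{lem:change order int diff Lambda0}, so no fresh estimates are needed; the one genuinely substantive step is the trace manipulation that exposes $\tr(\bGamma(\btheta)^{-1}\partial\bGamma(\btheta)/\partial\theta_j)$ and the observation that this vanishes by $|\bGamma(\btheta)|\equiv1$ — that is precisely what forces the $\btheta$-block of $\bD_{0,\bzeta}$ to be zero, everything else being routine spherical-symmetry bookkeeping.
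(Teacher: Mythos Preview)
Your proposal is correct and follows essentially the same route as the paper: invoke Lemma~\ref{lem:change order int diff Lambda0} to differentiate under the integral, condition on $\bX$, substitute $\by-\bX\bbeta^*=\bSigma^{1/2}\bz$, and use the moments of $\bu=\bz/\|\bz\|$ from Lemma~\ref{lem:Lemma 5.1}. The only noteworthy variation is in the $\btheta$-block: where the paper expands $\partial\bGamma/\partial\theta_j$ explicitly via the quotient rule for $\bV/|\bV|^{1/k}$ and then checks by hand that the resulting trace cancels, you instead observe that $|\bGamma(\btheta)|\equiv 1$ and apply Jacobi's formula to get $\tr(\bGamma^{-1}\partial\bGamma/\partial\theta_j)=0$ directly --- a slightly cleaner shortcut to the same conclusion.
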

\begin{proof}
Write $\bzeta_{0,P}=(\bbeta_{0,P},\btheta_{0,P})=(\bbeta_0(P),\btheta_0(P))$ and $\sigma_P=\sigma(P)$.
Because $\rho_0$ and~$\bV$ satisfy (R2), (R4) and (V4), respectively, and $\E\|\bX\|<\infty$,
according to Lemma~\ref{lem:change order int diff Lambda0},
we have that~$\Lambda_0$ is continuously differentiable at~$(\bzeta_{0,P},\sigma_P)$
and that we may interchange integration and differentiation in~$\partial\Lambda_0/\partial\bbeta$
at $(\bzeta_{0,P},\sigma_P)$.
With~$u_0(s)=\rho_0'(s)/s$, we find that
\[
\bD_{0,\bbeta}
=
\frac{\partial\Lambda_0(\bzeta_{0,P},\sigma_P)}{\partial\bbeta}
=
-
\frac{1}{2\sigma_P}
\E
\left[
u_0\left(
\frac{d_{\Gamma,0}}{\sigma_P}
\right)
\bX^T\bGamma(\btheta_{0,P})^{-1}(\by-\bX\bbeta_{0,P})
\right],
\]
where $d_{\Gamma,0}=d(\by,\bX\bbeta_{0,P},\bGamma(\btheta_{0,P}))$,
as defined in~\eqref{def:Mahalanobis distance},
with $\bGamma$ defined in~\eqref{def:Gamma}.
Since~$\bbeta_{0,P}=\bbeta^*$ is a point of symmetry of $P$, it follows that
\begin{equation}
\label{eq:D0beta=0}
\bD_{0,\bbeta}=\mathbf{0}.
\end{equation}
According to Lemma~\ref{lem:change order int diff Lambda0},
we may also interchange integration and differentiation in~$\partial\Lambda_0/\partial\btheta$
at $(\bzeta_{0,P},\sigma_P)$.
For any $j=1,\ldots,l$,
we find that
\begin{equation}
\label{eq:D0j}
\begin{split}
D_{0,j}
&=
\frac{\partial\Lambda_0(\bzeta_{0,P},\sigma_P)}{\partial\theta_{j}}\\
&=
-
\frac{1}{2\sigma_P}
\E\left[
u_0\left(\frac{d_{\Gamma,0}}{\sigma_P}\right)
\be_{0,P}^T\bGamma(\btheta_{0,P})^{-1}
\frac{\partial \bGamma(\btheta_{0,P})}{\partial \theta_{j}}
\bGamma(\btheta_{0,P})^{-1}
\be_{0,P}
\right],
\end{split}
\end{equation}
where $\be_{0,P}=\by-\bX\bbeta_{0,P}$.
Since $(\bbeta_{0,P},\btheta_{0,P})=(\bbeta^*,\btheta^*)$ and 
$\bGamma(\btheta_{0,P})=\bV(\btheta^*)/|\bV(\btheta^*)|^{1/k}=\bSigma/|\bSigma|^{1/k}$,
it follows that $d_{\Gamma,0}=d_{\Gamma}^*$, where
\begin{equation}
\label{eq:dGamma*}
(d_{\Gamma}^*)^2
=
|\bSigma|^{1/k}
(\by-\bX\bbeta^*)^T
\bSigma^{-1}
(\by-\bX\bbeta^*).
\end{equation}
Hence, the expectation on the right hand side of~\eqref{eq:D0j} can be written as
\[
|\bSigma|^{2/k}
\E\left[
\E\left[
u_0\left(\frac{d_{\Gamma}^*}{\sigma_P}\right)
(\be^*)^T\bSigma^{-1}
\frac{\partial \bGamma(\btheta^*)}{\partial \theta_{j}}
\bSigma^{-1}
\be^*
\bigg|\bX
\right]
\right]
\]
where $\be^*=\by-\bX\bbeta^*$.
The inner expectation is the conditional expectation
of $\by\mid\bX$, which has the same distribution as $\bSigma^{1/2}\bz+\bX\bbeta^*$,
where $\bz$ has spherical density $f_{\mathbf{0},\bI_k}$.
This means that the inner expection can be written as
\[
\E_{\mathbf{0},\bI_k}
\left[
u_0\left(c_\sigma\|\bz\|\right)
\bz^T\bSigma^{-1/2}
\frac{\partial \bGamma(\btheta^*)}{\partial \theta_{j}}
\bSigma^{-1/2}\bz
\right],
\]
where $c_\sigma=|\bSigma|^{1/(2k)}/\sigma_P$.
Next, let $\bu=\bz/\|\bz\|$ and apply Lemma~\ref{lem:Lemma 5.1}.
It follows that this expectation is equal to
\[
\begin{split}
&
\E_{\mathbf{0},\bI_k}
\left[
u_0\left(c_\sigma\|\bz\|\right)\|\bz\|^2
\right]
\text{tr}
\left(
\E_{\mathbf{0},\bI_k}
\left[
\bu\bu^T
\right]
\bSigma^{-1/2}
\frac{\partial \bGamma(\btheta^*)}{\partial \theta_{j}}
\bSigma^{-1/2}
\right)\\
&=
\E_{\mathbf{0},\bI_k}
\left[
u_0\left(c_\sigma\|\bz\|\right)\|\bz\|^2
\right]
\frac{1}{k}
\text{tr}
\left(
\bSigma^{-1/2}
\frac{\partial \bGamma(\btheta^*)}{\partial \theta_{j}}
\bSigma^{-1/2}
\right).
\end{split}
\]
Because for each $j=1,\ldots,l$, 
\[
\frac{\partial\bGamma}{\partial\theta_j}
=
\frac{\partial\bV/|\bV|^{1/k}}{\partial\theta_j}
=
-\frac{1}{k}
|\bV|^{-1/k}
\text{tr}\left(\bSigma^{-1}\frac{\partial\bV}{\partial\btheta_j}\right)\bV
+
|\bV|^{-1/k}
\frac{\partial\bV}{\partial\btheta_j},
\]
together with $\bV(\btheta^*)=\bSigma$, it follows that
\[
\text{tr}
\left(
\bSigma^{-1/2}
\frac{\partial \bGamma(\btheta^*)}{\partial \theta_{j}}
\bSigma^{-1/2}
\right)
=0,
\]
for all $j=1,2,\ldots,l$.
This means that
\begin{equation}
\label{eq:D0theta=0}
\bD_{0,\btheta}
=
\frac{\partial\Lambda_0(\bzeta_{0,P},\sigma_P)}{\partial\btheta}
=
\mathbf{0}.
\end{equation}
Together with~\eqref{eq:D0beta=0} this proves part one.

According to Lemma~\ref{lem:change order int diff Lambda0},
we may also interchange integration and differentiation in~$\partial\Lambda_0/\partial\sigma$.
We find that
\[
D_{0,\sigma}
=
\frac{\partial\Lambda_0(\bzeta_{0,P},\sigma_P)}{\partial\sigma}
=
-
\frac{1}{\sigma_P}
\E\left[
\rho_0'\left(\frac{d_{\Gamma,0}}{\sigma_P}\right)\frac{d_{\Gamma,0}}{\sigma_P}
\right],
\]
where $d_{\Gamma,0}=d(\by,\bX\bbeta_{0,P},\bGamma(\btheta_{0,P}))$,
as defined in~\eqref{def:Mahalanobis distance},
with $\bGamma$ defined in~\eqref{def:Gamma}.
As before, it follows that $d_{\Gamma,0}=d_{\Gamma}^*$, where $d_{\Gamma}^*$ is defined in~\eqref{eq:dGamma*},
so that 
\[
D_{0,\sigma}
=
-
\frac{1}{\sigma_P}
\E\left[
\rho_0'\left(\frac{d_{\Gamma}^*}{\sigma_P}\right)\frac{d_{\Gamma}^*}{\sigma_P}
\right]
=
-
\frac{1}{\sigma_P}
\E\left[
\E\left[
\rho_0'\left(\frac{d_{\Gamma}^*}{\sigma_P}\right)\frac{d_{\Gamma}^*}{\sigma_P}
\bigg|
\bX
\right]
\right].
\]
Then, the inner expectation on the right hand side is the conditional expectation
of $\by\mid\bX$, which has the same distribution as $\bSigma^{1/2}\bz+\bX\bbeta^*$,
where $\bz$ has spherical density $f_{\mathbf{0},\bI_k}$.
This means that 
\begin{equation}
\label{eq:D0sigma}
D_{0,\sigma}=
-\frac{1}{\sigma_P}
\E_{\mathbf{0},\bI_k}
\left[
\rho_0'
\left(
c_\sigma\|\bz\|
\right)
c_\sigma\|\bz\|
\right]
<0,
\end{equation}
where $c_\sigma=|\bSigma|^{1/(2k)}/\sigma_P$.
\end{proof}

\begin{lemma}
\label{lem:IF sigma elliptical}
Suppose that~$P$ satisfies~(E) for some $(\bbeta^*,\btheta^*)\in\R^q\times\bTheta$
and suppose that $\E\|\bX\|<\infty$.
Suppose that $\rho_0$ satisfies~(R2) and~(R4), and that~$\bV$ satisfies~(V4).
Let~$\bzeta_0=(\bbeta_0,\btheta_0)$ be the pair of initial functionals
satisfying $(\bbeta_0(P),\btheta_0(P))=(\bbeta^*,\btheta^*)$,
and suppose that $\text{\rm IF}(\bs,\bzeta_0,P)$ exists.
Let~$\sigma(P_{h,\bs})$ be the solution of equation~\eqref{def:sigma} with $P=P_{h,\bs}$, 
and suppose that for all $\bs\in\R^k\times\R^{kq}$, $\sigma(P_{h,\bs})\to\sigma(P)$, as $h\downarrow0$,
where~$\sigma(P)$ is a solution of~\eqref{def:sigma}.
Suppose that 
$\E_{\mathbf{0},\bI_k}[
\rho_0'
\left(
c_\sigma\|\bz\|
\right)
c_\sigma\|\bz\|]
>0$,
where $c_\sigma=|\bSigma|^{1/(2k)}/\sigma(P)$.
Then, for~$\bs_0\in\R^k\times\R^{kq}$,
\[
\text{\rm IF}(\bs_0;\sigma,P)
=
\frac{\sigma(P)}{\E_{\mathbf{0},\bI_k}[
\rho_0'
\left(
c_\sigma\|\bz\|
\right)
c_\sigma\|\bz\|]}
\Big\{
\rho_0
\left(
c_\sigma\|\bz_0\|
\right)
-
b_0
\Big\},
\]
where $\bz_0=\bSigma^{-1/2}(\by_0-\bX_0\bbeta^*)$.
\end{lemma}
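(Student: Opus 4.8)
The plan is to obtain the statement as a direct specialization of Lemma~\ref{lem:IF sigma zeta} to a distribution satisfying~(E), using Lemma~\ref{lem:D0zeta=0} to evaluate the derivatives of $\Lambda_0$ at the model point and, crucially, to see that the contribution of $\text{\rm IF}(\bs;\bzeta_0,P)$ cancels.

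First I would verify the hypotheses of Lemma~\ref{lem:IF sigma zeta}. The existence of $\text{\rm IF}(\bs;\bzeta_0,P)$ and the convergence $\sigma(P_{h,\bs})\to\sigma(P)$ are assumed in the statement. Since $\rho_0$ satisfies~(R2) and~(R4), $\bV$ satisfies~(V4), and $\E\|\bX\|<\infty$, Lemma~\ref{lem:change order int diff Lambda0} shows that $\Lambda_0$ from~\eqref{def:Lambda0} is continuously differentiable at $(\bzeta_0(P),\sigma(P))$. Applying Lemma~\ref{lem:D0zeta=0}, whose hypotheses hold because $(\bbeta_0(P),\btheta_0(P))=(\bbeta^*,\btheta^*)$, gives $\bD_{0,\bzeta}=\mathbf{0}$ and
\[
D_{0,\sigma}=-\frac{1}{\sigma(P)}\,\E_{\mathbf{0},\bI_k}\big[\rho_0'(c_\sigma\|\bz\|)\,c_\sigma\|\bz\|\big],
\qquad
c_\sigma=\frac{|\bSigma|^{1/(2k)}}{\sigma(P)}.
\]
By hypothesis $\E_{\mathbf{0},\bI_k}[\rho_0'(c_\sigma\|\bz\|)c_\sigma\|\bz\|]>0$, so $D_{0,\sigma}\ne0$ and Lemma~\ref{lem:IF sigma zeta} applies; as $\bD_{0,\bzeta}=\mathbf{0}$, its conclusion reduces to $\text{\rm IF}(\bs_0;\sigma,P)=-D_{0,\sigma}^{-1}\Psi_0(\bs_0,\bzeta_0(P),\sigma(P))$.

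It then remains to compute $\Psi_0$ at the model point from~\eqref{def:Psi0}. Using $\bzeta_0(P)=(\bbeta^*,\btheta^*)$ and the definition~\eqref{def:Gamma}, one has $\bGamma(\btheta^*)=\bV(\btheta^*)/|\bV(\btheta^*)|^{1/k}=\bSigma/|\bSigma|^{1/k}$, so with $\bz_0=\bSigma^{-1/2}(\by_0-\bX_0\bbeta^*)$ the quadratic form in~\eqref{def:Psi0} equals $(\by_0-\bX_0\bbeta^*)^T\bGamma(\btheta^*)^{-1}(\by_0-\bX_0\bbeta^*)=|\bSigma|^{1/k}\|\bz_0\|^2$. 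Hence the argument of $\rho_0$ equals $|\bSigma|^{1/(2k)}\|\bz_0\|/\sigma(P)=c_\sigma\|\bz_0\|$ and $\Psi_0(\bs_0,\bzeta_0(P),\sigma(P))=\rho_0(c_\sigma\|\bz_0\|)-b_0$. Substituting this together with the expression for $D_{0,\sigma}$ into the identity above yields the claimed formula.

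I do not anticipate a serious obstacle: the argument is a short chain of applications of the auxiliary lemmas. The point deserving most care is the vanishing of the cross-derivative $\bD_{0,\bzeta}$, that is, the fact that under~(E) the influence function of the auxiliary scale inherits nothing from $\text{\rm IF}(\bs;\bzeta_0,P)$; this is exactly Lemma~\ref{lem:D0zeta=0}, whose proof rests on the point-symmetry of $P$ about $\bbeta^*$ and on $\tr(\bSigma^{-1/2}(\partial\bGamma(\btheta^*)/\partial\theta_j)\bSigma^{-1/2})=0$. Everything else is routine bookkeeping of hypotheses and the elementary evaluation of $\Psi_0$.
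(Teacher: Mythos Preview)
Your proposal is correct and follows essentially the same route as the paper: invoke Lemma~\ref{lem:change order int diff Lambda0} for differentiability of $\Lambda_0$, apply Lemma~\ref{lem:D0zeta=0} to obtain $\bD_{0,\bzeta}=\mathbf{0}$ and the explicit $D_{0,\sigma}$, compute $\Psi_0(\bs_0,\bzeta_0(P),\sigma(P))=\rho_0(c_\sigma\|\bz_0\|)-b_0$, and conclude via Lemma~\ref{lem:IF sigma zeta}. The paper's proof is the same argument, only more tersely stated.
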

\begin{proof}
From Lemmas~\ref{lem:change order int diff Lambda0} and~\ref{lem:D0zeta=0}, 
we have that~$\Lambda_0$ is continuously differentiable 
at $(\bzeta_0(P),\sigma(P))$ with
$\bD_{0,\bzeta}=\mathbf{0}$ and
$D_{0,\sigma}=-\E_{\mathbf{0},\bI_k}[\rho_0'(c_\sigma\|\bz\|)c_\sigma\|\bz\|]/\sigma(P)<0$.
Since $(\bbeta_0(P),\btheta_0(P))=(\bbeta^*,\btheta^*)$ and 
$\bGamma(\btheta_0(P))=\bV(\btheta^*)/|\bV(\btheta^*)|^{1/k}=\bSigma/|\bSigma|^{1/k}$,
it follows that
\begin{equation}
\label{eq:Psi0 elliptical}
\Psi_0(\bs_0,\bzeta_0(P),\sigma(P))
=
\rho_0\left(
\frac{d(\by_0,\bX_0\bbeta^*,\bSigma/|\bSigma|^{1/k})}{\sigma(P)}
\right)-b_0
=
\rho_0
\left(
c_\sigma\|\bz_0\|
\right)
-
b_0,
\end{equation}
where $\bz_0=\bSigma^{-1/2}(\by_0-\bX_0\bbeta^*)$.
The lemma now follows immediately from Lemma~\ref{lem:IF sigma zeta}.
\end{proof}

\begin{lemma}
\label{lem:Psi bounded}
Suppose that $\rho_1$ satisfies (R2) and (R4).
Let $\sigma(P)$ be a solution of~\eqref{def:sigma} and let $\Psi=(\Psi_{\bbeta},\Psi_{\bgamma})$, as defined in~\eqref{def:Psi}.
Then there exist $0<C_1<\infty$, only depending on $P$ and $\sigma(P)$, such that
$\|\Psi_{\bbeta}(\bs,\bxi(P),\sigma(P))\|\leq C_1\|\bX\|$.
If in addition, $\bV$ satisfies~(V4), then there exist $0<C_2<\infty$, 
only depending on $P$ and $\sigma(P)$, such that~$\|\Psi_{\bgamma}(\bs,\bxi(P),\sigma(P))\|\leq C_2$.
\end{lemma}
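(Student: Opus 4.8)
The plan is to estimate the two blocks of $\Psi=(\Psi_{\bbeta},\Psi_{\bgamma})$ at the fixed pair $(\bxi(P),\sigma(P))=((\bbeta_1(P),\bgamma(P)),\sigma(P))$ using only the shape of $\rho_1$ forced by (R2)--(R4), together with the same Mahalanobis-type estimates already used in the proof of Lemma~\ref{lem:change order int diff Lambda0}. The first step is to record that (R2) makes $\rho_1$ constant on $[c_1,\infty)$, where $c_1>0$ is the finite cut-off, and (R4) makes $\rho_1'$ continuous; hence $\rho_1'$ is supported on the compact interval $[0,c_1]$, so both $M_1:=\sup_{s\ge0}|\rho_1'(s)|$ and $M_1':=\sup_{s\ge0}|\rho_1'(s)s|$ are finite. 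With $u_1(s)=\rho_1'(s)/s$ this gives the two elementary identities $|u_1(d/\sigma)|\,d=\sigma|\rho_1'(d/\sigma)|\le\sigma M_1$ and $|u_1(d/\sigma)|\,d^2=\sigma^2|\rho_1'(d/\sigma)(d/\sigma)|\le\sigma^2 M_1'$, which are exactly what the quadratic forms in $\Psi_{\bbeta}$ and $\Psi_{\bgamma}$ produce once combined with a bound on $d$.

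For $\Psi_{\bbeta}$, I would reuse the estimate $\|\bX^T\bV^{-1}(\by-\bX\bbeta)\|^2\le d^2\|\bX\|^2\lambda_1(\bV^{-1})$ with $d=d(\by,\bX\bbeta,\bV)$, established exactly as in the proof of Lemma~11.2 in Lopuha\"a \textit{et al}~\cite{lopuhaa-gares-ruizgazen2023}; evaluating at $\bbeta=\bbeta_1(P)$, $\bV=\bV(\bgamma(P))$ and multiplying by $|u_1(d/\sigma(P))|$ yields $\|\Psi_{\bbeta}(\bs,\bxi(P),\sigma(P))\|\le\sigma(P)M_1\sqrt{\lambda_1(\bV(\bgamma(P))^{-1})}\,\|\bX\|$, so one may take $C_1:=\sigma(P)M_1\sqrt{\lambda_1(\bV(\bgamma(P))^{-1})}$, a constant depending only on $P$ (through $\bxi(P)$) and $\sigma(P)$.

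For $\Psi_{\bgamma}$, the key simplification is that $\bxi(P)\in\mathfrak{D}$ forces $|\bV(\bgamma(P))|=1$, so the term $\mathrm{tr}(\bV^{-1}\partial\bV/\partial\gamma_j)\log|\bV|$ in $\Psi_{\bgamma,j}$ vanishes identically. It then remains to bound the quadratic form $u_1(d/\sigma(P))\,(\by-\bX\bbeta_1(P))^T\bV^{-1}\bH_{1,j}\bV^{-1}(\by-\bX\bbeta_1(P))$, and, as in the proof of Lemma~\ref{lem:change order int diff Lambda0}, I would bound $|(\by-\bX\bbeta)^T\bV^{-1}\bH_{1,j}\bV^{-1}(\by-\bX\bbeta)|\le d^2\|\bH_{1,j}\|\lambda_1(\bV^{-1})$. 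Here (V4) ensures that at the fixed vector $\bgamma(P)$ the matrices $\partial\bV(\bgamma(P))/\partial\gamma_t$ are finite, so by~\eqref{def:Hj} $\|\bH_{1,j}(\bgamma(P))\|$ is a finite constant depending only on $P$; combining with $|u_1(d/\sigma(P))|\,d^2\le\sigma^2(P)M_1'$ gives a uniform bound $|\Psi_{\bgamma,j}(\bs,\bxi(P),\sigma(P))|\le\sigma^2(P)M_1'\lambda_1(\bV(\bgamma(P))^{-1})\|\bH_{1,j}(\bgamma(P))\|$ for each $j$, hence $\|\Psi_{\bgamma}(\bs,\bxi(P),\sigma(P))\|\le\sqrt{l}\max_{j}|\Psi_{\bgamma,j}(\bs,\bxi(P),\sigma(P))|\le C_2$ with $C_2$ depending only on $P$ and $\sigma(P)$.

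I do not expect a genuine obstacle: the argument is pure bookkeeping, and the only points that need a moment's care are verifying that (R2) and (R4) together render $\rho_1'$ and $s\mapsto\rho_1'(s)s$ bounded, and noting that $\log|\bV(\bgamma(P))|=0$ precisely because we work on $\mathfrak{D}$ --- after that everything reduces to the Mahalanobis inequalities already in \cite{lopuhaa-gares-ruizgazen2023}.
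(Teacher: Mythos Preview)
Your proposal is correct and follows essentially the same approach as the paper, which simply states that the proof is ``completely similar to that of Lemma~11.2 in Lopuha\"a \emph{et al}~\cite{lopuhaa-gares-ruizgazen2023}.'' You have spelled out precisely those details: the boundedness of $\rho_1'(s)$ and $\rho_1'(s)s$ from (R2) and (R4), the Mahalanobis-type inequalities $\|\bX^T\bV^{-1}(\by-\bX\bbeta)\|\le d\|\bX\|\sqrt{\lambda_1(\bV^{-1})}$ and $|(\by-\bX\bbeta)^T\bV^{-1}\bH_{1,j}\bV^{-1}(\by-\bX\bbeta)|\le d^2\|\bH_{1,j}\|\lambda_1(\bV^{-1})$, and the observation that $\log|\bV(\bgamma(P))|=0$ on $\mathfrak{D}$ kills the constraint term in $\Psi_{\bgamma,j}$.
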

\begin{proof}
The proof is completely similar to that of Lemma~11.2 in Lopuha\"a \emph{et al}~\cite{lopuhaa-gares-ruizgazen2023}.
\end{proof}

\begin{lemma}
\label{lem:change order int diff Lambda}
Let $\Lambda$ be defined by~\eqref{def:Lambda} with~$\Psi$ defined in~\eqref{def:Psi}
and let~$\E\|\bX\|^2<\infty$.
Suppose that $\rho_1$ satisfies (R2) and (R5) and $\bV$ satisfies (V5).
Let $\sigma(P)$ be a solution of~\eqref{def:sigma} and let~$\bxi(P)\in \mathfrak{D}$ be a local minimum of~$R_P(\bbeta,\bV(\bgamma))$.
Let $N\subset \R^k\times \bTheta\times(0,\infty)$ be an open neighborhood of~$(\bxi(P),\sigma(P))$.
Then~$\Lambda$ is continuous differentiable at $(\bxi(P),\sigma(P)$ and
for all $(\bxi,\sigma)\in N$,
\[
\frac{\partial\Lambda(\bxi,\sigma)}{\partial \bxi}
=
\int
\frac{\partial\Psi(\bs,\bxi,\sigma)}{\partial \bxi}\,\dd P(\bs)
\quad\text{and}\quad
\frac{\partial\Lambda(\bxi,\sigma)}{\partial \sigma}
=
\int
\frac{\partial\Psi(\bs,\bxi,\sigma)}{\partial \sigma}\,\dd P(\bs).
\]
\end{lemma}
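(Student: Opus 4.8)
The plan is to obtain both identities by differentiation under the integral sign, via the standard dominated-convergence criterion: if, for $P$-almost every $\bs$, the map $(\bxi,\sigma)\mapsto\Psi(\bs,\bxi,\sigma)$ is continuously differentiable on $N$ and its partial derivatives are dominated --- uniformly over $(\bxi,\sigma)$ in a relatively compact neighborhood of a given point --- by a single $P$-integrable function, then $\Lambda$ is continuously differentiable there and integration and differentiation may be interchanged. Since this conclusion is local, applying the criterion at each point of $N$ yields the two displayed formulas on all of $N$, and applying it at $(\bxi(P),\sigma(P))$ yields the asserted continuous differentiability at that point. First I would differentiate the components $\Psi_{\bbeta}$ and $\Psi_{\bgamma,j}$ from~\eqref{def:Psi}--\eqref{def:Hj} with respect to $\bbeta$, $\gamma_1,\ldots,\gamma_l$ and $\sigma$; this is the same computation as in the proof of Lemma~11.2 in Lopuha\"a \emph{et al}~\cite{lopuhaa-gares-ruizgazen2023} (where the bounds on $\Psi$ itself, recalled here in Lemma~\ref{lem:Psi bounded}, are derived) and it parallels the argument already carried out for $\Lambda_0$ in Lemma~\ref{lem:change order int diff Lambda0}.

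After differentiating, every resulting term is a product of three kinds of factors: (i) a function of $s=d/\sigma$ of the form $\rho_1'(s)s^{b}$ or $\rho_1''(s)s^{b}$ with $0\le b\le 2$, which is bounded on $[0,\infty)$ because under (R2) and (R5) it is continuous on $[0,c]$ and vanishes on $[c,\infty)$ --- and, importantly, the removable-singularity quantities $u_1(d/\sigma)$, $u_1'(d/\sigma)(d/\sigma)=\rho_1''(d/\sigma)-\rho_1'(d/\sigma)/(d/\sigma)$, etc., extend continuously through $d=0$ by (R5), so $\Psi$ and its derivatives are continuous throughout; (ii) finitely many factors built from $\bV(\bgamma)^{-1}$, $\sigma$, $1/\sigma$, $\log|\bV(\bgamma)|$, $\partial\bV/\partial\gamma_j$, $\partial^{2}\bV/\partial\gamma_i\partial\gamma_j$ and their traces, all uniformly bounded on the compact closure of a small neighborhood by continuity, using (V5); and (iii) a residual factor controlled, via the elementary estimates from Lemma~11.2 in~\cite{lopuhaa-gares-ruizgazen2023} such as $\|\bX^T\bV^{-1}(\by-\bX\bbeta)\|^{2}\le d^{2}\|\bX\|^{2}\lambda_1(\bV^{-1})$ and $|(\by-\bX\bbeta)^T\bV^{-1}\bA\bV^{-1}(\by-\bX\bbeta)|\le d^{2}\|\bA\|\lambda_1(\bV^{-1})$, by $\|\bX\|^{2}$, $\|\bX\|$ or $1$ times a power of $d$. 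Since in each term the powers of $d$ are always matched against a type-(i) factor, products like $u_1(d/\sigma)d^{2}=\rho_1'(d/\sigma)(d/\sigma)\sigma^{2}$ stay bounded, and the highest surviving power of $\|\bX\|$ is $\|\bX\|^{2}$, occurring only in $\partial\Psi_{\bbeta}/\partial\bbeta$. This gives, on a suitable neighborhood $N$,
\[
\Bigl\|\frac{\partial\Psi(\bs,\bxi,\sigma)}{\partial\bxi}\Bigr\|
+
\Bigl\|\frac{\partial\Psi(\bs,\bxi,\sigma)}{\partial\sigma}\Bigr\|
\le
C\bigl(1+\|\bX\|^{2}\bigr),
\]
with $C<\infty$ depending only on $P$ and $\sigma(P)$, and the right-hand side is $P$-integrable since $\E\|\bX\|^{2}<\infty$. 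Continuity of the integrands in $(\bxi,\sigma)$ together with this domination then delivers both the interchange formulas and the continuity of $\partial\Lambda/\partial\bxi$ and $\partial\Lambda/\partial\sigma$ at $(\bxi(P),\sigma(P))$.

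I expect the main obstacle to be the bookkeeping in step (iii) for the $\gamma$-derivatives: differentiating $\bH_{1,j}$ from~\eqref{def:Hj} produces the second derivatives $\partial^{2}\bV/\partial\gamma_i\partial\gamma_j$, which is precisely why (V5), and not merely (V4), is required, and differentiating the factor $u_1(d/\sigma)$ inside $\Psi_{\bgamma,j}$ generates a term carrying $s^{3}=(d/\sigma)^{3}$; one must verify this is harmless, which it is because that term appears as $u_1'(s)s^{3}=\rho_1''(s)s^{2}-\rho_1'(s)s$, kept bounded by (R2) and (R5). A minor technical point is that $N$ need not be relatively compact in $\R^k\times\bTheta\times(0,\infty)$; since the statement is local, one simply carries out the domination on a relatively compact sub-neighborhood of each point of $N$, which suffices.
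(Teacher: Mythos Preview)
Your proposal is correct and follows essentially the same approach as the paper: both arguments differentiate $\Psi_{\bbeta}$ and $\Psi_{\bgamma,j}$, bound each resulting term by a constant times $1$, $\|\bX\|$, or $\|\bX\|^2$ using the elementary quadratic-form estimates and the boundedness of $\rho_1'(s)s^b$, $\rho_1''(s)s^b$ under (R2) and (R5), and then invoke dominated convergence. The paper simply cites Lemma~11.3 in~\cite{lopuhaa-gares-ruizgazen2023} for the $\bxi$-derivative part (you cite Lemma~11.2, which handles $\Psi$ itself rather than its derivatives, but the technique is the same), and writes out the $\sigma$-derivative explicitly; your decomposition into factor types (i)--(iii) is a somewhat more systematic packaging of the same estimates.
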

\begin{proof}
Let $(\bxi,\sigma)\in N$.
Consider $\bxi\mapsto\Lambda(\bxi,\sigma)$ with $\sigma\in(0,\infty)$ fixed.
The proof of
\[
\frac{\partial\Lambda(\bxi,\sigma)}{\partial \bxi}
=
\int
\frac{\partial\Psi(\bs,\bxi,\sigma)}{\partial \bxi}\,\dd P(\bs),
\]
and that $\partial\Lambda/\partial\bxi$ is continuous at~$(\bxi(P),\sigma(P))$, 
is completely similar to that of Lemma~11.3 in Lopuha\"a \emph{et al}~\cite{lopuhaa-gares-ruizgazen2023},
taking into account that $\sigma$ is uniformly bounded away from zero and infinity.

Next consider $\sigma\mapsto\Lambda(\bxi,\sigma)$ with $\bxi\in\R^k\times\bTheta$ fixed. 
From~\eqref{def:Psi}, we find that
\[
\begin{split}
\frac{\partial\Psi_{\bbeta}(\bs,\bxi,\sigma)}{\partial \sigma}
&=
-u_1'\left(\frac{d}{\sigma}\right)\frac{d}{\sigma^2}\bX^T\bV^{-1}(\by-\bX\bbeta)\\
\frac{\partial\Psi_{\bgamma,j}(\bs,\bxi,\sigma)}{\partial \sigma}
&=
-u_1'\left(\frac{d}{\sigma}\right)\frac{d}{\sigma^2}
(\by-\bX\bbeta)^T\bV^{-1}
\bH_{1,j}
\bV^{-1}(\by-\bX\bbeta),
\end{split}
\]
for $j=1,\ldots,l$, where $d^2=(\by-\bX\bbeta)^T\bV^{-1}(\by-\bX\bbeta)$
and $\bH_j$ is defined in~\eqref{def:Hj}.
Taking into account that $\sigma$ is uniformly bounded away from zero and infinity,
similar to the proof of Lemma~11.3 in Lopuha\"a \emph{et al}~\cite{lopuhaa-gares-ruizgazen2023},
we obtain
\[
\left\|
\frac{\partial\Psi_{\bbeta}(\bs,\bxi,\sigma)}{\partial \sigma}
\right\|
\leq
C_1\|\bX\|^2
\quad\text{and}\quad
\left\|
\frac{\partial\Psi_{\bgamma,j}(\bs,\bxi,\sigma)}{\partial \sigma}
\right\|
\leq C_2,
\]
for constants $0<C_1,C_2<\infty$ only depending on $P$.
Hence, it follows by dominated convergence that for $(\bxi,\sigma)$ in the neighborhood $N$
of $(\bxi(P),\sigma(P))$, it holds that
\[
\frac{\partial\Lambda(\bxi,\sigma)}{\partial \sigma}
=
\int
\frac{\partial\Psi(\bs,\bxi,\sigma)}{\partial \sigma}\,\dd P(\bs),
\]
and that $\partial\Lambda/\partial\sigma$ is continuous at~$(\bxi(P),\sigma(P))$.
\end{proof}

\begin{lemma}
\label{lem:Lambda derivative}
Suppose that $P$ satisfies~(E) for some $(\bbeta^*,\btheta^*)\in\R^q\times\bTheta$
and that $\E\|\bX\|^2<\infty$.
Suppose that $\rho_1$ satisfies (R2) and (R5) and that $\bV$ satisfies (V5) and has
a linear structure~\eqref{def:V linear}.
Let $\sigma(P)$ be the solution of~\eqref{def:sigma} and let $\bxi(P)=(\bbeta_1(P),\bgamma(P))\in \mathfrak{D}$ be
a local minimum of $R_P(\bbeta,\bV(\bgamma))$.
Suppose that $\bbeta_1(P)=\bbeta^*$ and $\bV(\bgamma(P))=\bSigma/|\bSigma|^{1/k}$.
Let $\Lambda$ be defined by~\eqref{def:Lambda} and~\eqref{def:Lambda-beta-gamma} with $\Psi$ 
from~\eqref{def:Psi linear}.
Then 
\[
\bD_{\bxi}
=
\frac{\partial \Lambda(\bxi(P),\sigma(P))}{\partial \bxi}
=
\left(
  \begin{array}{cc}
\bD_{\bbeta} & \mathbf{0}\\
    \\
\mathbf{0} & \bD_{\bgamma} \\
  \end{array}
\right),
\]
where
\begin{equation}
\label{def:derivative Lambda beta}
\bD_{\bbeta}=
\frac{\partial\Lambda_{\bbeta}(\bxi(P),\sigma(P))}{\partial \bbeta}
=
-\alpha_1
|\bSigma|^{1/k}
\E\left[
\bX^T\bSigma^{-1}\bX
\right],
\end{equation}
with $\alpha_1$ defined in~\eqref{def:alpha1-gamma1}, 
and
\begin{equation}
\label{def:derivative Lambda gamma}
\bD_{\bgamma}=
\frac{\partial\Lambda_{\bgamma}(\bxi(P),\sigma(P))}{\partial \bgamma}
=
\omega_1\bL^T\left(\bSigma^{-1}\otimes\bSigma^{-1}\right)\bL
-
\omega_2\bL^T
\vc(\bSigma^{-1})
\vc(\bSigma^{-1})^T
\bL,
\end{equation}
where $\bL=\partial\vc(\bV(\bgamma(P)))/\partial\bgamma^T$ is the $k^2\times l$ matrix given in~\eqref{def:L},
$\omega_1=\sigma^2(P)|\bSigma|^{2/k}\gamma_1$ and $\omega_2=\omega_1/k+|\bSigma|^{2/k}$, 
with $\gamma_1$ defined in~\eqref{def:alpha1-gamma1}.
\end{lemma}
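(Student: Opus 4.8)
\emph{Proof plan.}

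The strategy is to differentiate under the integral sign and then exploit the point symmetry of $P$ together with the spherical representation of $\by\mid\bX$. First I would apply Lemma~\ref{lem:change order int diff Lambda}, whose hypotheses ((R2), (R5), (V5) and $\E\|\bX\|^2<\infty$) are exactly those assumed here, to conclude that $\Lambda$ is continuously differentiable at $(\bxi(P),\sigma(P))$ with $\bD_{\bxi}=\int(\partial\Psi/\partial\bxi)\,\dd P$. Partitioning $\bD_{\bxi}$ into the blocks $\partial\Lambda_{\bbeta}/\partial\bbeta$, $\partial\Lambda_{\bbeta}/\partial\bgamma$, $\partial\Lambda_{\bgamma}/\partial\bbeta$, $\partial\Lambda_{\bgamma}/\partial\bgamma$, I would show the two off-diagonal blocks vanish: when we differentiate $\Psi_{\bbeta}$ in $\bgamma$, or $\Psi_{\bgamma}$ in $\bbeta$, and evaluate at $(\bxi(P),\sigma(P))$ with $\bbeta_1(P)=\bbeta^*$, every summand is an odd function of $\by-\bX\bbeta^*$ (differentiating a function of $d=d(\by,\bX\bbeta,\bV(\bgamma))$ with respect to $\bbeta$, or a factor $\bX^T\bV^{-1}(\by-\bX\bbeta)$ with respect to $\bgamma$, always produces an odd factor), so since $\bbeta^*$ is a point of symmetry of $P$ these integrals are $\mathbf{0}$, exactly as in the computation of $\bD_{0,\bbeta}$ in Lemma~\ref{lem:D0zeta=0}. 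Hence $\bD_{\bxi}$ is block diagonal.

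For $\bD_{\bbeta}=\partial\Lambda_{\bbeta}/\partial\bbeta$, I would differentiate $\Psi_{\bbeta}=u_1(d/\sigma)\bX^T\bV^{-1}(\by-\bX\bbeta)$ in $\bbeta$ and evaluate at $(\bxi(P),\sigma(P))$; using $\bV(\bgamma(P))^{-1}=|\bSigma|^{1/k}\bSigma^{-1}$ this gives a term in $u_1'(d^*/\sigma)\,\bX^T\bSigma^{-1}(\by-\bX\bbeta^*)(\by-\bX\bbeta^*)^T\bSigma^{-1}\bX$ and a term in $u_1(d^*/\sigma)\,\bX^T\bSigma^{-1}\bX$, where $d^*=d(\by,\bX\bbeta^*,\bV(\bgamma(P)))$. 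Conditioning on $\bX$, writing $\by\mid\bX$ as $\bSigma^{1/2}\bz+\bX\bbeta^*$ with $\bz$ spherical, and using $\E_{\mathbf{0},\bI_k}[\bu\bu^T]=\bI_k/k$ for $\bu=\bz/\|\bz\|$ from Lemma~\ref{lem:Lemma 5.1}, both terms become scalar multiples of $\E[\bX^T\bSigma^{-1}\bX]$; collecting them with $u_1(s)=\rho_1'(s)/s$ and $u_1'(s)s=\rho_1''(s)-\rho_1'(s)/s$ reproduces the combination defining $\alpha_1$ in~\eqref{def:alpha1-gamma1}, giving~\eqref{def:derivative Lambda beta}. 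This is essentially the computation underlying the influence function of $\bbeta_1$ and is routine.

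The substantive part is $\bD_{\bgamma}=\partial\Lambda_{\bgamma}/\partial\bgamma$. Write $\Psi_{\bgamma}=-\bL^T(\bV^{-1}\otimes\bV^{-1})\vc(\Psi_\bV)=-\bL^T\vc(\bV^{-1}\Psi_\bV\bV^{-1})$ and differentiate in $\gamma_j$ by the product rule, obtaining contributions $-\bV^{-1}\bL_j\bV^{-1}\Psi_\bV\bV^{-1}$ and $-\bV^{-1}\Psi_\bV\bV^{-1}\bL_j\bV^{-1}$ from the outer factors and $\bV^{-1}(\partial\Psi_\bV/\partial\gamma_j)\bV^{-1}$ from the middle. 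A preliminary I would record separately is that $\E[\Psi_\bV(\bs,\bxi(P),\sigma(P))]=\mathbf{0}$: at $\bxi(P)$ one has $|\bV(\bgamma(P))|=1$, and conditioning on $\bX$ and using $\E_{\mathbf{0},\bI_k}[\bu\bu^T]=\bI_k/k$ shows that $\E[k\,u_1(d^*/\sigma)(\by-\bX\bbeta^*)(\by-\bX\bbeta^*)^T\mid\bX]$ and $\E[v_1(d^*/\sigma)\sigma^2(P)\bV(\bgamma(P))\mid\bX]$ are the same scalar multiple of $\bSigma$, where $c_\sigma=|\bSigma|^{1/(2k)}/\sigma(P)$. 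Hence the two outer-factor contributions integrate to zero and $\bD_{\bgamma}$ equals $-\bL^T\vc(\bV^{-1}\,\E[\partial\Psi_\bV/\partial\gamma_j]\,\bV^{-1})$ stacked over $j$. It remains to differentiate the three summands of $\Psi_\bV$ using $\partial d^2/\partial\gamma_j=-(\by-\bX\bbeta)^T\bV^{-1}\bL_j\bV^{-1}(\by-\bX\bbeta)$, $\partial\bV/\partial\gamma_j=\bL_j$ and $\partial\log|\bV|/\partial\gamma_j=\tr(\bV^{-1}\bL_j)$, evaluate at $\bxi(P)$, condition on $\bX$, and apply the fourth-moment identity $\E_{\mathbf{0},\bI_k}[(\bu^T\bA\bu)\bu\bu^T]=(2\bA+\tr(\bA)\bI_k)/(k(k+2))$ (a direct consequence of Lemma~\ref{lem:Lemma 5.1}, since $(\bu^T\bA\bu)\vc(\bu\bu^T)=\E_{\mathbf{0},\bI_k}[\vc(\bu\bu^T)\vc(\bu\bu^T)^T]\vc(\bA)$). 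Collecting terms shows $\E[\partial\Psi_\bV/\partial\gamma_j]$ is a linear combination of $\bL_j$ and $\tr(\bSigma^{-1}\bL_j)\,\bV(\bgamma(P))$; then $\bV(\bgamma(P))=\bSigma/|\bSigma|^{1/k}$, the identities $\vc(\bSigma^{-1}\bL_j\bSigma^{-1})=(\bSigma^{-1}\otimes\bSigma^{-1})\vc(\bL_j)$ and $\tr(\bSigma^{-1}\bL_j)=\vc(\bSigma^{-1})^T\vc(\bL_j)$ (the columns of $\bL$ being the $\vc(\bL_j)$), and the definition of $\gamma_1$ in~\eqref{def:alpha1-gamma1} recombine the two scalar coefficients into $\omega_1=\sigma^2(P)|\bSigma|^{2/k}\gamma_1$ and $\omega_2=\omega_1/k+|\bSigma|^{2/k}$, yielding~\eqref{def:derivative Lambda gamma}. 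The main obstacle is precisely this last step: keeping the $\vc$/Kronecker manipulations, the several powers of $|\bSigma|^{1/k}$ and of $c_\sigma$, and the $\rho_1''$- and $\rho_1'$-moments straight, and checking that they assemble into exactly $\gamma_1$ (for $\omega_1$) together with the additional $|\bSigma|^{2/k}$ in $\omega_2$ contributed by the $\log|\bV|$ piece of $\Psi_\bV$.
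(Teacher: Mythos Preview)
Your proposal is correct and follows essentially the same route as the paper's own proof: invoke Lemma~\ref{lem:change order int diff Lambda} to differentiate under the integral, kill the off-diagonal blocks by the point-of-symmetry argument, compute $\bD_{\bbeta}$ via the spherical representation and Lemma~\ref{lem:Lemma 5.1}, and for $\bD_{\bgamma}$ first establish $\E[\Psi_\bV(\bs,\bxi(P),\sigma(P))]=\mathbf{0}$ so that only $\E[\partial\Psi_\bV/\partial\gamma_j]$ survives, then evaluate this using the fourth-moment identity from Lemma~\ref{lem:Lemma 5.1} and collect into $\omega_1,\omega_2$. Your identification of the bookkeeping of constants as the main obstacle is also accurate.
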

\begin{proof}
For convenience, write $\bxi_P=(\bbeta_{1,P},\bgamma_P)=(\bbeta_1(P),\bgamma(P))$,
$\bV_P=\bV(\bgamma(P))$, and $\sigma_P=\sigma(P)$.
Write $\partial \Lambda/\partial\bxi$ as the block matrix
\begin{equation}
\label{def:Lambda derivative}
\frac{\partial \Lambda(\bxi_P,\sigma_P)}{\partial \bxi}
=
\left(
  \begin{array}{cc}
\dfrac{\partial \Lambda_{\bbeta}(\bxi_P,\sigma_P)}{\partial \bbeta} & \dfrac{\partial \Lambda_{\bbeta}(\bxi_P,\sigma_P)}{\partial \bgamma} \\
    \\
\dfrac{\partial \Lambda_{\bgamma}(\bxi_P,\sigma_P)}{\partial \bbeta} & \dfrac{\partial \Lambda_{\bgamma}(\bxi_P,\sigma_P)}{\partial \bgamma} \\
  \end{array}
\right),
\end{equation}
where $\Lambda_{\bbeta}$ and $\Lambda_{\bgamma}$ are defined in~\eqref{def:Lambda-beta-gamma}
with $\Psi_{\bbeta}$ and $\Psi_{\bgamma}$ from~\eqref{def:Psi linear}.
Because $\rho_1$ and $\bV$ satisfy~(R2), (R5), and~(V5), and $\E\|\bX\|^2<\infty$,
according to Lemma~\ref{lem:change order int diff Lambda} we may
we may interchange integration and differentiation  
in~$\partial\Lambda_{\bbeta}/\partial\bgamma$ and $\partial\Lambda_{\bgamma}/\partial\bbeta$.
It can be seen that these are expectations of an odd function of $\by-\bX\bbeta_{1,P}$,
which means that they are equal to zero, as~$\bbeta_{1,P}=\bbeta^*$ is a point of symmetry of $P$.
Therefore
\begin{equation}
\label{eq:block matrix}
\bD_{\bxi}
=
\frac{\partial \Lambda(\bxi_P,\sigma_P)}{\partial \bxi}
=
\left(
  \begin{array}{cc}
\bD_{\bbeta} & \mathbf{0} \\
    \\
\mathbf{0} & \bD_{\bgamma} \\
  \end{array}
\right).
\end{equation}
It remains to determine $\bD_{\bbeta}=\partial\Lambda_{\bbeta}(\bxi_P,\sigma_P)/\partial\bbeta$
and $\bD_{\bgamma}=\partial\Lambda_{\bgamma}(\bxi_P,\sigma_P)/\partial\bgamma$.
According to Lemma~\ref{lem:change order int diff Lambda}, we have that $\Lambda$ is continuous differentiable at $(\bxi_P,\sigma_P)$ and
that we may interchange
integration and differentiation in $\partial\Lambda_{\bbeta}/\partial\bbeta$, where $\Lambda_{\bbeta}$ is defined in~\eqref{def:Lambda-beta-gamma}
with $\Psi_{\bbeta}$ from~\eqref{def:Psi linear}.
We obtain
\begin{equation}
\label{eq:Dbeta elliptical}
\begin{split}
\bD_{\bbeta}
&=
\int
\frac{\partial}{\partial \bbeta}
\Psi_{\bbeta}(\bs,\bxi_P,\sigma_P)\,\dd P(\bs)\\
&=
-\E
\left[
u_1'\left(\frac{d_P}{\sigma_P}\right)
\frac{\bX^T\bV_P^{-1}\be_P
\be_P^T
\bV_P^{-1}\bX}{\sigma_P d_P}
+
u_1\left(\frac{d_P}{\sigma_P}\right)\bX^T\bV_P^{-1}\bX
\right]\\
&=
-\E
\left[
\E
\left[
u_1'\left(\frac{d_P}{\sigma_P}\right)
\frac{\bX^T\bV_P^{-1}\be_P
\be_P^T
\bV_P^{-1}\bX}{\sigma_P d_P}
+
u_1\left(\frac{d_P}{\sigma_P}\right)\bX^T\bV_P^{-1}\bX
\bigg|
\bX
\right]
\right],
\end{split}
\end{equation}
where $d_P^2=\be_P^T\bV_P^{-1}\be_P$ and $\be_P=\by-\bX\bbeta_{1,P}$.
The inner expectation on the right hand side is the conditional expectation of $\by\mid\bX$,
which can be written as
\[
\bX^T\bV_P^{-1/2}
\E
\left[
u_1'\left(\frac{d_P}{\sigma_P}\right)
\frac{\bV_P^{-1/2}\be_P
\be_P^T
\bV_P^{-1/2}}{\sigma_P d_P}
+
u_1\left(\frac{d_P}{\sigma_P}\right)\bI_k
\,\Bigg|\,
\bX
\right]
\bV_P^{-1/2}\bX.
\]
Because $\bbeta_{1,P}=\bbeta^*$ and $\bV_P=\bSigma/|\bSigma|^{1/k}$, the previous expression is equal to
\[
|\bSigma|^{1/k}
\bX^T\bSigma^{-1/2}
\E
\left[
u_1'\left(\frac{d^*}{\sigma_P}\right)
\frac{|\bSigma|^{1/k}\bSigma^{-1/2}\be^*(\be^*)^T\bSigma^{-1/2}}{\sigma_Pd^*}
+
u_1\left(\frac{d^*}{\sigma_P}\right)\bI_k
\,\Bigg|\,
\bX
\right]
\bSigma^{-1/2}\bX,
\]
where $(d^*)^2=|\bSigma|^{1/k}(\by-\bX\bbeta^*)^T\bSigma^{-1}(\by-\bX\bbeta^*)$ and $\be^*=\by-\bX\bbeta^*$.
Note that $\by\mid\bX$ has the same distribution as $\bSigma^{1/2}\bz+\bX\bbeta^*$,
where~$\bz$ has a spherical density $f_{\mathbf{0},\bI_k}$,
so that the expression in the previous display is equal to
\[
|\bSigma|^{1/k}
\bX^T
\bSigma^{-1/2}
\mathbb{E}_{\mathbf{0},\bI_k}
\left[
u_1'\left(c_\sigma\|\bz\|\right)
\frac{c_\sigma}{\|\bz\|}
\bz\bz^T
+
u_1\left(c_\sigma\|\bz\|\right)
\bI_k
\right]
\bSigma^{-1/2}
\bX,
\]
where $c_\sigma=|\bSigma|^{1/(2k)}/\sigma_P$.
Let $\bu=\bz/\|\bz\|$.
Then with Lemma~\ref{lem:Lemma 5.1} we find
\[
\begin{split}
&
\mathbb{E}_{\mathbf{0},\bI_k}
\left[
u_1'\left(c_\sigma\|\bz\|\right)
\frac{c_\sigma}{\|\bz\|}
\bz\bz^T
+
u_1\left(c_\sigma\|\bz\|\right)
\bI_k
\right]\\
&=
\mathbb{E}_{\mathbf{0},\bI_k}
\left[
u_1'\left(c_\sigma\|\bz\|\right)c_\sigma\|\bz\|
\right]
\mathbb{E}_{\mathbf{0},\bI_k}
\left[
\bu\bu^T
\right]
+
\mathbb{E}_{\mathbf{0},\bI_k}
\left[
u_1\left(c_\sigma\|\bz\|\right)
\right]
\bI_k
=
\alpha_1\bI_k,
\end{split}
\]
where
\[
\begin{split}
\alpha_1
&=
\mathbb{E}_{\mathbf{0},\bI_k}
\left[
\frac{1}{k}
u_1'\left(c_\sigma\|\bz\|\right)c_\sigma\|\bz\|
+
u_1\left(c_\sigma\|\bz\|\right)
\right]\\
&=
\mathbb{E}_{\mathbf{0},\bI_k}
\left[
\left(
1-\frac{1}{k}
\right)
\frac{\rho_1'\left(c_\sigma\|\bz\|\right)}{c_\sigma\|\bz\|}
+
\frac1k
\rho_1''\left(c_\sigma\|\bz\|\right)
\right].
\end{split}
\]
We conclude that
\[
\dfrac{\partial \Lambda_{\bbeta}(\bxi_P,\sigma_P)}{\partial \bbeta}
=
-
\alpha_1
|\bSigma|^{1/k}
\E\left[
\bX^T\bSigma^{-1}\bX
\right].
\]
Next, we determine $\partial\Lambda_{\bgamma}(\bxi_P,\sigma_P)/\partial\bgamma$.
From~\eqref{def:Psi linear} we have
\[
\Psi_{\bgamma,j}
=
-
\vc(\bV^{-1}\bL_j\bV^{-1})^T
\vc\left(
\Psi_\bV
\right),
\]
for all $j=1,2,\ldots,l$,
where $\Psi_\bV$ is defined in~\eqref{def:PsiV}. 
Because $|\bV_P|=1$, we have
\[
\begin{split}
\int \Psi_{\bV}(\bs,\bxi_P,\sigma_P)\,\text{d}P(\bs)
&=
\E\left[
k
u_1\left(\frac{d_P}{\sigma_P}\right)
\be_P\be_P^T
-
v_1\left(\frac{d_P}{\sigma_P}\right)
\sigma_P^2\bV_P
\right]\\
&=
\E\left[
\E\left[
k
u_1\left(\frac{d_P}{\sigma_P}\right)
\be_P\be_P^T
-
v_1\left(\frac{d_P}{\sigma_P}\right)
\sigma_P^2\bV_P
\bigg|
\bX
\right]
\right].
\end{split}
\]
Similar to the reasoning before, the inner expectation can be written as
\[
\begin{split}
&
\E_{\mathbf{0},\bI_k}
\left[
ku_1(c_\sigma\|\bz\|)
\bSigma^{1/2}
\bz\bz^T
\bSigma^{1/2}
-
v_1(c_\sigma\|\bz\|)
\sigma_P^2
\bSigma/|\bSigma|^{1/k}
\right]\\
&=
k
\bSigma^{1/2}
\E_{\mathbf{0},\bI_k}
\left[
u_1(c_\sigma\|\bz\|)
\bz\bz^T
\right]
\bSigma^{1/2}
-
\E_{\mathbf{0},\bI_k}
\left[
u_1(c_\sigma\|\bz\|)\|\bz\|^2
\right]
\bSigma
\\
&=
k
\bSigma^{1/2}
\E_{\mathbf{0},\bI_k}
\left[
u_1(c_\sigma\|\bz\|)\|\bz\|^2
\right]
\E_{\mathbf{0},\bI_k}
\left[
\bu\bu^T
\right]
\bSigma^{1/2}
-
\E_{\mathbf{0},\bI_k}
\left[
u_1(c_\sigma\|\bz\|)\|\bz\|^2
\right]
\bSigma\\
&=
\mathbf{0},
\end{split}
\]
since $\E_{\mathbf{0},\bI_k}[\bu\bu^T]=(1/k)\bI_k$, according to Lemma~\ref{lem:Lemma 5.1}.
Hence we conclude that 
\[
\int \Psi_{\bV}(\bs,\bxi_P,\sigma_P)\,\text{d}P(\bs)=\mathbf{0}.
\]
Since, we may interchange integration and differentiation in $\partial\Lambda_{\bgamma}/\partial\bgamma$,
according to Lemma~\ref{lem:change order int diff Lambda},
this means that
for each $j,s=1,\ldots,l$,
\begin{equation}
\label{eq:Dlambda-gamma}
\begin{split}
\frac{\partial\Lambda_{\bgamma,j}(\bxi_P,\sigma_P)}{\partial \gamma_s}
&=
-
\vc\left(\bV_P^{-1}\bL_j\bV_P^{-1}\right)^T\vc\left(
\int
\frac{\partial\Psi_{\bV}(\bs,\bxi_P,\sigma_P)}{\partial \gamma_s}
\,\dd P(\bs)
\right),
\end{split}
\end{equation}
where $\Psi_\bV$ is defined in~\eqref{def:PsiV}.
We have
\begin{equation}
\label{eq:decomp PsiV}
\begin{split}
\frac{\partial \Psi_\bV}{\partial\gamma_s}
&=
\frac{\partial}{\partial\gamma_s}
k
u_1\left(\frac{d}{\sigma}\right)
(\by-\bX\bbeta)(\by-\bX\bbeta)^T
-
\frac{\partial}{\partial\gamma_s}
v_1\left(\frac{d}{\sigma}\right)\sigma^2\mathbf{V}
+
\frac{\partial}{\partial\gamma_s}
(\log|\bV|)\mathbf{V}.
\end{split}
\end{equation}
Because $\bV$ satisfies~\eqref{def:V linear}, if follows that $\partial\bV/\partial\gamma_s=\bL_s$.
Similar to~\eqref{eq:Dbeta elliptical}, for the first term in~\eqref{eq:decomp PsiV} we have at~$(\bxi_P,\sigma_P)$:
\[
\begin{split}
&\int
\frac{\partial}{\partial\gamma_s}
k
u_1\left(\frac{d_P}{\sigma_P}\right)
\be_P\be_P^T
\,\text{d}P(\bs)\\
&\quad=
\E
\left[
\E
\left[
-k
u_1'\left(\frac{d_P}{\sigma_P}\right)
\frac{1}{2\sigma_P d_P}
\be_P^T\bV_P^{-1}\frac{\partial\bV_P}{\partial\gamma_s}\bV_P^{-1}\be_P
\cdot
\be_P\be_P^T
\bigg|
\bX
\right]
\right],
\end{split}
\]
where $d_P^2=\be_P^T\bV_P^{-1}\be_P$ and $\be_P=\by-\bX\bbeta_{1,P}$.
Because $\bbeta_{1,P}=\bbeta^*$ and $\bV_P=\bSigma/|\bSigma|^{1/k}$, 
the inner expectation on the right hand side can be written as
\[
\begin{split}
&
-k
\E
\left[
u_1'\left(\frac{d^*}{\sigma_P}\right)
\frac{|\bSigma|^{2/k}}{2\sigma_Pd^*}
(\be^*)^T\bSigma^{-1}\bL_s\bSigma^{-1}\be^*
\cdot
\be^*(\be^*)^T
\right]\\
&=
-
\sigma_P^2
\E_{\mathbf{0},\bI_k}
\left[
\frac{ku_1'\left(c_\sigma\|\bz\|\right)(c_\sigma\|\bz\|)^3}2
\bu^T\bSigma^{-1/2}\bL_s\bSigma^{-1/2}\bu
\cdot
\bSigma^{1/2}\bu\bu^T\bSigma^{1/2}
\right],
\end{split}
\]
where $\bz=\bSigma^{-1/2}(\by-\bX\bbeta^*)$ and $\bu=\bz/\|\bz\|$.
According to Lemma~\ref{lem:Lemma 5.1}, 
the expectation of the right hand side is equal to
\begin{equation}
\label{eq:term1 decomp PsiV}
-
\sigma_P^2
\E_{\mathbf{0},\bI_k}
\left[
\frac{ku_1'\left(c_\sigma\|\bz\|\right)(c_\sigma\|\bz\|)^3}{2}
\right]
\E_{\mathbf{0},\bI_k}
\left[
\bu^T\bSigma^{-1/2}\bL_s\bSigma^{-1/2}\bu
\bSigma^{1/2}\bu\bu^T\bSigma^{1/2}
\right].
\end{equation}
For the second term in~\eqref{eq:decomp PsiV} we get at $(\bxi_P,\sigma_P)$:
\[
\begin{split}
&
-
\int\frac{\partial}{\partial\gamma_s}
v_1\left(\frac{d_P}{\sigma_P}\right)\sigma_P^2\mathbf{V}_P
\,\text{d}P(\bs)\\
&=
\E\left[
\E\left[
v_1'\left(\frac{d_P}{\sigma_P}\right)
\frac{1}{2\sigma_P d_P}
\be_P^T\bV_P^{-1}\bL_s\bV_P^{-1}\be_P
\cdot\sigma_P^2\mathbf{V}_P
-
v_1\left(\frac{d_P}{\sigma_P}\right)\sigma_P^2\bL_s
\bigg|\bX
\right]
\right],
\end{split}
\]
where $d_P^2=\be_P^T\bV_P^{-1}\be_P$ and $\be_P=\by-\bX\bbeta_{1,P}$.
As before, the inner expectation can be written as
\[
\begin{split}
&
\sigma_P^2
\E\left[
v_1'\left(\frac{d^*}{\sigma_P}\right)
\frac{|\bSigma|^{1/k}}{2\sigma_Pd^*}
(\be^*)^T\bSigma^{-1}\bL_s\bSigma^{-1}\be^*
\cdot
\bSigma
-
v_1\left(\frac{d^*}{\sigma_P}\right)\sigma_P^2
\bL_s
\right]\\
&=
\sigma_P^2
\E_{\mathbf{0},\bI_k}
\left[
\frac{v_1'\left(c_\sigma\|\bz\|\right)c_\sigma\|\bz\|}{2}
\bu^T\bSigma^{-1/2}\bL_s\bSigma^{-1/2}\bu
\cdot
\bSigma
-
\sigma_P^2
v_1\left(c_\sigma\|\bz\|\right)\bL_s
\right].
\end{split}
\]
With Lemma~\ref{lem:Lemma 5.1}, 
for the expectation of the second term in~\eqref{eq:decomp PsiV} we get 
\begin{equation}
\label{eq:term2 decomp PsiV}
\begin{split}
\sigma_P^2
\E_{\mathbf{0},\bI_k}
\left[
\frac{v_1'\left(c_\sigma\|\bz\|\right)c_\sigma\|\bz\|}{2}
\right]
\E_{\mathbf{0},\bI_k}
\left[
\bu^T\bSigma^{-1/2}\bL_s\bSigma^{-1/2}\bu
\right]
\bSigma\\
-
\sigma_P^2
\E_{\mathbf{0},\bI_k}
\left[
v_1\left(c_\sigma\|\bz\|\right)
\right]
\bL_s.
\end{split}
\end{equation}
For the third term in~\eqref{eq:decomp PsiV} we get at $(\bxi_P,\sigma_P)$:
\begin{equation}
\label{eq:term3 decomp PsiV}
\begin{split}
\frac{\partial}{\partial\gamma_s}
(\log|\bV_P|)\mathbf{V}_P
&=
\left(
\frac{\partial\log|\bV_P|}{\partial\gamma_s}
\right)
\bV_P
+
\left(
\log|\bV_P|
\right)
\frac{\partial\bV}{\partial\gamma_s}\\
&=
\text{tr}
\left(
\bV_P^{-1}\bL_s
\right)
\bV_P
=
\text{tr}
\left(
\bSigma^{-1}\bL_s
\right)
\bSigma,
\end{split}
\end{equation}
using that $|\bV_P|=1$.
It follows that 
\begin{equation}
\label{eq:Dlambda-gamma-js}
\begin{split}
&
\int
\frac{\partial\Psi_{\bV}(\bs,\bxi_P,\sigma_P)}{\partial \gamma_s}
\,\dd P(\bs)\\
&=
-
\sigma_P^2
\E_{\mathbf{0},\bI_k}
\left[
\frac{ku_1'\left(c_\sigma\|\bz\|\right)(c_\sigma\|\bz\|)^3}{2}
\right]
\E_{\mathbf{0},\bI_k}
\left[
\bu^T\bSigma^{-1/2}\bL_s\bSigma^{-1/2}\bu
\bSigma^{1/2}\bu\bu^T\bSigma^{1/2}
\right]\\
&\qquad+
\sigma_P^2
\E_{\mathbf{0},\bI_k}
\left[
\frac{v_1'\left(c_\sigma\|\bz\|\right)c_\sigma\|\bz\|}{2}
\right]
\E_{\mathbf{0},\bI_k}
\left[
\bu^T\bSigma^{-1/2}\bL_s\bSigma^{-1/2}\bu
\right]
\bSigma\\
&\qquad\qquad-
\sigma_P^2
\E_{\mathbf{0},\bI_k}
\left[
v_1\left(c_\sigma\|\bz\|\right)
\right]
\bL_s
+
\text{tr}
\left(
\bSigma^{-1}\bL_s
\right)
\bSigma.
\end{split}
\end{equation}
In view of~\eqref{eq:Dlambda-gamma} and~\eqref{eq:Dlambda-gamma-js}, for the first term in $\partial\Lambda_{\bgamma,j}/\partial\gamma_s$
we obtain
\[
\begin{split}
&
\vc(\bV_P^{-1}\bL_j\bV_P^{-1})^T
\vc\left(
\E_{\mathbf{0},\bI_k}
\left[
\bu^T\bSigma^{-1/2}\bL_s\bSigma^{-1/2}\bu
\bSigma^{1/2}\bu\bu^T\bSigma^{1/2}
\right]
\right)\\
&=
|\bSigma|^{2/k}
\vc(\bSigma^{-1}\bL_j\bSigma^{-1})^T
\E_{\mathbf{0},\bI_k}
\left[
\vc\left(
\bSigma^{1/2}\bu\bu^T\bSigma^{1/2}
\right)
\bu^T\bSigma^{-1/2}\bL_s\bSigma^{-1/2}\bu
\right]\\
&=
|\bSigma|^{2/k}
\vc(\bSigma^{-1}\bL_j\bSigma^{-1})^T
(\bSigma^{1/2}\otimes\bSigma^{1/2})
\E_{\mathbf{0},\bI_k}
\left[
\vc\left(
\bu\bu^T
\right)
\vc(\bu\bu^T)^T
\right]\\
&\qquad\qquad\qquad\qquad\qquad\qquad\qquad\qquad\qquad\qquad\qquad
\vc\left(\bSigma^{-1/2}\bL_s\bSigma^{-1/2}\right)
\\
&=
|\bSigma|^{2/k}
\vc(\bSigma^{-1/2}\bL_j\bSigma^{-1/2})^T
\frac{1}{k(k+2)}
\Big(
\bI_{k^2}+\bK_{k,k}
+
\vc(\bI_k)\vc(\bI_k)^T
\Big)\\
&\qquad\qquad\qquad\qquad\qquad\qquad\qquad\qquad\qquad\qquad\qquad
\vc\left(\bSigma^{-1/2}\bL_s\bSigma^{-1/2}\right),
\end{split}
\]
using Lemma~\ref{lem:Lemma 5.1}.
Application of properties~\eqref{eq:prop K} and~\eqref{eq:trace vec}, yields
\[
\begin{split}
&
\vc(\bV_P^{-1}\bL_j\bV_P^{-1})^T
\vc\left(
\E_{\mathbf{0},\bI_k}
\left[
\bu^T\bSigma^{-1/2}\bL_s\bSigma^{-1/2}\bu
\bSigma^{1/2}\bu\bu^T\bSigma^{1/2}
\right]
\right)\\
&=
\frac{|\bSigma|^{2/k}}{k(k+2)}
\Big(
2\text{tr}(\bSigma^{-1}\bL_j\bSigma^{-1}\bL_s)
+
\text{tr}(\bSigma^{-1}\bL_j)\text{tr}(\bSigma^{-1}\bL_s)
\Big).
\end{split}
\]
It follows that the first term in $\partial\Lambda_{\bgamma,j}/\partial\gamma_s$ is equal to
\begin{equation}
\label{eq:first term}
\sigma_P^2
|\bSigma|^{2/k}
\frac{\E_{\mathbf{0},\bI_k}
\left[
u_1'(c_\sigma\|\bz\|)(c_\sigma\|\bz\|)^3
\right]}{2(k+2)}
\Big(
2\text{tr}(\bSigma^{-1}\bL_j\bSigma^{-1}\bL_s)+\text{tr}(\bSigma^{-1}\bL_j)\text{tr}(\bSigma^{-1}\bL_s)
\Big).
\end{equation}
Similarly, for the second term in $\partial\Lambda_{\bgamma,j}/\partial\gamma_s$
we obtain
\[
\begin{split}
&
\vc\left(\bV_P^{-1}\bL_j\bV_P^{-1}\right)^T
\vc\left(\E_{\mathbf{0},\bI_k}
\left[
\bu^T\bSigma^{-1/2}\bL_s\bSigma^{-1/2}\bu
\right]
\bSigma
\right)\\
&=
|\bSigma|^{2/k}
\vc\left(\bSigma^{-1}\bL_j\bSigma^{-1}\right)^T
\text{tr}
\left(
\E_{\mathbf{0},\bI_k}
\left[
\bu\bu^T
\right]
\bSigma^{-1/2}\bL_s\bSigma^{-1/2}
\right)
\vc\left(\bSigma\right)
\\
&=
|\bSigma|^{2/k}
\text{tr}(\bSigma^{-1}\bL_s)
\vc\left(\bSigma^{-1}\bL_j\bSigma^{-1}\right)^T
\vc\left(\bSigma\right)\\
&=
|\bSigma|^{2/k}
\text{tr}(\bSigma^{-1}\bL_s)
\text{tr}(\bSigma^{-1}\bL_j).
\end{split}
\]
It follows that the second term in $\partial\Lambda_{\bgamma,j}/\partial\gamma_s$ is equal to
\begin{equation}
\label{eq:second term}
-\sigma_P^2
|\bSigma|^{2/k}
\frac{\E_{\mathbf{0},\bI_k}
\left[
v_1'(c_\sigma\|\bz\|)c_\sigma\|\bz\|
\right]}{2k}
\text{tr}\left(\bSigma^{-1}\bL_s\right)
\text{tr}\left(\bSigma^{-1}\bL_j\right).
\end{equation}
Finally, the third term in $\partial\Lambda_{\bgamma,j}/\partial\gamma_s$ is equal to
\begin{equation}
\label{eq:third term}
\begin{split}
&
\sigma_P^2
|\bSigma|^{2/k}
\E_{\mathbf{0},\bI_k}
\left[
v_1(c_\sigma\|\bz\|)
\right]
\vc(\bSigma^{-1}\bL_j\bSigma^{-1})^T
\vc(\bL_s)\\
&=
\sigma_P^2
|\bSigma|^{2/k}
\E_{\mathbf{0},\bI_k}
\left[
v_1(c_\sigma\|\bz\|)
\right]
\text{tr}(\bSigma^{-1}\bL_j\bSigma^{-1}\bL_s),
\end{split}
\end{equation}
and the fourth term in $\partial\Lambda_{\bgamma,j}/\partial\gamma_s$ is equal to
\begin{equation}
\label{eq:fourth term}
-|\bSigma|^{2/k}
\text{tr}
\left(
\bSigma^{-1}\bL_s
\right)
\vc(\bSigma^{-1}\bL_j\bSigma^{-1})^T(\vc\bSigma)
=
-|\bSigma|^{2/k}
\text{tr}
\left(
\bSigma^{-1}\bL_s
\right)
\text{tr}
\left(
\bSigma^{-1}\bL_j
\right).
\end{equation}
We conclude that $\partial\Lambda_{\bgamma,j}(\bxi_P)/\partial \gamma_s$ consists of
a term
$\text{tr}(\bSigma^{-1}\bL_j\bSigma^{-1}\bL_s)$
from~\eqref{eq:first term} and~\eqref{eq:third term}
with coefficient
\[
\begin{split}
\omega_1
&=
\sigma_P^2
|\bSigma|^{2/k}
\left(
\frac{\E_{\mathbf{0},\bI_k}
\left[
u_1'(c_\sigma\|\bz\|)(c_\sigma\|\bz\|)^3
\right]}{k+2}
+
\E_{\mathbf{0},\bI_k}
\left[
v_1(c_\sigma\|\bz\|)
\right]
\right)\\
&=
\sigma_P^2
|\bSigma|^{2/k}
\frac{\mathbb{E}_{0,\mathbf{I}_k}
\left[
\rho''(c_\sigma\|\bz\|)(c_\sigma\|\bz\|)^2+(k+1)\rho'(c_\sigma\|\bz\|)c_\sigma\|\bz\|
\right]}{k+2},
\end{split}
\]
and a term $-\text{tr}(\bSigma^{-1}\bL_s)\text{tr}(\bSigma^{-1}\bL_j)$ 
from~\eqref{eq:first term}, \eqref{eq:second term}, and~\eqref{eq:fourth term}
with coefficient
\[
\begin{split}
\omega_2
&=
\sigma_P^2
|\bSigma|^{2/k}
\left(
\frac{\E_{\mathbf{0},\bI_k}
\left[
u_1'(c_\sigma\|\bz\|)(c_\sigma\|\bz\|)^3
\right]}{2(k+2)}
-
\frac{\E_{\mathbf{0},\bI_k}
\left[
v_1'(c_\sigma\|\bz\|)c_\sigma\|\bz\|
\right]}{2k}
\right)
+
|\bSigma|^{2/k}\\
&=
\sigma_P^2
|\bSigma|^{2/k}
\frac{\mathbb{E}_{0,\mathbf{I}_k}
\left[
\rho''(c_\sigma\|\bz\|)(c_\sigma\|\bz\|)^2+(k+1)\rho'(c_\sigma\|\bz\|)c_\sigma\|\bz\|
\right]}{k(k+2)}
+
|\bSigma|^{2/k}\\
&=
\frac{\omega_1}{k}+|\bSigma|^{2/k}.
\end{split}
\]
From the definition of $\bL$ in~\eqref{def:L} it follows that the $l\times l$ matrix with entries
\[
\begin{split}
\frac{\partial\Lambda_{\bgamma,j}(\bxi_P,\sigma_P)}{\partial \gamma_s}
&=
\omega_1\text{tr}(\bSigma^{-1}\bL_j\bSigma^{-1}\bL_s)
-
\omega_2\text{tr}(\bSigma^{-1}\bL_s)\text{tr}(\bSigma^{-1}\bL_j),
\end{split}
\]
is the matrix
\[
\bD_{\bgamma}
=
\frac{\partial\Lambda_{\bgamma}(\bxi_P,\sigma_P)}{\partial \bgamma}
=
\omega_1\mathbf{L}^T\left(\bSigma^{-1}\otimes\bSigma^{-1}\right)\mathbf{L}
-
\omega_2\mathbf{L}^T
\vc(\bSigma^{-1})
\vc(\bSigma^{-1})^T
\mathbf{L}.
\]
This proves the lemma.
\end{proof}

\begin{lemma}
\label{lem:inverse}
Suppose the conditions of Lemma~\ref{lem:Lambda derivative} hold.
Let $\bD_{\bbeta}$ be defined in~\eqref{def:derivative Lambda beta} with~$\alpha_1$ from~\eqref{def:alpha1-gamma1}.
When $\alpha_1\ne0$ and $\bX$ has full rank with probability one,
then $\bD_{\bbeta}$ is non-singular with inverse
\[
\bD_{\bbeta}^{-1}
=
-\frac{1}{\alpha_1|\bSigma|^{1/k}}
\left(
\E\left[
\bX^T\bSigma^{-1}\bX
\right]
\right)^{-1}.
\] 
Let $\bD_{\bgamma}$ be defined in~\eqref{def:derivative Lambda gamma}
with $\omega_1=\sigma^2(P)|\bSigma|^{2/k}\gamma_1$ and $\omega_2=\omega_1/k+|\bSigma|^{2/k}$, 
where $\gamma_1$ is defined in~\eqref{def:alpha1-gamma1} and $\bL$ is defined in~\eqref{def:L}.
When $\gamma_1\ne0$ and~$\bL$ has full rank,
then~$\bD_{\bgamma}$ is non-singular with inverse
\[
\bD_{\bgamma}^{-1}
=
a(\bE^T\bE)^{-1}
+
b
(\bE^T\bE)^{-1}
\bE^T\vc(\bI_k)\vc(\bI_k)^T\bE
(\bE^T\bE)^{-1},
\]
where $\bE=(\bSigma^{-1/2}\otimes\bSigma^{-1/2})\bL$,
$a=1/\omega_1$, and $b=\omega_2/(\omega_1(\omega_1-k\omega_2))$.
\end{lemma}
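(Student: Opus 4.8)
The first claim is immediate. Since $\bX$ has full rank with probability one and $\bSigma^{-1}\in\text{PDS}(k)$, the matrix $\bX^T\bSigma^{-1}\bX$ is positive definite with probability one, so $\E[\bX^T\bSigma^{-1}\bX]$ is positive definite and in particular invertible. Because $-\alpha_1|\bSigma|^{1/k}$ is a nonzero scalar whenever $\alpha_1\ne0$, it follows that $\bD_{\bbeta}=-\alpha_1|\bSigma|^{1/k}\E[\bX^T\bSigma^{-1}\bX]$ is non-singular, with inverse $-(\alpha_1|\bSigma|^{1/k})^{-1}\big(\E[\bX^T\bSigma^{-1}\bX]\big)^{-1}$.

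For the second claim, the plan is first to express $\bD_{\bgamma}$ in terms of the matrix $\bE=(\bSigma^{-1/2}\otimes\bSigma^{-1/2})\bL$. Using the standard identities $(\bA\otimes\bB)^T=\bA^T\otimes\bB^T$, $(\bA\otimes\bB)(\bC\otimes\bD)=\bA\bC\otimes\bB\bD$, the symmetry of $\bSigma^{-1/2}$, and $\vc(\bA\bB\bC)=(\bC^T\otimes\bA)\vc(\bB)$, one obtains $\bE^T\bE=\bL^T(\bSigma^{-1}\otimes\bSigma^{-1})\bL$ and $\bE^T\vc(\bI_k)=\bL^T\vc(\bSigma^{-1})$. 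Writing $\bw=\bE^T\vc(\bI_k)$, this turns the defining formula~\eqref{def:derivative Lambda gamma} into the rank-one perturbation $\bD_{\bgamma}=\omega_1\bE^T\bE-\omega_2\,\bw\bw^T$. Since $\bL$ has full column rank and $\bSigma^{-1/2}\otimes\bSigma^{-1/2}$ is invertible, $\bE$ has full column rank, so $\bE^T\bE$ is positive definite and invertible, and $\omega_1\ne0$ because $\gamma_1\ne0$.

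The crucial step is the identity $\bw^T(\bE^T\bE)^{-1}\bw=k$. I would prove it by noting that, because $\bV$ has the linear structure~\eqref{def:V linear} and $\bV(\bgamma(P))=\bSigma/|\bSigma|^{1/k}=\sum_{j}\gamma_j(P)\bL_j$, one has $\bL\bgamma(P)=\vc(\bSigma)/|\bSigma|^{1/k}$, hence $\bE\bgamma(P)=(\bSigma^{-1/2}\otimes\bSigma^{-1/2})\vc(\bSigma)/|\bSigma|^{1/k}=\vc(\bSigma^{-1/2}\bSigma\bSigma^{-1/2})/|\bSigma|^{1/k}=\vc(\bI_k)/|\bSigma|^{1/k}$. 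Thus $\bw=\bE^T\bE\,\big(|\bSigma|^{1/k}\bgamma(P)\big)$, so $(\bE^T\bE)^{-1}\bw=|\bSigma|^{1/k}\bgamma(P)$ and $\bw^T(\bE^T\bE)^{-1}\bw=\vc(\bI_k)^T\vc(\bI_k)=\tr(\bI_k)=k$. With this in hand, I would apply the Sherman--Morrison formula to $\bD_{\bgamma}=\omega_1\bE^T\bE-\omega_2\bw\bw^T$; the relevant scalar $1-(\omega_2/\omega_1)\,\bw^T(\bE^T\bE)^{-1}\bw=(\omega_1-k\omega_2)/\omega_1$ is nonzero since $\omega_1-k\omega_2=-k|\bSigma|^{2/k}$ (using $\omega_2=\omega_1/k+|\bSigma|^{2/k}$), so the perturbation is non-singular, and substituting $\bw^T(\bE^T\bE)^{-1}\bw=k$ yields exactly $\bD_{\bgamma}^{-1}=a(\bE^T\bE)^{-1}+b(\bE^T\bE)^{-1}\bw\bw^T(\bE^T\bE)^{-1}$ with $a=1/\omega_1$ and $b=\omega_2/(\omega_1(\omega_1-k\omega_2))$. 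Equivalently, one can skip Sherman--Morrison and simply verify directly that this matrix times $\bD_{\bgamma}$ equals $\bI_l$, again using $\bw^T(\bE^T\bE)^{-1}\bw=k$.

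The Kronecker-product and vec bookkeeping is routine; the only part requiring a real idea is the identity $\bw^T(\bE^T\bE)^{-1}\bw=k$, which is where the linear structure of $\bV$ and the normalization $\bV(\bgamma(P))=\bSigma/|\bSigma|^{1/k}$ are essential, and hence where I expect the (minor) main obstacle to lie.
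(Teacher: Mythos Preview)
Your proof is correct and follows essentially the same approach as the paper: rewrite $\bD_{\bgamma}$ as a rank-one perturbation $\omega_1\bE^T\bE-\omega_2\bw\bw^T$, use the linear structure of $\bV$ to show $(\bE^T\bE)^{-1}\bw$ is a known vector (the paper uses $\btheta^*$ where you use $|\bSigma|^{1/k}\bgamma(P)$, but these coincide), deduce $\bw^T(\bE^T\bE)^{-1}\bw=k$, and apply Sherman--Morrison.
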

\begin{proof}
Because~$\bX$ has full rank with probability one, it follows that $\E[\bX^T\bSigma^{-1}\bX]$ is non-singular.
Since $\alpha_1\ne0$, this proves part one.
Then consider $\bD_{\bgamma}$ as given in~\eqref{def:derivative Lambda gamma}.
With $\bE=(\bSigma^{-1/2}\otimes\bSigma^{-1/2})\bL$,
we can write
\[
\bD_{\bgamma}
=
\omega_1\bE^T\bE
-
\omega_2\bE^T\vc(\bI_k)\vc(\bI_k)^T\bE,
\]
which is of the form $\bA+\bu\bv^T$.
Because $\bL$ has full rank and $\omega_1=\sigma^2(P)|\bSigma|^{2/k}\gamma_1\ne0$, it follows that 
$\bA=\omega_1\bE^T\bE=\omega_1\bL^T\left(\bSigma^{-1}\otimes\bSigma^{-1}\right)\bL$ is non-singular.
Furthermore, since~$\bV$ has a linear structure, we have that $\vc(\bSigma)=\bL\btheta^*$,
which implies $\bE\btheta^*=\vc(\bI_k)$ and $(\bE^T\bE)^{-1}\bE^T\vc(\bI_k)=\btheta^*$.
This means that $1+\bv^T\bA^{-1}\bu=(\omega_1-k\omega_2)/\omega_1=k|\bSigma|^{2/k}/\omega_1\ne0$.
It then follows from the Sherman-Morisson formula that $\bD_{\bgamma}$ is non-singular
and has inverse
\[
\bD_{\bgamma}^{-1}
=
a(\bE^T\bE)^{-1}
+
b
(\bE^T\bE)^{-1}
\bE^T\vc(\bI_k)\vc(\bI_k)^T\bE
(\bE^T\bE)^{-1},
\]
where 
$a=1/\omega_1$ and $b=\omega_2/(\omega_1(\omega_1-k\omega_2))$.
\end{proof}

\begin{lemma}
\label{lem:Dsigma=0}
Suppose $P$ satisfies~(E) for some $(\bbeta^*,\btheta^*)\in\R^q\times\bTheta$
and~$\E\|\bX\|^2<\infty$.
Suppose that $\rho_1$ satisfy (R2) and (R5), and suppose that $\bV$ satisfy and~(V5).
Let $\sigma(P)$ be the solution of~\eqref{def:sigma} and let $\bxi(P)=(\bbeta_1(P),\bgamma(P))\in \mathfrak{D}$ be
a local minimum of $R_P(\bbeta,\bV(\bgamma))$ that
satisfies~$\bbeta_1(P)=\bbeta^*$ and $\bV(\bgamma(P))=\bSigma/|\bSigma|^{1/k}$.
Then 
\[
\bD_{\sigma}
=
\frac{\partial\Lambda(\bxi(P),\sigma(P))}{\partial\sigma}=\mathbf{0}.
\]
\end{lemma}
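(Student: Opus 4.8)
The plan is to treat the two blocks of $\Lambda=(\Lambda_{\bbeta},\Lambda_{\bgamma})$ separately. Since $\rho_1$ satisfies (R2) and (R5), $\bV$ satisfies (V5), and $\E\|\bX\|^2<\infty$, Lemma~\ref{lem:change order int diff Lambda} lets me interchange integration and differentiation, so that $\bD_\sigma=\int\partial\Psi(\bs,\bxi(P),\sigma(P))/\partial\sigma\,\dd P(\bs)$ with the integrand as computed in the proof of that lemma; note that the term $-\tr(\bV^{-1}\partial\bV/\partial\gamma_j)\log|\bV|$ of $\Psi_{\bgamma,j}$ does not depend on $\sigma$ and so drops out.

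For the regression block, $\partial\Psi_{\bbeta}/\partial\sigma=-u_1'(d/\sigma)(d/\sigma^2)\bX^T\bV^{-1}(\by-\bX\bbeta)$ is an odd function of $\by-\bX\bbeta$. Condition (E) forces $\by\mid\bX$ to be elliptically contoured about $\bX\bbeta^*$, hence $\bbeta_1(P)=\bbeta^*$ is a point of symmetry of $P$; therefore the integral of this odd function is $\mathbf{0}$, giving $\partial\Lambda_{\bbeta}/\partial\sigma=\mathbf{0}$.

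For the $\bgamma$ block I condition on $\bX$ and use that $\by-\bX\bbeta^*$ has the law of $\bSigma^{1/2}\bz$ with $\bz$ spherical. Writing $\bV=\bV(\bgamma(P))=\bSigma/|\bSigma|^{1/k}$, $c_\sigma=|\bSigma|^{1/(2k)}/\sigma(P)$, $\bu=\bz/\|\bz\|$, and $\bM_j=\bSigma^{1/2}\bV^{-1}\bH_{1,j}\bV^{-1}\bSigma^{1/2}$, each entry $\partial\Lambda_{\bgamma,j}/\partial\sigma$ becomes a purely radial scalar in $\|\bz\|$ — whose $\E_{\mathbf{0},\bI_k}$ is a finite constant because $u_1'(s)s^3=s^2\rho_1''(s)-s\rho_1'(s)$ is bounded under (R2), (R5) and $c_\sigma$ is bounded away from $0$ and $\infty$ — times $\E_{\mathbf{0},\bI_k}[\bu^T\bM_j\bu]$, which equals $k^{-1}\tr(\bM_j)$ by Lemma~\ref{lem:Lemma 5.1}. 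Since $\bV^{-1}\bSigma=|\bSigma|^{1/k}\bI_k$, cyclicity of the trace gives $\tr(\bM_j)=|\bSigma|^{1/k}\tr(\bV^{-1}\bH_{1,j})$, and from~\eqref{def:Hj} one reads off that $\bH_{1,j}=\tr(\bV^{-1}\partial\bV/\partial\gamma_j)\bigl(\sum_t\gamma_t\partial\bV/\partial\gamma_t\bigr)-\tr\bigl(\bV^{-1}\sum_t\gamma_t\partial\bV/\partial\gamma_t\bigr)\partial\bV/\partial\gamma_j$, so $\tr(\bV^{-1}\bH_{1,j})$ is a difference of two equal products and vanishes. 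Hence $\partial\Lambda_{\bgamma}/\partial\sigma=\mathbf{0}$, and together with the regression block $\bD_\sigma=\mathbf{0}$.

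The routine part is the interchange of limits (Lemma~\ref{lem:change order int diff Lambda}) and the spherical reduction, which closely parallels the computation of $\bD_{\bxi}$ in Lemma~\ref{lem:Lambda derivative}. The one step that requires a little care is the $\bgamma$ block: here point symmetry is useless because $\partial\Psi_{\bgamma,j}/\partial\sigma$ is a quadratic form in the residual, and one has to see that the spherical averaging collapses it to the trace $\tr(\bV^{-1}\bH_{1,j})$, which is zero only by virtue of the antisymmetric ``$\tr(A)B-\tr(B)A$'' structure of $\bH_{1,j}$, combined with the fact that at $(\bxi(P),\sigma(P))$ the shape $\bV(\bgamma(P))$ is proportional to $\bSigma$, so that $\bV(\bgamma(P))^{-1}\bSigma$ is scalar.
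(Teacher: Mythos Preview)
Your proof is correct, and for the $\bbeta$-block it is identical to the paper's. For the $\bgamma$-block you take a genuinely different route. The paper passes through the linear-structure representation of Proposition~\ref{prop:score equations linear}: it writes $\Psi_{\bgamma}$ in terms of $\Psi_{\bV}$ from~\eqref{def:PsiV}, computes $\int\partial\Psi_{\bV}/\partial\sigma\,\dd P$ by conditioning on $\bX$, expanding via $\bz=\bSigma^{-1/2}(\by-\bX\bbeta^*)$, and then using $\E_{\mathbf{0},\bI_k}[\bu\bu^T]=(1/k)\bI_k$ to see that the two resulting terms cancel as matrices, so that already $\int\partial\Psi_{\bV}/\partial\sigma\,\dd P=\mathbf{0}$ before contraction with $\bL^T(\bV^{-1}\otimes\bV^{-1})$. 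You instead stay with the general form~\eqref{def:Psi} involving $\bH_{1,j}$, reduce the angular part to $k^{-1}\tr(\bM_j)$, use $\bV(\bgamma(P))^{-1}\bSigma=|\bSigma|^{1/k}\bI_k$ to rewrite this as a multiple of $\tr(\bV^{-1}\bH_{1,j})$, and then observe that this trace vanishes identically from the ``$\tr(A)B-\tr(B)A$'' structure of~\eqref{def:Hj}. Your argument is slightly more general, since it does not invoke the linear structure~\eqref{def:V linear} that the paper's proof tacitly uses (even though the lemma as stated does not assume it); the paper's version, on the other hand, gives a bit more, namely that the whole matrix $\int\partial\Psi_{\bV}/\partial\sigma\,\dd P$ vanishes, not merely its contractions against $\vc(\bV^{-1}\bL_j\bV^{-1})$.
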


\begin{proof}
For convenience, write $\bxi_P=(\bbeta_{1,P},\bgamma_P)=(\bbeta_1(P),\bgamma(P))$,
$\bV_P=\bV(\bgamma(P))$, and $\sigma_P=\sigma(P)$.
Consider $\Lambda_{\bbeta}$ as defined in~\eqref{def:Lambda-beta-gamma} with $\Psi_{\bbeta}$ 
from~\eqref{def:Psi linear}.
Because $\rho_1$ and $\bV$ satisfy~(R2), (R5), and~(V5), and $\E\|\bX\|^2<\infty$,
according to Lemma~\ref{lem:change order int diff Lambda},
we may interchange differentiation and integration in $\partial\Lambda_{\bbeta}/\partial\sigma$.
We find
\[
\begin{split}
\frac{\partial\Lambda_{\bbeta}(\bxi_P,\sigma_P)}{\partial\sigma}
&=
\int 
\frac{\partial\Psi_{\bbeta}(\bs,\bxi_P,\sigma_P)}{\partial\sigma}
\,\text{d}P(\bs)\\
&=
-
\E\left[
u_1'\left(\frac{d_P}{\sigma_P}\right)\frac{d_P}{\sigma_P^2}
\bX^T\bV_P^{-1}(\by-\bX\bbeta_{1,P})
\right].
\end{split}
\]
where $d_P^2=(\by-\bX\bbeta_{1,P})^T\bV_P^{-1}(\by-\bX\bbeta_{1,P})$.
Because $\bbeta_{1,P}=\bbeta^*$ is a point of symmetry,
it follows that 
\begin{equation}
\label{eq:Dbeta-sigma=0}
\bD_{\bbeta,\sigma}
=
\frac{\partial\Lambda_{\bbeta}(\bxi_P,\sigma_P)}{\partial\sigma}
=
\mathbf{0}.
\end{equation}
According to Lemma~\ref{lem:change order int diff Lambda},
we may also interchange differentiation and integration
in~$\partial\Lambda_{\bgamma}/\partial \sigma$,
where $\Lambda_{\bgamma}$ is defined by~\eqref{def:Lambda-beta-gamma} with $\Psi_{\bgamma}$
from~\eqref{def:Psi linear}.
For all $j=1,\ldots,l$ we obtain
\[
\frac{\partial\Lambda_{\bgamma,j}(\bxi_P,\sigma_P)}{\partial \sigma}
=
-
\vc\left(\bV_P^{-1}\bL_j\bV_P^{-1}\right)^T\vc\left(
\int
\frac{\partial\Psi_{\bV}(\bs,\bxi_P,\sigma_P)}{\partial \sigma}
\,\dd P(\bs)
\right),
\]
where $\Psi_\bV$ is defined in~\eqref{def:PsiV}.
Since $v_1(s)=u_1(s)s^2$, we have 
\[
\begin{split}
\int
\frac{\partial\Psi_{\bV}(\bs,\bxi_P,\sigma_P)}{\partial \sigma}\,\dd P(\bs)
&=
-\E
\left[
\E
\left[
ku_1'\left(\frac{d_P}{\sigma_P}\right)
\frac{d_P}{\sigma_P^2}
\be_P\be_P^T
-
u_1'\left(\frac{d_P}{\sigma_P}\right)
\frac{d_P^3}{\sigma_P^2}\bV_P
\,\Bigg|\,
\bX
\right]
\right],
\end{split}
\]
where $d^2_P=\be_P^T\bV_P^{-1}\be_P$ and $\be_P=\by-\bX\bbeta_{1,P}$.
The inner expectation on the right hand side is the conditional expectation of $\by\mid\bX$.
Because $\bbeta_{1,P}=\bbeta^*$ and $\bV_P=\bSigma/|\bSigma|^{1/k}$,
it follows that 
$d^2_P=|\bSigma|^{1/k}(\by-\bX\bbeta^*)^T\bSigma^{-1}(\by-\bX\bbeta^*)$.
Furthermore, $\by\mid\bX$ has the same distribution as $\bSigma^{1/2}\bz+\bX\bbeta^*$,
where~$\bz$ has a spherical density $f_{\mathbf{0},\bI_k}$.
This means that the inner expectation is equal to
\[
\begin{split}
&\E
\left[
ku_1'\left(\frac{d_P}{\sigma_P}\right)
\frac{d_P}{\sigma_P^2}
\be_P\be_P^T
-
u_1'\left(\frac{d_P}{\sigma_P}\right)
\frac{d_P^3}{\sigma_P^2}\bV_P
\,\Bigg|\,
\bX
\right]\\
&=
\E_{\mathbf{0},\bI_k}
\left[
ku_1'\left(c_\sigma\|\bz\|\right)
\frac{c_\sigma\|\bz\|}{\sigma_P}
\bSigma^{1/2}\bz\bz^T\bSigma^{1/2}
-
u_1'\left(c_\sigma\|\bz\|\right)
(c_\sigma\|\bz\|)^3
\frac{\sigma_P}{|\bSigma|^{1/k}}\bSigma
\right]\\
&=
\frac{\sigma_P}{|\bSigma|^{1/k}}
\E_{\mathbf{0},\bI_k}
\left[
ku_1'\left(c_\sigma\|\bz\|\right)
(c_\sigma\|\bz\|)^3
\bSigma^{1/2}
\bu\bu^T
\bSigma^{1/2}
-
u_1'\left(c_\sigma\|\bz\|\right)
(c_\sigma\|\bz\|)^3\bSigma
\right]
\end{split}
\]
where $c_\sigma=|\bSigma|^{1/(2k)}/\sigma_P$ and $\bu=\bz/\|\bz\|$.
Because $\E_{\mathbf{0},\bI_k}\left[\bu\bu^T\right]=(1/k)\bI_k$, according to Lemma~\ref{lem:Lemma 5.1},
the right hand side is equal to
\[
\begin{split}
\frac{k\sigma_P}{|\bSigma|^{1/k}}
\E_{\mathbf{0},\bI_k}
\left[
u_1'\left(c_\sigma\|\bz\|\right)
(c_\sigma\|\bz\|)^3
\right]
\bSigma^{1/2}
\E_{\mathbf{0},\bI_k}
\left[
\bu\bu^T
\right]
\bSigma^{1/2}&\\
-
\frac{\sigma_P}{|\bSigma|^{1/k}}
\E_{\mathbf{0},\bI_k}
\left[
u_1'\left(c_\sigma\|\bz\|\right)
(c_\sigma\|\bz\|)^3
\right]
\bSigma
&=
\mathbf{0}.
\end{split}
\]
We conclude that 
\begin{equation}
\label{eq:Dgamma-sigma=0}
\bD_{\bgamma,\sigma}
=
\frac{\partial\Lambda_{\bgamma}(\bxi_P,\sigma_P)}{\partial\sigma}
=
\textbf{0}.
\end{equation}
Together with~\eqref{eq:Dbeta-sigma=0} this proves the lemma.
\end{proof}

\paragraph*{Proof of Theorem~\ref{th:IF elliptical}}
\begin{proof}
Write $\bxi_P=(\bbeta_{1,P},\bgamma_P)=(\bbeta_1(P),\bgamma(P))$,
$\bV_P=\bV(\bgamma(P))$, and $\sigma_P=\sigma(P)$.
Because $(\bbeta_{1,P},\bV_P)=(\bbeta^*,\bSigma/|\bSigma|^{1/k})$,
we have
\[
d_{0,P}^2
=
(\by_0-\bX_0\bbeta_{1,P})^T\bV_P^{-1}(\by_0-\bX_0\bbeta_{1,P})
=
|\bSigma|^{1/k}\|\bz_0\|^2,
\]
where $\bz_0=\bSigma^{-1/2}(\by_0-\bX_0\bbeta^*)$.
This means that
\[
\Psi_{\bbeta}(\bs_0,\bxi_P,\sigma_P)
=
u_1\left(\frac{d_{0,P}}{\sigma_P}\right)
\bX_0^T\bV_P^{-1}(\by_0-\bX_0\bbeta^*)
=
|\bSigma|^{1/k}
u_1\left(c_\sigma\|\bz_0\|\right)
\bX_0^T\bSigma^{-1/2}\bz_0,
\]
where $c_\sigma=|\bSigma|^{1/(2k)}/\sigma_P$.
From Lemmas~\ref{lem:change order int diff Lambda} and~\ref{lem:Lambda derivative}, 
we have that~$\Lambda_{\bbeta}$ is continuously differentiable at~$(\bxi_P,\sigma_P)$, 
with a derivative given by
$\bD_{\bbeta}
=
-\alpha_1
|\bSigma|^{1/k}
\mathbb{E}\left[\mathbf{X}^T\bSigma^{-1}\mathbf{X}\right]$,
which is non-singular according to Lemma~\ref{lem:inverse}.
Because $\bbeta_{1,P}=\bbeta^*$ is a point of symmetry, from Theorem~\ref{th:point of symmetry} 
we obtain
\[
\begin{split}
\text{\rm IF}(\bs_0;\bbeta_1,P)
&=
-\bD_{\bbeta}^{-1}\Psi_{\bbeta}(\bs_0;\bxi_P,\sigma_P)\\
&=
\frac{u_1\left(c_\sigma\|\bz_0\|\right)}{\alpha_1}
\left(
\E\left[
\bX^T\bSigma^{-1}\bX
\right]
\right)^{-1}
\bX_0^T\bSigma^{-1/2}\bz_0.
\end{split}
\]
This proves part one.

For part two, we apply Lemma~\ref{lem:IF xi sigma}(i).
Consider $\Lambda$ as defined in~\eqref{def:Lambda} with~$\Psi$ defined in~\eqref{def:Psi linear}.
From Lemmas~\ref{lem:change order int diff Lambda} and~\ref{lem:Lambda derivative}, 
we have that~$\Lambda$ is continuously differentiable at~$(\bxi_P,\sigma_P)$, 
with a derivative given by
\[
\bD_{\bxi}
=
\left(
  \begin{array}{cc}
\bD_{\bbeta} & \mathbf{0}\\
    \\
\mathbf{0} & \bD_{\bgamma} \\
  \end{array}
\right),
\]
where $\bD_{\bbeta}$ and $\bD_{\bgamma}$ are given in~\eqref{def:derivative Lambda beta} and~\eqref{def:derivative Lambda gamma}.
According to Lemma~\ref{lem:inverse},
$\bD_{\bbeta}$ and $\bD_{\bgamma}$ are non-singular, which implies that $\bD_{\bxi}$ is non-singular.
Furthermore, from Lemma~\ref{lem:Dsigma=0} we have 
$\bD_\sigma=\partial\Lambda(\bxi_P,\sigma_P)/\partial\sigma=\mathbf{0}$.
From Lemma~\ref{lem:IF xi sigma}(i),
this means that
\[
\text{\rm IF}(\bs_0;\bgamma,P)
=
-\bD_{\bgamma}^{-1}
\Psi_{\bgamma}(\bs_0;\bxi_P,\sigma_P),
\]
where~$\bD_{\bgamma}^{-1}$ is given in Lemma~\ref{lem:inverse}.
The remaining derivation of the expression for $\text{IF}(\bs_0;\bgamma,P)$ runs along the same line of reasoning as in
(the second part of) the proof of Corollary~8.4 in Lopuha\"a \textit{et al}~\cite{lopuhaa-gares-ruizgazen2023}.
Using that $\log|\bV_P|=0$,
we find that 
\begin{equation}
\label{eq:decompose IF gamma}
\begin{split}
&
\text{IF}(\bs_0;\bgamma,P)\\
&=
-
\bD_{\bgamma}^{-1}
\vc\left(
\Psi_{\bgamma}(\bs_0;\bxi_P,\sigma_P)
\right)\\
&=
\bD_{\bgamma}^{-1}
\bL^T
(\bV_P^{-1}\otimes\bV_P^{-1})
\vc\left\{
ku_1\left(\frac{d_{0,P}}{\sigma_P}\right)
\be_{0,P}\be_{0,P}^T
-
v_1\left(\frac{d_{0,P}}{\sigma_P}\right)\sigma_P^2\bV_P
\right\}\\
&=
k|\bSigma|^{2/k}
u_1\left(c_\sigma\|\bz_0\|\right)
\bD_{\bgamma}^{-1}
\bL^T
\left(\bSigma^{-1}\otimes\bSigma^{-1}\right)
\vc\left\{
\be_0^*(\be_0^*)^T
\right\}\\
&\qquad-
\sigma_P^2|\bSigma|^{1/k}
v_1\left(c_\sigma\|\bz_0\|\right)
\bD_{\bgamma}^{-1}
\bL^T
\left(\bSigma^{-1}\otimes\bSigma^{-1}\right)
\vc(\bSigma),
\end{split}
\end{equation}
where $d_{0,P}^2=\be_{0,P}^T\bV_P^{-1}\be_{0,P}$ with $\be_{0,P}=\by_0-\bX_0\bbeta_{1,P}$,
and where $\bz_0=\bSigma^{-1/2}\be_0^*$ with $\be_0^*=\by_0-\bX_0\bbeta^*$, and
where $c_\sigma=\sigma(P)/|\bSigma|^{1/(2k)}$. 
Consider the first term on the right hand side of~\eqref{eq:decompose IF gamma}.
We have that
\[
\begin{split}
\bD_{\bgamma}^{-1}
\bL^T
(\bSigma^{-1}\otimes\bSigma^{-1})
\vc\left\{
\be_0^*(\be_0^*)^T
\right\}
=
\bD_{\bgamma}^{-1}
\bL^T
(\bSigma^{-1/2}\otimes\bSigma^{-1/2})
\vc\left(\bz_0\bz_0^T\right).
\end{split}
\]
From Lemma~\ref{lem:inverse} we obtain
\begin{equation}
\label{eq:DgammaE}
\begin{split}
&
\bD_{\bgamma}^{-1}
\bL^T
(\bSigma^{-1/2}\otimes\bSigma^{-1/2})\\
&=
a(\bE^T\bE)^{-1}
\bE^T
+
b(\bE^T\bE)^{-1}
\bE^T\vc(\bI_k)\vc(\bI_k)^T\bE
(\bE^T\bE)^{-1}
\bE^T,
\end{split}
\end{equation}
where $\bE=(\bSigma^{-1/2}\otimes\bSigma^{-1/2})\bL$,
$a=1/\omega_1$, and $b=\omega_2/(\omega_1(\omega_1-k\omega_2))$.
This implies that
\begin{equation}
\label{eq:expression in E par2}
\begin{split}
&
\bD_{\bgamma}^{-1}
\bL^T
(\bSigma^{-1}\otimes\bSigma^{-1})
\vc\left(
\be_0^*(\be_0^*)^T
\right)\\
&\quad=
a(\bE^T\bE)^{-1}
\bE^T
\vc\left(\bz_0\bz_0^T\right)\\
&\qquad+
b
(\bE^T\bE)^{-1}
\bE^T\vc(\bI_k)\vc(\bI_k)^T\bE
(\bE^T\bE)^{-1}
\bE^T
\vc\left(\bz_0\bz_0^T\right).
\end{split}
\end{equation}
The first term on the right hand side of~\eqref{eq:expression in E par2} is equal to
\[
\begin{split}
a
\Big(\bL^T(\bSigma^{-1}\otimes\bSigma^{-1})\bL)\Big)^{-1}
\bL^T\left(\bSigma^{-1/2}\otimes\bSigma^{-1/2}\right)
\vc\left(\bz_0\bz_0^T\right).
\end{split}
\]
Since $\bV$ has a linear structure, we have $\vc(\bSigma)=|\bSigma|^{1/k}\vc(\bV_P)=|\bSigma|^{1/k}\bL\bgamma_P$.
This means that
\begin{equation}
\label{eq:E from theta to V}
\begin{split}
\bE\,\bgamma_P
&=
\left(\bSigma^{-1/2}\otimes\bSigma^{-1/2}\right)\bL\,
\bgamma_P\\
&=
\left(\bSigma^{-1/2}\otimes\bSigma^{-1/2}\right)
\vc(\bSigma)/|\bSigma|^{1/k}
=
\vc(\bI_k)/|\bSigma|^{1/k},
\end{split}
\end{equation}
and
\begin{equation}
\label{eq:E from V to theta}
(\bE^T\bE)^{-1}\bE^T\vc(\bI_k)=|\bSigma|^{1/k}\bgamma_P.
\end{equation} 
It follows that the second term on the right hand side of~\eqref{eq:expression in E par2} is equal to
\[
\begin{split}
b
\left(|\bSigma|^{1/k}\bgamma_P\right)
\left(|\bSigma|^{1/k}\bgamma_P\right)^T
\bE^T\vc\left(\bz_0\bz_0^T\right)
&=
b|\bSigma|^{1/k}
\bgamma_P
\vc(\bI_k)^T\vc\left(\bz_0\bz_0^T\right)\\
&=
b|\bSigma|^{1/k}
\bgamma_P
\text{tr}\left(\bz_0\bz_0^T\right)
=
b|\bSigma|^{1/k}
\bgamma_P
\|\bz_0\|^2.\\
\end{split}
\]
It follows that the first term on the right hand side of~\eqref{eq:decompose IF gamma} is equal to
\[
\begin{split}
&
ka
|\bSigma|^{2/k}
u_1\left(c_\sigma\|\bz_0\|\right)
\Big(\bL^T(\bSigma^{-1}\otimes\bSigma^{-1})\bL\Big)^{-1}
\bL^T\left(\bSigma^{-1/2}\otimes\bSigma^{-1/2}\right)
\vc\left(\bz_0\bz_0^T\right)\\
&\qquad\qquad+
kb
|\bSigma|^{3/k}
u_1\left(c_\sigma\|\bz_0\|\right)
\|\bz_0\|^2\bgamma_P.
\end{split}
\]
Next, consider the second term on the right hand side of~\eqref{eq:decompose IF gamma}.
From~\eqref{eq:DgammaE},
together with~\eqref{eq:E from theta to V} and~\eqref{eq:E from V to theta},
we have 
\[
\begin{split}
&
\bD_{\bgamma}^{-1}
\bL^T
(\bSigma^{-1}\otimes\bSigma^{-1})
\vc(\bSigma)\\
&\quad=
\bD_{\bgamma}^{-1}
\bL^T
(\bSigma^{-1/2}\otimes\bSigma^{-1/2})
\vc(\bI_k)\\
&\quad=
a(\bE^T\bE)^{-1}
\bE^T
\vc\left(\bI_k\right)
+
b
(\bE^T\bE)^{-1}
\bE^T\vc(\bI_k)
\vc(\bI_k)^T\bE
(\bE^T\bE)^{-1}
\bE^T
\vc\left(\bI_k\right)\\
&\quad=
a|\bSigma|^{1/k}\bgamma_P
+
b(|\bSigma|^{1/k}\bgamma_P)
\vc(\bI_k)^T\bE
(\bE^T\bE)^{-1}
\bE^T
\vc\left(\bI_k\right)\\
&\quad=
a|\bSigma|^{1/k}\bgamma_P
+
b(|\bSigma|^{1/k}\bgamma_P)
|\bSigma|^{1/k}\bgamma_P^T
\bE^T\vc(\bI_k)\\
&\quad=
a|\bSigma|^{1/k}\bgamma_P
+
b(|\bSigma|^{1/k}\bgamma_P)
\vc(\bI_k)^T\vc(\bI_k)\\
&\quad=
(a+kb)
|\bSigma|^{1/k}\bgamma_P.
\end{split}
\]
It follows that the second term on the right hand side of~\eqref{eq:decompose IF gamma} is equal to
\[
-
\sigma_P^2|\bSigma|^{2/k}
v_1\left(c_\sigma\|\bz_0\|\right)
(a+kb)
\bgamma_P.
\]
Putting things together, we find that $\mathrm{IF}(\bs_0;\bgamma,P)$ is equal to
\[
\begin{split}
&
ka
|\bSigma|^{2/k}
u_1\left(c_\sigma\|\bz_0\|\right)
\Big(\bL^T(\bSigma^{-1}\otimes\bSigma^{-1})\bL\Big)^{-1}
\bL^T\left(\bSigma^{-1/2}\otimes\bSigma^{-1/2}\right)
\vc\left(\bz_0\bz_0^T\right)\\
&\qquad+
kb
|\bSigma|^{3/k}
u_1\left(c_\sigma\|\bz_0\|\right)
\|\bz_0\|^2\bgamma_P
-
\sigma_P^2
|\bSigma|^{2/k}
v_1\left(c_\sigma\|\bz_0\|\right)
(a+kb)
\bgamma_P.
\end{split}
\]
The term with $\bgamma_P$ has coefficient
\[
\begin{split}
&
kb
|\bSigma|^{3/k}
u_1\left(c_\sigma\|\bz_0\|\right)
\|\bz_0\|^2
-
\sigma_P^2
|\bSigma|^{2/k}
v_1\left(c_\sigma\|\bz_0\|\right)
(a+kb)\\
&\quad=
kb
\sigma_P^2
|\bSigma|^{2/k}
v_1\left(c_\sigma\|\bz_0\|\right)
-
\sigma_P^2
|\bSigma|^{2/k}
v_1\left(c_\sigma\|\bz_0\|\right)
(a+kb)\\
&\quad=
-
a\sigma_P^2
|\bSigma|^{2/k}
v_1\left(c_\sigma\|\bz_0\|\right),
\end{split}
\]
using that $v_1(s)=u_1(s)s^2$.
Note that
$a=1/\omega_1=1/(\sigma_P^2|\bSigma|^{2/k}\gamma_1)$.
It follows that 
\[
\begin{split}
\mathrm{IF}(\bs_0;\bgamma,P)
&=
\frac{ku_1\left(c_\sigma\|\bz_0\|\right)}{\sigma_P^2\gamma_1}
\Big(\bL^T(\bSigma^{-1}\otimes\bSigma^{-1})\bL\Big)^{-1}
\bL^T\left(\bSigma^{-1/2}\otimes\bSigma^{-1/2}\right)
\vc\left(\bz_0\bz_0^T\right)\\
&\qquad-
\frac{v_1\left(c_\sigma\|\bz_0\|\right)}{\gamma_1}
\bgamma_P.
\end{split}
\]
Finally, due to linearity of $\bV$,
we have 
\begin{equation}
\label{eq:relation gamma theta*}
\bL\bgamma_P
=
\bV(\bgamma_P)
=
\frac{\bSigma}{|\bSigma|^{1/k}}
=
\frac{\bV(\btheta^*)}{|\bSigma|^{1/k}}
=
\frac{\bL\btheta^*}{|\bSigma|^{1/k}}.
\end{equation}
Since $\bL$ has full rank, we can multiply from the left by $(\bL^T\bL)^{-1}\bL^T$,
which implies that~$\bgamma_P=\btheta^*/|\bSigma|^{1/k}$.
This proves the theorem.
\end{proof}

\paragraph*{Proof of Corollary~\ref{cor:IF theta1}}
\begin{proof}
Write $(\bgamma_P,\sigma_P)=(\bgamma(P),\sigma(P))$
and $(\bgamma_{h,\bs_0},\sigma_{h,\bs_0})=(\bgamma(P_{h,\bs_0}),\sigma(P_{h,\bs_0}))$.
Because $\btheta_1(P)$ is a solution of~\eqref{def:theta1} and
$\bV$ has a linear structure, it follows that
\begin{equation}
\label{eq:relation theta1 gamma}
\bL\btheta_1(P)
=
\bV(\btheta_1(P))
=
\sigma^2_P\bV(\bgamma_P)
=
\sigma^2_P\bL\bgamma_P.
\end{equation}
Since $\bL$ has full rank, we can multiply from the left by $(\bL^T\bL)^{-1}\bL^T$,
which implies that~$\btheta_1(P)=\sigma^2_P\bgamma_P$ and 
a similar property holds for $\btheta_1(P_{h,\bs_0})$.
We find that
\[
\begin{split}
\btheta_1(P_{h,\bs_0})-\btheta_1(P)
&= 
\sigma^2_{h,\bs_0}
\left(
\bgamma_{h,\bs_0}-\bgamma_P
\right)
+
\bgamma_P
\left(
\sigma^2_{h,\bs_0}-\sigma^2_P
\right)\\
&=
\sigma^2_{h,\bs_0}
\left(
\bgamma_{h,\bs_0}-\bgamma_P
\right)
+
\bgamma_P
\left(
\sigma_{h,\bs_0}+\sigma_P
\right)
\left(
\sigma_{h,\bs_0}-\sigma_P
\right).
\end{split}
\]
According to Lemma~\ref{lem:IF sigma elliptical} and Theorem~\ref{th:IF elliptical},
$\text{IF}(\bs_0;\sigma,P)$ and $\text{IF}(\bs_0;\bgamma,P)$ exist.
Together with $\sigma_{h,\bs_0}\to\sigma_P$, we obtain
\[
\text{IF}(\bs_0;\btheta_1,P)
=
\sigma^2_P
\text{IF}(\bs_0;\bgamma,P)
+
2\sigma_P\bgamma_P
\text{IF}(\bs_0;\sigma,P).
\]
Because $\bV$ has a linear structure, from~\eqref{eq:relation gamma theta*},
we have $\bgamma_P=\btheta^*/|\bSigma|^{1/k}$,
after multiplication from the left by $(\bL^T\bL)^{-1}\bL^T$.
The corollary now follows from the expressions obtained in Lemma~\ref{lem:IF sigma elliptical} and Theorem~\ref{th:IF elliptical}.
\end{proof}

\subsection{Influence functions of covariance MM-functionals}
We provide some details about the influence functions of the covariance MM-functionals
in Examples~\ref{ex:LME model}, \ref{ex:multivariate linear regression}, and~\ref{ex:multivariate location-scatter}
for the situation where the distribution $P$ satisfies~(E).
\setcounter{example}{0}
\begin{example}[Linear Mixed Effects model]
\label{ex:IF LME}
For the influence function of covariance MM-functionals in linear mixed effects models,
nothing seems to be available yet.
For model~\eqref{def:linear mixed effects model Copt}, the expression for the influence function of the 
variance component MM-functional now follows from Corollary~\ref{cor:IF theta1} and
the one for the covariance MM-functional and the corresponding shape component follow from~\eqref{eq:IF Vtheta1}
and~\eqref{eq:IF Vgamma}, respectively.
From~\eqref{def:V linear} and~\eqref{def:L}, it follows that 
\begin{equation}
\label{eq:L for LME}
\bL
=
\Big[
  \begin{array}{cccc}
    \vc\left(\bI_k\right) & \vc\left(\bZ_1\bZ_1^T\right) 
 & \cdots &     \vc\left(\bZ_r\bZ_r^T\right) \\
  \end{array}
\Big].
\end{equation}
Furthermore, it can be seen that $\bL^T(\bSigma^{-1}\otimes\bSigma^{-1})\bL$ is equal to the matrix $\bQ$
with entries
\begin{equation}
\label{eq:entries Q}
Q_{ij}=\text{tr}\left(\bZ_i\bZ_i^T\bSigma^{-1}\bZ_j\bZ_j^T\bSigma^{-1}\right),
\quad
i,j=0,1,\ldots,r,
\end{equation}
where $\bZ_0=\bI_k$, and that 
$\bL^T\left(\bSigma^{-1/2}\otimes\bSigma^{-1/2}\right)\vc\left(\bz_0\bz_0^T\right)$ is equal to the vector $\bU$
with coordinates
\[
U_i=(\by_0-\bX_0\bbeta^*)^T\bSigma^{-1}\bZ_i\bZ_i^T\bSigma^{-1}(\by_0-\bX_0\bbeta^*),
\quad
i=0,1,\ldots,r,
\]
This implies that the influence function of the variance component MM-functional is given by
\[
\text{\rm IF}(\bs_0,\btheta_1,P)
=
\alpha_C(c_\sigma\|\bz_0\|)\bQ^{-1}\bU
-
\beta_C(c_\sigma\|\bz_0\|)\btheta^*,
\]
with $\alpha_C$ and $\beta_C$ defined in~\eqref{def:alphaC betaC},
and
where $\bz_0=\bSigma^{-1/2}(\by_0-\bX_0\bbeta^*)$ and $c_\sigma=|\bSigma|^{1/(2k)}/\sigma(P)$.
From~\eqref{eq:IF Vtheta1} we find that the influence functional of the covariance MM-functional is given by
\[
\text{\rm IF}(\bs_0,\vc(\bV(\btheta_1)),P)
=
\alpha_C(c_\sigma\|\bz_0\|)\bL\bQ^{-1}\bU
-
\beta_C(c_\sigma\|\bz_0\|)\vc(\bSigma).
\]
From~\eqref{eq:IF Vgamma}, we obtain the influence function of the shape MM-functional
\[
\text{\rm IF}(\bs_0,\bGamma(\btheta_1),P)
=
\frac{\alpha_C(c_\sigma\|\bz_0\|)}{\sigma^2(P)}
\left\{
\bL\bQ^{-1}\bU
-
\frac{\|\bz_0\|^2}{k}
\vc(\bSigma)
\right\},
\]
and in view of Remark~\ref{rem:IF Vgamma and gamma}, from Theorem~\ref{th:IF elliptical} we obtain
the influence function of the direction MM-functional
\[
\text{\rm IF}(\bs_0,\bgamma,P)
=
\frac{\alpha_C(c_\sigma\|\bz_0\|)}{\sigma^2(P)}
\left\{
\bQ^{-1}\bU
-
\frac{\|\bz_0\|^2}{k}
\btheta^*
\right\}.
\]
When $c_\sigma=1$, then the influence function of the covariance shape MM-functional 
coincides with that of the shape component of the covariance S-functional defined with $\rho_1$,
and similarly for the direction component of the variance component S-functional
defined with $\rho_1$, see Lopuha\"a \emph{et al}~\cite{lopuhaa-gares-ruizgazen2023}.
\end{example}

\begin{example}[Multivariate Linear Regression]
For the multivariate linear regression model~\eqref{def:multivariate linear regression model}, 
Kudraszow and Maronna~\cite{kudraszow-maronna2011} do not consider the
influence function of the covariance MM-functional.
In this model, the matrix $\bL$ is equal to the duplication matrix $\mathcal{D}_k$,
which satisfies
\begin{equation}
\label{eq:prop duplication}
\mathcal{D}_k
\left(
\mathcal{D}_k^T(\bSigma^{-1}\otimes\bSigma^{-1}\mathcal{D}_k
\right)^{-1}
\mathcal{D}_k^T
=
\frac12
\left(
\bI_{k^2}+\bK_{k,k}
\right)(\bSigma\otimes\bSigma),
\end{equation}
(e.g., see Magnus and Neudecker~\cite[Ch.~3, Sec.~8]{magnus&neudecker1988}).
Together with~\eqref{eq:IF Vtheta1} we find that the expression for
the covariance MM-functional is given by
\begin{equation}
\label{eq:IF cov MM mult lin regr}
\begin{split}
\text{\rm IF}(\bs_0,\bV(\btheta_1),P)
&=
\alpha_C\left(c_\sigma\|\bz_0\|\right)
(\by_0-\bB^T\bx_0)(\by_0-\bB^T\bx_0)^T
-
\beta_C(c_\sigma\|\bz_0\|)
\bSigma,
\end{split}
\end{equation}
with $\alpha_C$ and $\beta_C$ defined in~\eqref{def:alphaC betaC},
$\bz_0=\bSigma^{-1/2}(\by_0-\bB^T\bx_0)$, and $c_\sigma=|\bSigma|^{1/(2k)}/\sigma(P)$.
From~\eqref{eq:IF Vgamma}, it follows that 
the influence function of the shape MM-functional is given by
\begin{equation}
\label{eq:IF shape MM mult lin regr}
\text{IF}(\bs_0;\bGamma(\btheta_1),P)
=
\frac{\alpha_C\left(c_\sigma\|\bz_0\|\right)}{\sigma^2(P)}
\left\{
(\by_0-\bB^T\bx_0)(\by_0-\bB^T\bx_0)^T
-
\frac{\|\bz_0\|^2}{k}\bSigma
\right\}.
\end{equation}
When $c_\sigma=1$, 
this also coincides with the influence function of
shape component of the covariance S-functional defined with~$\rho_1$ in the 
multivariate linear regression model.
This is confirmed by application of formula~(8.3) in Kent and Tyler~\cite{kent&tyler1996}
to the expression found in Theorem~2 of Van Aelst and Willems~\cite{vanaelst&willems2005}.
\end{example}

\begin{example}[Multivariate Location and Scatter]
For the multivariate location-scatter model, we also have $\bL=\mathcal{D}_k$.
Since this model is a special case of the multivariate linear regression model~\eqref{def:multivariate linear regression model}
by taking $\bx_i=1$ and $\bB^T=\bmu$,
the expression for the influence function of the covariance MM-functional can 
be obtained from~\eqref{eq:IF cov MM mult lin regr}
and the influence function of the covariance shape MM-functional from~\eqref{eq:IF shape MM mult lin regr},
by replacing $\bB^T\bx_0$ by $\bmu$.
When $c_\sigma=1$, this also coincides with the influence function of
shape component of the covariance S-functional defined with~$\rho_1$ in the multivariate location-scatter model.
This was already observed by Salibi\'an-Barrera \emph{et al}~\cite{SalibianBarrera-VanAelst-Willems2006}.
Finally, again there is a connection with the CM-functionals considered in Kent and Tyler~\cite{kent&tyler1996},
whose influence function depends on a parameter~$\lambda_0$.
By using~\eqref{eq:prop duplication}, one finds that the expression in~\eqref{eq:IF cov MM mult lin regr},
with $\bB^T\bx_0=\bmu$,
is similar to the expression for the influence function of the covariance CM-functional,
for the case that $\lambda_0=\lambda_L$ (see Kent and Tyler~\cite{kent&tyler1996} for details),
and they both coincide when $c_\sigma=1/\sqrt{\lambda_0}$ and $\rho_0=\rho_1$.
\end{example}

\subsection{Proofs for Section~\ref{sec:asymp norm}}
\begin{lemma}
\label{lem:asymp relation xi sigma}
Suppose that $\rho_1$ satisfies (R1)-(R4), such that $u_1(s)$ is of bounded variation,
and suppose that~$\bV$ satisfies~(V4).
Let $\sigma_n$ and $\sigma(P)$ be solutions of~\eqref{def:initial estimators} and~\eqref{def:sigma}, respectively,
and let $\bxi_n=(\bbeta_{1,n},\bgamma_n)$ and~$\bxi(P)=(\bbeta_1(P),\bgamma(P))$ be local minima of $R_n(\bbeta,\bV(\bgamma))$ and 
$R_P(\bbeta,\bV(\bgamma))$, respectively.
Suppose that $(\bxi_n,\sigma_n)\to(\bxi(P),\sigma(P))$, in probability.
Let $\Lambda$ be defined in~\eqref{def:Lambda} with~$\Psi$ defined in~\eqref{def:Psi}
Suppose that $\Lambda$ is continuously differentiable with 
a non-singular derivative $\bD_{\bxi}=\partial\Lambda/\partial\bxi$ 
and derivative $\bD_{\sigma}=\partial\Lambda/\partial\sigma$
at $(\bxi(P),\sigma(P))$,
and suppose that $\E\|\bs\|^2<\infty$.
Then
\[
\begin{split}
\bxi_n-\bxi(P)
&=
-\bD_{\bxi}^{-1}
\bigg\{
\bD_\sigma(\sigma_n-\sigma(P))
+
\int \Psi(\mathbf{s},\bxi(P),\sigma(P))\,\dd (\mathbb{P}_n-P)(\mathbf{s})
\bigg\}\\
&\quad
+
o_P(\|\bxi_n-\bxi(P)\|)
+
o_P(|\sigma_n-\sigma(P)|)
+
o_P(1/\sqrt{n}),
\end{split}
\]
\end{lemma}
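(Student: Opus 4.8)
The plan is to carry out the standard asymptotic expansion for $M$-estimators defined by an estimating equation, paying attention to the additional dependence of $\Psi$ on the auxiliary scale. By Proposition~\ref{prop:score equations} applied to the empirical measure $\mathbb{P}_n$, see~\eqref{eq:M-equation estimator}, the local minimum $\bxi_n$ satisfies $\int\Psi(\bs,\bxi_n,\sigma_n)\,\dd\mathbb{P}_n(\bs)=\mathbf{0}$, while by definition $\Lambda(\bxi(P),\sigma(P))=\int\Psi(\bs,\bxi(P),\sigma(P))\,\dd P(\bs)=\mathbf{0}$ (note $\E\|\bs\|^2<\infty$ implies $\E\|\bX\|<\infty$, so Proposition~\ref{prop:score equations} applies). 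First I would decompose
\[
\mathbf{0}
=
\int\Psi(\bs,\bxi_n,\sigma_n)\,\dd\mathbb{P}_n(\bs)
=
\int\Psi(\bs,\bxi_n,\sigma_n)\,\dd(\mathbb{P}_n-P)(\bs)
+
\Lambda(\bxi_n,\sigma_n),
\]
and treat the two terms separately.

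For the empirical process term, the goal is the stochastic equicontinuity statement
\[
\int\Psi(\bs,\bxi_n,\sigma_n)\,\dd(\mathbb{P}_n-P)(\bs)
=
\int\Psi(\bs,\bxi(P),\sigma(P))\,\dd(\mathbb{P}_n-P)(\bs)
+o_P(1/\sqrt{n}).
\]
By Lemma~\ref{lem:Psi bounded}, on a neighbourhood of $(\bxi(P),\sigma(P))$ the map $\bs\mapsto\Psi(\bs,\bxi,\sigma)$ is dominated by $C(1+\|\bX\|)$, which is square integrable since $\E\|\bs\|^2<\infty$; since $u_1$ is of bounded variation and $\bgamma\mapsto\bV(\bgamma)$ is continuously differentiable by (V4), the relevant family is manageable (a Donsker/VC-type class), so that the difference process evaluated along $(\bxi_n,\sigma_n)\to(\bxi(P),\sigma(P))$ in probability is $o_P(1/\sqrt{n})$. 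This step can be carried out exactly as in Lopuha\"a~\cite{lopuhaa1989} and Lopuha\"a \textit{et al}~\cite{lopuhaa-gares-ruizgazen2023}, and I expect it to be the main obstacle.

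For the second term, since $\Lambda$ is continuously differentiable at $(\bxi(P),\sigma(P))$ with derivatives $\bD_{\bxi}$ and $\bD_\sigma$, a first-order Taylor expansion together with $\Lambda(\bxi(P),\sigma(P))=\mathbf{0}$ and $(\bxi_n,\sigma_n)\to(\bxi(P),\sigma(P))$ in probability gives
\[
\Lambda(\bxi_n,\sigma_n)
=
\bD_{\bxi}(\bxi_n-\bxi(P))
+
\bD_\sigma(\sigma_n-\sigma(P))
+o_P(\|\bxi_n-\bxi(P)\|)
+o_P(|\sigma_n-\sigma(P)|).
\]
Combining this with the previous display, and using that $\int\Psi(\bs,\bxi(P),\sigma(P))\,\dd(\mathbb{P}_n-P)(\bs)=O_P(1/\sqrt{n})$ by the central limit theorem (square integrability again from Lemma~\ref{lem:Psi bounded} and $\E\|\bs\|^2<\infty$), yields
\begin{align*}
\bD_{\bxi}(\bxi_n-\bxi(P))
&=
-\bD_\sigma(\sigma_n-\sigma(P))
-\int\Psi(\bs,\bxi(P),\sigma(P))\,\dd(\mathbb{P}_n-P)(\bs)\\
&\quad+o_P(\|\bxi_n-\bxi(P)\|)+o_P(|\sigma_n-\sigma(P)|)+o_P(1/\sqrt{n}).
\end{align*}
Since $\bD_{\bxi}$ is non-singular, multiplying from the left by $\bD_{\bxi}^{-1}$ (which absorbs the constant into the remainder terms) gives the claimed expansion. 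The only remaining work is bookkeeping of the $o_P$ remainders, which is routine once the equicontinuity step is established.
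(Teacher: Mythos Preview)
Your proposal is correct and follows essentially the same approach as the paper: the same three-term decomposition of the empirical score equation, the same stochastic equicontinuity argument (the paper invokes equations~(96)--(97) from Lemma~11.8 in Lopuha\"a \textit{et al}~\cite{supplement} for exactly this step), and the same Taylor expansion of $\Lambda$ at $(\bxi(P),\sigma(P))$. The only cosmetic difference is that you expand $\Lambda$ jointly in $(\bxi,\sigma)$, whereas the paper expands first in $\bxi$ with $\sigma_n$ held fixed and then expands $\Lambda(\bxi(P),\sigma_n)$ in $\sigma$; both give the same remainder structure.
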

\begin{proof}
The proof is similar to that of Theorem~9.1 in Lopuha\"a \textit{et al}~\cite{lopuhaa-gares-ruizgazen2023}.
Due to~\eqref{eq:cond beta0 theta0}, we have that $\bxi_n=\bxi(\mathbb{P}_n)=(\bbeta_{1,n},\bgamma_{n})$ and $\sigma_n=\sigma(\mathbb{P}_n)$.
This means that $(\bxi_n,\sigma_n)$ satisfies score equation~\eqref{eq:Psi=0} for the MM-functionals at $\mathbb{P}_n$, that is
\[
\int \Psi(\bs,\bxi_n,\sigma_n)\,\dd \mathbb{P}_n(\bs)=\mathbf{0},
\]
with $\Psi$ defined in~\eqref{def:Psi}.
Writing $\bxi_P=(\bbeta_{1,P},\bgamma_P)=(\bbeta_1(P),\bgamma(P))$, we decompose as follows
\begin{equation}
\label{eq:decomposition estimator appendix}
\begin{split}
\mathbf{0}=
\int \Psi(\mathbf{s},\bxi_n,\sigma_n)\,\dd P(\mathbf{s})
&+
\int \Psi(\mathbf{s},\bxi_P,\sigma_P)\,\dd (\mathbb{P}_n-P)(\mathbf{s})\\
&+
\int
\left(
\Psi(\mathbf{s},\bxi_n,\sigma_n)-\Psi(\mathbf{s},\bxi_P,\sigma_P)
\right)
\,\dd (\mathbb{P}_n-P)(\mathbf{s}).
\end{split}
\end{equation}
We start by showing that the third term on the right hand side of~\eqref{eq:decomposition estimator appendix}
is of order $o_P(1/\sqrt{n})$.
Since $\Psi=(\Psi_{\bbeta},\Psi_{\bgamma})$, with $\Psi_{\bgamma}=(\Psi_{\bgamma,1},\ldots,\Psi_{\bgamma,l})$,
it suffices to show that
\begin{eqnarray}
\int
\left(
\Psi_{\bbeta}(\bs,\bxi_n,\sigma_n)-\Psi_{\bbeta}(\bs,\bxi_P,\sigma_P)
\right)
\,\dd (\mathbb{P}_n-P)(\bs)
&=&\label{eq:stoch equi vector}
o_P(1/\sqrt{n}),
\\
\int
\left(
\Psi_{\bgamma,j}(\bs,\bxi_n,\sigma_n)-\Psi_{\bgamma,j}(\bs,\bxi_P,\sigma_P)
\right)
\,\dd (\mathbb{P}_n-P)(\bs)
&=&\label{eq:stoch equi matrix}
o_P(1/\sqrt{n}),
\end{eqnarray}
for $j=1,\ldots,l$.
From~\eqref{def:Psi} we have that
\[
\Psi_{\bbeta}(\bs,\bxi,\sigma)
=
u_1\left(
\frac{d}{\sigma}\right)\bX^T\bV^{-1}(\by-\bX\bbeta)
=
u_1\left(d(\by,\bX\bbeta,\sigma^2\bV)\right)\bX^T\bV^{-1}(\by-\bX\bbeta).
\]
Since $\bV$ satisfies~(V1) we have that $\sigma_n^2\bV(\bgamma_n)\to \sigma_P^2\bV(\bgamma_P)$, in probability.
Then~\eqref{eq:stoch equi vector} follows from equation~(96) in the proof of Lemma~11.8 in Lopuha\"a \emph{et al}~\cite{supplement}.
From~\eqref{def:Psi}, we also have
\[
\Psi_{\bgamma,j}(\bs,\bxi,\sigma)
=
u_1\left(\frac{d}{\sigma}\right)
(\by-\bX\bbeta)^T\bV^{-1}
\bH_{1,j}
\bV^{-1}(\by-\bX\bbeta)
-
\mathrm{tr}\left(\bV^{-1}\frac{\partial \bV}{\partial \gamma_j}\right)\log|\bV|,
\]
for $j=1,\ldots,l$,
where $\bH_{1,j}=\bH_{1,j}(\bgamma)$ is defined in~\eqref{def:Hj}
Because $|\bV(\bgamma_n)|=1$, it follows that
\[
\begin{split}
\Psi_{\bgamma,j}(\bs,\bxi_n,\sigma_n)
=
u_1\left(d_n\right)
(\by-\bX\bbeta_{1,n})^T\bV(\bgamma_n)^{-1}
\bH_{1,j}(\bgamma_n)
\bV(\bgamma_n)^{-1}(\by-\bX\bbeta_{1,n}),
\end{split}
\]
where $d_n^2=(\by-\bX\bbeta_{1,n})^T(\sigma_n^2\bV(\bgamma_n))^{-1}(\by-\bX\bbeta_{1,n})$
and similarly for $\Psi_{\bgamma,j}(\bs,\bxi_P,\sigma_P)$.
Because $\bV$ satisfies~(V4) we have that $\sigma_n^2\bV(\bgamma_n)\to \sigma_P^2\bV(\bgamma_P)$ and 
$\bH_{1,j}(\bgamma_n)\to\bH_{1,j}(\bgamma_P)$, in probability.
This implies that~\eqref{eq:stoch equi matrix} follows from equation~(97) in the proof of Lemma~11.8 in Lopuha\"a \emph{et al}~\cite{supplement}.

Then, from~\eqref{eq:decomposition estimator appendix} we can write
\[
\mathbf{0}
=
\Lambda(\bxi_n,\sigma_n)
+
\int \Psi(\mathbf{s},\bxi_P,\sigma_P)\,\dd (\mathbb{P}_n-P)(\mathbf{s})\\
+
o_P(1/\sqrt{n}).
\]
Because $\Lambda$ is continuously differentiable at
$(\bxi_P,\sigma_P)$, it follows that
\[
\begin{split}
\Lambda(\bxi_n,\sigma_n)
&=
\Lambda(\bxi_P,\sigma_n)
+
\frac{\partial\Lambda}{\partial\bxi}(\bxi_P,\sigma_n)
(\bxi_n-\bxi_P)+
o_P(\|\bxi_n-\bxi_P\|)\\
&=
\Lambda(\bxi_P,\sigma_n)
+
\left(
\frac{\partial\Lambda}{\partial\bxi}(\bxi_P,\sigma_P)+o_P(1)
\right)
(\bxi_n-\bxi_P)+
o_P(\|\bxi_n-\bxi_P\|).
\end{split}
\]
Furthermore, since $\bxi_P$ is a solution of~\eqref{eq:Psi=0}, we find
\[
\begin{split}
\Lambda(\bxi_P,\sigma_n)
&=
\Lambda(\bxi_P,\sigma_P)
+
\frac{\partial\Lambda}{\partial\sigma}(\bxi_P,\sigma_P)(\sigma_n-\sigma_P)
+
o_P(|\sigma_n-\sigma_P|)\\
&=
\frac{\partial\Lambda}{\partial\sigma}(\bxi_P,\sigma_P)(\sigma_n-\sigma_P)
+
o_P(|\sigma_n-\sigma_P|).
\end{split}
\]
Putting things together, we obtain
\[
\begin{split}
\mathbf{0}
=
\bD_\sigma(\sigma_n-\sigma_P)
&+
\bD_{\bxi}(\bxi_n-\bxi_P)
+
\int \Psi(\mathbf{s},\bxi_P,\sigma_P)\,\dd (\mathbb{P}_n-P)(\mathbf{s})\\
&+
o_P(\|\bxi_n-\bxi_P\|)
+
o_P(|\sigma_n-\sigma_P|)
+
o_P(1/\sqrt{n}).
\end{split}
\]
Because $\bD_{\bxi}$ is non-singular, it follows that
\[
\begin{split}
\bxi_n-\bxi_P
=
-\bD_{\bxi}^{-1}
\bigg\{
\bD_\sigma(\sigma_n-\sigma_P)
&+
\int \Psi(\mathbf{s},\bxi_P,\sigma_P)\,\dd (\mathbb{P}_n-P)(\mathbf{s})
\bigg\}\\
&+
o_P(\|\bxi_n-\bxi_P\|)
+
o_P(|\sigma_n-\sigma_P|)
+
o_P(1/\sqrt{n}).
\end{split}
\]
This proves the lemma.
\end{proof}

\begin{lemma}
\label{lem:asymp relation sigma zeta}
Suppose that $\rho_0$ satisfies~(R1)-(R2) and~$\bV$ satisfies~(V4).
Let $\bzeta_{0,n}=(\bbeta_{0,n},\btheta_{0,n})$ 
and~$\bzeta_0(P)=(\bbeta_0(P),\btheta_0(P))$ be the pairs of initial estimators and 
corresponding functionals, and let~$\sigma_n$ and~$\sigma(P)$ be solutions of~\eqref{def:initial estimators} and~\eqref{def:sigma}, respectively.
Suppose that $(\bzeta_{0,n},\sigma_n)\to(\bzeta_0(P),\sigma(P)$), in probability.
Let~$\Lambda_0$ be defined in~\eqref{def:Lambda0} with $\Psi_0$ defined in~\eqref{def:Psi0},
and suppose that $\Lambda_0$
is continuously differentiable with derivatives
$D_{0,\sigma}=\partial\Lambda_0/\partial\sigma\ne0$ 
and
$\bD_{0,\bzeta}=\partial\Lambda_0/\partial\bzeta$
at~$(\bzeta(P),\sigma(P))$.
Then 
\[
\begin{split}
\sigma_n-\sigma(P)
&=
-D_{0,\sigma}^{-1}
\bigg\{
\bD_{0,\bzeta}^T(\bzeta_{0,n}-\bzeta_0(P))
+
\int \Psi_0(\mathbf{s},\bzeta_0(P),\sigma(P))\,\dd (\mathbb{P}_n-P)(\mathbf{s})
\bigg\}\\
&\quad+
o(\|\bzeta_{0,n}-\bzeta_0(P)\|)
+
o_P(|\sigma_n-\sigma(P)|)
+
o_P(1/\sqrt{n}).
\end{split}
\]
\end{lemma}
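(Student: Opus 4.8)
The plan is to follow the same route as the proof of Lemma~\ref{lem:IF sigma zeta} and of Theorem~9.1 in Lopuha\"a \textit{et al}~\cite{lopuhaa-gares-ruizgazen2023}, but now at the empirical measure $\mathbb{P}_n$ instead of a contaminated measure. Since $\sigma_n$ is a solution of~\eqref{def:initial estimators}, it satisfies $\int \Psi_0(\bs,\bzeta_{0,n},\sigma_n)\,\dd\mathbb{P}_n(\bs)=\mathbf{0}$, with $\Psi_0$ as in~\eqref{def:Psi0}. Writing $\bzeta_{0,P}=\bzeta_0(P)$ and $\sigma_P=\sigma(P)$, the first step is the decomposition
\begin{equation}
\label{eq:plan decomposition sigma}
\begin{split}
\mathbf{0}
&=
\Lambda_0(\bzeta_{0,n},\sigma_n)
+
\int \Psi_0(\bs,\bzeta_{0,P},\sigma_P)\,\dd(\mathbb{P}_n-P)(\bs)\\
&\quad+
\int
\Big(
\Psi_0(\bs,\bzeta_{0,n},\sigma_n)-\Psi_0(\bs,\bzeta_{0,P},\sigma_P)
\Big)
\,\dd(\mathbb{P}_n-P)(\bs),
\end{split}
\end{equation}
where $\Lambda_0$ is defined in~\eqref{def:Lambda0}.

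The second step is to show that the third term on the right-hand side of~\eqref{eq:plan decomposition sigma} is $o_P(1/\sqrt n)$. This is a stochastic equicontinuity statement for the class of functions $\bs\mapsto\rho_0\big(d(\by,\bX\bbeta,\bGamma(\btheta))/\sigma\big)$ indexed by $(\bbeta,\btheta,\sigma)$ in a neighbourhood of $(\bzeta_{0,P},\sigma_P)$. Because $\rho_0$ is bounded by (R1)--(R2), this class has a bounded (hence integrable) envelope, and because $\rho_0$ is monotone on $[0,c_0]$ and constant beyond it, while $(\bbeta,\btheta,\sigma)\mapsto d(\by,\bX\bbeta,\bGamma(\btheta))/\sigma$ is continuous (using (V4), which gives continuity of $\bGamma$), a bracketing argument shows the class is Donsker; together with the assumption $(\bzeta_{0,n},\sigma_n)\to(\bzeta_{0,P},\sigma_P)$ in probability, this yields the claimed $o_P(1/\sqrt n)$ bound, exactly as in equations~(96)--(97) in the proof of Lemma~11.8 in Lopuha\"a \textit{et al}~\cite{lopuhaa-gares-ruizgazen2023}.

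With this, \eqref{eq:plan decomposition sigma} reduces to $\mathbf{0}=\Lambda_0(\bzeta_{0,n},\sigma_n)+\int\Psi_0(\bs,\bzeta_{0,P},\sigma_P)\,\dd(\mathbb{P}_n-P)(\bs)+o_P(1/\sqrt n)$. Expanding $\Lambda_0$ around $(\bzeta_{0,P},\sigma_P)$ exactly as in the proof of Lemma~\ref{lem:IF sigma zeta} gives $\Lambda_0(\bzeta_{0,n},\sigma_n)=\Lambda_0(\bzeta_{0,n},\sigma_P)+(D_{0,\sigma}+o_P(1))(\sigma_n-\sigma_P)+o_P(|\sigma_n-\sigma_P|)$ and $\Lambda_0(\bzeta_{0,n},\sigma_P)=\bD_{0,\bzeta}^T(\bzeta_{0,n}-\bzeta_{0,P})+o(\|\bzeta_{0,n}-\bzeta_{0,P}\|)$, using that $\Lambda_0(\bzeta_{0,P},\sigma_P)=0$ because $\sigma_P$ solves~\eqref{def:sigma}. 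Substituting these into the reduced identity, solving for $\sigma_n-\sigma_P$ (which is legitimate since $D_{0,\sigma}\neq 0$), and collecting the remainder terms produces precisely the stated expansion.

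I expect the only genuinely non-routine step to be the stochastic equicontinuity bound on the third term of~\eqref{eq:plan decomposition sigma}; the Taylor expansion of $\Lambda_0$ and the subsequent algebra are immediate once the first-order differentiability hypothesis on $\Lambda_0$ is invoked. It is worth noting that, in contrast to Lemma~\ref{lem:asymp relation xi sigma}, no bounded-variation hypothesis on the relevant score function is required here, because the boundedness of $\rho_0$ alone already controls the envelope of the indexing class.
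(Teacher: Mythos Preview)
Your proposal is correct and follows essentially the same route as the paper: the same three-term decomposition~\eqref{eq:plan decomposition sigma}, the same appeal to Lemma~11.8 of Lopuha\"a \textit{et al}~\cite{supplement} for the stochastic equicontinuity of the third term (the paper invokes display~(98) there rather than~(96)--(97), since $\Psi_0$ is of the form $\rho_0\!\circ d$ rather than $u_1\!\circ d$ times a vector, but the argument is the same), and the same Taylor expansion of $\Lambda_0$ followed by division by $D_{0,\sigma}$. Your closing observation that boundedness of $\rho_0$ alone suffices for the envelope is also consistent with the paper's hypotheses.
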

\begin{proof}
Denote $\bzeta_{0,n}=(\bbeta_{0,n},\btheta_{0,n})$ and 
let $\sigma_n$ and $\sigma(P)$ be solutions of~\eqref{def:initial estimators} and~\eqref{def:sigma}, respectively.
Then, according to~\eqref{def:initial estimators}, $(\bzeta_{0,n},\sigma_n)$ satisfies 
\[
\int \Psi_0(\bs,\bzeta_{0,n},\sigma_n)\,\dd \mathbb{P}_n(\bs)=0.
\]
Writing $\bzeta_{0,P}=(\bbeta_{0,P},\btheta_{0,P})$ and $\sigma_P=\sigma(P)$, we decompose as follows
\begin{equation}
\label{eq:decomposition sigma}
\begin{split}
\mathbf{0}
=
\int \Psi_0(\mathbf{s},\bzeta_{0,n},\sigma_n)\,\dd P(\mathbf{s})
+
\int \Psi_0(\mathbf{s},\bzeta_{0,P},\sigma_P)\,\dd (\mathbb{P}_n-P)(\mathbf{s})\\
+
\int
\left(
\Psi_0(\mathbf{s},\bzeta_{0,n},\sigma_n)-\Psi_0(\mathbf{s},\bzeta_{0,P},\sigma_P)
\right)
\,\dd (\mathbb{P}_n-P)(\mathbf{s}).
\end{split}
\end{equation}
Consider the third term on the right hand side, where we can write
\[
\Psi_0(\bs,\bzeta,\sigma)
=
\rho_0
\left(
d(\by,\bX\bbeta,\sigma^2\bGamma(\btheta))
\right)
-
b_0,
\]
where $d(\by,\bX\bbeta,\sigma^2\bGamma(\btheta))$ is defined in~\eqref{def:Mahalanobis distance}
and $\bGamma(\btheta)=\bV(\btheta)/|\bV(\btheta)|^{1/k}$.
Because $\bV$ satisfies~(V4) we have that $\sigma_n^2\bGamma(\btheta_{0,n})\to\sigma_P^2\bGamma(\btheta_{0,P})$, in probability.
Since $\rho_0$ and $\bV$ satisfy the conditions needed to establish (98) in the proof of 
Lemma~11.8 in Lopuha\"a \textit{et al}~\cite{supplement}, it follows that
\[
\int
\left(
\Psi_0(\mathbf{s},\bzeta_{0,n},\sigma_n)-\Psi_0(\mathbf{s},\bzeta_{0,P},\sigma_P)
\right)
\,\dd (\mathbb{P}_n-P)(\mathbf{s})
=
o_P(1/\sqrt{n}).
\]
Then from~\eqref{eq:decomposition sigma} we can write
\[
0
=
\Lambda_0(\bzeta_{0,n},\sigma_n)
+
\int \Psi_0(\mathbf{s},\bzeta_{0,P},\sigma_P)\,\dd (\mathbb{P}_n-P)(\mathbf{s})\\
+
o_P(1/\sqrt{n}).
\]
Because the partial derivative $\partial\Lambda_0/\partial\sigma$  is continuous at
$(\bzeta_{0,P},\sigma_P)$, it follows that
\[
\begin{split}
\Lambda_0(\bzeta_{0,n},\sigma_n)
&=
\Lambda_0(\bzeta_{0,n},\sigma_P)
+
\frac{\partial\Lambda_0}{\partial\sigma}(\bzeta_{0,n},\sigma_P)
(\sigma_n-\sigma_P)+
o_P(|\sigma_n-\sigma_P|)\\
&=
\Lambda(\bzeta_{0,n},\sigma_P)
+
D_{0,\sigma}
(\sigma_n-\sigma_P)+
o_P(|\sigma_n-\sigma_P|).
\end{split}
\]
Furthermore, since $\sigma_P$ is a solution of~\eqref{def:sigma}, we find
\[
\begin{split}
\Lambda(\bzeta_{0,n},\sigma_P)
&=
\Lambda(\bzeta_{0,P},\sigma_P)
+
\frac{\partial\Lambda_0}{\partial\bzeta}(\bzeta_{0,P},\sigma_P)(\bzeta_{0,n}-\bzeta_{0,P})
+
o_P(\|\bzeta_{0,n}-\bzeta_{0,P}\|)\\
&=
\bD_{0,\bzeta}^T(\bzeta_{0,n}-\bzeta_{0,P})
+
o_P(\|\bzeta_{0,n}-\bzeta_{0,P}\|).
\end{split}
\]
Putting things together gives
\[
\begin{split}
0
=
\bD_{0,\bzeta}^T(\bzeta_{0,n}-\bzeta_{0,P})
&+
D_{0,\sigma}
(\sigma_n-\sigma_P)
+
\int \Psi_0(\mathbf{s},\bzeta_{0,P},\sigma_P)\,\dd (\mathbb{P}_n-P)(\mathbf{s})\\
&+
o_P(1/\sqrt{n})
+
o_P(\|\bzeta_{0,n}-\bzeta_{0,P}\|)
+
o_P(|\sigma_n-\sigma_P|).
\end{split}
\]
Because $D_{0,\sigma}\ne0$, it follows that
\[
\begin{split}
\sigma_n-\sigma_P
=
-D_{0,\sigma}^{-1}
\bigg\{
\bD_{0,\bzeta}^T(\bzeta_{0,n}-\bzeta_{0,P})
+
\int \Psi_0(\mathbf{s},\bzeta_{0,P},\sigma_P)\,\dd (\mathbb{P}_n-P)(\mathbf{s})
\bigg\}\\
+
o_P(1/\sqrt{n})
+
o_P(\|\bzeta_{0,n}-\bzeta_{0,P}\|)
+
o_P(|\sigma_n-\sigma_P|).
\end{split}
\]
This proves the lemma.
\end{proof}

\paragraph*{Proof of Theorem~\ref{th:asymp norm symmetry}}
\begin{proof}
Write $\bxi_P=(\bbeta_{1,P},\bgamma_P)=(\bbeta_1(P),\bgamma(P))$ and $\sigma_P=\sigma(P)$. 
Similar to~\eqref{eq:decomposition estimator appendix} we decompose as follows
\[
\begin{split}
\mathbf{0}=
\int \Psi_{\bbeta}(\mathbf{s},\bxi_n,\sigma_n)\,\dd P(\mathbf{s})
&+
\int \Psi{\bbeta}(\mathbf{s},\bxi_P,\sigma_P)\,\dd (\mathbb{P}_n-P)(\mathbf{s})\\
&+
\int
\left(
\Psi_{\bbeta}(\mathbf{s},\bxi_n,\sigma_n)-\Psi_{\bbeta}(\mathbf{s},\bxi_P,\sigma_P)
\right)
\,\dd (\mathbb{P}_n-P)(\mathbf{s}).
\end{split}
\]
From~\eqref{def:Psi} we have that
\[
\Psi_{\bbeta}(\bs,\bxi,\sigma)
=
u_1\left(
\frac{d}{\sigma}\right)\bX^T\bV^{-1}(\by-\bX\bbeta)
=
u_1\left(d(\by,\bX\bbeta,\sigma^2\bV)\right)\bX^T\bV^{-1}(\by-\bX\bbeta).
\]
Since $\bV$ satisfies~(V1) we have that $\sigma_n^2\bV(\bgamma_n)\to \sigma_P^2\bV(\bgamma_P)$, in probability.
Then similar to the proof of equation~(96) in Lemma~11.8 in Lopuha\"a \emph{et al}~\cite{supplement},
it follows
\[
\int
\left(
\Psi_{\bbeta}(\bs,\bxi_n,\sigma_n)-\Psi_{\bbeta}(\bs,\bxi_P,\sigma_P)
\right)
\,\dd (\mathbb{P}_n-P)(\bs)
=
o_P(1/\sqrt{n}).
\]
This means we can write
\begin{equation}
\label{eq:decomp Psi-beta general}
\mathbf{0}
=
\Lambda_{\bbeta}(\bxi_n,\sigma_n)
+
\int \Psi_{\bbeta}(\mathbf{s},\bxi_P,\sigma_P)\,\dd (\mathbb{P}_n-P)(\mathbf{s})\\
+
o_P(1/\sqrt{n}).
\end{equation}
Since $\partial\Lambda_{\bbeta}/\partial\bbeta$ is continuous at $(\bxi_P,\sigma_P)$
and $(\bgamma_n,\sigma_n)\to(\bgamma_P,\sigma_P)$, in probability, it follows that
\[
\begin{split}
\Lambda_{\bbeta}(\bxi_n,\sigma_n)
&=
\Lambda_{\bbeta}(\bbeta_{1,P},\gamma_n,\sigma_n)
+
\left(
\frac{\partial\Lambda_{\bbeta}}{\partial\bbeta}(\bbeta_{1,P},\bgamma_P,\sigma_P)
+
o_P(1)
\right)
(\bbeta_{1,n}-\bbeta_{1,P}).
\end{split}
\]
Because $\bbeta_{1,P}$ is a point of symmetry and $\Psi_{\bbeta}$ is an 
odd function of $\by-\bX\bbeta$, 
it follows that $\Lambda_{\bbeta}(\bbeta_{1,P},\bgamma_n,\sigma_n)=\mathbf{0}$,
so that 
\[
\Lambda_{\bbeta}(\bxi_n,\sigma_n)
=
\bD_{\bbeta}
(\bbeta_{1,n}-\bbeta_{1,P})
+
o(\|\bbeta_{1,n}-\bbeta_{1,P}\|).
\]
Together with~\eqref{eq:decomp Psi-beta general}, we obtain
\[
\begin{split}
\mathbf{0}
=
\bD_{\bbeta}
(\bbeta_n-\bbeta_{1,P})
&+
\int \Psi_{\bbeta}(\mathbf{s},\bxi_P,\sigma_P)\,\dd (\mathbb{P}_n-P)(\mathbf{s})
+
o(\|\bbeta_{1,n}-\bbeta_{1,P}\|)
+
o_P(1/\sqrt{n}).
\end{split}
\]
Since $\rho_1$ satisfies~(R2) and (R4),
according to Lemma~\ref{lem:Psi bounded} there exist a constant $C_1>0$ only depending on $P$ and $\rho_1$, such that
$\|\Psi_{\bbeta}(\bs;\bxi_P,\sigma_P)\|
\leq
C_1\|\bX\|$.
Since $\E\|\bX\|^2<\infty$, the central limit theorem applies to the second term on the right hand side.
From this, we first conclude that $\bbeta_{1,n}-\bbeta_{1,P}=O_P(1/\sqrt{n})$.
After inserting this in the previous equality and use that $\bD_{\bbeta}$ is non-singular, this implies
\[
\bbeta_{1,n}-\bbeta_{1,P}
=
-
\bD_{\bbeta}^{-1}
\int 
\Psi_{\bbeta}(\mathbf{s},\bxi_P,\sigma_P)
\,\dd (\mathbb{P}_n-P)(\mathbf{s})
+
o_P(1/\sqrt{n}).
\]
This finishes the proof.
\end{proof}

\begin{lemma}
\label{lem:asym norm sigma}
Suppose that $P$ satisfies~(E) for some $(\bbeta^*,\btheta^*)\in\R^q\times\bTheta$
and suppose that $\E\|\bX\|<\infty$.
Suppose that $\rho_0$ and $\bV$ satisfy (R2), (R4) and (V4), respectively.
Let $\bzeta_{0,n}=(\bbeta_{0,n},\btheta_{0,n})$ be the pair of initial estimators
and let $\bzeta_0=(\bbeta_0,\btheta_0)$ be the corresponding functional 
satisfying $(\bbeta_0(P),\btheta_0(P))=(\bbeta^*,\btheta^*)$, and suppose that $\bzeta_{0,n}-\bzeta_0(P)=O_P(1/\sqrt{n})$.
Let~$\sigma_n$ be a solution of~\eqref{def:initial estimators} 
and suppose that $\sigma_n\to\sigma(P)$, in probability,
where~$\sigma(P)$ is a solution of~\eqref{def:sigma}.
Suppose that $\E_{\mathbf{0},\bI_k}[\rho_0'(c_\sigma\|\bz\|)c_\sigma\|\bz\|]>0$,
where $c_\sigma=|\bSigma|^{1/(2k)}/\sigma(P)$.
Then~$\sqrt{n}(\sigma_n-\sigma(P))$ is asymptotically normal with mean zero and variance
\[
\frac{\sigma^2(P)\E\left[(\rho_0(c_\sigma\|\bz\|)-b_0)^2\right]}{\big(\E_{\mathbf{0},\bI_k}[\rho_0'(c_\sigma\|\bz\|)c_\sigma\|\bz\|]\big)^2}.
\]
\end{lemma}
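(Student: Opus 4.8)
The plan is to obtain the asymptotic linear expansion for $\sigma_n-\sigma(P)$ from Lemma~\ref{lem:asymp relation sigma zeta}, simplify it using the elliptical structure of $P$, and then apply the central limit theorem to the surviving empirical-process term.

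First I would verify that the hypotheses of Lemma~\ref{lem:asymp relation sigma zeta} are in force. Since $\rho_0$ and $\bV$ satisfy (R2), (R4) and (V4), and $\E\|\bX\|<\infty$, Lemma~\ref{lem:change order int diff Lambda0} shows that $\Lambda_0$ from~\eqref{def:Lambda0} is continuously differentiable on a neighborhood of $(\bzeta_0(P),\sigma(P))$. Because $P$ satisfies~(E) with $(\bbeta_0(P),\btheta_0(P))=(\bbeta^*,\btheta^*)$, Lemma~\ref{lem:D0zeta=0} gives $\bD_{0,\bzeta}=\partial\Lambda_0/\partial\bzeta=\mathbf{0}$ and
\[
D_{0,\sigma}
=
\frac{\partial\Lambda_0(\bzeta_0(P),\sigma(P))}{\partial\sigma}
=
-\frac{1}{\sigma(P)}\E_{\mathbf{0},\bI_k}\left[\rho_0'(c_\sigma\|\bz\|)c_\sigma\|\bz\|\right],
\]
which is nonzero by the standing assumption $\E_{\mathbf{0},\bI_k}[\rho_0'(c_\sigma\|\bz\|)c_\sigma\|\bz\|]>0$. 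Hence Lemma~\ref{lem:asymp relation sigma zeta} applies.

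Next I would substitute $\bD_{0,\bzeta}=\mathbf{0}$ into the expansion of Lemma~\ref{lem:asymp relation sigma zeta} and use $\bzeta_{0,n}-\bzeta_0(P)=O_P(1/\sqrt{n})$ to absorb the $o(\|\bzeta_{0,n}-\bzeta_0(P)\|)$ term into $o_P(1/\sqrt{n})$, obtaining
\[
\sigma_n-\sigma(P)
=
-D_{0,\sigma}^{-1}\int\Psi_0(\bs,\bzeta_0(P),\sigma(P))\,\dd(\mathbb{P}_n-P)(\bs)
+o_P(|\sigma_n-\sigma(P)|)+o_P(1/\sqrt{n}).
\]
As in~\eqref{eq:Psi0 elliptical} (using $(\bbeta_0(P),\btheta_0(P))=(\bbeta^*,\btheta^*)$ and $\bGamma(\btheta^*)=\bSigma/|\bSigma|^{1/k}$), the integrand equals $\Psi_0(\bs,\bzeta_0(P),\sigma(P))=\rho_0(c_\sigma\|\bz\|)-b_0$ with $\bz=\bSigma^{-1/2}(\by-\bX\bbeta^*)$, and $\int\Psi_0(\bs,\bzeta_0(P),\sigma(P))\,\dd P(\bs)=0$ because $\sigma(P)$ solves~\eqref{def:sigma}. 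Since $\rho_0$ is bounded by (R2), this integrand is bounded and in particular square integrable, so the central limit theorem gives that $\sqrt{n}\int\Psi_0(\bs,\bzeta_0(P),\sigma(P))\,\dd(\mathbb{P}_n-P)(\bs)$ is asymptotically normal with mean zero and variance $\E[(\rho_0(c_\sigma\|\bz\|)-b_0)^2]$; here I use that at an elliptical $P$ the conditional law of $\bz$ given $\bX$ is the spherical $f_{\mathbf{0},\bI_k}$, so this expectation coincides with $\E_{\mathbf{0},\bI_k}[(\rho_0(c_\sigma\|\bz\|)-b_0)^2]$ and does not depend on $\bX$. It follows that the leading term is $O_P(1/\sqrt{n})$, which forces $\sigma_n-\sigma(P)=O_P(1/\sqrt{n})$ and lets the self-referential $o_P(|\sigma_n-\sigma(P)|)$ be merged into $o_P(1/\sqrt{n})$. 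Slutsky's lemma then yields that $\sqrt{n}(\sigma_n-\sigma(P))$ is asymptotically normal with mean zero and variance $D_{0,\sigma}^{-2}\E[(\rho_0(c_\sigma\|\bz\|)-b_0)^2]$, and inserting the value of $D_{0,\sigma}$ computed above produces exactly the claimed expression.

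The variance bookkeeping is routine; the one point requiring care is the chain of verifications that feeds Lemma~\ref{lem:asymp relation sigma zeta} — namely that $\Lambda_0$ is genuinely $C^1$ near $(\bzeta_0(P),\sigma(P))$ (via Lemma~\ref{lem:change order int diff Lambda0}) and that $D_{0,\sigma}\ne0$ (via Lemma~\ref{lem:D0zeta=0} together with the positivity hypothesis) — together with the standard two-step argument, first establishing $\sigma_n-\sigma(P)=O_P(1/\sqrt{n})$ and then reading off the exact linear expansion, which is what disposes of the $o_P(|\sigma_n-\sigma(P)|)$ remainder.
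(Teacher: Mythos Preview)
Your proposal is correct and follows essentially the same route as the paper: both invoke Lemma~\ref{lem:change order int diff Lambda0} and Lemma~\ref{lem:D0zeta=0} to verify the hypotheses of Lemma~\ref{lem:asymp relation sigma zeta} (in particular $\bD_{0,\bzeta}=\mathbf{0}$ and $D_{0,\sigma}\ne0$), feed in $\bzeta_{0,n}-\bzeta_0(P)=O_P(1/\sqrt n)$, apply the CLT to the bounded integrand $\rho_0(c_\sigma\|\bz\|)-b_0$, and use the two-step argument to clear the $o_P(|\sigma_n-\sigma(P)|)$ remainder before reading off the variance.
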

\begin{proof}
We apply Lemma~\ref{lem:asymp relation sigma zeta}.
Consider $\Lambda_0$ as defined in~\eqref{def:Lambda0} with $\Psi_0$ from~\eqref{def:Psi0}.
From Lemma~\ref{lem:change order int diff Lambda0}, we have that
$\Lambda_0$ is continuously differentiable at $(\bzeta_0(P),\sigma(P))$, 
with derivatives~$\bD_{0,\bzeta}=\mathbf{0}$ and
$D_{0,\sigma}=-\E_{\mathbf{0},\bI_k}[\rho_0'(c_\sigma\|\bz\|)c_\sigma\|\bz\|]/\sigma(P)<0$,
according to Lemma~\ref{lem:D0zeta=0}.
Since $\bzeta_{0,n}-\bzeta_0(P)=O_P(1/\sqrt{n})$,
from Lemma~\ref{lem:asymp relation sigma zeta}, it then follows that
\begin{equation}
\label{eq:expansion sigma}
\begin{split}
\sigma_n-\sigma(P)
=
-D_{0,\sigma}^{-1}
\int \Psi_0(\mathbf{s},\bzeta_0(P),\sigma(P))\,\dd (\mathbb{P}_n-P)(\mathbf{s})\\
+
o_P(\sigma_n-\sigma(P))
+
o_P(1/\sqrt{n}),
\end{split}
\end{equation}
Since $\rho_0$ is bounded, the central limit theorem applies 
to the first term on the right hand side of~\eqref{eq:expansion sigma}.
We first conclude that $\sigma_n-\sigma(P)=O_P(1/\sqrt{n})$ and
after inserting this in~\eqref{eq:expansion sigma},
we find that 
$\sqrt{n}(\sigma_n-\sigma(P))$ is asymptotically normal with mean zero and variance
$\E\left[\Psi_0(\mathbf{s},\bzeta_0(P),\sigma(P))^2\right]/D_{0,\sigma}^2$.
Since $(\bbeta_0(P),\btheta_0(P))=(\bbeta^*,\btheta^*)$ and 
$\bGamma(\btheta_0(P))=\bV(\btheta^*)/|\bV(\btheta^*)|^{1/k}=\bSigma/|\bSigma|^{1/k}$,
it follows that
\[
\E\left[
\Psi_0(\mathbf{s},\bzeta_0(P),\sigma(P))^2
\right]
=
\E
\left[
\E\left[
\left(
\rho_0
\left(
\frac{d_{\Gamma}^*}{\sigma(P)}\right)
-b_0
\right)^2
\bigg|
\bX
\right]
\right],
\]
where $d_{\Gamma}^*$ is defined in~\eqref{eq:dGamma*}.
The inner expectation on the right hand side
is the conditional expectation of $\by\mid\bX$,
which has the same distribution as 
$\bSigma^{1/2}\bz+\bX\bbeta^*$, where $\bz$ has spherical density~$f_{\mathbf{0},\bI_k}$.
This implies that the inner expectation on the right hand side is equal to
$\E_{\mathbf{0},\bI_k}
[(\rho_0(c_\sigma\|\bz\|)-b_0)^2]$,
where $c_\sigma=|\bSigma|^{1/(2k)}/\sigma(P)$.
This proves the lemma.
\end{proof}

\paragraph*{Proof of Theorem~\ref{th:asymp norm elliptical}}
\begin{proof}
Write $\bxi_P=(\bbeta_{1,P},\bgamma_P)=(\bbeta_1(P),\bgamma(P))$,
$\bV_P=\bV(\bgamma_p)$, and $\sigma_P=\sigma(P)$.
We apply Lemma~\ref{lem:asymp relation xi sigma}.
Consider $\Lambda$ as defined in~\eqref{def:Lambda} with~$\Psi$ as defined in~\eqref{def:Psi linear}.
From Lemmas~\ref{lem:change order int diff Lambda} and~\ref{lem:Lambda derivative},
we have that~$\Lambda$ is continuously differentiable at $(\bxi_P,\sigma_P)$ 
with derivative
\[
\bD_{\bxi}
=
\frac{\partial \Lambda(\bxi_P,\sigma_P)}{\partial \bxi}
=
\left(
  \begin{array}{cc}
\bD_{\bbeta} & \mathbf{0}\\
    \\
\mathbf{0} & \bD_{\bgamma} \\
  \end{array}
\right),
\]
where $\bD_{\bbeta}$ and $\bD_{\bgamma}$ are given in~\eqref{def:derivative Lambda beta} and~\eqref{def:derivative Lambda gamma}.
According to Lemma~\ref{lem:inverse}, we have that 
$\bD_{\bbeta}$ and $\bD_{\bgamma}$ are non-singular, so that $\bD_{\bxi}$ is non-singular.
Furthermore, from Lemma~\ref{lem:Dsigma=0}, we have that 
$\bD_{\sigma}=\partial\Lambda(\bxi_P,\sigma_P)/\partial\sigma=\mathbf{0}$.
Since $\sigma_n-\sigma_P=O_P(1/\sqrt{n})$, 
Lemma~\ref{lem:asymp relation xi sigma} yields that
\[
\begin{split}
\bxi_n-\bxi_P
&=
-\bD_{\bxi}^{-1}
\int 
\Psi(\mathbf{s},\bxi_P,\sigma_P)\,\dd (\mathbb{P}_n-P)(\mathbf{s})
+
o_P(\|\bxi_n-\bxi(P)\|)
+
o_P(1/\sqrt{n}).
\end{split}
\]
According to Lemma~\ref{lem:Psi bounded}, there exist constants $C_1,C_2>0$ only depending on $P$ and $\sigma(P)$, such that
$\left\|
\Psi(\bs;\bxi_P,\sigma_P)
\right\|
\leq
C_1+C_2\|\bX\|$.
Since $\E\|\bX\|^2<\infty$, the central limit theorem applies to the second term on the right hand side.
From this we first obtain that $\bxi_n-\bxi_P=O_P(1/\sqrt{n})$.
After inserting this in the previous equation, we find that
\begin{equation}
\label{eq:expansion xi}
\bxi_n-\bxi_P
=
-\bD_{\bxi}^{-1}
\int \Psi(\mathbf{s},\bxi_P,\sigma_P)\,\dd (\mathbb{P}_n-P)(\mathbf{s})
+
o_P(1/\sqrt{n}).
\end{equation}
It follows that $\sqrt{n}(\bxi_n-\bxi_P)$ is asymptotically normal with mean zero and variance
\begin{equation}
\label{eq:lim var bxi}
\bD_{\bxi}^{-1}
\E\left[
\Psi(\mathbf{s},\bxi_P,\sigma_P)
\Psi(\mathbf{s},\bxi_P,\sigma_P)^T
\right]
\bD_{\bxi}^{-1}.
\end{equation}
Because $\Psi_{\bbeta}(\mathbf{s},\bxi_P,\sigma_P)\Psi_{\bgamma}(\mathbf{s},\bxi_P,\sigma_P)^T$ is
an odd function of $\by-\bX\bbeta_{1,P}$ and~$\bbeta_{1,P}=\bbeta^*$ is a point of symmetry,
it follows that 
$\E[\Psi_{\bbeta}(\mathbf{s},\bxi_P,\sigma_P)\Psi_{\bgamma}(\mathbf{s},\bxi_P,\sigma_P)^T]=\mathbf{0}$.
Hence, also $\E[\Psi(\mathbf{s},\bxi_P,\sigma_P)\Psi(\mathbf{s},\bxi_P,\sigma_P)^T]$
is a block matrix with 
$\E[\Psi_{\bbeta}(\mathbf{s},\bxi_P,\sigma_P)\Psi_{\bbeta}(\mathbf{s},\bxi_P,\sigma_P)^T]$
and 
$\E[\Psi_{\bgamma}(\mathbf{s},\bxi_P,\sigma_P)\Psi_{\bgamma}(\mathbf{s},\bxi_P,\sigma_P)^T]$
on the main diagonal.
We conclude that the limiting variance~\eqref{eq:lim var bxi} of $\sqrt{n}(\bxi_n-\bxi_P)$
is a block matrix.
This proves that $\sqrt{n}(\bbeta_{1,n}-\bbeta_{1,P})$ and
$\sqrt{n}(\bgamma_n-\bgamma_P)$ are asymptotically independent.

Moreover, it also follows that $\sqrt{n}(\bbeta_{1,n}-\bbeta_{1,P})$ is asymptotically normal with
mean zero and variance
\begin{equation}
\label{eq:lim var beta}
\bD_{\bbeta}^{-1}\E[\Psi_{\bbeta}(\mathbf{s},\bxi_P,\sigma_P)\Psi_{\bbeta}(\mathbf{s},\bxi_P,\sigma_P)^T]\bD_{\bbeta}^{-1},
\end{equation}
and $\sqrt{n}(\bgamma_n-\bgamma_P)$ is asymptotically normal with mean zero and variance
\begin{equation}
\label{eq:lim var gamma}
\bD_{\bgamma}^{-1}\E[\Psi_{\bgamma}(\mathbf{s},\bxi_P,\sigma_P)\Psi_{\bgamma}(\mathbf{s},\bxi_P,\sigma_P)^T]\bD_{\bgamma}^{-1}.
\end{equation}
Because $\bV_P=\bSigma/|\bSigma|^{1/k}$,
we can write
\[
\Psi_{\bbeta}(\mathbf{s},\bxi_P,\sigma_P)
=
|\bSigma|^{1/k}
u_1\left(\frac{d^*}{\sigma_P}\right)
\bX^T\bSigma^{-1}(\by-\bX\bbeta^*),
\]
where $(d^*)^2=|\bSigma|^{1/k}(\by-\bX\bbeta^*)^T\bSigma^{-1}(\by-\bX\bbeta^*)$ and
$u_1(s)=\rho_1'(s)/s$.
We find that
\[
\begin{split}
&
\E\left[
\Psi_{\bbeta}(\mathbf{s},\bxi_P,\sigma_P)
\Psi_{\bbeta}(\mathbf{s},\bxi_P,\sigma_P)^T
\right]\\
&\qquad=
|\bSigma|^{2/k}
\E\left[
\bX^T
\E\left[
u_1\left(\frac{d^*}{\sigma_P}\right)^2
\bSigma^{-1}
(\by-\bX\bbeta^*)(\by-\bX\bbeta^*)^T
\bSigma^{-1}
\bigg|
\bX
\right]
\bX
\right].
\end{split}
\]
The inner expectation on the right hand side
is the conditional expectation of $\by\mid\bX$,
which has the same distribution as 
$\bSigma^{1/2}\bz+\bX\bbeta^*$, where $\bz$ has spherical density $f_{\mathbf{0},\bI_k}$.
Therefore, 
similar to the proof of Theorem~\ref{th:IF elliptical},
the inner conditional expectation can be written as
\[
\bSigma^{-1/2}
\E_{\mathbf{0},\bI_k}\left[u_1(c_\sigma\|\bz\|)^2\|\bz\|^2\right]
\E_{\mathbf{0},\bI_k}
\left[\bu\bu^T\right]
\bSigma^{-1/2}
=
\frac{\E_{\mathbf{0},\bI_k}\left[u_1(c_\sigma\|\bz\|)^2\|\bz\|^2\right]}{k}
\bSigma^{-1},
\]
where $c_\sigma=|\bSigma|^{1/(2k)}/\sigma_P$.
Together with~\eqref{def:derivative Lambda beta} 
and $u_1(s)=\rho_1'(s)/s$,
this implies that
the asymptotic variance~\eqref{eq:lim var beta} of $\sqrt{n}(\bbeta_{1,n}-\bbeta_1(P))$ is given by
\[
\frac{\sigma_P^2}{|\bSigma|^{1/k}}
\frac{\E_{\mathbf{0},\bI_k}\left[\rho_1'(c_\sigma\|\bz\|)^2\right]}{k\alpha_1^2}
\left(
\mathbb{E}\left[\mathbf{X}^T\bSigma^{-1}\mathbf{X}\right]
\right)^{-1},
\]
where $\alpha_1$ is defined in~\eqref{def:alpha1-gamma1}.

Next, consider the limiting variance~\eqref{eq:lim var gamma} of $\sqrt{n}(\bgamma_n-\bgamma(P))$. 
According to Lemma~\ref{lem:inverse}, we have
\begin{equation}
\label{eq:Dgammainv}
\bD_{\bgamma}^{-1}
=
a(\bE^T\bE)^{-1}
+
b
(\bE^T\bE)^{-1}
\bE^T\vc(\bI_k)\vc(\bI_k)^T\bE
(\bE^T\bE)^{-1},
\end{equation}
where $\bE=(\bSigma^{-1/2}\otimes\bSigma^{-1/2})\bL$, and 
$a=1/\omega_1$ and $b=\omega_2(\omega_1(\omega_1-k\omega_2))$,
where $\omega_1$ and $\omega_2$ are given in Lemma~\ref{lem:Lambda derivative}.
The rest of the proof is similar to the (second part of the) proof of Corollary~9.2 in Lopuha\"a \emph{et al}~\cite{lopuhaa-gares-ruizgazen2023}.
To make the connection with the proof of Corollary~9.2 in Lopuha\"a \emph{et al}~\cite{lopuhaa-gares-ruizgazen2023},
note that $\bV_P=\bSigma/|\bSigma|^{1/k}$, so that
\[
\Psi_{\bgamma}(\bs,\bxi_P,\sigma_P)
=
-
|\bSigma|^{2/k}
\bL^T
\left(
\bSigma^{-1}
\otimes
\bSigma^{-1}
\right)
\vc\left(
\Psi_\bV(\bs,\bxi_P,\sigma_P)
\right),
\]
and
\[
\begin{split}
&
\E\left[
\vc\left(\Psi_{\bgamma}(\bs,\bxi_P,\sigma_P)\right)
\vc\left(\Psi_{\bgamma}(\bs,\bxi_P,\sigma_P)\right)^T
\right]\\
&=
|\bSigma|^{4/k}
\bE^T
\E\left[
\vc\left(\bSigma^{-1/2}\Psi_{\bV}(\bs,\bxi_P,\sigma_P)\bSigma^{-1/2}\right)
\vc\left(\bSigma^{-1/2}\Psi_{\bV}(\bs,\bxi_P,\sigma_P)\bSigma^{-1/2}\right)^T
\right]
\bE,
\end{split}
\]
where $\Psi_\bV$ is given in~\eqref{def:PsiV}.
Furthermore, with $\bz=\bSigma^{-1/2}(\by-\bX\bbeta^*)$ and $\bu=\bz/\|\bz\|$,
we can write
\[
\begin{split}
\bSigma^{-1/2}\Psi_{\bV}(\bs,\bxi_P,\sigma_P)\bSigma^{-1/2}
&=
ku_1(c_\sigma\|\bz\|)\bz\bz^T
-
v_1(c_\sigma\|\bz\|)\frac{\sigma_P^2}{|\bSigma|^{1/k}}\bSigma\\
&=
ku_1(c_\sigma\|\bz\|)\|\bz\|^2\bu\bu^T
-
v_1(c_\sigma\|\bz\|)\frac{1}{c_\sigma^2}\bSigma\\
&=
\frac{1}{c_\sigma^2}
\Big\{
ku_1(c_\sigma\|\bz\|)(c_\sigma\|\bz\|)^2\bu\bu^T
-
v_1(c_\sigma\|\bz\|)\bSigma
\Big\}.
\end{split}
\]
Similar to the proof of Corollary~9.2 in Lopuha\"a \emph{et al}~\cite{lopuhaa-gares-ruizgazen2023}, we obtain
\[
\begin{split}
\E\left[
\Psi_{\bgamma}(\bs,\bxi_P,\sigma_P)
\Psi_{\bgamma}(\bs,\bxi_P,\sigma_P)^T
\right]
&=
\sigma_P^4|\bSigma|^{2/k}
\Big\{
2\delta_1\bE^T\bE
+
\delta_2
\bE^T\vc(\bI_k)\vc(\bI_k)^T\bE
\Big\},
\end{split}
\]
where 
\[
\begin{split}
\delta_1
&=
\frac{k\E_{\mathbf{0},\bI_k}\left[u_1(c_\sigma\|\bz\|)^2(c_\sigma\|\bz\|)^4\right]}{k+2}\\
\delta_2
&=
\frac{k\E_{\mathbf{0},\bI_k}\left[u_1(c_\sigma\|\bz\|)^2(c_\sigma\|\bz\|)^4\right]}{k+2}
-
2\E_{\mathbf{0},\bI_k}\left[u_1(c_\sigma\|\bz\|)v_1(c_\sigma\|\bz\|)(c_\sigma\|\bz\|)^2\right]\\
&\phantom{= \frac{k\E_{\mathbf{0},\bI_k}\left[u_1(c_\sigma\|\bz\|)^2(c_\sigma\|\bz\|)^4\right]}{k+2}}
+
\E_{\mathbf{0},\bI_k}\left[v_1(\|c_\sigma\bz\|)^2\right].
\end{split}
\]
Because $v_1(s)=u_1(s)s^2$, we find that
\[
\frac{k}{k+2}u_1(s)^2s^4
-
2u_1(s)v_1(s)s^2
+
v_1(s)^2
=
-\frac{2}{k+2}u_1(s)^2s^4.
\]
This means
\begin{equation}
\label{def:delta12}
\begin{split}
\delta_1
&=
\frac{k\E_{\mathbf{0},\bI_k}\left[u_1(c_\sigma\|\bz\|)^2(c_\sigma\|\bz\|)^4\right]}{k+2}\\
\delta_2
&=
-
\frac{2\E_{\mathbf{0},\bI_k}\left[u_1(c_\sigma\|\bz\|)^2(c_\sigma\|\bz\|)^4\right]}{k+2}
=
-\frac{2}{k}\delta_1.
\end{split}
\end{equation}
Together with~\eqref{eq:Dgammainv}, 
as in the proof of Corollary~9.2 in Lopuha\"a \emph{et al}~\cite{lopuhaa-gares-ruizgazen2023},
it follows that
\[
\begin{split}
&
\bD_{\bgamma}^{-1}
\E\left[
\Psi_{\bgamma}(\mathbf{s},\bxi_P,\sigma_P)\Psi_{\bgamma}(\mathbf{s},\bxi_P,\sigma_P)^T
\right]
\bD_{\bgamma}^{-1}\\
&=
2\sigma_1(\bE^T\bE)^{-1}
+
\sigma_2(\bE^T\bE)^{-1}\bE^T\vc(\bI_k)
\vc(\bI_k)^T
\bE(\bE^T\bE)^{-1},
\end{split}
\]
where 
\[
\sigma_1
=
\sigma_P^4|\bSigma|^{2/k}a^2\delta_1
=
\frac{\delta_1}{|\bSigma|^{2/k}\gamma_1^2},
\]
and 
\[
\sigma_2
=
\sigma_P^4|\bSigma|^{2/k}
\left\{
2b(2a+kb)\delta_1+(a+kb)^2\delta_2
\right\}
=
-2\sigma_P^4|\bSigma|^{2/k}a^2\delta_1/k
=
-2\sigma_1/k.
\]
We obtain
\begin{equation}
\label{def:sigma12}
\begin{split}
\sigma_1
&=
\frac{k\E_{\mathbf{0},\bI_k}\left[u_1(\|c_\sigma\bz\|)^2(c_\sigma\|\bz\|)^4\right]}{(k+2)\gamma_1^2|\bSigma|^{2/k}};\\
\sigma_2
&=
-\frac{2\sigma_1}{k}.
\end{split}
\end{equation}
Since $\bV$ has a linear structure, we have $\vc(\bSigma)=\bL\btheta^*$
and $\bE\btheta^*=\vc(\bI_k)$ and 
\[
(\bE^T\bE)^{-1}\bE^T\vc(\bI_k)=\btheta^*.
\]
We find that the limiting variance~\eqref{eq:lim var gamma} is equal to
$2\sigma_1
(\bL^T(\bSigma^{-1}\otimes\bSigma^{-1})\bL)^{-1}
+
\sigma_2
\btheta^*(\btheta^*)^T$.
This finishes the proof.
\end{proof}

\paragraph*{Proof of Corollary~\ref{cor:asymp norm theta1}}
\begin{proof}
Write $\bzeta_{0,P}=\bzeta_0(P)$, $\bxi_P=(\bbeta_{1,P},\bgamma_P)=(\bbeta_1(P),\bgamma(P))$,
$\bV_P=\bV(\bgamma_p)$, and $\sigma_P=\sigma(P)$.
Because $\btheta_{1,n}$ and~$\btheta_1(P)$ are solutions of~\eqref{eq:update theta} and~\eqref{def:theta1}, respectively,
similar to the reasoning in the proof of Corollary~\ref{cor:IF theta1},
we have that
\begin{equation}
\label{eq:decomp theta1}
\begin{split}
\btheta_{1,n}-\btheta_1(P)
&=
\sigma_n^2\bgamma_n-\sigma_P^2\bgamma_P
=
\sigma_n^2(\bgamma_n-\bgamma_P)
+
\bgamma_P(\sigma_n^2-\sigma_P^2)\\
&=
\sigma_n^2(\bgamma_n-\bgamma_P)
+
\bgamma_P(\sigma_n+\sigma_P)(\sigma_n-\sigma_P).
\end{split}
\end{equation}
We will apply Lemmas~\ref{lem:asymp relation xi sigma} and~\ref{lem:asymp relation sigma zeta}.
From Lemma~\ref{lem:change order int diff Lambda0},
we have that $\Lambda_0$ is continuously differentiable at $(\bzeta_{0,P},\sigma_P)$ with derivatives
$\bD_{0,\bzeta}=\mathbf{0}$ and
$D_{0,\sigma}<0$, according to Lemma~\ref{lem:D0zeta=0}.
Since $\bzeta_{0,n}-\bzeta_{0,P}=O_P(1/\sqrt{n})$, it follows from 
Lemma~\ref{lem:asymp relation sigma zeta} that
\[
\begin{split}
\sigma_n-\sigma(P)
=
-D_{0,\sigma}^{-1}
\int \Psi_0(\mathbf{s},\bzeta_0(P),\sigma(P))\,\dd (\mathbb{P}_n-P)(\mathbf{s})\\
+
o_P(\sigma_n-\sigma_P)
+
o_P(1/\sqrt{n}).
\end{split}
\]
Since $\rho_0$ is bounded, the central limit theorem applies 
to the first term on the right hand side of~\eqref{eq:expansion sigma2}
and we find that $\sigma_ n-\sigma_P=O_P(1/\sqrt{n})$.
Therefore,
\begin{equation}
\label{eq:expansion sigma2}
\begin{split}
\sigma_n-\sigma(P)
=
-D_{0,\sigma}^{-1}
\int \Psi_0(\mathbf{s},\bzeta_0(P),\sigma(P))\,\dd (\mathbb{P}_n-P)(\mathbf{s})
+
o_P(1/\sqrt{n}).
\end{split}
\end{equation}
From Lemma~\ref{lem:change order int diff Lambda},
we have that~$\Lambda$ is continuous differentiable at~$(\bxi_P,\sigma_P)$,
with derivative $\bD_{\bxi}=\partial\Lambda(\bxi_P,\sigma_P)/\partial\bxi$ given in
Lemma~\ref{lem:Lambda derivative}, 
which is non-singular according to Lemma~\ref{lem:inverse}.
Furthermore, according to Lemma~\ref{lem:Dsigma=0}, we have~$\bD_{\sigma}=\partial\Lambda(\bxi_P,\sigma_P)/\partial\sigma=\mathbf{0}$.
Since $\rho_1$ satisfies (R1)-(R4), such that $u_1(s)$ is of bounded variation,
and~$\bV$ satisfies~(V4), we may apply Lemma~\ref{lem:asymp relation xi sigma} and obtain
\[
\begin{split}
\bxi_n-\bxi_P
&=
-\bD_{\bxi}^{-1}
\int 
\Psi(\mathbf{s},\bxi_P,\sigma_P)\,\dd (\mathbb{P}_n-P)(\mathbf{s})
+
o_P(\|\bxi_n-\bxi(P)\|)
+
o_P(1/\sqrt{n}).
\end{split}
\]
As in the proof of Theorem~\ref{th:asymp norm elliptical},
we first obtain that $\bxi_n-\bxi_P=O_P(1/\sqrt{n})$ and then conclude that
\[
\bxi_n-\bxi_P
=
-\bD_{\bxi}^{-1}
\int \Psi(\mathbf{s},\bxi_P,\sigma_P)\,\dd (\mathbb{P}_n-P)(\mathbf{s})
+
o_P(1/\sqrt{n}).
\]
From the block structure of $\bD_{\bxi}$ established in Lemma~\ref{lem:Lambda derivative},
it then follows that 
\begin{equation}
\label{eq:asym expansion gamma}
\bgamma_n-\bgamma_P
=
-\bD_{\bgamma}^{-1}
\int \Psi_{\bgamma}(\mathbf{s},\bxi_P,\sigma_P)\,\dd (\mathbb{P}_n-P)(\mathbf{s})
+
o_P(1/\sqrt{n}).
\end{equation}
In particular, this implies that $\bgamma_n-\bgamma_P=O_P(1/\sqrt{n})$, so that
from~\eqref{eq:decomp theta1} we obtain
\[
\sqrt{n}(\btheta_{1,n}-\btheta_1(P))
=
\sigma^2_P
\sqrt{n}(\bgamma_n-\bgamma_P)
+
2\sigma_P\bgamma_P
\sqrt{n}(\sigma_n-\sigma_P)+o_P(1).
\]
From expansions~\eqref{eq:expansion sigma2} and~\eqref{eq:asym expansion gamma}, 
we conclude that if we define
\[
\Psi_{\btheta}(\bs,\bxi,\sigma)
=
\sigma^2_P
\bD_{\bgamma}^{-1}
\Psi_{\bgamma}(\bs,\bxi_P,\sigma_P)
+
2\sigma_P\bgamma_P
D_{0,\sigma}^{-1}
\Psi_{0}(\bs,\bxi_P,\sigma_P),
\]
then $\sqrt{n}(\btheta_{1,n}-\btheta_1(P))$ is asymptotically normal with mean zero and variance
\[
\E\left[
\Psi_{\btheta}(\bs,\bxi_P,\sigma_P)
\Psi_{\btheta}(\bs,\bxi_P,\sigma_P)^T
\right].
\]
As in the proof of Theorem~\ref{th:asymp norm elliptical},
\[
\Psi_{\bgamma}(\bs,\bxi_P,\sigma_P)
=
-
|\bSigma|^{2/k}
\bL^T
\left(
\bSigma^{-1}
\otimes
\bSigma^{-1}
\right)
\vc\left(
\Psi_\bV(\bs,\bxi_P,\sigma_P)
\right),
\]
with
\[
\begin{split}
\Psi_\bV(\bs,\bxi_P,\sigma_P)
&=
k
u_1\left(\frac{d^*}{\sigma_P}\right)
(\by-\bX\bbeta^*)(\by-\bX\bbeta^*)^T
-
v_1\left(\frac{d^*}{\sigma_P}\right)
\frac{\sigma_P^2}{|\bSigma|^{1/k}}
\bSigma.
\end{split}
\]
where $(d^*)^2=|\bSigma|^{1/k}(\by-\bX\bbeta^*)^T\bSigma^{-1}(\by-\bX\bbeta^*)$,
and from the proof of Lemma~\ref{lem:asym norm sigma}
\[
\Psi_{0}(\bs,\bxi_P,\sigma_P)
=
\rho_0
\left(\frac{d_{\Gamma}^*}{\sigma_P}\right)
-
b_0,
\]
where $d_{\Gamma}^*=d^*$.
This means that
\[
\begin{split}
&
\E[\Psi_\bV(\bs,\bxi_P,\sigma_P)\Psi_{0}(\bs,\bxi_P,\sigma_P)]\\
&=
\E\left[
\E\left[
\left(
k
u_1\left(\frac{d^*}{\sigma_P}\right)
\be^*(\be^*)^T
-
\frac{\sigma_P^2}{|\bSigma|^{1/k}}
v_1\left(\frac{d^*}{\sigma_P}\right)
\bSigma
\right)
\left(\rho_0
\left(\frac{d^*}{\sigma_P}\right)
-
b_0\right)
\bigg|
\bX
\right]
\right],
\end{split}
\]
where $\be^*=\by-\bX\bbeta^*$.
The inner expectation is the conditional expectation of $\by\mid\bX$,
which has the same distribution as 
$\bSigma^{1/2}\bz+\bX\bbeta^*$, where $\bz$ has spherical density $f_{\mathbf{0},\bI_k}$.
Therefore, if we denote $c_\sigma=|\bSigma|^{1/(2k)}/\sigma_P$, then the inner expectation can be written as
\[
\begin{split}
&
\mathbb{E}_{0,\mathbf{I}_k}
\left[
\left(
k
u_1\left(c_\sigma|\bz\|\right)
\bSigma^{1/2}\bz\bz^T\bSigma^{1/2}
-
v_1\left(\|\bz\|\right)
\frac{1}{c_\sigma^2}\bSigma
\right)
\left(\rho_0
\left(c_\sigma\|\bz\|\right)
-
b_0\right)
\right]\\
&=
\frac{1}{c_\sigma^2}
\mathbb{E}_{0,\mathbf{I}_k}
\left[
k
u_1\left(c_\sigma|\bz\|\right)(c_\sigma\|\bz\|)^2
\left(\rho_0
\left(c_\sigma\|\bz\|\right)
-
b_0\right)
\bSigma^{1/2}\bu\bu^T\bSigma^{1/2}
\right]\\
&\qquad
-
\frac{1}{c_\sigma^2}
\mathbb{E}_{0,\mathbf{I}_k}
\left[
v_1\left(c_\sigma\|\bz\|\right)
\left(
\rho_0\left(c_\sigma\|\bz\|\right)
-
b_0\right)
\right]
\bSigma\\
&=
\frac{k}{c_\sigma^2}
\mathbb{E}_{0,\mathbf{I}_k}
\left[
v_1\left(c_\sigma\|\bz\|\right)
\left(
\rho_0\left(c_\sigma\|\bz\|\right)
-
b_0\right)
\right]
\bSigma^{1/2}
\mathbb{E}_{0,\mathbf{I}_k}
\left[
\bu\bu^T
\right]
\bSigma^{1/2}\\
&\qquad
-
\frac{1}{c_\sigma^2}
\mathbb{E}_{0,\mathbf{I}_k}
\left[
v_1\left(c_\sigma\|\bz\|\right)
\left(
\rho_0\left(c_\sigma\|\bz\|\right)
-
b_0\right)
\right]
\bSigma\\
&=
\frac{k}{c_\sigma^2}
\mathbb{E}_{0,\mathbf{I}_k}
\left[
v_1\left(c_\sigma\|\bz\|\right)
\left(
\rho_0\left(c_\sigma\|\bz\|\right)
-
b_0\right)
\right]
\frac{1}{k}\bSigma\\
&\qquad
-
\frac{1}{c_\sigma^2}
\mathbb{E}_{0,\mathbf{I}_k}
\left[
v_1\left(c_\sigma\|\bz\|\right)
\left(
\rho_0\left(c_\sigma\|\bz\|\right)
-
b_0\right)
\right]
\bSigma
=
\mathbf{0}.
\end{split}
\]
We find that
\[
\begin{split}
\E\left[
\Psi_{\btheta}(\bs,\bxi,\sigma)
\Psi_{\btheta}(\bs,\bxi,\sigma)^T
\right]
=
\sigma^4_P
\bD_{\bgamma}^{-1}
\E\left[
\Psi_{\bgamma}(\bs,\bxi_P,\sigma_P)\Psi_{\bgamma}(\bs,\bxi_P,\sigma_P)^T
\right]
\bD_{\bgamma}^{-1}\\
+
4\sigma^2_P
D_{0,\sigma}^{-1}
\E\left[
\Psi_{0}(\bs,\bxi_P,\sigma_P)^2
\right]
D_{0,\sigma}^{-1}
\bgamma_P\bgamma_P^T,
\end{split}
\]
which is a linear combination
of the asymptotic variances of 
$\sqrt{n}(\bgamma_n-\bgamma_P)$ and~$\sqrt{n}(\sigma_n-\sigma_P)$:
\[
\begin{split}
&
\E\left[
\Psi_{\btheta}(\bs,\bxi,\sigma)
\Psi_{\btheta}(\bs,\bxi,\sigma)^T
\right]\\
&\qquad=
\sigma^4_P
\text{AVAR}\left(\sqrt{n}(\bgamma_n-\bgamma_P)\right)
+
4\sigma^2_P
\text{AVAR}\left(\sqrt{n}(\sigma_n-\sigma_P)\right)
\bgamma_P\bgamma_P^T.
\end{split}
\]
Since $\bV$ has a linear structure, similar to the proof of Corollary~\ref{cor:IF theta1},
from~\eqref{eq:relation gamma theta*} we have $\bgamma_P=\btheta^*/|\bSigma|^{1/k}$.
Then, from Theorem~\ref{th:asymp norm elliptical} and Lemma~\ref{lem:asym norm sigma}, 
we find that the right hand side is equal to
\[
\begin{split}
&
\frac{2\sigma_1}{c_\sigma^2}
\left\{
\Big(\bL^T\left(\bSigma^{-1}\otimes\bSigma^{-1}\right)\bL\Big)^{-1}
-
\frac{1}{k}
\btheta^*(\btheta^*)^T
\right\}
+
\frac{4}{c_\sigma^2}
\frac{\E\left[(\rho_0(c_\sigma\|\bz\|)-b_0)^2\right]}{\big(\E_{\mathbf{0},\bI_k}[\rho_0'(c_\sigma\|\bz\|)c_\sigma\|\bz\|]\big)^2}
\btheta^*(\btheta^*)^T.
\end{split}
\]
This finishes the proof.
\end{proof}

\subsection{Limiting distributions of covariance estimators}
We provide some details about the limiting variances of the covariance MM-estimators
in Examples~\ref{ex:LME model}, \ref{ex:multivariate linear regression}, and~\ref{ex:multivariate location-scatter}
for the situation where the distribution $P$ satisfies~(E).
\setcounter{example}{0}
\begin{example}[Linear Mixed Effects model]
\label{ex:asymp distr LME}
For linear mixed effects models, 
nothing seems to be available about the limiting distribution of covariance MM-estimators.
For model~\eqref{def:linear mixed effects model Copt}, the limiting distribution of the 
variance component MM-estimator now follows from Corollary~\ref{cor:asymp norm theta1} and
the limiting distributions of the covariance MM-estimator
and the corresponding shape component follow from~\eqref{eq:asymp var theta1}
and~\eqref{eq:asymp var shape}, respectively.
This implies that~$\sqrt{n}(\btheta_{1,n}-\btheta_1(P))$ is asymptotically normal with mean zero and variance
\[
\frac{2\sigma_1}{c_\sigma^2}
\bQ^{-1}
+
\left(
-\frac{2\sigma_1}{kc_\sigma^2}
+
\sigma_3
\right)
\btheta^*(\btheta^*)^T,
\]
where $\sigma_1$ and $\sigma_3$ are defined in~\eqref{def:sigma1} and~\eqref{def:sigma3},
and where $\bQ$ is the matrix with entries given in~\eqref{eq:entries Q}. 
From~\eqref{eq:asymp var theta1} we find that 
$\sqrt{n}(\vc(\bV(\btheta_{1,n}))-\vc(\bSigma))$ is asymptotically normal with 
mean zero and variance
\[
\frac{2\sigma_1}{c_\sigma^2}
\bL\bQ^{-1}\bL^T
+
\left(
-\frac{2\sigma_1}{kc_\sigma^2}
+
\sigma_3
\right)
\vc(\bSigma)
\vc(\bSigma)^T,
\]
where $\bL$ is given by~\eqref{eq:L for LME}.
For the shape component $\bGamma(\btheta)=\bV(\btheta)/|\bV(\btheta)|^{1/k}$,
from~\eqref{eq:asymp var shape} we obtain that 
$\sqrt{n}(\vc(\bGamma(\btheta_{1,n}))-\vc(\bGamma(\btheta_1(P))))$
is asymptotically normal with mean zero and variance
\[
\frac{2\sigma_1}{c_\sigma^2|\bSigma|^{2/k}}
\left\{
\bL\bQ^{-1}\bL^T
-
\frac{1}{k}
\vc(\bSigma)
\vc(\bSigma)^T
\right\}.
\]
When $c_\sigma=1$, the limiting distribution
of the covariance shape MM-estimator in the linear mixed effects model~\eqref{def:linear mixed effects model Copt} 
coincides with that of the shape component corresponding to the covariance S-estimator defined with $\rho_1$,
see Lopuha\"a \emph{et al}~\cite{lopuhaa-gares-ruizgazen2023},
and similarly for the direction component of the variance component MM-estimator.
\end{example}

\begin{example}[Multivariate Linear Regression]
For the multivariate linear regression model~\eqref{def:multivariate linear regression model}, 
Kudraszow and Maronna~\cite{kudraszow-maronna2011} do not consider the
limiting distribution of the covariance MM-estimator.
In this model, the matrix $\bL$ is equal to the duplication matrix~$\mathcal{D}_k$,
which satisfies~\eqref{eq:prop duplication}.
The limiting distribution of the covariance MM-estimator
and the corresponding shape component follow from~\eqref{eq:asymp var theta1}
and~\eqref{eq:asymp var shape}, respectively.
By using~\eqref{eq:prop duplication}, the limiting variance of the covariance MM-estimator becomes
\begin{equation}
\label{eq:asymp var cov MM mult regr}
\frac{\sigma_1}{c_\sigma^2}
\left(
\bI_{k^2}+\bK_{k,k}
\right)(\bSigma\otimes\bSigma)
+
\left(
-\frac{2\sigma_1}{kc_\sigma^2}
+
\sigma_3
\right)
\vc(\bSigma)
\vc(\bSigma)^T,
\end{equation}
whereas the covariance shape estimator has limiting variance
\begin{equation}
\label{eq:asymp var shape MM mult regr}
\frac{\sigma_1}{c_\sigma^2|\bSigma|^{2/k}}
\left\{
\left(
\bI_{k^2}+\bK_{k,k}
\right)(\bSigma\otimes\bSigma)
-
\frac{2}{k}
\vc(\bSigma)
\vc(\bSigma)^T
\right\},
\end{equation}
where $\sigma_1$ and $\sigma_3$ are defined in~\eqref{def:sigma1} and~\eqref{def:sigma3}.
When $c_\sigma=1$, the limit behavior of the covariance shape MM-estimator 
coincides with that of the shape component of the covariance S-estimator defined with $\rho_1$
in the multivariate linear regression model.
\end{example}

\begin{example}[Multivariate Location and Scatter]
For the multivariate location-scatter model, we also have $\bL=\mathcal{D}_k$.
Since this model is a special case of the multivariate linear regression model~\eqref{def:multivariate linear regression model}
by taking $\bx_i=1$ and $\bB^T=\bmu$,
the limiting distributions of the covariance MM-estimator and the corresponding shape component 
are the same as that of their counterparts in the multivariate linear regression model
and the limiting variances have the same expressions as~\eqref{eq:asymp var cov MM mult regr}
and~\eqref{eq:asymp var shape MM mult regr}.
When $c_\sigma=1$, the behavior of the covariance shape MM-estimator 
coincides with that of the covariance shape S-estimator defined with $\rho_1$.
This was already observed by Salibi\'an-Barrera \emph{et al}~\cite{SalibianBarrera-VanAelst-Willems2006},
whose formula~(9) matches with the expression in~\eqref{eq:asymp var shape MM mult regr} with $c_\sigma=1$.
Finally, also here there is a connection with the CM-estimators considered in 
Kent and Tyler~\cite{kent&tyler1996},
whose limiting distribution depends on a parameter~$\lambda_0$.
By using~\eqref{eq:prop duplication}, it can be seen that the limiting distribution 
of $\sqrt{n}(\vc(\bV(\btheta_{1,n}))-\vc(\bSigma))$ is similar to that of 
the covariance CM-estimator for the particular case that $\lambda_0=\lambda_L$ (see Kent and Tyler~\cite{kent&tyler1996} for details),
and that they both coincide when $c_\sigma=1/\sqrt{\lambda_0}$ and $\rho_0=\rho_1$.
\end{example}

\end{document}